\documentclass{article}

\usepackage[preprint]{style}

\newcommand{\Spsd}{\mathbb{S}^d_+}

\usepackage{multirow}
\usepackage[cp1250]{inputenc}
\usepackage[T1]{fontenc}
\usepackage{calligra}
\usepackage{layout}
\usepackage{bbm}

\usepackage{amsmath}
\usepackage{amsthm}
\usepackage{amssymb}
\usepackage{amsfonts}
\usepackage{mathtools}

\usepackage{color}
\usepackage[dvipsnames]{xcolor}
\usepackage{graphicx}
\usepackage{verbatim}
\usepackage{subcaption}

\usepackage{algorithmic}
\usepackage{algorithm}

\usepackage{thmtools} 
\usepackage{thm-restate}

\usepackage{url}
\usepackage{hyperref}
\hypersetup{colorlinks=false, urlcolor=magenta}

\usepackage{cleveref}

\usepackage{nicefrac}
\usepackage{adjustbox}
\usepackage{threeparttable}

\usepackage{import, ifthen}
\usepackage{enumerate}
\usepackage{enumitem}

\usepackage{tcolorbox}
\usepackage{pifont}
\definecolor{mydarkred}{RGB}{192,25,25}
\definecolor{mydarkgreen}{RGB}{25,192,25}
\definecolor{mydarkblue}{RGB}{25,25,192}

\usepackage{xspace}
\newcommand{\algname}[1]{{\color{ForestGreen}\small\sf#1}\xspace}

\newcommand{\norm}[1]{{\left\| #1 \right\|}}

\newcommand{\normsq}[1]{{\left\| #1 \right\|^2}} %

\newcommand{\matnormsq}[2]{{\left\| #1 \right\|^2_{#2}}}

\newcommand{\ip}[2]{\left\langle#1,#2\right\rangle} %

\newcommand{\rbr}[1]{\left(#1\right)} %
\newcommand{\sbr}[1]{\left[#1\right]} %
\newcommand{\abr}[1]{\left\langle#1\right\rangle} %
\newcommand{\curlybr}[1]{\left\{#1\right\}} %
\newcommand{\absbr}[1]{\left\|#1\right\|} %
\newcommand{\abs}[1]{\left\lvert #1\right\rvert} %

\newcommand{\cL}{\mathcal{L}}

\newcommand{\cO}{\mathcal{O}}

\newcommand{\cX}{\mathcal{X}}

\newcommand{\R}{\mathbb{R}}  %

 \newcommand{\mA}{\mathbf{A}}
 \newcommand{\mB}{\mathbf{B}}
 \newcommand{\mzero}{\mathbf{0}}
 \newcommand{\mC}{\mathbf{C}}

\newcommand{\mI}{\mathbf{I}}

\newcommand{\mD}{\mathbf{D}}

\newcommand{\mQ}{\mathbf{Q}}
\newcommand{\mG}{\mathbf{G}}

\newcommand{\del}[1]{}

\newcommand{\eqdef}{:=}

\newcommand{\pr}[1][]{
  \ifthenelse { \equal{#1}{} }
  { \ensuremath{\mathrm{P}} }
  { \ensuremath{\mathrm{P}\left(#1\right)} }
}

\DeclareMathOperator{\epi}{epi}         %
\DeclareMathOperator{\Diag}{Diag}       %

\theoremstyle{plain}
\newtheorem{assumption}{Assumption}\numberwithin{assumption}{section}
\newtheorem{theorem}{Theorem}\numberwithin{theorem}{section}
\newtheorem{lemma}{Lemma}\numberwithin{lemma}{section}
\newtheorem{proposition}{Proposition}\numberwithin{proposition}{section}
\newtheorem{corollary}{Corollary}\numberwithin{corollary}{section}
\numberwithin{claim}{section}
\numberwithin{fact}{section}
 \numberwithin{exercise}{section}
\newtheorem{example}{Example}\numberwithin{example}{section}
\theoremstyle{definition}
\newtheorem{definition}{Definition}\numberwithin{definition}{section}

\usepackage[colorinlistoftodos,bordercolor=orange,backgroundcolor=orange!20,linecolor=orange,textsize=scriptsize,disable]{todonotes}

\DeclareMathOperator*{\argmin}{arg\,min}
\usepackage{mathrsfs}

\newcommand{\N}{\mathbb{N}}  %

\usepackage{soul}

\title{Local Curvature Descent: Squeezing More Curvature out of Standard and Polyak Gradient Descent}

\author{%
   Peter Richt{\'a}rik 
  \\
  KAUST, Saudi Arabia \\  
  AI Initiative, Saudi Arabia \\
\texttt{peter.richtarik@kaust.edu.sa}
\And
Simone Maria Giancola
\\
KAUST, Saudi Arabia
\\
Bocconi University, Italy \\
\texttt{simone.giancola@kaust.edu.sa}
\And 
Dymitr Lubczyk
\\
KAUST, Saudi Arabia 
\\ 
University of Amsterdam, Netherlands
\\
\texttt{dymitr.lubczyk@student.uva.nl}
\And
Robin Yadav
\\
KAUST, Saudi Arabia 
\\ 
University of British Columbia, Canada
\\
\texttt{robiny12@student.ubc.ca}
}

\begin{document}

\maketitle

\begin{abstract}
We contribute to the growing body of knowledge on more powerful and adaptive stepsizes for convex optimization, empowered by {\em local curvature information}. We do not go the route of fully-fledged second-order methods which require the expensive computation of the Hessian. Instead,  our key observation is that, for some problems (e.g., when minimizing the sum of squares of absolutely convex functions), certain local curvature information is readily available, and can be used to obtain surprisingly powerful matrix-valued stepsizes, and meaningful theory. In particular, we develop three new methods---\algname{LCD1}, \algname{LCD2} and \algname{LCD3}---where the abbreviation stands for {\em local curvature descent}.  While \algname{LCD1} generalizes gradient descent with fixed stepsize,  \algname{LCD2} generalizes gradient descent with Polyak stepsize. Our methods enhance these classical gradient descent baselines with local curvature information, and our theory recovers the known rates in the  special case when no curvature information is used. Our last method, \algname{LCD3}, is a variable-metric version of \algname{LCD2}; this feature leads to a closed-form expression for the iterates. Our empirical results are encouraging, and show that the local curvature descent improves upon gradient descent. 
\end{abstract}

\section{Introduction} \label{sec:introduction}
In this work we revisit the standard optimization problem
\begin{equation}\label{eqn:optimization problem}
  \min\limits_{x \in \R^d} \; f(x),
\end{equation}
where $f: \R^d \to \R$ is a continuous convex function with a nonempty set of minimizers $\cX_{\star}$. Further, we denote the optimal function value by $f_{\star}\eqdef f(x_\star)$, where $x_\star\in \cX_{\star}$.

\subsection{First-order methods} First-order methods of the  Gradient Descent (\algname{GD}) and Stochastic Gradient Descent (\algname{SGD})  variety  have been widely adopted to solve problems of type \eqref{eqn:optimization problem}~\citep{polyakGradientMethodsMinimisation1963, robbinsStochasticApproximationMethod1951}. Due to their simplicity and relatively low computational cost, these methods have seen great success across many machine learning applications, and beyond. Nonetheless, \algname{GD}, performing iterations of the form
\begin{equation}\label{eqn:gradient descent}
x_{k+1} = x_k - \gamma_k \nabla f(x_k), 
\end{equation}
where $\gamma_k>0$ is a learning rate (stepsize),  suffers from several well-known drawbacks. For example, for convex and $L$-smooth objectives, \algname{GD} converges provided that\footnote{It is theoretically possible to use slightly larger stepsizes, by at most a factor of 2, but this is does not play a role in our narrative.}  
\begin{equation}\label{eq:summable}\textstyle \sum \limits_{k=0}^\infty \gamma_k = +\infty \qquad \text{and} \qquad \gamma_k 
\leq \frac{1}{L} \quad \forall k\geq 0\end{equation}\citep{nesterovIntroductoryLecturesConvex2004}. For many problems, $L$ is very large and/or unknown, and estimating its value is a non-trivial task. Overestimation of the smoothness constant leads to unnecessarily small stepsizes, which degrades performance, both in theory and in practice. 

{\bf Polyak stepsize.}
When the optimal value $f_{\star}$ is known, a very elegant solution to the above-mentioned problems was provided by \citet{polyakIntroductionOptimization1987}, who proposed the use of what is now known as the Polyak stepsize:
\begin{equation} \label{eqn:polyak stepsize introduction}
  \textstyle  \gamma_k\eqdef \frac{f(x_k) - f_{\star}}{\norm{\nabla f(x_k)}^2}. 
\end{equation}
It is known that if $f$ is convex and $L$-smooth, then $\gamma_k\geq \frac{1}{2L}$ for all $k\geq 0$ . So, unlike strategies based on the recommendation provided by \eqref{eq:summable}, Polyak stepsize can never be too small compared to the upper bound from \eqref{eq:summable}. In fact, it is possible for $\gamma_k$ to be larger than $\frac{1}{L}$, which leads to practical benefits. Moreover, this is achieved without having to know or estimate $L$, which is a big advantage. Since the function value $f(x_k)$ and the gradient $\nabla f(x_k)$ are typically known, the only price for these benefits is the knowledge of the optimal value $f_\star$. This may or may not be a large price to pay, depending on the application.

{\bf Malitsky-Mishchenko stepsize.} In the case of convex and locally smooth objectives, \citet{malitskyAdaptiveGradientDescent2020a} recently proposed an ingenious adaptive stepsize rule that iteratively builds an estimate of the inverse {\em local} smoothness constant from the information provided by the sequence of iterates and gradients. Furthermore, they prove their methods achieve the same or better rate of convergence as \algname{GD}, without the need to assume global smoothness. For a review of further approaches to adaptivity, we refer the reader to \citet{malitskyAdaptiveGradientDescent2020a}, and for several extensions of this line of work, we refer to \citet{zhouAdaBBAdaptiveBarzilaiBorwein2024}.

{\bf Adaptive stepsizes in deep learning.} 
When training neural networks and other machine learning models, issues related to the appropriate selection of stepsizes are amplified even further. Optimization problems appearing in deep learning are not convex and may not even be $L$-smooth, or $L$ is prohibitively large, and tuning the learning rate usually requires the use of schedulers or a costly grid search. In this domain, adaptive stepsizes have played a pivotal role in the success of first-order optimization algorithms. Adaptive methods such as \algname{Adam}, \algname{RMSProp}, \algname{AMSGrad}, and \algname{Adagrad} scale the stepsize at each iteration based on the gradients \citep{kingmaAdamMethodStochastic2017, hintonNeuralNetworksMachine2014, reddiConvergenceAdam2019, JMLR:v12:duchi11a}. Although \algname{Adam} has seen great success empirically when training deep learning models, there is very little theoretical understanding of why it works so well. On the other hand, \algname{Adagrad} converges at the desired rate for smooth and Lipschitz objectives but is not as successful in practice as \algname{Adam} \citep{JMLR:v12:duchi11a}. 

\subsection{Second-order methods} 
When $f$ is twice differentiable and $L$-smooth, $L$ can be seen as a global upper bound on the largest eigenvalue of the Hessian of $f$. So, there are close connections between the way a learning rate should be set in \algname{GD}-type methods and the curvature of $f$.

{\bf Newton's method.}
Perhaps the most well-known second-order algorithm is Newton's method:
\[x_{k+1} = x_k - \rbr{\nabla^2 f(x_k) }^{-1} \nabla f(x_k).\]
When it works, it converges in a few iterations only. However, it may fail to converge even on convex objectives\footnote{A well-known example is the function $f(x)=\ln(e^{-x}+e^x)$ with $x_0=1.1$. }. It needs to be modified in order to converge from any starting point, say by adding a damping factor~\citep{DampedNewton2022} or  regularization~\citep{RegNewton2023}.  However, under suitable assumptions, Newton's method converges quadratically when started close enough to the solution. The key difficulty in performing a Newton's step is the computation of the Hessian and a performing a linear solve. In analogy with \eqref{eqn:gradient descent}, it is possible to think of $\rbr{\nabla^2 f(x_k) }^{-1}$ as a matrix-valued stepsize.

{\bf Quasi-Newton methods.} To reduce the computational cost, quasi-Newton methods such as \algname{L-BFGS} utilize an approximation of the inverse Hessian that can be computed from gradients and iterates only, typically using the approximation $\nabla^2 f(x_{k+1})(x_{k+1}-x_{k}) \approx \nabla f(x_{k+1}) - \nabla f(x_k)$, which makes sense under appropriate assumptions when $\|x_{k+1}- x_k\|$ is small \citet{nocedalNumericalOptimization2006, al-baaliBroydenQuasiNewtonMethods2014, al-baaliOverviewPracticalQuasiNewton2007, dennisjr.QuasiNewtonMethodsMotivation1977}. Until very recently, quasi-Newton methods were merely efficient heuristics, with very weak theory beyond quadratics~\citep{RBFGS2020,NR-QN-2021}.

{\bf Polyak stepsize with second-order information.} \citet{liSP2SecondOrder2022} recently proposed extensions of the Polyak stepsize, named \algname{SP2} and \algname{SP2+}, that incorporate second-order information. \algname{SP2} can also be derived similarly to the Polyak stepsize. %
While \algname{SP2} can be utilized in the non-convex stochastic setting, it only has convergence theory for quadratic functions and can often be very unstable in practice.  Furthermore, the quadratic constraint defined for \algname{SP2} may not even be a localization set. 

\subsection{Notation} All vectors are in $\R^d$ unless explicitly stated otherwise. We use $\cX_\star$ to denote the set of minimizes of $f$. Matrices are uppercase and bold (e.g., $\mA, \mC$), the $d\times d$ zero (resp.\ identity) matrix is denoted by $\mzero$ (resp.\ $\mI$), and  $\Spsd$ is the set of $d\times d$ positive semi-definite matrices. The standard Euclidean inner product is denoted with $\ip{\cdot}{\cdot}$.  For $\mA\in \Spsd$, we let $\norm{x}^2_{\mA} \eqdef \ip{\mA x}{x}$. By $\|x\|_p\eqdef (\sum_{i=1}^d |x_i|^p)^{1/p}$ we denote the $L_p$ norm in $\R^d$. The L{\"o}wner order for positive semi-definite matrices is denoted with $\preceq$. 
 
\section{Summary of Contributions}

In this work we contribute to the growing body of knowledge on more powerful and adaptive stepsizes, empowered by {\em local curvature information}. We do not go the route of fully-fledged second-order methods which require the expensive computation of the Hessian. 
\begin{quote} Instead,  our key observation is that, for some problems, certain local curvature information is readily available, and can be used to obtain surprisingly powerful matrix-valued stepsizes.   \end{quote}

The examples mentioned above, and discussed in detail in Sections~\ref{sec:Examples} and~\ref{subsec:absolutely convex functions main text} lead to the following abstract assumption, which at the same time defines what we mean by the term {\em local curvature}: 

\begin{assumption}[Convexity and smoothness with local curvature] \label{ass:main assumption upper and lower bound}
    There exists a curvature mapping/metric/matrix $\mC:\R^d \to \Spsd$ and a constant $L_{\mC}\geq 0$ such that the inequalities    \begin{align}\label{eqn:lower bound main assumption}
\underbrace{ f(y) + \abr{ \nabla f(y), x - y } + \tfrac{1}{2} \matnormsq{x - y}{\mC(y)}  }_{ M^{\rm low}_{\mC}(x;y)} \leq  f(x),
    \end{align}
    \begin{align}\label{eqn:upper bound main assumption}
    f(x)  \leq  \underbrace{  f(y) + \abr{ \nabla f(y), x - y } +  \tfrac{1}{2}\matnormsq{x - y}{\mC(y) + L_{\mC}\cdot \mI} }_{M^{\rm up}_{\mC}(x;y)}
    \end{align}
hold   for all $x,y \in \R^d$. 
\end{assumption}

\Cref{ass:main assumption upper and lower bound} defines a new class of functions. Note that with the specific choice $\mC(y) \equiv \mzero$,  \eqref{eqn:lower bound main assumption} reduces to convexity, and \eqref{eqn:upper bound main assumption} reduces to $L$-smoothness, with $L=L_\mC$. Note that any function satisfying \eqref{eqn:lower bound main assumption} is necessarily convex, and similarly, any $L$-smooth function satisfies \eqref{eqn:upper bound main assumption} with any curvature mapping $\mC$ and $L_{\mC}=L$. However, the converse is not true: a function satisfying \eqref{eqn:upper bound main assumption} is not necessarily $L$-smooth for any finite $L$. Further, note that if $f$ is $\mu$-strongly convex, then it satisfies \eqref{eqn:lower bound main assumption} with curvature mapping $\mC(y) \equiv \mu  \mI$.  The class of convex and $L$-smooth functions is one of the most studied functional classes in optimization. Our new class is a strict and, as we shall see, useful generalization.

We now provide a brief overview of our theoretical and empirical contributions:
\subsection{Local curvature and a new function class} We define a new function class, described by \Cref{ass:main assumption upper and lower bound}, extending the classical class of convex and $L$-smooth functions. Further, we show that there are problems which satisfy  \Cref{ass:main assumption upper and lower bound} with nontrivial and easy-to-compute curvature mapping $\mC$ (see \Cref{sec:Examples} and \Cref{subsec:absolutely convex functions main text}).

\subsection{Three new algorithms} We propose three novel algorithms for solving problem \eqref{eqn:optimization problem} for function $f$ satisfying \Cref{ass:main assumption upper and lower bound}: Local Curvature Descent 1 (\algname{LCD1}), Local Curvature Descent 2 (\algname{LCD2}) and Local Curvature Descent 3 (\algname{LCD3}). First, \algname{LCD1} generalizes  \algname{GD} with constant stepsize: one moves from point $y$ to the point obtained by minimizing the upper bound \eqref{eqn:upper bound main assumption} on $f$ in $x$. Indeed, if $\mC(y)\equiv\mzero$, this algorithmic design strategy leads to gradient descent with stepsize $1/L$, where $L=L_{\mC}$. Second,  \algname{LCD2} generalizes  \algname{GD} with Polyak stepsize: one moves from point $y$ to the Euclidean projection of $y$ onto the halfspace $$\cL_{\mC}(y) \eqdef \{x\in \R^d \;|\; f(y) + \abr{ \nabla f(y), x - y } + \tfrac{1}{2} \matnormsq{x - y}{\mC(y)}  \leq f_{\star}\}.$$ Indeed, if $\mC(y)\equiv\mzero$, this algorithmic design leads to \algname{GD} with stepsize \eqref{eqn:polyak stepsize introduction}. Computing the projection involves finding the unique root of a scalar equation in variable, which can be executed efficiently.   Third, \algname{LCD3} is obtained from \algname{LCD2} by replacing the Euclidean projection with the projection defined by the local curvature matrix $\mC$.  The projection problem then has a closed-form solution.
\subsection{Theory} We prove convergence theorems for \algname{LCD1} (\Cref{thm:_=-ovuYFUYuyfudyf76d8&Fd}) and \algname{LCD2} (\Cref{thm:_=-ovuYFUYuyfudyf76d8&Fd}), with the same  $\cO(1/k)$  worst case rate of \algname{GD} with constant and Polyak stepsize, respectively.  Previous work on preconditioned Polyak stepsize \citep{abdukhakimovStochasticGradientDescent2023} fails to provide convergence theory and uses matrix stepsizes based on heuristics. In contrast, \algname{LCD2} utilizes local curvature from \Cref{ass:main assumption upper and lower bound}, and enjoys strong  convergence guarantees. 
\subsection{Experiments} We demonstrate superior empirical behavior  of \algname{LCD2} over the \algname{GD} with Polyak stepsize across several standard machine learning problems to which our theory applies. The presence of local curvature in our algorithms boosts their empirical performance when compared to their counterparts {\em not} taking advantage of local curvature.

\section{Three Flavors of Local Curvature Descent}

We now describe our methods.

\subsection{Local Curvature Descent 1} \label{subsec:Adaptive algorithms emerge from the assumption}

Our first method, \algname{LCD1} is obtained by minimizing the upper bound from Assumption~\ref{ass:main assumption upper and lower bound} where $y=x_k$, and letting  $x_{k+1}$ be the  minimizer:
\begin{equation}\tag{\algname{LCD1}}\label{eqn:_98y98G(DT97fg987g87fgghgUYGUd}
      \boxed{  x_{k + 1} = x_k - \left[\mC(x_k) + L_{\mC}\cdot \mI \right]^{-1}\nabla f(x_k)}
    \end{equation}
    The derivation is routine; nevertheless, the detailed steps behind \Cref{eqn:_98y98G(DT97fg987g87fgghgUYGUd} can be found in Appendix~\ref{subsec:x_09y098F&T897F}. If $\mC(x)\equiv \mzero$ and we let $L=L_{\mC}$, we recover \algname{GD} with the constant stepsize $\gamma_k= \frac{1}{L}$. Note that just like \algname{GD}, \algname{LCD1} is not adaptive to the smoothness parameter $L_{\mC}$; this parameter is needed to perform a step.

\subsection{Local Curvature Descent 2}

Given any $y\in \R^d$, let us define the {\em localization set}
\begin{eqnarray}\label{eqn:inclusion set}
     \cL_{\mC}(y) \eqdef \left\{ x \in \R^d \;:\; M_{\mC}^{\rm low}(x,y) \leq f_{\star} \right\}.
    \end{eqnarray}
Due to \eqref{eqn:lower bound main assumption}, we have  $\cX_\star \subset \cL_{\mC}(y)$, which justifies the use of the word ``localization''.  Furthermore, $y\in \cX_{\star}$ if and only if $y\in \cL_{\mC}(y)$. Therefore, $\cL_{\mC}(x_k)$ separates $\R^d$ in two regions: one containing $\cX_{\star}$, the other the current iterate $y=x_k$. This allows us to design our second algorithm, \algname{LCD2}: we simply project the current iterate $x_k$ into the localization set $\cL_{\mC}(x_k)$, bringing it closer to the set of optimal points $\cX_{\star}$:
\begin{equation} \tag{\algname{LCD2}}\label{eqn:LCD2}
      \boxed{\textstyle  x_{k + 1} = \argmin\limits_{x \in \cL_{\mC}(x_k)}\; \frac{1}{2} \matnormsq{ x - x_k }{}}
    \end{equation}
It turns out that this projection problem has an implicit parametric solution of the form
    \begin{equation} \tag{\algname{LCD2}}\label{eqn:nnmn bgvyv^S&^F&^&*}
     \textstyle   x_{k + 1} = x_k - \left[ \mC(x_k) + \beta_k \cdot \mI \right]^{-1}\nabla f(x_k),
    \end{equation} 
    where $\beta_k >0$. Importantly, we show in Appendix~\ref{subsec:888876897t87tFFGSCD6d__-=} that the structure of the problem is easy: the parameter $1/\beta_k$ is the unique root of a scalar equation, solvable efficiently. Moreover, if $\mC(x_k)$ is a rank-one matrix or a multiple of $\mI$, a closed-form solution exists. We present the details in Appendix~\ref{subsec:closed form for rank one matrix}. 
    
    Note that when $\mC(x)\equiv \mzero$, \algname{LCD2} becomes \algname{GD} with Polyak stepsize. In general, \algname{LCD2} can be seen as a variant of  \algname{GD} with Polyak stepsize, enhanced with  {\em local curvature}. The method no longer points in the negative gradient direction anymore, of course. We argue that one step of \algname{LCD2} improves on one step of \algname{GD} with Polyak stepsize.  Indeed, since $\cL_{\mC}(x_k)\subseteq \cL_{\mzero}(x_k)$, with equality if and only if $\mC(x_k) = \mzero$, the point $x_{k+1}$ obtained by \algname{LCD2}  is closer to $\cX_{\star}$ than what is achieved by a single step of \algname{GD} with Polyak stepsize.

\subsection{Local Curvature Descent 3}

Our last method, \algname{LCD3}, was born out of the desire to remove the need for the univariate root-finding subroutine in order to execute the projection defining \algname{LCD2}. This can be achieved by projecting using the norm given by the local curvature matrix $\mC(x_k)$ instead:
\begin{equation} \tag{\algname{LCD3}}\label{eqn:LCD3}
    \boxed{\textstyle  x_{k + 1} = \argmin\limits_{x \in \cL_{\mC}(x_k)}\; \frac{1}{2} \matnormsq{ x - x_k }{\mC(x_k)}}
\end{equation}
  If $\mC$ is invertible\footnote{We assume this for simplicity only.}, this projection problem admits the closed-form solution
    \begin{equation} \tag{\algname{LCD3}}\label{eq:xxx_xxx_=009f} \textstyle x_{k+1} = x_k - \rbr{1 - \sqrt{1 - \frac{2(f(x_k)-f_\star) }{\norm{\nabla f(x_k)}_{\mC^{-1}(x_k)}^2}}}\mC^{-1}(x_k)\nabla f(x_k).
    \end{equation}
The full derivation of this fact can be found in Appendix~\ref{subsec:==--009988hhbuYGUSd}. Although \algname{LCD3} uses the same localization set as \algname{LCD2}, we do not provide any convergence theorem for this method. The variable metric nature of the projection makes it technically difficult to provide a meaningful analysis of this method. Nevertheless, we justify the introduction of \algname{LCD3} via its promising experimental behavior in \Cref{sec:experiments} and \Cref{sec:more experiments}.
 
\section{Convergence Rates} \label{subsec:convergence of two variants}

Having described the methods, this appears to be the right moment to present our main convergence results for \algname{LCD1} and \algname{LCD2}.

\begin{theorem}[Convergence of  \algname{LCD1}] \label{thm:_=-ovuYFUYuyfudyf76d8&Fd}
Let \Cref{ass:main assumption upper and lower bound} be satisfied. For all $k\geq 1$, the iterates of  \algname{LCD1} satisfy
\begin{equation*}   \textstyle      f(x_k) - f_{\star} \leq \frac{L_{\mC} \norm{x_0 - x_\star}^2}{2k}.
\end{equation*}

\end{theorem}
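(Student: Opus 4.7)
The plan is to mimic the textbook analysis of gradient descent on convex $L$-smooth functions, generalizing the two key ingredients (the descent lemma and convexity) using the matrix upper and lower bounds from Assumption~\ref{ass:main assumption upper and lower bound}. Throughout, I will write $\mM_k \eqdef \mC(x_k) + L_{\mC}\mI$, so that \algname{LCD1} becomes $x_{k+1}=x_k-\mM_k^{-1}\nabla f(x_k)$.

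First I would derive a descent lemma. Applying the upper bound \eqref{eqn:upper bound main assumption} with $y=x_k$ and $x=x_{k+1}$ and substituting the update rule gives
\[
f(x_{k+1}) \;\leq\; f(x_k) - \ip{\nabla f(x_k)}{\mM_k^{-1}\nabla f(x_k)} + \tfrac{1}{2}\matnormsq{\mM_k^{-1}\nabla f(x_k)}{\mM_k} \;=\; f(x_k) - \tfrac{1}{2}\matnormsq{\nabla f(x_k)}{\mM_k^{-1}}.
\]
This shows $f(x_k)$ is non-increasing and provides the inequality $\matnormsq{\nabla f(x_k)}{\mM_k^{-1}} \leq 2(f(x_k)-f(x_{k+1}))$ I will need later.

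Next I would expand the Lyapunov-like quantity $\matnormsq{x_{k+1}-x_\star}{\mM_k}$ (with the metric $\mM_k$ chosen precisely so that the cross term cancels cleanly):
\[
\matnormsq{x_{k+1}-x_\star}{\mM_k} = \matnormsq{x_k-x_\star}{\mM_k} - 2\ip{\nabla f(x_k)}{x_k-x_\star} + \matnormsq{\nabla f(x_k)}{\mM_k^{-1}}.
\]
The lower bound \eqref{eqn:lower bound main assumption} applied at $y=x_k$, $x=x_\star$ yields $\ip{\nabla f(x_k)}{x_k-x_\star} \geq f(x_k)-f_\star + \tfrac{1}{2}\matnormsq{x_k-x_\star}{\mC(x_k)}$; plugging this in together with the descent lemma above produces
\[
\matnormsq{x_{k+1}-x_\star}{\mM_k} + 2(f(x_{k+1})-f_\star) \;\leq\; \matnormsq{x_k-x_\star}{\mM_k} - \matnormsq{x_k-x_\star}{\mC(x_k)} \;=\; L_{\mC}\norm{x_k-x_\star}^2,
\]
where the last equality is the identity $\mM_k - \mC(x_k) = L_{\mC}\mI$—this is the point where the curvature contributions from the upper and lower bounds conspire to cancel. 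Using $\mC(x_k)\succeq \mzero$ to drop it from $\mM_k$ on the left gives the one-step recursion
\[
2(f(x_{k+1})-f_\star) \;\leq\; L_{\mC}\rbr{\norm{x_k-x_\star}^2 - \norm{x_{k+1}-x_\star}^2}.
\]

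Finally I would telescope this from $i=0$ to $k-1$ and invoke monotonicity of $f(x_k)$ (from the descent lemma) to replace each $f(x_{i+1})$ on the left by $f(x_k)$, obtaining $2k(f(x_k)-f_\star) \leq L_{\mC}\norm{x_0-x_\star}^2$, which is exactly the claimed bound. The main conceptual hurdle is choosing the right Lyapunov metric: because the stepsize is matrix-valued and state-dependent, a naive Euclidean potential does not telescope. The key observation is that measuring distance in the metric $\mM_k=\mC(x_k)+L_{\mC}\mI$ allows the $\tfrac{1}{2}\matnormsq{x_k-x_\star}{\mC(x_k)}$ term coming from the generalized convexity on the \emph{lower} side to exactly annihilate the $\mC(x_k)$ piece sitting on the \emph{upper} side, leaving behind a scalar multiple of the Euclidean distance that telescopes.
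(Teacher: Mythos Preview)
Your proof is correct and follows essentially the same route as the paper: expand the distance to $x_\star$ in the $\mM_k=\mC(x_k)+L_{\mC}\mI$ metric, use the upper bound to control the gradient term (your descent lemma is the paper's Lemma~\ref{lem:_=_=_=Ponh890h098hd}) and the lower bound to control the inner product, then cancel the $\mC(x_k)$-norms to obtain the one-step recursion $L_{\mC}\norm{x_{k+1}-x_\star}^2 - L_{\mC}\norm{x_k-x_\star}^2 \leq -2(f(x_{k+1})-f_\star)$, which is exactly the paper's Lemma~\ref{lem:=====9i0uy98t978dt8fe}. The only cosmetic difference is the final step: the paper packages the recursion into a Lyapunov function $\mathcal{E}_k = L_{\mC}\norm{x_k-x_\star}^2 + 2k(f(x_k)-f_\star)$ and shows $\mathcal{E}_{k+1}\leq \mathcal{E}_k$, whereas you telescope and invoke monotonicity of $f(x_k)$; these are equivalent.
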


\begin{theorem}[Convergence of  \algname{LCD2}] \label{thm:b87big87=-=0iu-f9dhufdfg}
Let \Cref{ass:main assumption upper and lower bound} be satisfied. For all $k \geq 1$, the iterates of \algname{LCD2} satisfy
\begin{equation*}
  \textstyle  \min\limits_{1 \leq t \leq k} \;f(x_t)  -  f_{\star}  \leq
 \frac{L_{\mC} \norm{ x_0 - x_{\star} }^{2}}{2 k}  .
\end{equation*}

\end{theorem}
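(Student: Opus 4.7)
The plan is to combine three standard ingredients: (i) a Fejér-type contraction coming from the fact that $\mathcal{X}_\star \subseteq \cL_\mC(x_k)$ and \algname{LCD2} is an Euclidean projection onto this convex set; (ii) the identity that the iterate $x_{k+1}$ lies on the boundary of $\cL_\mC(x_k)$ (unless $x_k \in \cX_\star$, in which case the theorem is trivial); and (iii) the upper quadratic bound from Assumption~\ref{ass:main assumption upper and lower bound} used at $y=x_k$, $x = x_{k+1}$.

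First, since the lower bound \eqref{eqn:lower bound main assumption} applied at $y=x_k$, $x=x_\star$ gives $M_{\mC}^{\rm low}(x_\star,x_k) \leq f(x_\star) = f_\star$, any $x_\star \in \cX_\star$ belongs to $\cL_\mC(x_k)$. As \algname{LCD2} defines $x_{k+1}$ to be the Euclidean projection of $x_k$ onto the closed convex set $\cL_\mC(x_k)$, the non-expansiveness of the projection (or equivalently the Pythagorean inequality) yields
\begin{equation}\label{eq:proofproposal-pyth}
\norm{x_{k+1} - x_\star}^2 \leq \norm{x_k - x_\star}^2 - \norm{x_{k+1} - x_k}^2.
\end{equation}
Telescoping \eqref{eq:proofproposal-pyth} over $t = 0, \ldots, k-1$ gives $\sum_{t=0}^{k-1} \norm{x_{t+1} - x_t}^2 \leq \norm{x_0 - x_\star}^2$.

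Next, assuming $x_k \notin \cX_\star$ (else $f(x_k) = f_\star$ and the min in the theorem is already $0$), the projection $x_{k+1}$ lies on the boundary of $\cL_\mC(x_k)$, so
\begin{equation}\label{eq:proofproposal-boundary}
f(x_k) + \abr{\nabla f(x_k), x_{k+1} - x_k} + \tfrac{1}{2}\matnormsq{x_{k+1} - x_k}{\mC(x_k)} = f_\star.
\end{equation}
Plugging the upper bound \eqref{eqn:upper bound main assumption} at $y=x_k$, $x=x_{k+1}$ and subtracting \eqref{eq:proofproposal-boundary} gives
\begin{equation}\label{eq:proofproposal-main}
f(x_{k+1}) - f_\star \leq \tfrac{L_\mC}{2}\norm{x_{k+1} - x_k}^2,
\end{equation}
because the cross term $\abr{\nabla f(x_k), x_{k+1}-x_k}$ and the $\mC(x_k)$-quadratic term cancel, leaving only the $L_\mC \cdot \mI$ contribution.

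Finally, combining \eqref{eq:proofproposal-main} with the telescoped Fejér bound yields
\begin{equation*}
\tfrac{2k}{L_\mC}\min_{1 \leq t \leq k}\bigl(f(x_t) - f_\star\bigr) \leq \sum_{t=0}^{k-1}\tfrac{2}{L_\mC}\bigl(f(x_{t+1}) - f_\star\bigr) \leq \sum_{t=0}^{k-1}\norm{x_{t+1} - x_t}^2 \leq \norm{x_0 - x_\star}^2,
\end{equation*}
which rearranges to the claimed rate. The only nontrivial step to be careful with is the boundary identity \eqref{eq:proofproposal-boundary}: one must justify that the projection indeed activates the constraint, but this follows from the dichotomy noted right after \eqref{eqn:inclusion set}, namely $x_k \in \cL_\mC(x_k)$ iff $x_k \in \cX_\star$. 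Everything else is arithmetic, so I expect no real obstacle beyond this routine case split.
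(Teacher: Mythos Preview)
Your proposal is correct and follows essentially the same route as the paper: Fej\'er contraction from the projection, the upper bound~\eqref{eqn:upper bound main assumption} combined with membership in $\cL_\mC(x_k)$ to get $f(x_{k+1})-f_\star \le \tfrac{L_\mC}{2}\|x_{k+1}-x_k\|^2$, and a telescoping/min argument. The only minor difference is that the paper uses the \emph{inequality} $x_{k+1}\in\cL_\mC(x_k)$ rather than your boundary \emph{equality}~\eqref{eq:proofproposal-boundary}; this suffices for~\eqref{eq:proofproposal-main} and spares you the case split you flagged as the one nontrivial step.
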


The proofs of these results can be found in Appendix~\ref{subsec:__=-98D(*Y(89gf8f} and Appendix~\ref{subsec:_98y98t987ft87fgiugfDDDD}, respectively. It is possible to derive linear convergence results under the assumption that $\mC(x) \succeq \mu \mI$ for all $x\in \R^d$ and some $\mu>0$; however, we refrain from listing these for brevity reasons.

If $\mC(x)\equiv \mzero$, and we let $L=L_{\mC}$, these theorems recover the standard rates known for \algname{GD} with the stepsize $1/L$ and  \algname{GD}  with Polyak stepsize, respectively. So, we generalize these earlier results.  However, it is possible for a function to satisfy  \Cref{ass:main assumption upper and lower bound} and not be $L$-smooth. In this sense, our results extend the reach of the classical theorems beyond the class of convex and $L$-smooth functions.   On the other hand, if $f$ {\em is} convex and $L$-smooth, it may be possible that it satisfies  \Cref{ass:main assumption upper and lower bound} with some nonzero local curvature mapping $\mC$, in which case $L_{\mC} \leq L$. Indeed,
\begin{equation*}
\underset{x \in \R^d}{\inf} \lambda_{\min}(\mC(x)) \leq L - L_{\mC} \leq \underset{x \in \R^d}{\sup} \lambda_{\max}(\mC(x)),
\end{equation*}
where $\lambda_{\min}(\cdot)$  (resp.\ $\lambda_{\max}(\cdot)$) represents the smallest (resp.\ largest) eigenvalue of the argument, confirming $L_{\mC} \leq L$. However, it may be  that $L_{\mC} \ll L$, in which case our result leads to improved complexity. Nevertheless, the main allure of our methods is their attractive empirical behavior.

{\bf Convex quadratics.} For convex quadratics, \Cref{ass:main assumption upper and lower bound} is satisfied with $\mC(x) = \nabla^2 f(x)$ and $L_{\mC}=0$. In this case, both \algname{LCD1} and \algname{LCD2} reduce to Newton's method, and  converge in a single step. Moreover, \Cref{thm:_=-ovuYFUYuyfudyf76d8&Fd} and \Cref{thm:b87big87=-=0iu-f9dhufdfg} predict this one-step convergence behavior.

\par To validate our theoretical setting, we will show that functions satisfying Assumption \ref{ass:main assumption upper and lower bound} are easy to construct, well-behaved, and practically interesting. 

\section{Local Curvature Calculus} \label{sec:properties}

We now mention a couple basic properties of functions that satisfy Inequalities (\ref{eqn:lower bound main assumption}) and (\ref{eqn:upper bound main assumption}). 

\begin{lemma}\label{theory_lb_basic_calc}
Let $\alpha, \beta \in \R$ with $\beta \geq 0$. Suppose functions $f$ and $g$ satisfy inequality (\ref{eqn:lower bound main assumption}) with curvature mappings $\mC_1$ and $\mC_2$ respectively. Then:
\begin{equation*}
    f + \alpha,  \qquad \beta f, \quad \text{and}  \quad  f + g,
\end{equation*}
satisfy Inequality (\ref{eqn:lower bound main assumption}) with curvature mappings $\mC_1$, $\beta\mC_1$, and $\mC_1 + \mC_2$ respectively. 
\end{lemma}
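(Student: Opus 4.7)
The plan is to verify each of the three claims by directly inserting the definitions into the inequality (\ref{eqn:lower bound main assumption}) and performing elementary algebraic manipulations. Since the inequality is linear in the function and quadratic in the curvature term, the three claims correspond, respectively, to invariance under constant shifts, compatibility with nonnegative scaling, and additivity.

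First, for $f + \alpha$, I would simply add $\alpha$ to both sides of the inequality (\ref{eqn:lower bound main assumption}) satisfied by $f$. Since $\nabla (f + \alpha)(y) = \nabla f(y)$ and the quadratic term is unchanged, this yields (\ref{eqn:lower bound main assumption}) for $f + \alpha$ with the same curvature mapping $\mC_1$.

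Second, for $\beta f$ with $\beta \geq 0$, I would multiply the inequality (\ref{eqn:lower bound main assumption}) satisfied by $f$ through by $\beta$. Multiplication by a nonnegative scalar preserves the direction of the inequality, and on the left-hand side I get $\beta f(y) + \langle \beta \nabla f(y), x - y\rangle + \tfrac{1}{2}\|x-y\|^2_{\beta \mC_1(y)}$, which is precisely $M^{\rm low}_{\beta \mC_1}(x;y)$ for the function $\beta f$ (using $\nabla (\beta f)(y) = \beta \nabla f(y)$). The requirement $\beta \geq 0$ is used twice: it ensures the inequality direction is preserved, and it ensures $\beta \mC_1(y) \in \Spsd$ so that $\beta \mC_1$ is a valid curvature mapping in the sense of Assumption~\ref{ass:main assumption upper and lower bound}.

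Third, for $f + g$, I would add the two inequalities (\ref{eqn:lower bound main assumption}) satisfied by $f$ (with $\mC_1$) and $g$ (with $\mC_2$) term by term. Using linearity of the gradient, $\nabla (f+g)(y) = \nabla f(y) + \nabla g(y)$, and the identity $\tfrac12 \|x-y\|^2_{\mC_1(y)} + \tfrac12 \|x-y\|^2_{\mC_2(y)} = \tfrac12 \|x-y\|^2_{\mC_1(y) + \mC_2(y)}$, the sum of the two left-hand sides equals $M^{\rm low}_{\mC_1 + \mC_2}(x;y)$ for $f + g$, and the sum of the right-hand sides equals $(f+g)(x)$. The sum $\mC_1 + \mC_2$ is again positive semi-definite, so it is a valid curvature mapping.

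There is no serious obstacle here; the only subtlety worth flagging is the need for $\beta \geq 0$ in the scaling case, both to preserve the inequality and to keep the curvature mapping positive semi-definite. I do not foresee any hidden difficulty, and each of the three verifications is at most one or two lines.
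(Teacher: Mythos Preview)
Your proposal is correct and follows essentially the same approach as the paper's proof: for each of the three claims the paper also performs the obvious one-line manipulation (add $\alpha$, multiply by $\beta\geq 0$, and add the two inequalities), using linearity of the gradient and of the quadratic form in the curvature matrix. Your explicit remark about why $\beta\geq 0$ is needed is a nice addition that the paper does not spell out.
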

The proof of the  lemma can be found in Appendix~\ref{subsec:more theory on the lower bound}.  A particularly useful instantiation of \Cref{theory_lb_basic_calc} is presented in the following corollary. 

\begin{corollary}\label{theory_reg}
If $f$ satisfies \eqref{eqn:lower bound main assumption} and $g$ is convex, then  $h \eqdef f + g$ also satisfies \eqref{eqn:lower bound main assumption}. 
\end{corollary}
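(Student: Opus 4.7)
The plan is to view this corollary as an immediate specialization of \Cref{theory_lb_basic_calc} to the case where one of the two curvature mappings vanishes identically. First, I would observe that convexity of $g$ is exactly the statement that $g$ satisfies Inequality~\eqref{eqn:lower bound main assumption} with the trivial curvature mapping $\mC_2(y) \equiv \mzero$: indeed, $\matnormsq{x-y}{\mzero} = 0$, so \eqref{eqn:lower bound main assumption} with $\mC_2 \equiv \mzero$ reduces to the standard gradient inequality $g(y) + \ip{\nabla g(y)}{x-y} \leq g(x)$ characterizing convex functions.

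Second, I would invoke the additivity part of \Cref{theory_lb_basic_calc}: since $f$ satisfies \eqref{eqn:lower bound main assumption} with curvature mapping $\mC_1$ (which I would rename $\mC$ to match the statement of the corollary) and $g$ satisfies \eqref{eqn:lower bound main assumption} with curvature mapping $\mC_2 \equiv \mzero$, the sum $h = f+g$ satisfies \eqref{eqn:lower bound main assumption} with curvature mapping $\mC_1 + \mC_2 = \mC + \mzero = \mC$. This concludes the proof.

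There is no real obstacle here; the only subtlety worth flagging is that $g$ must be assumed differentiable (or at least, \eqref{eqn:lower bound main assumption} must be interpreted with subgradients) in order for $\nabla g(y)$ to appear in the inequality. Since the paper implicitly works in a differentiable setting throughout \Cref{ass:main assumption upper and lower bound}, this is not an issue, but I would briefly remark on it so that the use of the gradient inequality for convex $g$ is unambiguous.
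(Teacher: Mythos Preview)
Your proposal is correct and matches the paper's proof essentially line for line: the paper also observes that convexity of $g$ is exactly \eqref{eqn:lower bound main assumption} with $\mC(y)\equiv\mzero$, and then applies the additivity clause of \Cref{theory_lb_basic_calc} to conclude that $h=f+g$ satisfies the lower bound with curvature mapping $\mC+\mzero=\mC$. Your extra remark on differentiability versus subgradients is a reasonable caveat, though the paper does not mention it.
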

Corollary \ref{theory_reg} enables us to derive a variety of examples of functions satisfying inequality (\ref{eqn:lower bound main assumption}) by summing convex functions with instances from our class. Moreover, we can also show that inequality (\ref{eqn:lower bound main assumption}) is preserved under pre-composition with linear functions. Additional results for functions satisfying Assumption \ref{ass:main assumption upper and lower bound} can be found in Appendix~\ref{sec:more theory}. 

\section{Examples of Functions Satisfying \Cref{ass:main assumption upper and lower bound}} \label{sec:Examples}

We first list three examples that satisfy both inequalities in Assumption~\ref{ass:main assumption upper and lower bound}. Firstly, observe that if a function is $L$-smooth, then it satisfies inequality (\ref{eqn:upper bound main assumption}) since $\mC(x)$ is assumed to be a positive semi-definite matrix. We aim to find convex functions that satisfy our assumption in a non-trivial manner, i.e., $\mC(x) \not\equiv \mzero$ and $\mC(x) \not\equiv \mu \mI$ for some $\mu > 0$. 

\begin{example}[Huber loss]\label{theory:huber}
Let $\delta > 0$ and consider the Huber loss function $h: \R \to \R$ given by
\begin{equation*}
    h(x) = 
    \begin{cases}
    \frac{1}{2}x^2 & \abs{x} \leq \delta \\
    \delta(\abs{x} - \frac{1}{2}\delta) & \abs{x} > \delta
    \end{cases}.
\end{equation*}
Then $f=h^2$ satisfies \Cref{ass:main assumption upper and lower bound} with constant $L_{\mC} = 2\delta^2$ and curvature mapping
\begin{equation*}
    \mC(x) = 
    \begin{cases}
        x^2 & \abs{x} \leq \delta \\
        \delta^2 & \abs{x} > \delta
    \end{cases}.
\end{equation*}
\end{example}
Example \ref{theory:huber} is particularly interesting because $\mC(x) + 2\delta^2 \leq 3\delta^2$ for any $x \in \R$. By computing the second derivative of $f$, we can obtain the tightest $L$-smoothness constant; it is equal to $3\delta^2$. Therefore, the variable bound we derived is at least as good as the $L$-smoothness bound. 

\begin{example}[Squared $p$ norm]\label{theory:lp_reg_square}
Let $p \geq 2$ and define $f: \R^d \to \R$ as $f(x) = \absbr{x}_p$. Then $f^2$ satisfies \Cref{ass:main assumption upper and lower bound} with either of the two curvature mappings,
\begin{equation*}
  \textstyle   \mC(x) = \frac{2}{\norm{x}_p^{p-2}}\Diag{(\abs{x_1}^{p-2},\ldots, \abs{x_d}^{p-2})}, \quad \mC(x) = 2\nabla f(x) \nabla f(x)^\top,
\end{equation*}
and constant $L_{\mC} = 2(p - 1)$. 
\end{example}

\begin{example}[$L_p$ regression]
Suppose $\mA \in \R^{n \times d}$ and $b \in \R^n$. For $p \geq 2$, the function $
    f(x) = \absbr{\mA x - b}_p^2,
$
satisfies Assumption \ref{ass:main assumption upper and lower bound} as a precomposition of Example \ref{theory:lp_reg_square} with an affine function. 
\end{example}

Therefore, linear regression in the squared $L_p$ norm  satisfies our assumption. The $L_p$ regression problem has several applications in machine learning \citep{dasguptaSamplingAlgorithmsCoresets2009,muscoActiveLinearRegression2022,JMLR:v18:17-044}. This includes low-rank matrix approximation, sparse recovery, data clustering, and learning tasks \citep{adilFastAlgorithmsEll_p2023}. In general, convex optimization in non-Euclidean geometries is a well-studied and important research direction. This motivates us to study $L_p$ norms further and understand how they can fit within our assumptions.

We can perform other simple modifications of $L_p$ norm that satisfy only inequality (\ref{eqn:lower bound main assumption}). 

\begin{example}
\label{example:lpp}
Let $p \geq 2$. Then $f(x) = \absbr{x}_p^p$ satisfies  (\ref{eqn:lower bound main assumption}) with either of the  curvature mappings 
\begin{equation*}
    \textstyle   \mC_1(x) = \frac{1}{p - 1} \nabla^2 f(x), \qquad \mC_2(x) = \frac{1}{pf(x)}\nabla f(x) \nabla f(x)^\top.
\end{equation*}
\end{example}
We postpone comments to Appendix~\ref{subsec:more examples on the lower bound}. 
Using Corollary \ref{theory_reg} and the above examples, we can construct regularized convex problems that satisfy our assumptions. For instance, we can add the square of an $L_p$ norm to the logistic loss function to obtain an objective function that satisfies (\ref{eqn:lower bound main assumption}), with the mapping from the regularizer. The objective function will be $L$-smooth, so it also satisfies inequality~(\ref{eqn:upper bound main assumption}).

\section{Absolutely Convex Functions} \label{subsec:absolutely convex functions main text}

In addition to the examples from \Cref{sec:Examples}, we now introduce the class of {\em absolutely convex} functions, and the problem of minimizing the sum of squares of absolutely convex functions. In this setting, as we shall show, the curvature mapping $\mC$  satisfying Inequality~(\ref{eqn:lower bound main assumption}) is readily available. 

\subsection{Absolute convexity}
Absolutely convex functions are defined as follows.

\begin{definition}[Absolute convexity]
A function $\phi : \R^d \to \R$ is absolutely convex if
\begin{equation}\label{theory:abs_conv_defn_abs_conv}
\phi(x)\geq | \phi(y) + \langle \nabla \phi(y), \,x - y\rangle | \quad \forall x, y\in \R^d  . 
\end{equation}
\end{definition}
Above, $\nabla \phi(y)$ refers to a subgradient of $\phi$ at $y$. Geometrically, \eqref{theory:abs_conv_defn_abs_conv} means that linear approximations of $\phi$ are always above the graph of $-\phi$ in addition to being below the graph of $\phi$ (same as convexity),
\begin{equation*}
- \phi(x) \leq \phi(y) + \abr{\nabla \phi(y),x-y} \leq \phi(x).
\end{equation*}
Thus, any absolutely convex function is necessarily convex and non-negative.  A constant function is absolutely convex if and only if it is non-negative. A linear function is absolutely convex if and only if it is constant and non-negative. Moreover, the absolute value of any affine function is absolutely convex; that is, $\phi(x) = | \abr{a,x} + b |$ is absolutely convex. We avoid stating basic calculus rules as in Lemma~\ref{theory_lb_basic_calc}, and opt to present only one interesting property, and one notable example. Many others can be found in Appendix~\ref{sec: more absolute convexity}. 
\begin{lemma} \label{lem:bounded subgradients main text}
Absolutely convex functions have bounded subgradients. 
\end{lemma}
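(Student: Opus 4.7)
The strategy is to exploit the ``negative branch'' of the absolute convexity inequality, which is precisely what distinguishes absolutely convex functions from merely convex ones. Fix an arbitrary $y \in \R^d$ and any $g \in \partial \phi(y)$. The defining inequality \eqref{theory:abs_conv_defn_abs_conv} says $\phi(x) \geq |\phi(y) + \langle g, x-y\rangle|$ for all $x$, and in particular
\begin{equation*}
\phi(x) \geq -\phi(y) - \langle g, x-y\rangle \qquad \forall x \in \R^d.
\end{equation*}
Rearranging, $\langle g, y-x\rangle \leq \phi(x) + \phi(y)$. The convexity branch alone ($\phi(x) \geq \phi(y) + \langle g, x-y\rangle$) is insufficient to yield a bound on $\|g\|$ (consider $\phi(x) = x^2$), so isolating this second branch is the whole point.

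\textbf{Main step.} Substituting $x = y - u$ for an arbitrary $u \in \R^d$ with $\|u\| \leq 1$, the displayed rearrangement becomes
\begin{equation*}
\langle g, u\rangle \leq \phi(y) + \phi(y - u).
\end{equation*}
Taking the supremum over the closed unit ball in $u$ and using that $\|g\| = \sup_{\|u\|\leq 1}\langle g, u\rangle$ gives
\begin{equation*}
\|g\| \leq \phi(y) + \sup_{\|u\|\leq 1} \phi(y-u).
\end{equation*}
Since $\phi$ is a finite-valued convex function on $\R^d$, it is continuous on $\R^d$ and therefore bounded on the compact set $\{y - u : \|u\|\leq 1\}$. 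Thus the right-hand side is finite, proving that $\partial \phi(y)$ is a bounded subset of $\R^d$. Concretely, setting $u = g/\|g\|$ when $g \neq 0$ yields the explicit bound $\|g\| \leq \phi(y) + \phi(y - g/\|g\|)$.

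\textbf{Main obstacle.} There is no substantial obstacle once the correct branch of \eqref{theory:abs_conv_defn_abs_conv} is selected; the only thing one must not overlook is that the bound comes from the ``$-\phi(y)$ side'' of the absolute value, not from standard convexity. The continuity/local boundedness of finite-valued convex functions on $\R^d$ used at the end is standard and does not require any extra assumption on $\phi$.
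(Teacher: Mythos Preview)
Your argument establishes only that $\partial\phi(y)$ is a bounded set for each fixed $y$, with the $y$-dependent bound $\phi(y)+\sup_{\|u\|\le 1}\phi(y-u)$. That conclusion is a standard fact about finite convex functions on $\R^d$ and holds without absolute convexity; your own counterexample $\phi(x)=x^2$ already has $\partial\phi(y)=\{2y\}$ bounded for each $y$. The lemma, as proved in the appendix (Lemma~\ref{lem:absolutely convex functions have bounded gradients multivariable}), asserts a \emph{uniform} bound: there exists $M$ with $\|g\|\le M$ for every $y$ and every $g\in\partial\phi(y)$. Your bound is not uniform---for $\phi(x)=|x|$ it reads $|y|+|y-1|$, which grows without bound even though $|\phi'|\le 1$ everywhere. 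So although you correctly isolate the negative branch of~\eqref{theory:abs_conv_defn_abs_conv}, the inequality you extract from it is too weak: it only controls $\langle g,u\rangle$ by local values of $\phi$ near $y$, and those values are themselves unbounded in $y$.

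The paper's proof is structurally different. It first uses that an absolutely convex $\phi$ admits a minimizer $x_\star$, then argues by contradiction: if $\|\nabla\phi(y_n)\|\to\infty$ along some sequence, the absolute-value lower bounds $x\mapsto|\phi(y_n)+\langle\nabla\phi(y_n),x-y_n\rangle|$ become cones of arbitrarily large slope underneath $\phi$. Restricting to the line through $x_\star$ in the direction $\nabla\phi(y_n)$ and invoking the one-dimensional geometry (Lemma~\ref{cone}), one finds points $x_n$ with $\phi(x_n)=c>\phi(x_\star)$ and $\|x_n-x_\star\|\le 2c/\|\nabla\phi(y_n)\|\to 0$, contradicting continuity of $\phi$ at $x_\star$. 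The key difference from your attempt is that the paper anchors everything at the minimizer $x_\star$ rather than at the moving point $y$, which is what makes the bound independent of $y$.
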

\begin{example}\label{ex:ac} If $p \geq 1$, then $\phi(x) = \norm{x}_p$ is absolutely convex.
\end{example}

\subsection{Minimizing the sum of squares of absolutely convex functions}
To conclude, we present the derivation of the curvature mapping $\mC$ for the sum of squares of absolutely convex functions. Consider the optimization problem 
\begin{equation}\label{eq:g-problem}
  \textstyle x_\star = \argmin  \limits_{x\in \R^d} \curlybr{f(x)\eqdef \frac{1}{n}\sum \limits_{i=1}^n \phi_i^2(x)},
\end{equation}
where each $\phi_i$ is absolutely convex and a solution, $x_{\star}$, is assumed to exist. Let $f_i \eqdef \phi_i^2$, so that $\nabla f_i(x) = 2\phi_i(x)\nabla \phi_i(x)$. The gradient of $f$ is given by 
\begin{equation*}\label{eq:gradF}
  \textstyle \nabla f(x) =\frac{1}{n}\sum \limits_{i=1}^n \nabla f_i(x)= \frac{2}{n}\sum \limits_{i=1}^n  \phi_i(x)\nabla \phi_i(x).
\end{equation*}

Since $\phi_i$ is absolutely convex, $f_i$ is necessarily convex. Indeed, by squaring both sides of the defining inequality~(\ref{theory:abs_conv_defn_abs_conv}), we get
\begin{equation*}
f_i(y) + \abr{\nabla f_i(y),x-y} +  \abr{\nabla \phi_i(y)\nabla \phi_i(y)^\top (x-y) ,x-y}  \leq f_i(x), \qquad \forall x,y\in \R^d.
\end{equation*}
Summing these inequalities across $i$ and taking the average, we find that the curvature mapping can be set to
\begin{equation*}
    \label{examples:abs_convex_B}
     \textstyle \mC(x) = \frac{2}{n} \sum \limits_{i=1}^n \nabla \phi_i(x) \nabla \phi_i(x)^\top.
\end{equation*}
In Appendix~\ref{sec:more experiments}, we provide experiments on objective functions that are in this class. 

\section{Experiments} \label{sec:experiments}
\par To illustrate practical performance of the presented methods, we run a series of experiments on MacBook Pro with Apple M1 chip and 8GB of RAM. We use datasets from LibSVM \citep{changLIBSVMLibrarySupport2011}. We implemented all algorithms in Python. 
\par Let us focus on solving
\begin{equation*}
    \textstyle  f(x) = \frac{1}{n} \sum \limits_{i = 1}^{n} \log(1 + e^{-b_ia_ix}) + \lambda \norm{x}^p_p,
\end{equation*}
where $a_i \in \R^d$ and $b_i \in \{-1, 1\}$ are the data samples associated with a binary classification problem. The regularization weight $\lambda$ is set proportionally to the $L$-smoothness constant of the logistic regression instance.

\par In the first experiment, we use $L_2$ regularization. Therefore, $f$ is $L$-smooth and $\mu$-strongly-convex, so $\mC(x) \equiv \mu\mI$. As mentioned previously, in this setting, \algname{LCD1}  recovers \algname{GD} and \algname{LCD2} has a closed-form solution coinciding with \algname{LCD3}.
\begin{figure}[!ht]
    \centering
    \caption{Logistic regression on {\tt a2a} dataset with $L_2$ regularization.}
    \begin{subfigure}{0.32\textwidth}
        \caption{$\lambda=L \cdot 10^{-4}$}
        \vspace{-5pt}
        \includegraphics[width = \textwidth]{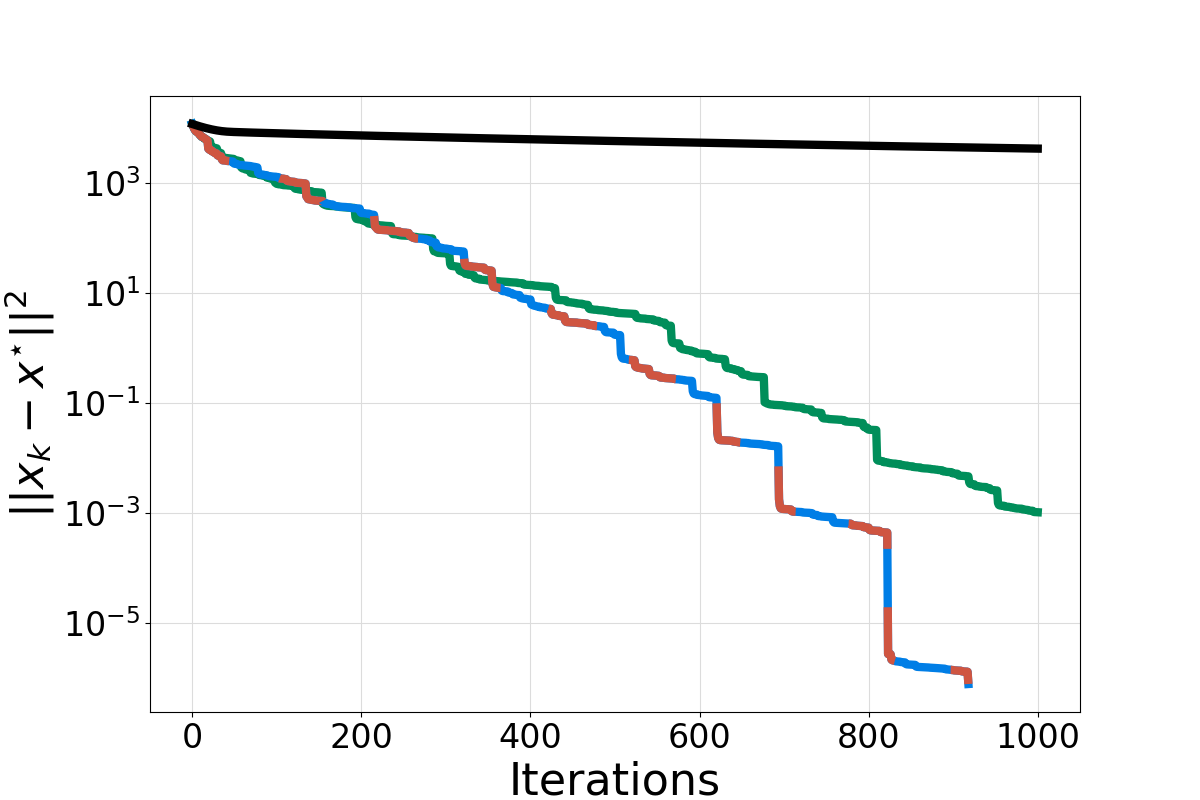}
    \end{subfigure}
    ~
    \begin{subfigure}{0.32\textwidth}
        \caption{$\lambda=\frac{L}{3} \cdot 10^{-3}$}
        \vspace{-5pt}
        \includegraphics[width = \textwidth]{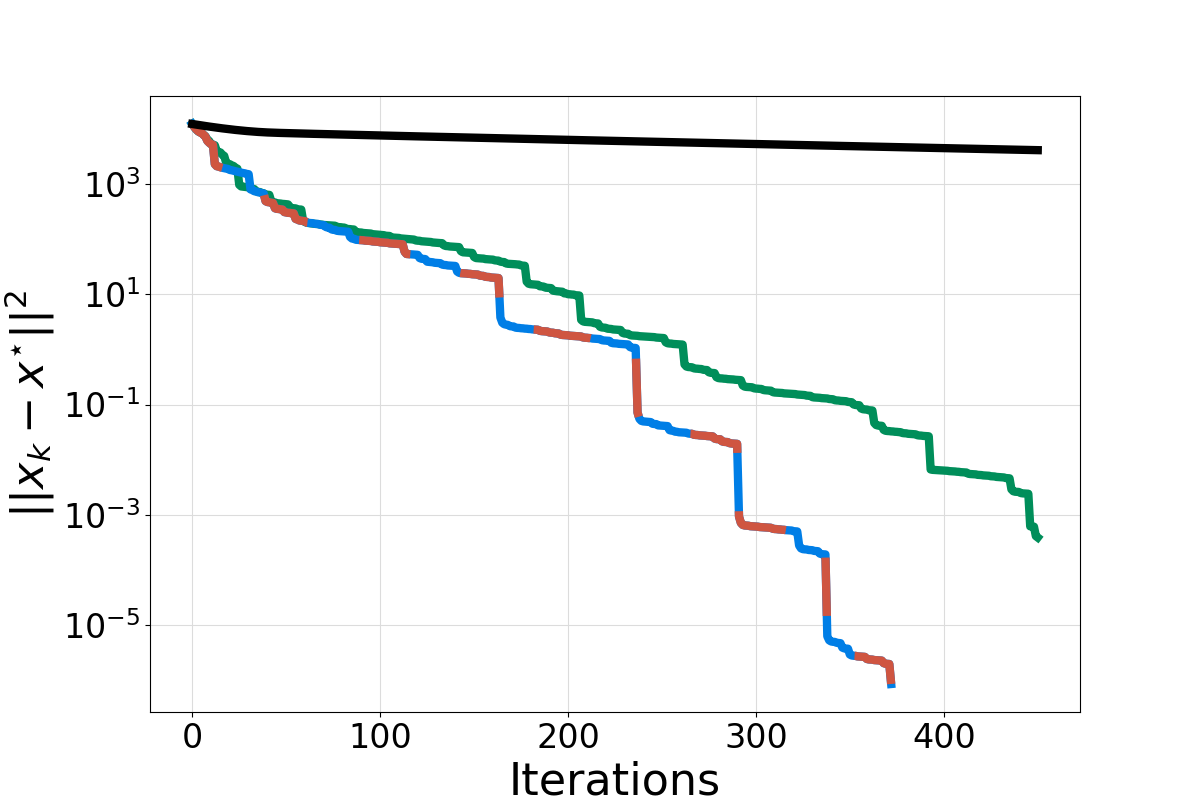}
    \end{subfigure}
    ~
    \begin{subfigure}{0.32\textwidth}
        \caption{$\lambda=L \cdot 10^{-3}$}
        \vspace{-5pt}
        \includegraphics[width=\textwidth]{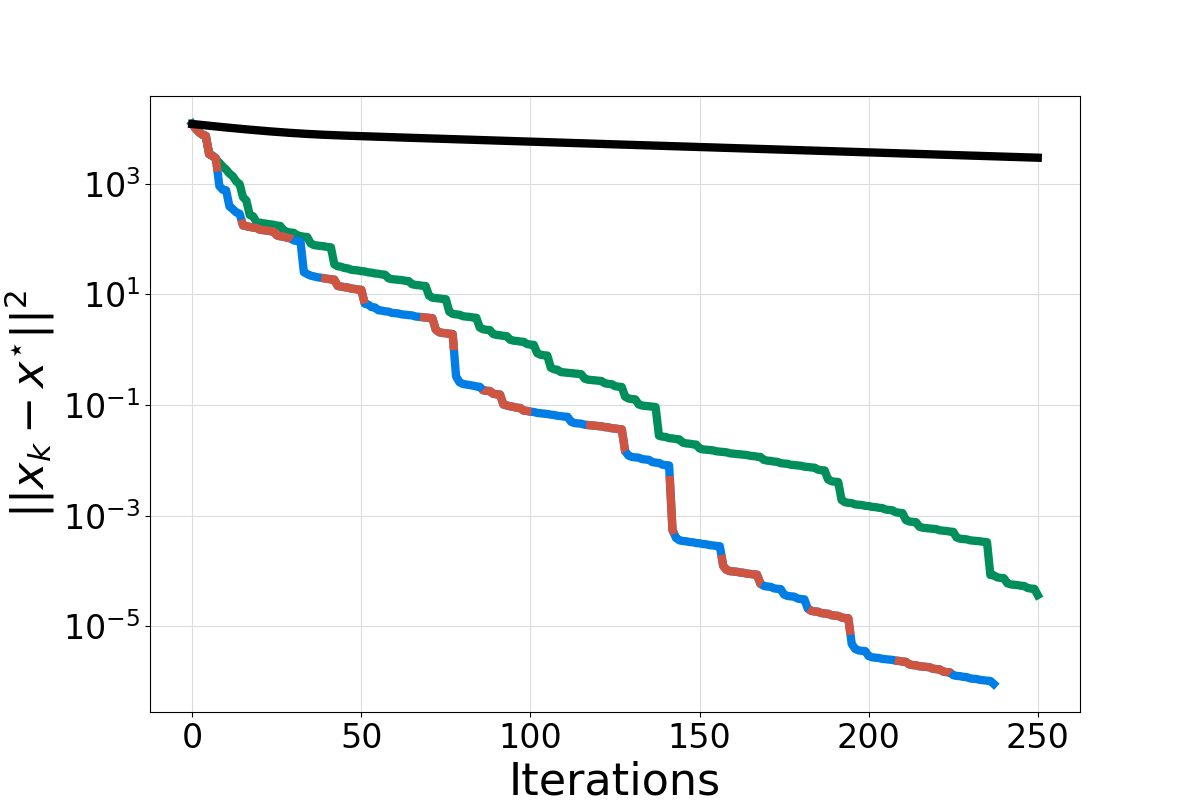}
    \end{subfigure}
    \label{exp:l2a2a}

    \vspace{7pt}
    
    \caption{Logistic regression on {\tt mushrooms} dataset with $L_2$ regularization.}
    \begin{subfigure}{0.32\textwidth}
        \caption{$\lambda=L \cdot 10^{-4}$}
        \vspace{-5pt}
        \includegraphics[width = \textwidth]{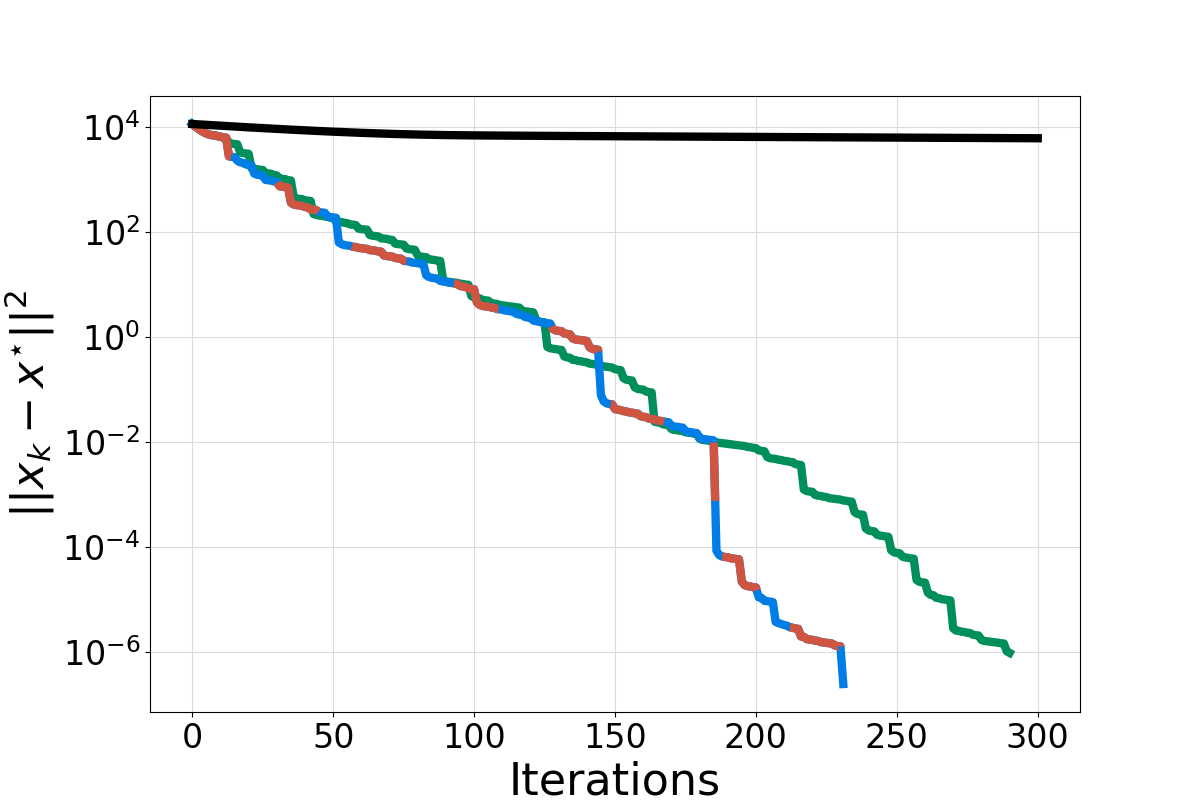}
    \end{subfigure}
    ~
    \begin{subfigure}{0.32\textwidth}
        \caption{$\lambda=\frac{L}{3} 
        \cdot 10^{-3}$}
        \vspace{-5pt}
        \includegraphics[width = \textwidth]{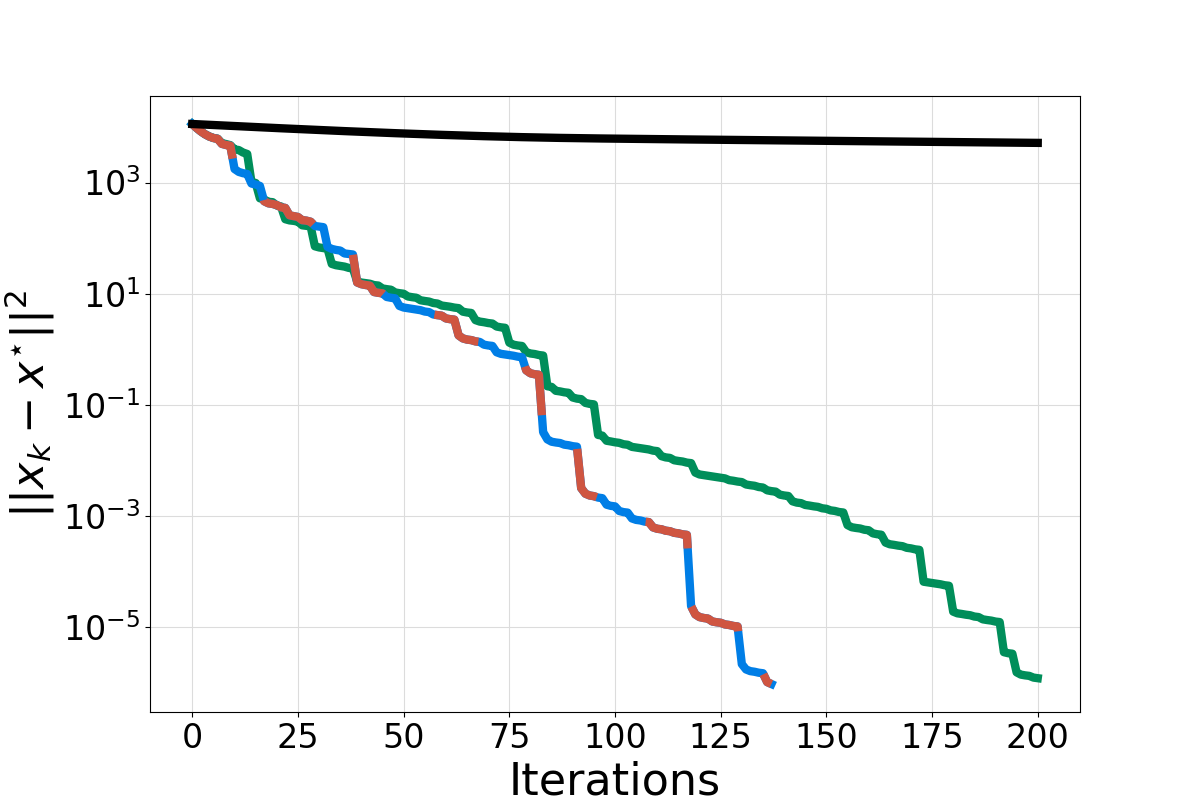}
    \end{subfigure}
    ~
    \begin{subfigure}{0.32\textwidth}
        \caption{$\lambda=L \cdot 10^{-3}$}
        \vspace{-5pt}
        \includegraphics[width = \textwidth]{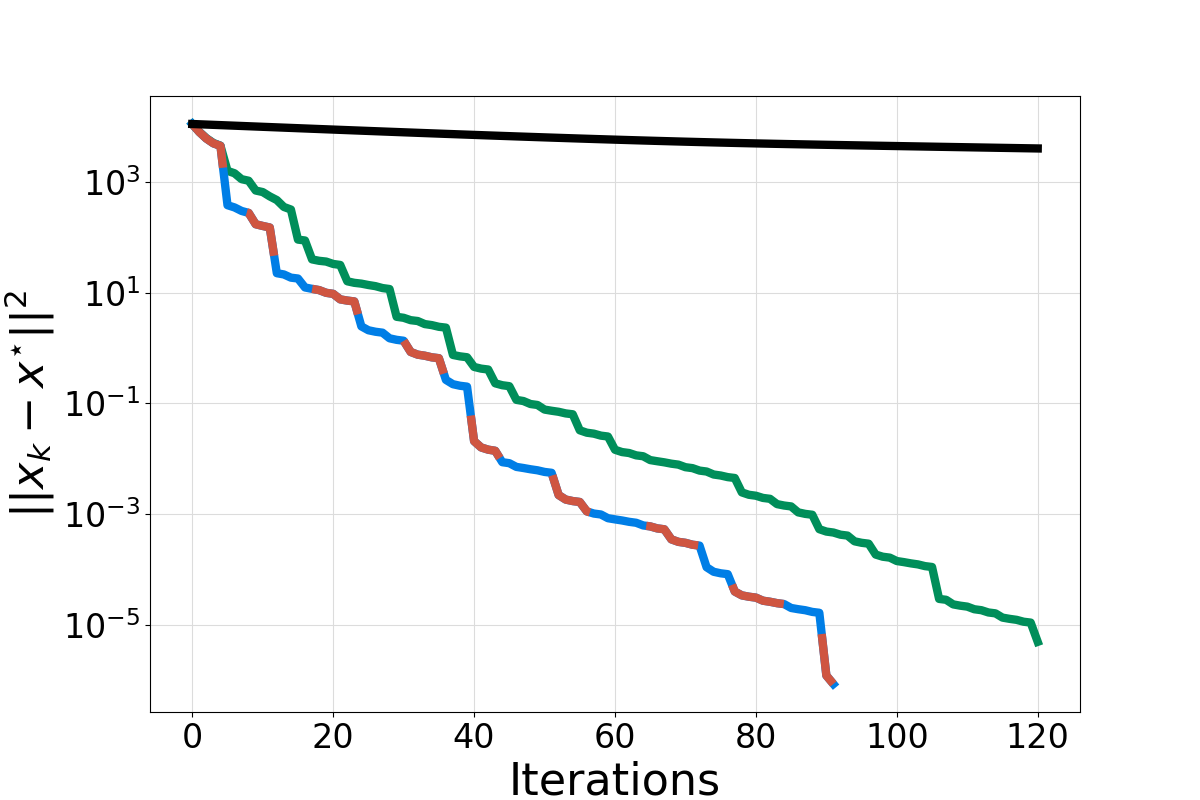}
    \end{subfigure}
    \label{exp:l2mush}
    
    \vspace{5pt}
\includegraphics[width=0.55\textwidth]{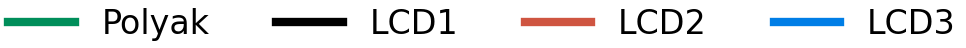}
\end{figure}
Figures~\ref{exp:l2a2a}--\ref{exp:l2mush} show that \algname{LCD2} consistently outperforms Polyak. As expected, the gap increases with $\lambda$ because $\mC(x)$ only stores information about the regularizer. Thus, increasing $\lambda$ shrinks the localization set of \algname{LCD2} so its improvement over Polyak grows. Importantly, since  \algname{LCD2} has a closed form solution, its cost-per-iteration is the same as Polyak. 

In the next experiment, we use $L_3$ regularization. In Example \ref{example:lpp} we propose two $\mC(x)$ matrix candidates for $\norm{x}_p^p$. Here we decide on the diagonal variant $\mC_1(x)$. The objective function is no longer $L$-smooth, due to the non-smooth regularizer. As a result, we run  \algname{LCD1}  with the smallest $L_{\mC}$ such that the method converges. Additionally, \algname{LCD2}  no longer has a closed form solution, so the projection algorithm must be deployed. To perform a fair comparison of our algorithms, we show both time and iteration plots.  
\begin{figure}[!ht]
    \centering
    \caption{Logistic regression on {\tt mushrooms} dataset with $L_3$ regularization - iteration convergence.}
    
    \begin{subfigure}{0.32\textwidth}
        \caption{$\lambda=L \cdot 10^{-3}$}
        \vspace{-5pt}
        \includegraphics[width = \textwidth]{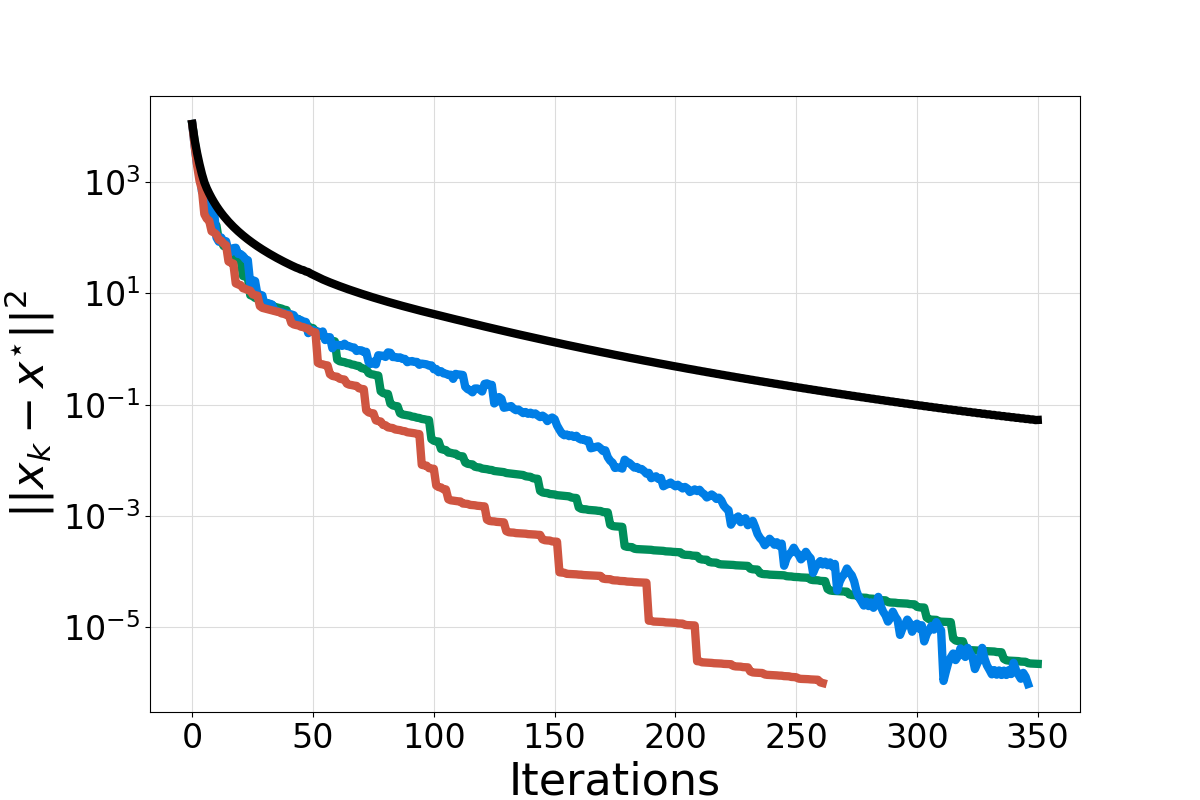}
    \end{subfigure}
    ~
    \begin{subfigure}{0.32\textwidth}
        \caption{$\lambda=\frac{L}{3} \cdot 10^{-2}$}
        \vspace{-5pt}
        \includegraphics[width = \textwidth]{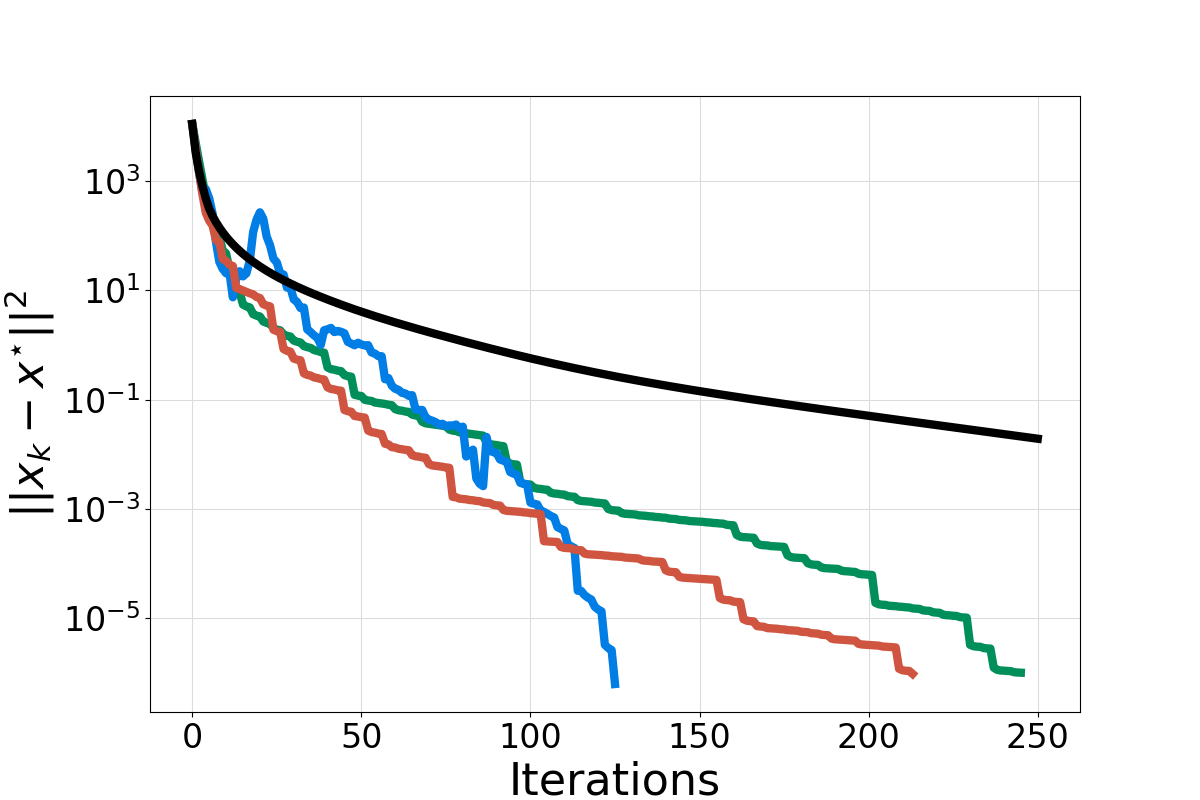}
    \end{subfigure}
    ~
    \begin{subfigure}{0.32\textwidth}
        \caption{$\lambda=L \cdot 10^{-2}$}
        \vspace{-5pt}
        \includegraphics[width = \textwidth]{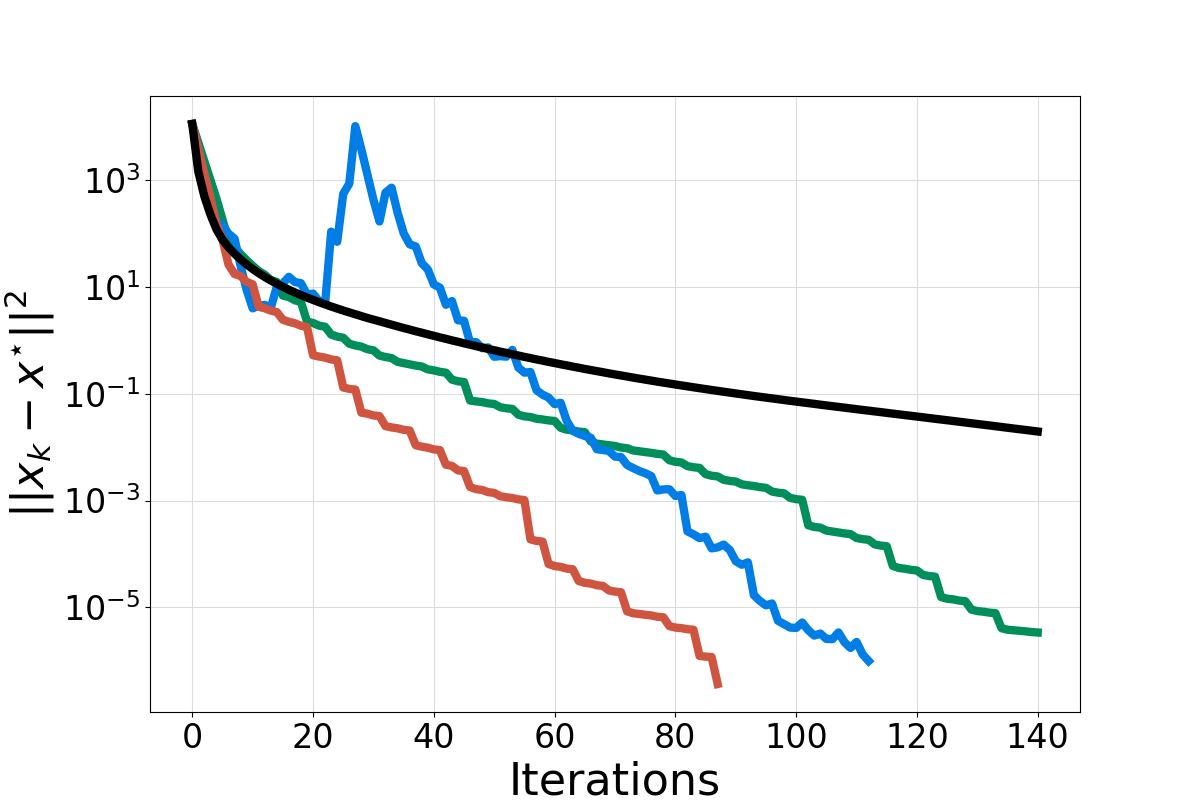}
    \end{subfigure}
    \label{exp:l3iters}

    \vspace{7pt}
    
    \caption{Logistic regression on {\tt mushrooms} dataset with $L_3$ regularization - time convergence.}
    \begin{subfigure}{0.32\textwidth}
        \caption{$\lambda=L \cdot 10^{-3}$}
        \vspace{-5pt}
        \includegraphics[width = \textwidth]{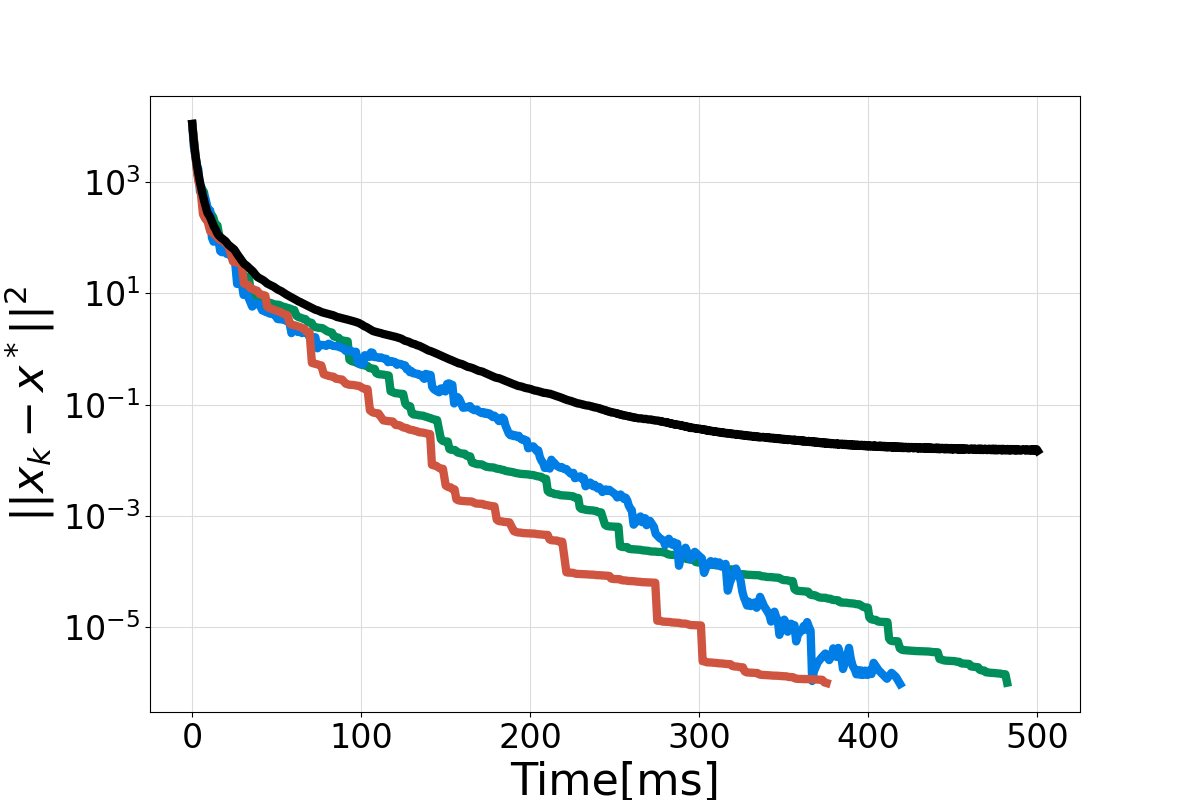}
    \end{subfigure}
    ~
    \begin{subfigure}{0.32\textwidth}
        \caption{$\lambda=\frac{L}{3} \cdot 10^{-2}$}
        \vspace{-5pt}
        \includegraphics[width = \textwidth]{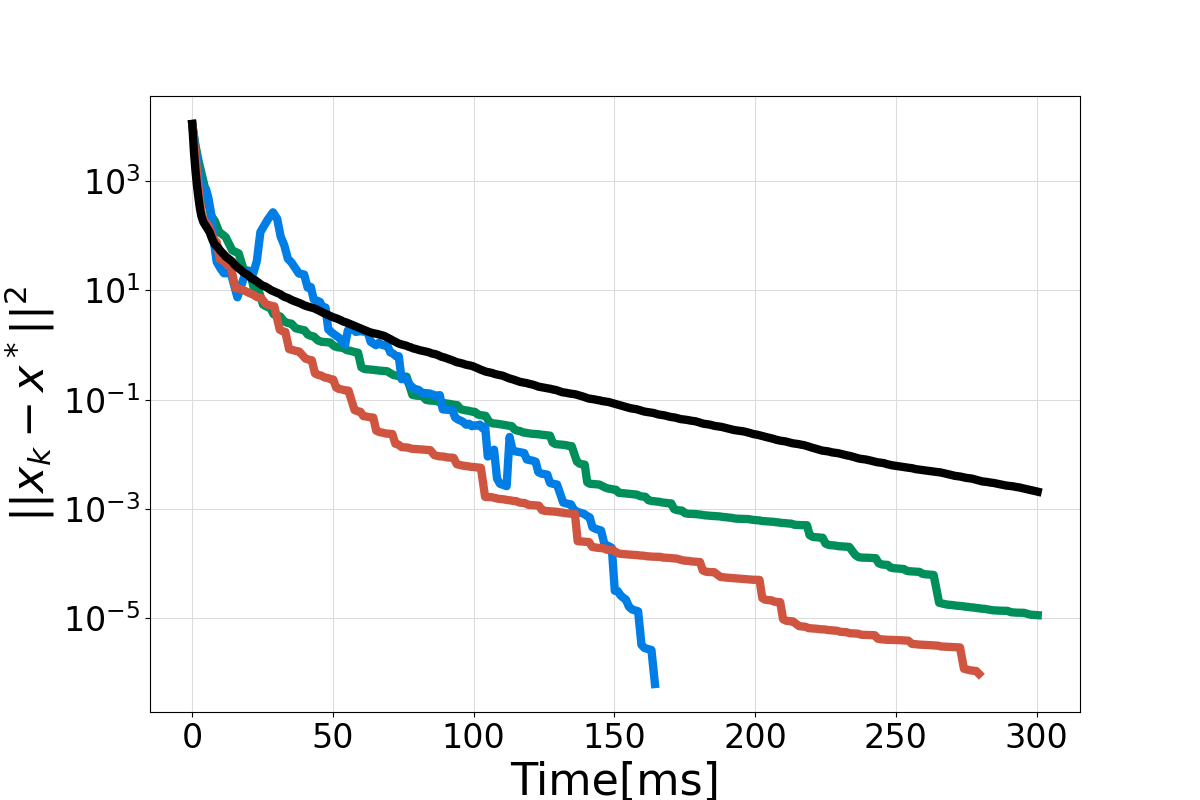}
    \end{subfigure}
    ~
    \begin{subfigure}{0.32\textwidth}
        \caption{$\lambda=L \cdot 10^{-2}$}
        \vspace{-5pt}
        \includegraphics[width = \textwidth]{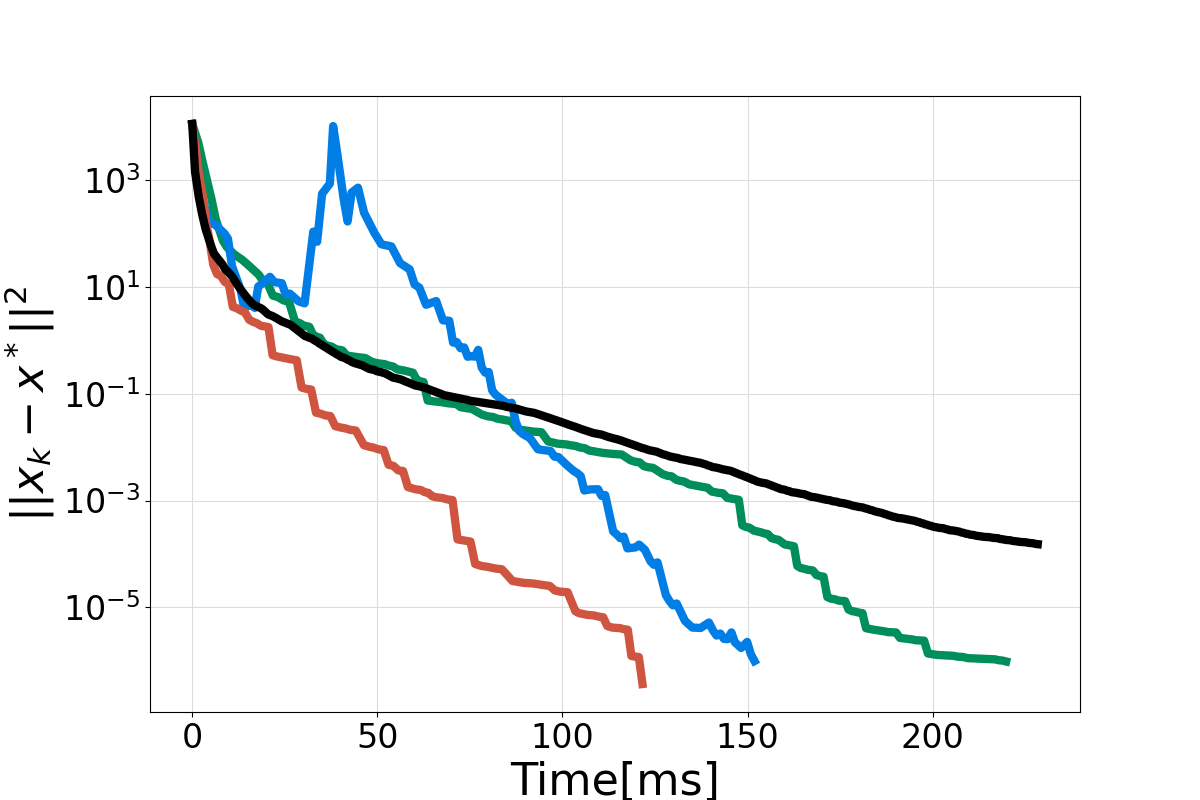}
    \end{subfigure}
    \label{exp:l3time}

    \vspace{5pt}
    
\includegraphics[width=0.55\textwidth]{imgs/log_reg_l2/legend_horizontal.png}
\end{figure}
Figure~\ref{exp:l3iters} displays similar to the $L_2$ case improvement of \algname{LCD2} over Polyak, which grows with $\lambda$. Our heuristic \algname{LCD3} can produce satisfying results, experimentally. However, its convergence cannot be guaranteed. In fact, as $\lambda$ increases it becomes unstable. \algname{LCD1} converges at comparable pace with the other three methods at initial steps, yet the limited adaptiveness slows it down later on.

Figure~\ref{exp:l3time} shows convergence of our methods in time. One may point that the plots look almost identical to the iteration counterpart. The main reason is the cost of computing the gradient, which is $\cO(nd)$. All other operations performed by \algname{LCD3} and  \algname{LCD1}  are $\cO(d)$. The method with the most expensive update rule is  \algname{LCD2}. At every step it performs around $5$ rounds of the projection algorithm, each costing $\cO(d)$. We conclude that all the methods have comparable computational cost per iteration, as the main expense is the gradient evaluation. While the complexities discussed above are for diagonal matrices, we remark that the general $\cO(d^3)$ cost is bearable when $n\gg d$. Moreover, our examples usually allow cheap diagonal matrix methods.

Further experiments with ridge regression and sum of squared Huber losses are in Appendix~\ref{sec:more experiments}.

\section{Conclusion} \label{sec:conclusion}
We explored adaptive matrix-valued stepsizes under novel assumptions that reinforce convexity and $L$-smoothness with extra curvature information.  Under our assumptions, we proposed \algname{LCD1} and \algname{LCD2}, which generalize \algname{GD} with constant stepsize and Polyak stepsize, respectively. Moreover, we provided convergence theorems for both of these algorithms. We also proposed \algname{LCD3} which displays promising experimental behavior. Our key insight is that, for some problems, we have certain local curvature information that can be readily exploited.  We tested the methods on these problems using a variety of realistic datasets, demonstrating good empirical performance.

The main limitation of our analysis is the restriction to a deterministic setting. We also acknowledge that the assumption is yet to explore in its entirety. As a matter of fact, the most natural extension of the present work is including stochasticity and understanding the full potential of Assumption~\ref{ass:main assumption upper and lower bound}.  

\section{Acknowledgements} The work of all authors was supported by the KAUST Baseline Research Fund awarded to Peter Richt\'{a}rik, who additionally acknowledges support from the SDAIA-KAUST Center of Excellence in Data Science and Artificial Intelligence. The work of  
 Simone Maria Giancola, Dymitr Lubczyk and Robin Yadav was supported by  the Visiting Student Research Program (VSRP) at  KAUST. All authors are thankful to the KAUST AI Initiative for office space and administrative support.

\bibliographystyle{abbrvnat}	
\bibliography{main}
\clearpage

\clearpage
\part*{Appendix}
\appendix
\tableofcontents

\clearpage
\section{Local Curvature Descent 1 (\algname{LCD1})} 
\subsection{Derivation}
\label{subsec:x_09y098F&T897F}
\par Suppose that the upper bound in (\ref{eqn:upper bound main assumption}) from \Cref{ass:main assumption upper and lower bound}) holds. Then, at a given point $x_{k + 1}\in \R^d$, we have:
\begin{equation*}
    f(x_{k + 1})\leq f(x_k) + \ip{\nabla f(x_k)}{x_{k + 1} - x_k} + \frac{1}{2}\matnormsq{x_{k + 1} - x_k}{\mC(x_k) + L_{\mC}\mI}\qquad \forall x_k\in \R^d. 
\end{equation*}
Minimizing the right hand side with respect to $x_{k + 1}$ we find that:
\begin{equation}
    x_{k + 1} = x_k - \left[\mC(x_k) + L_{\mC}\mI \right]^{-1}\nabla f(x_k). 
\end{equation}
In particular, the matrix that pre-multiplies the vector is always invertible, since $\mC(x)$ is positive semi-definite for each $x\in \R^d$.
\subsection{Convergence proof}
\label{subsec:__=-98D(*Y(89gf8f}
\begin{lemma}\label{lem:=====9i0uy98t978dt8fe}
    Let Assumption \ref{ass:main assumption upper and lower bound} hold. For all $k\geq 0$, the sequence $(x_k)_{k \in \N}$ of  \algname{LCD1}  is such that: 
    \begin{equation} \label{eqn:ASD gets closer lemma rate}
    \norm{x_{k+ 1}- x_{\star}}_2^2 - \norm{x_k - x_{\star}}_2^2\leq -\frac{2}{L_{\mC}}\left(f(x_{k + 1}) - f(x_{\star})\right), \quad \forall k\in \N. 
    \end{equation}
    
    \begin{proof} The proof is achieved by carefully bounding terms. For this reason, we split it into three steps. 
        
We seek a connection between the two distances in the geometry induced by $\tilde{\mC}(x_k) \eqdef \mC(x_k) + L_{\mC}\mI$:       
    \begin{align*}
        \norm{x_k - x_{\star}}_{\tilde{\mC}(x_k)}^2 &= \norm{x_k - x_{k+1} + x_{k+1} - x_{\star}}_{\tilde{\mC}(x_k)}^2
        \\
        &= \norm{x_k - x_{k+1}}_{\tilde{\mC}(x_k)}^2 + 2\ip{[\tilde{\mC}(x_k)](x_k - x_{k+1})}{x_{k+1} - x_{\star}}
        \\
        &\phantom{ + \norm{x_{k+1}} }+ \norm{x_{k+1} - x_{\star}}_{\tilde{\mC}(x_k)}^2 
        \\
        &= \norm{x_k - x_{k+1}}_{\tilde{\mC}(x_k)}^2 + 2\ip{\nabla f(x_k)}{x_{k+1} - x_{\star}} + \norm{x_{k+1} - x_{\star}}_{\tilde{\mC}(x_k)}^2 
        \\
        &= \norm{x_{k+1} - x_k}_{\tilde{\mC}(x_k)}^2 + 2\ip{\nabla f(x_k)}{x_{k+1} - x_{\star}} + \norm{x_{k+1} - x_{\star}}_{\tilde{\mC}(x_k)}^2.  
    \end{align*}
    Rearranging the terms we obtain
    \begin{align*}
        \norm{x_{k+1} - x_{\star}}_{\tilde{\mC}(x_k)}^2 - \norm{x_k - x_{\star}}_{\tilde{\mC}(x_k)}^2 &= -\norm{x_{k+1} - x_k}_{\tilde{\mC}(x_k)}^2 - 2\ip{\nabla f(x_k)}{x_{k+1} - x_{\star}} 
        \\
        &= -\norm{x_{k+1} - x_k}_{\tilde{\mC}(x_k)}^2
        \\
        &\phantom{\norm{x}}- 2\ip{\nabla f(x_k)}{x_{k+1} - x_k + x_k - x_{\star}} 
        \\
            &=-\norm{x_{k+1} - x_k}_{\tilde{\mC}(x_k)}^2 - 2\ip{\nabla f(x_k)}{x_{k+1} - x_k} 
            \\ 
            &\phantom{\norm{x}}+ 2\ip{\nabla f(x_k)}{x_{\star} - x_k}.
    \end{align*}
    In particular, we wish to bound the inner products. 
    
Rearranging the lower bound (\ref{eqn:lower bound main assumption}) in Assumption \ref{ass:main assumption upper and lower bound} for the pair $(x_k, x_{\star})$: 
    \begin{equation*}
        2\ip{\nabla f(x_k)}{x_{\star} - x_k}\leq 2(f(x_{\star}) - f(x_k)) - \norm{x_k - x_{\star}}_{\mC(x_k)}^2. 
    \end{equation*}
    In a similar way, massaging the upper bound (\ref{eqn:upper bound main assumption}) of Assumption \ref{ass:main assumption upper and lower bound} for the pair $(x_{k + 1}, x_k)$ one can derive:
    \begin{equation*}
    -2\ip{\nabla f(x_k)}{x_{k+1} - x_k} \leq \norm{x_{k+1} - x_k}_{\tilde{\mC}(x_k)}^2 + 2(f(x_k) - f(x_{k+1})).
    \end{equation*}
    
Combining two previous steps we find:
    \begin{align*}
    \norm{x_{k+1} - x_{\star}}_{\tilde{\mC}(x_k)}^2 - \norm{x_k - x_{\star}}_{\tilde{\mC}(x_k)}^2 &\leq -\norm{x_{k+1} - x_k}_{\tilde{\mC}(x_k)}^2 + \norm{x_{k+1} - x_k}_{\tilde{\mC}(x_k)}^2 
    \\
    &+ 2(f(x_k) - f(x_{k+1})) + 2(f(x_{\star}) - f(x_k)) - \norm{x_k - x_{\star}}_{\mC(x_k)}^2
    \\
    &=  2(f(x_{\star}) - f(x_{k+1})) - \norm{x_k - x_{\star}}_{\mC(x_k)}^2.
    \end{align*}
    The term $\norm{x_k - x_{\star}}_{\mC(x_k)}^2$ is on both sides of the inequality, so we can cancel it out:
    \begin{equation*}
    \norm{x_{k+1} - x_{\star}}_{\tilde{\mC}(x_k)}^2 - \norm{x_k - x_{\star}}_{L_{\mC}\mI}^2 \leq 2(f(x_{\star}) - f(x_{k+1})).
    \end{equation*}
    Having almost removed all the $\mC(x_k)$ norms, it suffices to apply the crude bound:
    \begin{equation*}
       L_{\mC}\norm{x_{k + 1}- x_{\star}}^2 = \norm{x_{k+1} - x_{\star}}_{L_{\mC}\mI}^2 \leq \norm{x_{k+1} - x_{\star}}_{\tilde{\mC}(x_k)}^2,
    \end{equation*}
    which holds since $L_{\mC}\mI \preceq \mC(x_k) + L_{\mC}\mI = \tilde{\mC}(x_k)$. 
    
    Therefore, we obtain
    \begin{align*}
        \norm{x_{k+1} - x_{\star}}_{L_{\mC}\mI}^2 - \norm{x_k - x_{\star}}_{L_{\mC}\mI}^2 &\leq 2(f(x_{\star}) - f(x_{k+1})) = -2(f(x_{k+1}) - f(x_{\star})).
    \end{align*}
     Reordering gives the claim. 
    \end{proof}
\end{lemma}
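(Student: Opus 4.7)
The plan is to derive the progress inequality by working in the norm induced by $\tilde{\mC}(x_k) \eqdef \mC(x_k) + L_{\mC}\mI$, and only at the very end pass to the Euclidean norm via the L\"owner inequality $L_{\mC}\mI \preceq \tilde{\mC}(x_k)$, which holds because $\mC(x_k) \in \Spsd$ by Assumption~\ref{ass:main assumption upper and lower bound}. The rewrite of the \algname{LCD1} update as $\tilde{\mC}(x_k)(x_{k+1} - x_k) = -\nabla f(x_k)$ is the bridge that lets us swap gradient inner products for $\tilde{\mC}(x_k)$-weighted inner products involving $x_{k+1} - x_k$.

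First I would expand $\matnormsq{x_k - x_\star}{\tilde{\mC}(x_k)}$ via the splitting $x_k - x_\star = (x_k - x_{k+1}) + (x_{k+1} - x_\star)$, producing the terms $\matnormsq{x_{k+1} - x_\star}{\tilde{\mC}(x_k)}$, $\matnormsq{x_{k+1} - x_k}{\tilde{\mC}(x_k)}$, and a cross term. Using the update rule, the cross term equals $2\ip{\nabla f(x_k)}{x_{k+1} - x_\star}$, which I would split by inserting $\pm x_k$ into $2\ip{\nabla f(x_k)}{x_{k+1} - x_k} + 2\ip{\nabla f(x_k)}{x_k - x_\star}$.

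At this point the two bounds from Assumption~\ref{ass:main assumption upper and lower bound} each play a distinct role. The upper bound (\ref{eqn:upper bound main assumption}) applied at $(x_{k+1}, x_k)$ yields $-2\ip{\nabla f(x_k)}{x_{k+1} - x_k} \leq \matnormsq{x_{k+1} - x_k}{\tilde{\mC}(x_k)} + 2(f(x_k) - f(x_{k+1}))$, exactly matched to cancel the $\matnormsq{x_{k+1} - x_k}{\tilde{\mC}(x_k)}$ term produced above. The lower bound (\ref{eqn:lower bound main assumption}) applied at $(x_\star, x_k)$ yields $2\ip{\nabla f(x_k)}{x_\star - x_k} \leq 2(f_\star - f(x_k)) - \matnormsq{x_\star - x_k}{\mC(x_k)}$, and this $\mC(x_k)$-weighted term is precisely what is needed to eliminate the $\mC(x_k)$-portion of $\matnormsq{x_k - x_\star}{\tilde{\mC}(x_k)} = \matnormsq{x_k - x_\star}{\mC(x_k)} + L_{\mC}\sqn{x_k - x_\star}$; the two occurrences of $f(x_k)$ also cancel, leaving only $f(x_{k+1}) - f_\star$ on the right.

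I expect the main obstacle to be bookkeeping rather than any deep step: each of the three norms (Euclidean, $\mC(x_k)$, and $\tilde{\mC}(x_k)$) must appear with precisely the right coefficient so that all matrix-weighted contributions on the right cancel and only a clean $L_{\mC}\sqn{x_k - x_\star}$ survives. Once that cancellation is in place, one has $\matnormsq{x_{k+1} - x_\star}{\tilde{\mC}(x_k)} - L_{\mC}\sqn{x_k - x_\star} \leq -2(f(x_{k+1}) - f_\star)$. Discarding the nonnegative $\mC(x_k)$-part of the first term (again by $\mC(x_k) \succeq \mzero$) and dividing by $L_{\mC}$ then yields the stated inequality.
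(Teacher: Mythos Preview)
Your proposal is correct and follows essentially the same approach as the paper's proof: the same $\tilde{\mC}(x_k)$-norm expansion via the splitting $x_k - x_\star = (x_k - x_{k+1}) + (x_{k+1} - x_\star)$, the same use of the update rule to rewrite the cross term, the same application of the upper bound at $(x_{k+1}, x_k)$ and the lower bound at $(x_\star, x_k)$, the same cancellation of the $\matnormsq{x_k - x_\star}{\mC(x_k)}$ term, and the same final passage to the Euclidean norm via $L_{\mC}\mI \preceq \tilde{\mC}(x_k)$. Your bookkeeping is accurate and there are no gaps.
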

\begin{lemma}\label{lem:_=_=_=Ponh890h098hd}

For any $k\in\N$, the iterations of  \algname{LCD1}  satisfy:
\begin{equation} \label{eqn:ASD gets closer rate appendix}
     f(x_{k+1}) - f(x_k) \leq - \frac{1}{2}\norm{\nabla f(x_k)}_{(\mC(x_k) + L_{\mC}\mI)^{-1}}^2 \leq 0. 
\end{equation}
\begin{proof}
        Let us remind the form of the updates for each $k\in \N$
\begin{equation*}
    x_{k+1} = x_k - \sbr{\tilde{\mC}(x_k)}^{-1} \nabla f(x_k),
\end{equation*}
where $\tilde{\mC}(x_k) = \mC(x_k) + L_{\mC}\mI$.

By Assumption~\ref{ass:main assumption upper and lower bound}, we know that 
\begin{align*}
    f(x_{k+1}) &\leq f(x_k) - \ip{\nabla f(x_k)}{[\tilde{\mC}(x_k)]^{-1} \nabla f(x_k)} + \frac{1}{2}\ip{\nabla f(x_k)}{[\tilde{\mC}(x_k)]^{-1} \nabla f(x_k)} \\
    &= f(x_k) - \frac{1}{2}\norm{\nabla f(x_k)}_{[\tilde{\mC}(x_k)]^{-1}}^2,
\end{align*}
and the claim follows by simple rearrangement. 
\end{proof}
    
\end{lemma}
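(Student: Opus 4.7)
The plan is to derive the descent inequality as a direct consequence of applying the upper bound from \Cref{ass:main assumption upper and lower bound} to the pair $(x_{k+1},x_k)$ and then exploiting the fact that, by construction, $x_{k+1}$ is the exact minimizer of that upper bound. For brevity, let $\tilde{\mC}(x_k) \eqdef \mC(x_k) + L_{\mC}\mI$. Substituting $x = x_{k+1}$ and $y = x_k$ into~\eqref{eqn:upper bound main assumption} gives
\[
f(x_{k+1}) \leq f(x_k) + \ip{\nabla f(x_k)}{x_{k+1}-x_k} + \tfrac{1}{2}\|x_{k+1}-x_k\|^2_{\tilde{\mC}(x_k)}.
\]

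Next I would plug in the closed-form update $x_{k+1} - x_k = -\tilde{\mC}(x_k)^{-1}\nabla f(x_k)$ from~\eqref{eqn:_98y98G(DT97fg987g87fgghgUYGUd}. The cross term becomes $-\ip{\nabla f(x_k)}{\tilde{\mC}(x_k)^{-1}\nabla f(x_k)} = -\|\nabla f(x_k)\|^2_{\tilde{\mC}(x_k)^{-1}}$. For the quadratic term, the matrix and its inverse cancel:
\[
\|\tilde{\mC}(x_k)^{-1}\nabla f(x_k)\|^2_{\tilde{\mC}(x_k)} = \ip{\tilde{\mC}(x_k)\tilde{\mC}(x_k)^{-1}\nabla f(x_k)}{\tilde{\mC}(x_k)^{-1}\nabla f(x_k)} = \|\nabla f(x_k)\|^2_{\tilde{\mC}(x_k)^{-1}}.
\]
Adding the two contributions yields $-\|\nabla f(x_k)\|^2_{\tilde{\mC}(x_k)^{-1}} + \tfrac{1}{2}\|\nabla f(x_k)\|^2_{\tilde{\mC}(x_k)^{-1}} = -\tfrac{1}{2}\|\nabla f(x_k)\|^2_{\tilde{\mC}(x_k)^{-1}}$, which after rearrangement is the middle inequality in the claim.

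The second inequality (non-positivity) is immediate: since $\mC(x_k)\in\Spsd$ and $L_{\mC}\geq 0$ (with $L_{\mC}>0$ in the non-degenerate case), $\tilde{\mC}(x_k)$ is positive definite, hence so is $\tilde{\mC}(x_k)^{-1}$, and the squared norm is non-negative. I anticipate no real obstacle in this proof; it is essentially a one-line computation once one realizes that the upper bound is a quadratic minimized exactly by the \algname{LCD1} step, so the standard ``factor of $1/2$'' cancellation between the linear and quadratic pieces goes through verbatim. The only subtle point worth noting is the invertibility of $\tilde{\mC}(x_k)$; the assumption $\mC(x_k)\succeq \mzero$ together with $L_{\mC}>0$ (or, failing that, strict positive definiteness of $\mC(x_k)$ as in the quadratic case where \algname{LCD1} reduces to Newton's method) is what makes the $\tilde{\mC}(x_k)^{-1}$-norm well-defined.
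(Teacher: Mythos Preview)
Your proof is correct and follows essentially the same route as the paper's: apply the upper bound~\eqref{eqn:upper bound main assumption} at $(x_{k+1},x_k)$, substitute the \algname{LCD1} step $x_{k+1}-x_k=-\tilde{\mC}(x_k)^{-1}\nabla f(x_k)$, and observe the usual $-1+\tfrac12=-\tfrac12$ cancellation. If anything, your write-up is slightly more explicit about the quadratic-term simplification and the invertibility of $\tilde{\mC}(x_k)$ than the paper's own proof.
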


Having the lemmas established, let us proceed to the proof of Theorem \ref{thm:_=-ovuYFUYuyfudyf76d8&Fd}. We want to show that if $f : \R^d\to \R$ satisfies Assumption \ref{ass:main assumption upper and lower bound} then for any $k\in \mathbb{N}$, the iterates of  \algname{LCD1}  are such that:
\begin{equation} \label{eqn:x-=-99uf9u98fyeDDD}
     f(x_k) - f(x_\star) \leq \frac{L_{\mC}}{2k}\normsq{x_0 - x_\star}.
\end{equation}
\begin{proof} We use a standard Lyapunov function proof technique. For completeness, let us report it.

By \Cref{lem:_=_=_=Ponh890h098hd}, function values get closer to $f_{\star}$ across iterations. \Cref{lem:=====9i0uy98t978dt8fe}, the vectors get closer in norm to an optimum.

Then, we can combine the two positive decreasing terms $L_{\mC}\norm{x_k - x_{\star}}^2$ and $f(x_k) - f(x_{\star})$ into a Lyapunov energy function:
\begin{equation*}
    \mathcal{E}_k \eqdef  L_{\mC}\norm{x_k - x_{\star}}^2 + 2k(f(x_k) - f(x_{\star})), \qquad \forall k \in\N. 
\end{equation*}
In particular, $\mathcal{E}_0 = L_{\mC}\norm{x_0 - x_{\star}}^2$, and we claim that $\mathcal{E}_k$ is a decreasing function. To see this, we start by rewriting the difference:
\begin{align*}
    \mathcal{E}_{k+1} - \mathcal{E}_k &= 2(k+1)(f(x_{k+1}) - f(x_{\star})) - 2k(f(x_k) - f(x_{\star})) 
    \\
    &\phantom{+\norm{x}}+ L_{\mC}\norm{x_{k+1} - x_{\star}}^2 - L_{\mC}\norm{x_k - x_{\star}}^2 
    \\
    &= 2(f(x_{k+1}) - f(x_{\star})) + 2k(f(x_{k+1}) - f(x_{\star}) - f(x_k) + f(x_{\star})) 
    \\
    &\phantom{+\norm{x}}+ L\norm{x_{k+1} - x_{\star}}^2 - L_{\mC}\norm{x_k - x_{\star}}^2
    \\
    &= 2(f(x_{k+1}) - f(x_{\star})) + 2k(f(x_{k+1})- f(x_k)) 
    \\
    &\phantom{+\norm{x}}+ L_{\mC}\norm{x_{k+1} - x_{\star}}^2 - L_{\mC}\norm{x_k - x_{\star}}^2. 
\end{align*}
It is evident that we can apply our Lemmas as follows:
\begin{align}
    f(x_{k+1})- f(x_k) & \leq 0 && \text{by Lemma \ref{lem:_=_=_=Ponh890h098hd};}
    \\
    L_{\mC}\norm{x_{k+1} - x_{\star}}^2 - L_{\mC}\norm{x_k - x_{\star}}^2 &\leq f(x_{k + 1}) - f(x_{\star}) && \text{by Lemma \ref{lem:=====9i0uy98t978dt8fe}.}
\end{align}
Putting everything together:
\begin{equation*}
   \mathcal{E}_{k+1} - \mathcal{E}_k \leq 2(f(x_{k+1}) - f(x_{\star}))  -2(f(x_{k+1}) - f(x_{\star})) = 0,
\end{equation*}
showing that $\mathcal{E}_k$ is decreasing. As a particular case, we then find:
\begin{equation*}
    2k(f(x_k) - f(x_{\star})) \leq \mathcal{E}_k \leq \mathcal{E}_0 = L_{\mC}\norm{x_0 - x_{\star}}^2,
\end{equation*}
which reordered recovers the rate of \algname{GD} with stepsize $\frac{1}{L_{\mC}}$, i.e.
\begin{equation*}
    f(x_k) - f(x_{\star}) \leq \frac{L_{\mC}\norm{x_0 - x_{\star}}^2}{2k}.
\end{equation*}
\end{proof}

\textbf{Remark.} For quadratic functions Assumption~\ref{ass:main assumption upper and lower bound} is satisfied with $\mC(x)$ equal to the Hessian, and $L_{\mC}=0$. Thus, \algname{LCD1} convergences in one step for this class of functions.

\clearpage
\section{Local Curvature Descent 2 (\algname{LCD2})}
\label{sec:d=97t7tS&8t78f7f}
\subsection{Derivation}
\label{subsec:888876897t87tFFGSCD6d__-=}
\par Consider the minimization problem for the update step of \algname{LCD2}:
\begin{eqnarray}
    \min_{x \in \cL_{\mC}(x_k)} \frac{1}{2}\norm{x - x_k}^2,
\end{eqnarray}
where
\begin{equation} \label{eqn:jjHHbuy87f898GDf_9f}
    \cL_{\mC}(x_k) = \left\{x\in \R^d\mid f(x_k) +   \abr{\nabla f(x_k),x-x_k} + \frac{1}{2}\matnormsq{x - x_k}{\mC(x_k)}  \leq f_\star\right\}. 
\end{equation}
If $\mC(x_k)$ is the zero matrix, we know this problem has a closed-form solution. Therefore, we focus on the case where $\mC(x_k)$ is a non-zero matrix. Moreover, we assume that $x_k \not \in \mathcal{X_\star}$. The Lagrangian of this problem is:
\begin{equation*}
    \mathscr{L}(x, \beta) = \frac{1}{2}\norm{x - x_k}^2 + \beta\left(\frac{1}{2}\matnormsq{x - x_k}{\mC(x_k)} +\ip{\nabla f(x_k)}{x - x_k} + f(x_k) - f_{\star}\right),
\end{equation*}
where $\beta \geq 0$. For optimal $\bar{x}$ and $\bar{\beta}$ we have that $\nabla_x \mathscr{L}(\bar{x}, \bar{\beta}) = 0$. Therefore,
\begin{equation*}
    \bar{x} - x_k + \bar{\beta}\left(\matnormsq{\bar{x} - x_k}{\mC(x_k)} + \nabla f(x_k) \right) = 0.
\end{equation*}
Isolating for $\bar{x}$, we find that:
\begin{equation*}
    \bar{x} = x_k - \bar{\beta} \left[ \mI + \bar{\beta} \mC(x_k)\right]^{-1}\nabla f(x_k).
\end{equation*} 
We can see $\bar{\beta} \neq 0$ so the constraint is tight. The next step would be to substitute $\bar{x}$ into the constraint and solve for $\bar{\beta}$:
\begin{equation*}
\frac{1}{2}\matnormsq{\bar{x} - x_k}{\mC(x_k)} +\ip{\nabla f(x_k)}{\bar{x} - x_k} + f(x_k) - f_{\star} = 0.
\end{equation*}
Despite the left-hand side being a scalar function of $\bar{\beta}$, we cannot obtain a closed-form solution for $\bar{\beta}$. However, we can use an iterative root-finding sub-routine such as Newton's method to get an approximation of $\bar{\beta}$ cheaply and effectively. By substituting in the value of $\bar{x}$, we see that we need to find the root of the following function:
\begin{align} \label{eqn:H---=-9ih8ff}
    H(\beta) &\coloneqq \frac{\beta^2}{2} g_k^\top\left[ \mI + \beta \mC(x_k)\right]^{-\top} \mC(x_k) \left[ \mI + \beta \mC(x_k)\right]^{-1} g_k \\ \notag
    &\phantom{\qquad \qquad} - \beta g_k^\top \left[ \mI + \beta \mC(x_k)\right]^{-1} g_k + \Delta_k. 
\end{align}

To simplify notation, let $\mC \eqdef \mC(x_k)$, $g \eqdef \nabla f(x_k)$ and $\Delta_k \eqdef f(x_k) - f_{\star}$. 

\par In the following proposition, we confirm that $H$ has a root in the interval $[0,\infty)$. We also show that $H$ is convex and monotonically decreasing on that interval. Therefore, Newton's method is guaranteed to converge to the root of $H$ at a quadratic rate. In particular, we do not need $H$ to be monotonically decreasing; nonetheless, it is an interesting property of the problem. 

\begin{proposition}[Properties of $H$]
    Let $H$ be defined as in \Cref{eqn:H---=-9ih8ff}. Then, for $\beta \geq 0$:
    \begin{equation}
         H(0) > 0, \quad H'(\beta) < 0, \quad H''(\beta) > 0, \quad \lim_{\beta \to \infty} H(\beta) < 0.
    \end{equation}
    \begin{proof}
\par W.l.o.g. assume that $\mC$ is a symmetric matrix. As a result, $\mC$ is orthogonally diagonalizable so we let $\mC = \mQ \mD \mQ^\top$ where $\mD$ is a diagonal matrix, and $\mQ$ is an orthogonal matrix such that $ \mQ \mQ^\top = \mI$. Manipulating the inverse matrix in the definition of $H$, we find:
    \begin{equation*}
        \left[\mI + \beta \mC(x_k)\right]^{-1} = \sbr{\mQ \mQ^\top + \beta \mQ \mD \mQ^\top}^{-1} = \sbr{\mQ(\mI + \beta \mD)\mQ^\top}^{-1} = \mQ\sbr{\mI + \beta \mD}^{-1}\mQ^\top. 
    \end{equation*}
Let $\tilde{g} \eqdef \mQ^\top g$. Let $D_i$ represent the $i^{\text{th}}$ entry of the diagonal of $\mD$ and $\tilde{g}_i$ represent the $i^{\text{th}}$ entry in $\tilde{g}$. We rewrite $H$ as:
\begin{align*}
    H(\beta) &= \frac{\beta^2}{2}g^\top \mQ \sbr{\mI + \beta \mD}^{-1} \mD \sbr{\mI + \beta \mD}^{-1} \mQ^\top g - \beta g^\top \mQ\sbr{\mI + \beta \mD}^{-1}\mQ^\top g + \Delta_k 
    \\
    &= \frac{\beta^2}{2}\tilde{g}^\top \sbr{\mI + \beta \mD}^{-1} \mD \sbr{\mI + \beta \mD}^{-1} \tilde{g} - \beta \tilde{g}^\top\sbr{\mI + \beta \mD}^{-1}\tilde{g} + \Delta_k 
    \\
    &= \frac{\beta^2}{2} \sum_{i=1}^{d} \frac{\tilde{g}_i^2D_i}{(1 + \beta D_i)^2} - \beta \sum_{i=1}^{d} \frac{\tilde{g}_i^2}{1 + \beta D_i} + \Delta_k.
\end{align*}
By inspection, $H$ is a rational function and the derivative is easily found;
\begin{align*}
    H'(\beta) &= \beta \sum_{i=1}^{d} \frac{\tilde{g}_i^2D_i}{(1 + \beta D_i)^2} - \beta^2\sum_{i=1}^{d} \frac{\tilde{g}_i^2D_i^2}{(1 + \beta D_i)^3} -\sum_{i=1}^{d} \frac{\tilde{g}_i^2}{1 + \beta D_i} + \beta\sum_{i=1}^{d} \frac{\tilde{g}_i^2D_i}{(1 + \beta D_i)^2}
    \\
    &= - \sum_{i=1}^{d} \frac{\tilde{g}_i^2}{(1 + \beta D_i)^3}. 
\end{align*}
Since $\mC$ is a positive semi-definite matrix, $D_i \geq 0$ for all $i$. Thus for $\beta \geq 0$, we have $H'(\beta) \leq 0$. Since the null-space of an orthogonal matrix is the singleton of the zero vector, the product $\tilde{g} = \mQ^\top g$ is different than zero when $g\neq 0$, which holds by the assumption $x_k\notin \cX_{\star}$.  Therefore, there is at least one $\tilde{g}_i$ that is non-zero and thus, $H'(\beta) \neq 0$. 
The second derivative of $H$ is,
\begin{equation*}
    H''(\beta) = 3 \sum_{i=1}^{d} \frac{\tilde{g}_i^2D_i}{(1 + \beta D_i)^4}.
\end{equation*}
By similar arguments used for the first derivative, we can show that $H''(\beta) > 0$.
\newline
To conclude, we will show that $\lim_{\beta \to \infty} H(\beta) < 0$. We discuss two cases separately.
\newline
Suppose $\mC$ is not invertible. Then, there exists an entry $i$ of $\mD$ such that $D_i = 0$. Without loss of generality, suppose that the last entry $D_d$, is equal to $0$. The same reasoning will apply if more than one entry is equal to $0$. Taking the limit:
\begin{align*}
    \lim_{\beta \to \infty} H(\beta) &= \lim_{\beta \to \infty} \frac{\beta^2}{2} \sum_{i=1}^{d} \frac{\tilde{g}_i^2D_i}{(1 + \beta D_i)^2} - \beta \sum_{i=1}^{d} \frac{\tilde{g}_i^2}{1 + \beta D_i} + \Delta_k 
    \\
    &= \lim_{\beta \to \infty} \frac{\beta^2}{2} \sum_{i=1}^{d - 1} \frac{\tilde{g}_i^2D_i}{(1 + \beta D_i)^2} - \beta \sum_{i=1}^{d - 1} \frac{\tilde{g}_i^2}{1 + \beta D_i} - \beta \tilde{g}_i^2 + \Delta_k 
    \\
    &= \frac{1}{2} \sum_{i=1}^{d - 1} \frac{\tilde{g}_i^2}{D_i} - \sum_{i=1}^{d - 1} \frac{\tilde{g}_i^2}{D_i} + \Delta_k + \lim_{\beta \to \infty} -\beta \tilde{g}_i^2 
    \\
    &= -\infty 
    \\
    &< 0.
\end{align*}
Now suppose that $\mC$ is invertible. Then, $D_i > 0$ for all $i$. Differently from before:
\begin{align*}
    \lim_{\beta \to \infty} H(\beta) &= \lim_{\beta \to \infty} \frac{\beta^2}{2} \sum_{i=1}^{d} \frac{\tilde{g}_i^2D_i}{(1 + \beta D_i)^2} - \beta \sum_{i=1}^{d} \frac{\tilde{g}_i^2}{1 + \beta D_i} + \Delta_k 
    \\
    &= \frac{1}{2} \sum_{i=1}^{d} \frac{\tilde{g}_i^2}{D_i} - \sum_{i=1}^{d} \frac{\tilde{g}_i^2}{D_i} + \Delta_k 
    \\
    &= \frac{1}{2} \tilde{g}^\top \mD^{-1} \tilde{g} - \tilde{g}^\top \mD^{-1} \tilde{g} + \Delta_k 
    \\
    &= -\frac{1}{2} g^\top \mQ \mD^{-1} \mQ^\top g + \Delta_k 
    \\
    &= -\frac{1}{2} g^\top \mC^{-1} g + \Delta_k.
\end{align*}
Recalling our definitions, the right hand side is:
\begin{equation*}
    \lim_{\beta\to\infty} H(\beta) = -\frac{1}{2}\matnormsq{\nabla f(x_k)}{[\mC(x_k)]^{-1}} + f(x_k) - f_{\star}.
\end{equation*}
By \Cref{lem:co-coercivity with B convexity}, $\lim_{\beta\to\infty} H(\beta) \leq 0$. The inequality is strict when $f(x_k) - f_{\star} \neq \frac{1}{2}\matnormsq{\nabla f(x_k)}{\mC^{-1}}$. In the case where equality holds, we have that $\lim_{\beta\to\infty} H(\beta) = 0$. Therefore, $H$ does not have a root in the interval $[0, \infty)$ but the solution to the optimization problem is obtain when $\beta = \infty$. This corresponds to the following optimal solution $\bar{x}$:
\begin{equation}
    \bar{x} = x_k - \mC^{-1}(x_k)\nabla f(x_k). 
\end{equation}
Interestingly, under the same condition, \algname{LCD3} takes a step in the form $x_{k+1} = x_k - \mC^{-1}(x_k)\nabla f(x_k)$. An example of a setting where the equality condition holds is when $f$ is a convex quadratic and $\mC$ is the Hessian of $f$. One can see that the update step of \algname{LCD3} and \algname{LCD2} are equivalent to Newton's method for that case so they both converge in one iteration. 
\end{proof}
\end{proposition}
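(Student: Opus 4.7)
The plan is to first diagonalize the positive semidefinite matrix $\mC = \mC(x_k)$ via an orthogonal decomposition $\mC = \mQ\mD\mQ^\top$ and introduce the rotated gradient $\tilde g \eqdef \mQ^\top g$. Using $[\mI + \beta\mC]^{-1} = \mQ[\mI + \beta\mD]^{-1}\mQ^\top$, this reduces $H(\beta)$ to a sum of scalar rational functions
\[
H(\beta) = \frac{\beta^2}{2}\sum_{i=1}^d \frac{\tilde g_i^2 D_i}{(1+\beta D_i)^2} - \beta \sum_{i=1}^d \frac{\tilde g_i^2}{1+\beta D_i} + \Delta_k,
\]
where $D_i \geq 0$ are the eigenvalues of $\mC$. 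This scalarization is the key structural step and makes the subsequent sign and limit computations elementary.

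The evaluation at $\beta = 0$ is immediate: $H(0) = \Delta_k = f(x_k) - f_\star > 0$ by the standing assumption $x_k \notin \cX_\star$. For the sign of $H'$ and $H''$, I would differentiate term-by-term in the scalar representation. A careful calculation (watching for cancellation between the two sums) yields the clean forms $H'(\beta) = -\sum_i \tilde g_i^2/(1+\beta D_i)^3$ and $H''(\beta) = 3\sum_i \tilde g_i^2 D_i/(1+\beta D_i)^4$. Since $\mQ$ is orthogonal and $g \neq 0$, at least one $\tilde g_i$ is nonzero, so $H'(\beta) < 0$ strictly. For $H''$, one further needs at least one index with $D_i > 0$ and $\tilde g_i \neq 0$; this is where the standing assumption $\mC(x_k) \neq \mzero$ enters, so I would handle the degenerate case $\mC(x_k) = \mzero$ separately, noting that in that case \algname{LCD2} reduces to \algname{GD} with Polyak stepsize and $H$ is already linear with a known closed-form root.

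For the limit as $\beta \to \infty$, my plan is a case split on the invertibility of $\mC$. When $\mC$ is singular, any eigenvalue $D_i = 0$ contributes a term $-\beta \tilde g_i^2$ that drives $H \to -\infty$, provided some such $\tilde g_i \neq 0$; if instead $g$ lies entirely in the range of $\mC$, one is reduced to the invertible case on the active subspace. When $\mC$ is invertible, each scalar term converges and a short computation gives
\[
\lim_{\beta\to\infty} H(\beta) = -\tfrac{1}{2} g^\top \mC^{-1} g + \Delta_k = -\tfrac{1}{2}\|\nabla f(x_k)\|^2_{\mC^{-1}(x_k)} + f(x_k) - f_\star.
\]
The main obstacle is showing this quantity is negative. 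I would establish a co-coercivity-type inequality $f(x_k) - f_\star \leq \tfrac{1}{2}\|\nabla f(x_k)\|^2_{\mC^{-1}(x_k)}$ by taking the lower bound \eqref{eqn:lower bound main assumption} at $y = x_k$, minimizing its right-hand side over $x \in \R^d$ (the unconstrained quadratic minimum is at $x - x_k = -\mC^{-1}\nabla f(x_k)$), and comparing the resulting value with $f_\star$. The limiting equality case corresponds exactly to the degenerate setting in which the constraint is only attained at $\beta = \infty$; I would flag this as an edge case where the defining minimization is still well-posed and produces the Newton-like update $x_{k+1} = x_k - \mC^{-1}(x_k)\nabla f(x_k)$, coinciding with \algname{LCD3}.
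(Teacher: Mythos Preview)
Your proposal is correct and follows essentially the same approach as the paper: orthogonal diagonalization of $\mC$, scalarization of $H$ in the eigenbasis, direct computation of $H'$ and $H''$, a case split on invertibility for the limit, and the co-coercivity inequality (which the paper isolates as \Cref{lem:co-coercivity with B convexity} and proves exactly as you sketch). If anything, you are slightly more careful than the paper in flagging the edge cases for $H''(\beta) > 0$ (needing some $\tilde g_i \neq 0$ with $D_i > 0$) and for the singular-$\mC$ limit (needing $g$ to have a component in the null space), both of which the paper glosses over under its standing assumption that $\mC(x_k)$ is nonzero.
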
 
\par It may seem that using Newton's root finding method is impractical because computing $H$ defined in \Cref{eqn:H---=-9ih8ff} for a given $\beta$ requires performing a matrix inversion. However, this can be avoided by computing the eigendecomposition of $\mC(x_k)$ at the beginning of each step of \algname{LCD2}. Then each subsequent evaluation of $H$ done by Newton's method sub-routine only requires inverting a diagonal matrix and not the full matrix. Thus, the main cost at each step of \algname{LCD2} is computing the eigendecomposition of $\mC(x_k)$ once, which in practice is much faster than computing the inverse. Furthermore, if $\mC(x_k)$ is a diagonal matrix, the eigendecomposition of $\mC(x_k)$ is itself so each step of \algname{LCD2} becomes even cheaper. Also, in practice, Newton's method for root-finding is terminated when $\abs{H}  < \epsilon$. Therefore, in the case where $$f(x_k) - f_{\star} = \frac{1}{2}\matnormsq{\nabla f(x_k)}{\mC^{-1}(x_k)},$$ the method will run until a large enough $\beta$ is obtained and the step will become numerically equivalent to $\bar{x} = x_k - \mC^{-1}(x_k)\nabla f(x_k)$. 

\par On a related note, Newton's method is used to solve a similar constrained optimization problem for trust region methods, namely, the trust region sub-problem \citep{nocedalNumericalOptimization2006}. Practical versions of such algorithms do not iterate until convergence but are content with an approximate solution that can be obtained in two or three iterations. 

\subsection{Convergence proof}
\label{subsec:_98y98t987ft87fgiugfDDDD}
\begin{lemma}\label{lem:FMP gets closer in terates space}
Let Assumption \ref{ass:main assumption upper and lower bound} hold. For all $k\geq 0$, the sequence $(x_k)_{k \in \N}$ of  \algname{LCD2}  obeys the recursion:
\begin{equation*}
\norm{ x_{k+1} - x_{\star} }^2 \leq \norm{  x_k - x_{\star} }^2 - \norm{x_{k+1} - x_k }^2.
\end{equation*}
Hence, for any $k \geq 1$, we have
\begin{equation}\label{eqn:FMP gets closer lemma rate}
\min\limits_{0 \leq t \leq k-1} \; \norm{x_{t+1}-x_t}^2  \leq \frac{\norm{x_{0}-x_{\star}}^2}{k}. 
\end{equation}
\begin{proof}
Let us write down the first-order optimality conditions for the optimization problem at Step~3 of \algname{LCD2}:
\begin{equation}\label{eqn:FMP_lemma_cond}
\ip{x_k - x_{k+1}}{x_{k+1} - y} \geq 0, \qquad \forall y \in \cL_{\mC}(x_k).
\end{equation}
Since $x_{\star} \in  \cL_{\mC}(x_k)$, for any $k \geq 0$ we have
\begin{align*}
\norm{x_{k+1} - x_{\star}}^2 &= \norm{  x_k - x_{\star} }^2 - 2\ip{x_k - x_{k+1}}{x_k - x_{\star}} + \norm{x_{k+1} - x_k}^2 
\\
&= \norm{x_k - x_{\star}}^2 - 2 \ip{x_k - x_{k+1}}{x_{k+1} - x_{\star}}
\\
&\phantom{\norm{x}} - 2\ip{x_k - x_{k+1}}{x_{k} - x_{k+1}} + \norm{  x_{k+1} - x_k }^2 
\\
&= \norm{x_k - x_{\star}}^2 - 2\ip{x_k - x_{k+1}}{x_{k+1} - x_{\star}} - \norm{x_{k+1} - x_k}^2 
\\
&\stackrel{\mathclap{\eqref{eqn:FMP_lemma_cond}}}{\leq} \norm{x_k - x_{\star}}^2 - \norm{x_{k+1} - x_k }^2.
\end{align*}
Summing up these inequalities for $k = 0, \dots, K-1$, we
obtain (\ref{eqn:FMP gets closer lemma rate}).
\end{proof}
\end{lemma}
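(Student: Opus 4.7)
The plan is to exploit the fact that \algname{LCD2} is, by its very definition, the Euclidean projection of $x_k$ onto the localization set $\cL_{\mC}(x_k)$. First I would observe that $\cL_{\mC}(x_k)$ is nonempty, closed and convex: it is a sublevel set of the convex quadratic $x \mapsto f(x_k) + \abr{\nabla f(x_k), x - x_k} + \tfrac{1}{2} \matnormsq{x - x_k}{\mC(x_k)}$, whose Hessian $\mC(x_k)$ is PSD by Assumption~\ref{ass:main assumption upper and lower bound}. Crucially, the lower bound \eqref{eqn:lower bound main assumption} applied at $y = x_k$, $x = x_\star$ yields $M^{\rm low}_{\mC}(x_\star; x_k) \leq f(x_\star) = f_\star$, so $x_\star \in \cL_{\mC}(x_k)$. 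Hence the projection is well-defined and $x_{k+1}$ is well-defined.

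Next, the main inequality $\norm{x_{k+1} - x_\star}^2 \leq \norm{x_k - x_\star}^2 - \norm{x_{k+1} - x_k}^2$ is a textbook consequence of the first-order optimality condition for projection onto a closed convex set: for every $y \in \cL_{\mC}(x_k)$,
\begin{equation*}
\ip{x_k - x_{k+1}}{x_{k+1} - y} \geq 0.
\end{equation*}
Specializing to $y = x_\star$ and using the three-term identity
\begin{equation*}
\norm{x_{k+1} - x_\star}^2 = \norm{x_k - x_\star}^2 - 2\ip{x_k - x_{k+1}}{x_k - x_\star} + \norm{x_{k+1} - x_k}^2,
\end{equation*}
I would split $x_k - x_\star = (x_k - x_{k+1}) + (x_{k+1} - x_\star)$ inside the inner product, so that the $\norm{x_{k+1} - x_k}^2$ contribution doubles with the wrong sign and then, after combining with the projection inequality, collapses to the desired one-step descent.

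Finally, the $\min$-bound follows by telescoping. Summing the recursion for $k = 0, 1, \ldots, K-1$ gives
\begin{equation*}
\sum_{k=0}^{K-1} \norm{x_{k+1} - x_k}^2 \leq \norm{x_0 - x_\star}^2 - \norm{x_K - x_\star}^2 \leq \norm{x_0 - x_\star}^2,
\end{equation*}
and since the minimum of nonnegative reals is bounded above by their average, dividing by $K$ yields \eqref{eqn:FMP gets closer lemma rate}. I do not anticipate any substantive obstacle; the only mild subtlety is being careful to apply the three-term identity in the right direction so that the sign of the cross term is governed by the projection optimality condition, rather than fighting it. All other ingredients are elementary once $x_\star \in \cL_{\mC}(x_k)$ has been established from \eqref{eqn:lower bound main assumption}.
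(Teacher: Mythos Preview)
Your proposal is correct and follows essentially the same route as the paper: the projection optimality condition $\ip{x_k - x_{k+1}}{x_{k+1} - y} \geq 0$ specialized to $y = x_\star \in \cL_{\mC}(x_k)$, combined with the three-term expansion of $\norm{x_{k+1} - x_\star}^2$ and the split $x_k - x_\star = (x_k - x_{k+1}) + (x_{k+1} - x_\star)$, followed by telescoping. If anything, you are slightly more careful than the paper in explicitly verifying that $\cL_{\mC}(x_k)$ is closed, convex, and contains $x_\star$ via \eqref{eqn:lower bound main assumption}.
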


Let us proceed to the proof of Theorem \ref{thm:b87big87=-=0iu-f9dhufdfg} for  \algname{LCD2}. We show that any if $f : \R^d\to \R$ satisfies Assumption \ref{ass:main assumption upper and lower bound} for any $k \geq 1$ the iterates of  \algname{LCD2}  are such that:
\begin{align*}
\min\limits_{1 \leq t \leq k} \;f(x_t)  -  f_{\star} & \leq
{L_{\mC} \norm{ x_0 - x_{\star} }^{2} \over 2 k}  .
\end{align*}
    \begin{proof}
            Since $x_{k+1} \in {\cal L_{\mC}}(x_k)$, we have
            \begin{align*}
            f(x_{k+1}) & \stackrel{(\ref{eqn:upper bound main assumption})}{\leq}  f(x_k) +
            \abr{ \nabla f(x_k), x_{k+1} - x_k } + \frac{1}{2} \norm{x_{k+1} - x_k}_{\mC(x_k) + L_{\mC} \mI }^2 \notag
            \\ 
            &= f(x_k) +\abr{ \nabla f(x_k), x_{k+1} - x_k } +  \frac{1}{2}  \norm{x_{k+1} - x_k}_{\mC(x_k)}^2  \notag
            \\ 
            &\qquad - \frac{1}{2}  \norm{x_{k+1} - x_k}_{\mC(x_k)}^2 + \frac{1}{2}\norm{x_{k+1} - x_k}_{\mC(x_k) + L_{\mC} \mI}^2 \notag 
            \\ 
            &= f(x_k) +
            \abr{ \nabla f(x_k), x_{k+1} - x_k } +  \frac{1}{2}  \norm{x_{k+1} - x_k}_{\mC(x_k)}^2 + \frac{1}{2}\norm{x_{k+1} - x_k}_{L_{\mC} \mI}^2 \notag 
            \\ 
            & \stackrel{(\ref{eqn:jjHHbuy87f898GDf_9f})}{\leq}  f_{\star} + \frac{1}{2}  \norm{ x_{k+1} - x_k }_{L_{\mC} \mI}^{2} \notag 
            \\
            &= f_{\star} + \frac{L_{\mC}\norm{ x_{k+1} - x_k }^{2}}{2}   . \label{eq-UB}
            \end{align*}
            
            By rearranging the above inequality and applying~\eqref{eqn:FMP gets closer lemma rate} from Lemma~\ref{lem:FMP gets closer in terates space}, we get 
            \begin{align*}
            \min\limits_{0 \leq t \leq k-1} \; f(x_{t+1}) - f_{\star}  &\stackrel{(\ref{eqn:upper bound main assumption})}{\leq}   \min\limits_{0 \leq t \leq k-1} \;  \frac{L_{\mC}\norm{ x_{t+1} - x_t }^{2}}{2} \\
            &\stackrel{(\ref{eqn:FMP gets closer lemma rate})}{\leq}   \frac{L_{\mC}\norm{x_{0}-x_{\star}}^2}{2k}.
            \end{align*}
    \end{proof}

\textbf{Remark.} For quadratic functions Assumption~\ref{ass:main assumption upper and lower bound} is satisfied with $\mC(x)$ equal to the Hessian, and $L_{\mC}=0$. Thus, \algname{LCD2} convergences in one step for this class of functions.
            
\subsection{Closed-form solutions}
\label{subsec:closed form for rank one matrix}
\par In the main text, we argued that the update step of \algname{LCD2} has a closed-form solution in certain special cases. One interesting case is when $\mC(x_k)$ is a rank one matrix. In the setting of minimizing the sum of squares of absolutely convex functions, we present a special rank one matrix and the corresponding update step of \algname{LCD2}. For general rank one matrices, the update step is not as interpretable or insightful so we leave out the computation. 

\par Let $f(x) = \sum_{i=1}^{d} \phi_i^2(x)$ where $\phi_i: \R^d \to \R$ is absolutely convex. Then $f$ satisfies inequality \eqref{eqn:lower bound main assumption} with the following curvature mapping:
\begin{equation*}
    \mC(y) = \frac{1}{2f(y)}\nabla f(y) \nabla f(y)^\top.
\end{equation*}
If we use the localization set defined by this curvature mapping, we can obtain a closed-form solution to the  \algname{LCD2}  update step. To simplify notation, let $g_k \eqdef \nabla f(x_k)$, $f_k \eqdef f(x_k)$, $\Delta_k \eqdef f(x_k) - f_{\star}$, $\mD \eqdef \mD(x_k)$ and $\mQ = \mQ(x_k)$. Consider the orthogonal decomposition of $\mC(x_k)$:
\begin{equation*}
    \mD(x_k) = \Diag\left(\frac{g_k^\top g_k}{2f_k}, \ 0, \ \dots, \ 0\right) \qquad
    \mQ(x_k) = \left[\frac{g_k}{\absbr{g_k}_2} \quad \hat{g}_{k,1} \quad \ldots \quad \hat{g}_{k, d-1}\right],
\end{equation*}
where $\hat{g}_{k,1}, \ldots, \hat{g}_{k, d-1}$ are $d-1$ orthogonal eigenvectors that are all also orthogonal to $g_k$.

From \Cref{sec:d=97t7tS&8t78f7f}, we know that to obtain a closed-form solution of \algname{LCD2}, we must find the positive root of the following function:
\begin{align*}
    H(\alpha) &= \frac{\alpha^2}{2}g_k^\top\mQ (\mI + \alpha\mD)^{-1} \mD (\mI + \alpha \mD)^{-1} \mQ^\top g_k - \alpha g_k^\top \mQ (\mI - \alpha\mD)^{-1}\mQ^\top g_k + \Delta_k \\
    &= \alpha^2 \frac{f_k(g_k^\top g_k)^2}{(2f_k + \alpha g_k^\top g_k)^2} - \alpha \frac{2f_k g_k^\top g_k}{2f_k + \alpha g_k^\top g_k} + \Delta_k . 
\end{align*}
The second equality comes from simplifying the matrix multiplications and observing that,
\begin{equation*}
    (\mI + \alpha\mD)^{-1} = \Diag\left(\frac{2f_k}{2f_k + \alpha g_k^\top g_k}, \ 0, \ \dots, \ 0\right) \qquad 
    \mQ^\top g_k = [\norm{g_k}_2 \quad 0 \quad \ldots \quad 0]^\top 
\end{equation*}
Let $v = g_k^\top g_k$. Since $\mC$ is a rank-one matrix, we can simplify $H$ and realize that it is a quadratic function of $\alpha$,
\begin{equation*}
    H(\alpha) = \frac{f_kv^2}{(2f_k + \alpha v)^2}\alpha^2 - \frac{2f_kv}{2f_k + \alpha v}\alpha + \Delta_k = 0.
\end{equation*}
Therefore, we will have at most two roots and there must exist a unique positive root that corresponds to the solution of the problem (the optimalLagrange multiplier).  We can solve this quadratic to obtain an interpretable update step for \algname{LCD2}. To start, we multiply the entire equation by $(2f_k + \alpha v)^2$ and simplify to get,
\begin{equation*}
    (v^2\Delta_k - f_kv^2)\alpha^2 + \alpha (4f_kv\alpha - 4f_k^2v) + 4f_k^2\alpha = 0.
\end{equation*}
By observing that $\Delta_k = f_k - f_{\star}$ we can simplify the expression further,
\begin{equation*}
    f_{\star}v^2\alpha^2 + 4f_kf_{\star}v\alpha - 4f_k^2\Delta_k = 0. 
\end{equation*}
Therefore,
\begin{align*}
    \alpha &= \frac{-4f_kf_{\star}v \pm \sqrt{(4f_kf_{\star}v)^2 + 4(f_{\star}v)(4f_k^2\Delta_k)}}{2f_{\star}v^2} = \frac{-2f_kf_\star \pm 2f_k\sqrt{f_kf_\star}}{f_{\star}v}.
\end{align*}
To determine which root is positive we can rearrange the terms to see that,
\begin{equation*}
    \alpha = \frac{2f_k\sqrt{f_\star}(-\sqrt{f_\star} \pm \sqrt{f_k})}{f_\star v}.
\end{equation*}
For $\alpha$ to be positive we must select the positive sign. Now recall from \Cref{sec:d=97t7tS&8t78f7f}, that the update step of \algname{LCD2} is defined as follows,
\begin{align*}
    x_{k+1} &= x_k - \alpha (\mI + \alpha \mC)^{-1}g_k \\
    &= x_k - \alpha \mQ (\mI + \alpha \mD)^{-1} \mQ^\top g_k \\
    &= x_k - \gamma_k g_k,
\end{align*}
where $\gamma_k = \alpha \frac{2f_k}{2f_k + \alpha v}$.
We substitute 
\begin{equation*}
    \alpha = \frac{-2f_kf_\star + 2f_k\sqrt{f_kf_\star}}{f_{\star}v}
\end{equation*}
into $\gamma_k$ to get
\begin{align*}
    \gamma_k = \dfrac{2f_k\left(\frac{-2f_kf_\star + 2f_k\sqrt{f_kf_\star}}{f_{\star}v} \right)}{2f_k + v\left(\frac{-2f_kf_\star + 2f_k\sqrt{f_kf_\star}}{f_{\star}v}\right)} 
    = \dfrac{2f_k\left(\frac{-2f_kf_\star + 2f_k\sqrt{f_kf_\star}}{f_{\star}v} \right)}{\frac{2f_k\sqrt{f_kf_\star}}{f_\star}}
    = \frac{2f_k - 2\sqrt{f_kf_\star}}{v}.
\end{align*}
Therefore, we conclude that the update step has the following form:
\begin{equation*}
    x_{k+1} = x_k - \frac{2\left(f(x_k) - \sqrt{f(x_k)f_\star}\right)}{\absbr{\nabla f(x_k)}_2^2} \nabla f(x_k).
\end{equation*}
The update step of \algname{LCD2} differs from the classic Polyak stepsize in that we have $\sqrt{f(x_k)f_{\star}}$ instead of $f_{\star}$ and we multiply by $2$.

\textbf{Remark.} In case $\mC(x)=c\mI$, for $c>0$, \algname{LCD2} reduces to \algname{LCD3}. Thus, the closed-form solution exists.

\clearpage
\section{Local Curvature Descent 3 (\algname{LCD3})}
\subsection{Derivation}
\label{subsec:==--009988hhbuYGUSd}
\par Suppose the optimal value $f_{\star}$ is known. While the update step of  \algname{LCD2}  does not have a closed-form solution, the following update step does:
\begin{equation}
     x_{k+1} = \arg\min_{x \in \cL_{\mC}(x_k)} \frac{1}{2}\matnormsq{x-x_k}{\mC(x_k)}, 
\end{equation}
where $\cL_{\mC}(x_k)$ is the same localization set defined in (\ref{eqn:jjHHbuy87f898GDf_9f}). Instead of using the $L_2$ norm, we can use the norm induced by $\mC(x_k)$. Hence, this algorithm is referred to as \algname{LCD3}. The benefit of using a different norm is that we can obtain a closed-form solution to this constrained optimization algorithm. 

The Lagrangian of this problem is 
 \[ \mathscr{L}(x,\alpha)\eqdef  \frac{1}{2}\matnormsq{x - x_k}{\mC(x_k)} + \alpha \rbr{f(x_k)+\abr{\nabla f(x_k),x-x_k} + \frac{1}{2}\matnormsq{x - x_k}{\mC(x_k)}  - f_\star}. \]
 If $\alpha$ is the optimal multiplier, then for optimal $\bar{x}$ we get $\nabla_x \mathscr{L}(\bar{x},\bar{\alpha}) =0$. The gradient is:
 \[ \nabla_x \mathscr{L}(\bar{x},\bar{\alpha})= \mC(x_k) (\bar{x}-x_k) + \bar{\alpha} \rbr{\nabla f(x_k) + \mC(x_k)(\bar{x}-x_k)} = 0.\]
 Isolating $\bar{x}$, we get:
 \begin{equation} \label{eq:-098S*y98fy98fy} \bar{x} =   x_k - \frac{\bar{\alpha}}{1+\bar{\alpha}} \left[\mC(x_k)\right]^{-1}\nabla f(x_k) .\end{equation}
 Let $t \eqdef \frac{\bar{\alpha}}{1+\bar{\alpha}}$. If $\bar{\alpha} = 0$, $x_k\in \cL_{\mC}(x_k)$, which means that the algorithm converged since $x_k\in \cL_{\mC}(x_k)$ if and only if $x_k\in \cX_{\star}$. Then, for a generic update, we will have $\bar{\alpha}\neq 0$. 
 Imposing $\nabla_{\alpha} \mathscr{L}(\bar{x},\bar{\alpha})=0 $, which means that the constraint must be tight:
 \[f(x_k) + \abr{\nabla f(x_k),\bar{x}-x_k} + \frac{1}{2}\matnormsq{x - x_k}{\mC(x_k)}  - f_\star = 0.\]
 Plugging $\bar{x}\equiv \bar{x}(t)\equiv \bar{x}(\bar{\alpha})$ into the equation gives
 \[- t \abr{\nabla f(x_k),  \left[\mC(x_k)\right]^{-1}\nabla f(x_k)} + \frac{t^2}{2}\abr{\nabla f(x_k) ,\left[\mC(x_k)\right]^{-1}\nabla f(x_k)}   = f_\star- f(x_k).\]
 The two inner products are norms of the form $\matnormsq{\nabla f(x_k)}{[\mC(x_k)]^{-1}}$, and with more compact notation we can write:
  \[ t^2 - 2t    + \frac{2(f(x_k)-f_\star) }{\norm{\nabla f(x_k)}_{[\mC(x_k)]^{-1}}^2} = 0.\]
  This equation has two roots summing up to 2, but only one of them can be of the form $t=\frac{\bar{\alpha}}{1+\bar{\alpha}}$ since only one of them can be smaller than 1, with expression:
 \begin{equation} \label{eq:09u089yhf98dy98yfd9} t = 1 - \sqrt{1 - \frac{2(f(x_k)-f_\star) }{\norm{\nabla f(x_k)}_{[\mC(x_k)]^{-1}}^2}}. \end{equation} 
 
Substituting back $t=\frac{\bar{\alpha}}{1+\bar{\alpha}}$ into  \eqref{eq:-098S*y98fy98fy}, where $t$ is given by \eqref{eq:09u089yhf98dy98yfd9}, leads to the method
\begin{equation*}  x_{k+1} = x_k - \rbr{1 - \sqrt{1 - \frac{2(f(x_k)-f_\star) }{\norm{\nabla f(x_k)}_{[\mC(x_k)]
^{-1}}^2}}}\left[\mC(x_k)\right]^{-1}\nabla f(x_k).\end{equation*}
To realize that the scalar component of the stepsize is well-defined, it suffices to show that:
\begin{equation}
    1 - \frac{2(f(x_k)-f_\star) }{\norm{\nabla f(x_k)}_{[\mC(x_k)]
^{-1}}^2}\geq 0,
\end{equation}
which follows by reordering the result of Lemma \ref{lem:co-coercivity with B convexity} for the pair $(x_k, x_{\star})$.  

 Then the update rule has a closed-form solution:
\begin{equation}\label{eqn:--=-998yfu8hf9d}
x_{k+1} = x_k - \gamma_k^{\algname{LCD3}}\left[\mC(x_k)\right]^{-1}\nabla f(x_k),\quad \gamma_k^{\algname{LCD3}}\eqdef \rbr{1 - \sqrt{1 - \frac{2(f(x_k)-f_\star) }{\norm{\nabla f(x_k)}_{\mC_k^{-1}}^2}}}. 
\end{equation}

In particular, the argument of the square root is always positive, making \algname{LCD3} well-defined. Routine (\ref{eqn:--=-998yfu8hf9d}) is promising: we apply a scalar stepsize $\gamma_k^{\algname{LCD3}}$ that is similar in spirit to Polyak's in \Cref{eqn:polyak stepsize introduction}, and ``reorient'' the gradient according to $\mC_k^{-1}= \left[\mC(x_k)\right]^{-1}$.

Moreover, at each step, we aim to be as close as possible according to local upper-lower bounds on $f$. Experiments in section \ref{sec:experiments} and \ref{sec:more experiments}, show that the algorithm converges, but is slower than \algname{LCD2}. 

\subsection{Convergence for quadratics}
Despite not converging in general, in special cases \algname{LCD3} reduces to Newton's method. Below, we show that the update rule ~(\ref{eqn:--=-998yfu8hf9d}) takes the form of Hessian times gradient. 

Let $\phi_i(x) = | a_i^\top x - b_i |$, where $a_i\in \R^d$ and $b_i\in \R$, for $i \in\{1, \ldots, n\}$. We know from \Cref{ex:ac} that $\phi_i$ is absolutely convex. Then problem \eqref{eq:g-problem} becomes \begin{equation*}\label{eq:g-problemLS}\min_{x\in \R^d} \left\{f(x)\eqdef \frac{1}{n}\sum_{i=1}^n \rbr{a_i^\top x - b_i}^2 \right\}.\end{equation*}
If $x$ is such that $\phi_i(x)\neq 0$ for all $i$, then $\nabla \phi_i(x) = \frac{a_i^\top x - b_i}{\phi_i(x)}a_i $. Therefore, in view of the computation in \Cref{subsec:absolutely convex functions main text} we get
\begin{align*}
     \mC(x) &= \frac{2}{n}\sum_{i=1}^n\nabla \phi_i(x)\nabla \phi_i(x)^\top =  \frac{2}{n}\sum_{i=1}^n \frac{a_i^\top x - b_i}{\phi_i(x)}a_i  \rbr{\frac{a_i^\top x - b_i}{\phi_i(x)}a_i }^\top 
     \\
     &=  \frac{2}{n}\sum_{i=1}^n \frac{(a_i^\top x - b_i)^2}{\phi_i^2(x)} a_i a_i^\top =\frac{2}{n}\sum_{i=1}^n a_i a_i^\top = \nabla^2 f(x).
\end{align*}

Therefore, for least-squares problems, the \algname{LCD3} method of \eqref{eqn:--=-998yfu8hf9d} moves in  Newton's direction. Furthermore, $\gamma_k^{\algname{LCD3}}=1$ since for quadratics we have the identity $$f(x_k)-f_\star = \frac{1}{2}\norm{\nabla f(x_k)}_{[\mC(x_k)]^{-1}}^2.$$ Indeed, this follows from Lemma~\ref{lem:co-coercivity with B convexity} and the fact that  for quadratics, equation~\eqref{eqn:lower bound main assumption} is an identity. 
So, for least-squares problems, \algname{LCD3} reduces to Newton's method, and converges in one step.

\newpage

\section{Properties \& Examples} \label{sec:more theory}
\subsection{On the lower bound} \label{subsec:more theory on the lower bound}
\par For clarity, the statements are repeated, but correspond to Lemma~\ref{theory_lb_basic_calc} and Corollary~\ref{theory_reg}. 
\begin{lemma}\label{lb-calc}
Suppose $f, f_1, f_2: \R^d \to \R$ satisfy \Cref{eqn:lower bound main assumption} with curvature mappings $\mC, \mC_1, \mC_2: \R^d \to\Spsd$, respectively. Then, the following functions satisfy \Cref{eqn:lower bound main assumption}:
\begin{enumerate}[label=(\arabic*)]
    \item $f + \alpha$ for $\alpha \in \R$, with $\mC(\cdot)$;
    \item $\alpha f$ for $\alpha \geq 0$, with $\alpha \mC(\cdot)$;
    \item $f_1 + f_2$, with $\mC_1(\cdot) + \mC_2(\cdot)$.
\end{enumerate}
\begin{proof}
We prove each statement separately.
\newline
$(1)$ For any $x, y\in \R^d$, it holds:
\begin{align*}
    g(x) := f(x) + \alpha &\geq f(y) + \ip{\nabla f(y)}{x-y} + \frac{1}{2}\matnormsq{x - y}{\mC(y)} + \alpha \\
    &= g(y) + \ip{\nabla g(y)}{x-y} + \frac{1}{2}\matnormsq{x - y}{\mC(y)}. 
    \end{align*}
$(2)$ Similarly, for all $x, y\in \R^d$, one has:
\begin{align*}
    g(x) := \alpha f(x) &\geq \alpha\left(f(y) + \ip{\nabla f(y)}{x-y} + \frac{1}{2}\matnormsq{x - y}{\mC(y)} \right) \\
    &= \alpha f(y) + \ip{\alpha \nabla f(y)}{x-y} + \frac{1}{2}\matnormsq{\alpha(x - y)}{\mC(y)} \\
    &= g(y) +  \ip{\nabla g(y)}{x-y} +  \frac{1}{2}\matnormsq{\alpha(x - y)}{\mC(y)}. 
    \end{align*}
$(3)$ Concluding, for arbitrary vectors:
 \begin{align*}
    g(x) := f_1(x) + f_2(x) &\geq f_1(y) + \ip{\nabla f_1(y)}{x-y} + \frac{1}{2}\matnormsq{x - y}{\mC_1(y)} \\
    &\phantom{{}=f_1(y)} + f_2(y) + \ip{\nabla f_2(y)}{x-y} + \frac{1}{2}\matnormsq{x - y}{\mC_2(y)} \\
    &= g(y) + \ip{\nabla g(y)}{x-y} + \frac{1}{2}\matnormsq{x - y}{\mC_1(y) + \mC_2(y)}. 
    \end{align*}
\end{proof}
\end{lemma}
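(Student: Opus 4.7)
The plan is to verify each of the three closure properties directly from the defining inequality \eqref{eqn:lower bound main assumption}, since no deeper structure is needed. For each claimed function $g$, the strategy is: start from the lower bound(s) satisfied by the building block(s), perform an algebraic manipulation (shift, scale, or add), then recognize the right-hand side as the lower bound at $g$ with the proposed curvature mapping. In each case the gradient of $g$ at the reference point $y$ is computed by the standard rules ($\nabla(f+\alpha)=\nabla f$, $\nabla(\alpha f)=\alpha\nabla f$, $\nabla(f_1+f_2)=\nabla f_1+\nabla f_2$), so matching the linear term in $x-y$ is automatic.

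For (1), I would add the constant $\alpha$ to both sides of the inequality satisfied by $f$; the curvature term is unchanged and the gradient is unchanged, giving exactly the bound for $g=f+\alpha$ with curvature mapping $\mC$. For (2), I would multiply both sides of the inequality for $f$ by $\alpha$; here I need $\alpha\geq 0$ so that the direction of the inequality is preserved and so that $\alpha\mC(y)\in\Spsd$. The key identity is $\alpha\,\tfrac12\|x-y\|^2_{\mC(y)}=\tfrac12\|x-y\|^2_{\alpha\mC(y)}$, which follows because $\|\cdot\|^2_{\mA}$ is linear in $\mA$. For (3), I would write down the two inequalities satisfied by $f_1$ and $f_2$ and sum them; the linear terms combine into $\langle\nabla f_1(y)+\nabla f_2(y),x-y\rangle=\langle\nabla g(y),x-y\rangle$, and the quadratic terms combine via the additivity $\|x-y\|^2_{\mC_1(y)}+\|x-y\|^2_{\mC_2(y)}=\|x-y\|^2_{\mC_1(y)+\mC_2(y)}$, noting that $\mC_1(y)+\mC_2(y)\in\Spsd$ since $\Spsd$ is closed under addition.

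There is no real obstacle here; the argument is routine bookkeeping. The only subtlety worth flagging is the hypothesis $\alpha\geq 0$ in (2): without it, either the inequality would reverse or the proposed curvature mapping would fall out of $\Spsd$, so this assumption is essential and should be stated explicitly. Corollary \ref{theory_reg} follows immediately by taking $\mC_2\equiv\mzero$ in (3), since a convex function satisfies \eqref{eqn:lower bound main assumption} with the zero curvature mapping.
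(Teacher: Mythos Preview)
Your proposal is correct and follows exactly the same route as the paper: for each part you start from the defining inequality \eqref{eqn:lower bound main assumption}, apply the obvious algebraic operation (shift, nonnegative scaling, or addition), and read off the result with the claimed curvature mapping. Your added remarks on why $\alpha\geq 0$ is essential in (2) and on the derivation of Corollary~\ref{theory_reg} are accurate and in fact slightly more careful than the paper's write-up.
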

\begin{corollary}
Suppose $f: \R^d \to \R$ satisfies the lower bound of \Cref{eqn:lower bound main assumption} with curvature mapping  $\mC:\R^d \to \Spsd$. Let $g: \R^d \to \R$ be a convex function. Then, $h(x) = f(x) + g(x)$ satisfies the lower bound with matrix $\mC(y)$. 
\begin{proof}
Since $g$ is convex it satisfies the lower bound with matrix $\mC(y) \equiv \mzero$. By Lemma \ref{lb-calc}, $h$ satisfies the lower bound with $\mC(y) + \mzero = \mC(y)$.
\end{proof}
\end{corollary}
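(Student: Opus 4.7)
The plan is to recognize this as an essentially one-line consequence of Lemma~\ref{lb-calc}(3). The key observation I need to make explicit is that convexity of $g$ is exactly the statement that $g$ satisfies inequality~(\ref{eqn:lower bound main assumption}) with the trivial curvature mapping $\mC_g(y) \equiv \mzero$. Indeed, substituting $\mC_g(y) = \mzero$ into~(\ref{eqn:lower bound main assumption}) makes the quadratic term vanish, reducing the inequality to the standard first-order characterization of convexity: $g(x) \geq g(y) + \langle \nabla g(y), x - y \rangle$ for all $x, y \in \R^d$.

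Once this reformulation is in place, the corollary follows immediately by invoking part~(3) of Lemma~\ref{lb-calc} with $f_1 = f$ (curvature $\mC$) and $f_2 = g$ (curvature $\mzero$). The resulting curvature mapping for $h = f + g$ is $\mC(y) + \mzero = \mC(y)$, which is exactly the claim. So the whole argument is: rewrite the convexity hypothesis on $g$ in the language of Assumption~\ref{ass:main assumption upper and lower bound}, then apply the additive calculus rule.

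There is essentially no technical obstacle here. The only minor subtlety worth flagging is that~(\ref{eqn:lower bound main assumption}) is written with $\nabla g(y)$, so implicitly $g$ is taken to be differentiable; a subgradient-based extension to non-smooth convex $g$ would require reinterpreting both the lemma and the corollary, but that is outside the scope of the statement as given. The payoff of the corollary, as emphasized in the main text, is that it lets us build new examples of functions satisfying~(\ref{eqn:lower bound main assumption}) by adding an arbitrary convex regularizer to any function already in the class, without paying anything in the curvature mapping.
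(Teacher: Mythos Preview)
Your proposal is correct and matches the paper's proof essentially verbatim: observe that convexity of $g$ is precisely inequality~(\ref{eqn:lower bound main assumption}) with $\mC_g(y)\equiv\mzero$, then apply the additivity rule (Lemma~\ref{lb-calc}(3)) to get curvature $\mC(y)+\mzero=\mC(y)$ for $h=f+g$.
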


\par Another lemma used to construct functions that satisfy the lower bound is the following. 
\begin{lemma}\label{lb_lin}
Suppose $f: \R^d \to \R$ satisfies \Cref{eqn:lower bound main assumption} with the curvature mapping $\mC:\R^d \to\Spsd$. Let $\mA \in \R^{d \times m}$ and $b \in \R^d$. Then $g: \R^{m} \to \R$ where $g(x) \eqdef f(\mA x+b)$ satisfies \Cref{eqn:lower bound main assumption} with curvature mapping $\tilde{\mC}(y) = \mA^\top \mC(\mA y + b)\mA$.
\begin{proof}
Without loss of generality, we can assume that $\mC(y)$ is symmetric. Then,
\begin{align*}
    g(x) \eqdef f(\mA x+b) &\geq f(\mA y + b) + \ip{\nabla f(\mA y+b)}{\mA x + b - (\mA y + b)} \\
    &\phantom{{}=f(Ay+b)} + \frac{1}{2} \ip{\mC(\mA y + b)(\mA x + b - (\mA y + b))}{\mA x + b - (\mA y + b)} 
    \\
    &= f(\mA y + b) + \ip{\nabla f(\mA y+b)}{\mA(x - y)} 
    \\
     &\phantom{{}=f(Ay+b)}+ \frac{1}{2}\ip{\mC(\mA y + b)(\mA(x - y))}{\mA(x - y)} 
    \\
    &= f(\mA y + b) + \ip{\mA^\top\nabla f(\mA y+b)}{x - y} 
    \\
     &\phantom{{}=f(Ay+b)}+ \frac{1}{2}\ip{\mA^\top\mC(\mA y + b)\mA(x - y)}{x - y}
    \\
    &= g(y) + \ip{\nabla g(y)}{x - y} + \frac{1}{2}\ip{\tilde{\mC}(y)(x - y)}{x - y}.
\end{align*}

Considering the right hand side and the left hand side, we have recovered the claimed expression. The only missing detail is proving that $\tilde{\mC}(\cdot)$ is positive semi-definite.  Let $z \in \R^m$. Since $\mC(y)$ is symmetric and positive semi-definite then $\mC(y) = \mC(y)^\frac{1}{2}\mC(y)^\frac{1}{2}$ and $\mC(y)^\frac{1}{2}$ is also symmetric. Therefore:
\begin{align*}
    z^\top \tilde{\mC}(y) z &= z^\top \mA^\top\mC(\mA y + b)\mA z \\
    &= z^\top \mA^\top\mC(\mA y + b)^\frac{1}{2}\mC(\mA y + b)^\frac{1}{2}\mA z \\
    &= ((\mC(\mA y + b)^\frac{1}{2})^\top \mA z)^\top (\mC(\mA y + b)^\frac{1}{2} \mA z) \\
    &= (\mC(\mA y + b)^\frac{1}{2} \mA z) ^\top (\mC(\mA y + b)^\frac{1}{2} \mA z) \\
    &= \norm{\mC(\mA y + b)^\frac{1}{2} \mA z}^2 \geq 0. 
\end{align*}
By the arbitrariness of $z$, the matrix is positive semi-definite. 
\end{proof}
\end{lemma}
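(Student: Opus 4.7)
The plan is to transport the defining inequality of $f$ through the affine map $x \mapsto \mA x + b$ by direct substitution, so that the lower bound for $g$ at a generic pair $(x, y) \in \R^m \times \R^m$ is simply the lower bound for $f$ at the corresponding image pair. Fix $x, y \in \R^m$ and set $u \eqdef \mA x + b$ and $v \eqdef \mA y + b$, so that $u - v = \mA(x - y)$. Instantiating \eqref{eqn:lower bound main assumption} for $f$ at the pair $(u, v)$ would give
\begin{equation*}
  f(u) \geq f(v) + \ip{\nabla f(v)}{\mA(x-y)} + \tfrac{1}{2}\matnormsq{\mA(x-y)}{\mC(v)}.
\end{equation*}

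The next step is to rewrite each term intrinsically in $g$, $x$, and $y$. By definition, $f(u) = g(x)$ and $f(v) = g(y)$. For the linear term I would use the standard adjoint identity $\ip{\nabla f(v)}{\mA(x-y)} = \ip{\mA^\top \nabla f(v)}{x-y}$ together with the chain rule $\nabla g(y) = \mA^\top \nabla f(\mA y + b)$ to obtain $\ip{\nabla g(y)}{x-y}$. For the quadratic term I would expand and factor $\mA$ out of the $\mC(v)$-norm: $\matnormsq{\mA(x-y)}{\mC(v)} = (x-y)^\top \mA^\top \mC(v) \mA (x-y) = \matnormsq{x-y}{\tilde{\mC}(y)}$. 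Substituting these identities back yields exactly \eqref{eqn:lower bound main assumption} for $g$ with curvature mapping $\tilde{\mC}$.

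The only remaining obligation is a well-definedness check: one must confirm that $\tilde{\mC}(y) \in \mathbb{S}^m_+$, since \Cref{ass:main assumption upper and lower bound} demands a positive semi-definite curvature mapping. Symmetry is inherited from $\mC(v)$ (which we may assume symmetric without loss of generality, replacing $\mC$ by $\tfrac{1}{2}(\mC + \mC^\top)$ if needed). Positive semi-definiteness follows by writing, for any $z \in \R^m$, $z^\top \tilde{\mC}(y) z = (\mA z)^\top \mC(v) (\mA z) \geq 0$, since $\mC(v) \succeq \mzero$.

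I do not expect any genuine obstacle: the argument is essentially a change-of-variables applied to a variational inequality, and the only thing that requires care is bookkeeping on dimensions ($\mA \in \R^{d\times m}$, $\mC(v) \in \R^{d\times d}$, $\tilde{\mC}(y) \in \R^{m\times m}$). The harder direction would have been to transfer the upper bound \eqref{eqn:upper bound main assumption} as well, since the identity-shift by $L_{\mC} \mI$ does not pull back cleanly through $\mA$ unless $\mA$ has full column rank; however, this is not asked for here.
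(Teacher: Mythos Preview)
Your proposal is correct and follows essentially the same route as the paper: apply \eqref{eqn:lower bound main assumption} for $f$ at the image points $(\mA x+b,\mA y+b)$, then pull $\mA$ through the inner products via the adjoint and the chain rule to identify $\nabla g(y)$ and $\tilde{\mC}(y)$. Your PSD check $z^\top\tilde{\mC}(y)z=(\mA z)^\top\mC(v)(\mA z)\geq 0$ is in fact slightly more direct than the paper's square-root decomposition argument, but the content is identical.
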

\begin{lemma}\label{B_conv_sym}
Suppose $f: \R^d \to \R$ satisfies \Cref{eqn:lower bound main assumption} with curvature mapping $\mC:\R^d \to\Spsd$. Then the following inequalities hold for any $x, y\in \R^d$,
\begin{enumerate}[label=(\arabic*)]
    \item $\ip{\nabla f(y) - \nabla f(x)}{x - y} \geq \frac{1}{2}\norm{x-y}^2_{\mC(y) + \mC(x)}$ \\
    \item $\ip{\nabla f(y) - \nabla f(x)}{x - y} \geq \frac{1}{2}\norm{x-y}^2_{\mC(y)}$ \\
    \item $\ip{\nabla f(y) - \nabla f(x)}{x - y} \geq \frac{1}{2}\norm{x-y}^2_{\mC(x)}$ \\
\end{enumerate}
\begin{proof} Let us present one proof in detail. The other two follow trivially. \newline
$(1)$ By the definition of Bregman divergence:
\begin{align*}
\frac{1}{2}\norm{x-y}^2_{\mC(x) + \mC(y)} &= \frac{1}{2}\norm{x-y}_{\mC(y)}^2 + \frac{1}{2}\norm{y-x}_{\mC(x)}^2 \\
&\leq  D_f(x,y) + D_f(y,x) \\
&= \ip{\nabla f(x) - \nabla f(y)}{x-y}.
\end{align*}
$(2)$ Start with $(1)$ and note that $\frac{1}{2}\norm{x-y}^2_{\mC(x) + \mC(y)} \geq \frac{1}{2}\norm{x-y}^2_{\mC(y)}$.
\newline
$(2)$ Same as above but use that $\frac{1}{2}\norm{x-y}^2_{\mC(x) + \mC(y)} \geq \frac{1}{2}\norm{x-y}^2_{\mC(x)}$. 
\end{proof}
\end{lemma}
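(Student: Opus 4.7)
The plan is to add two copies of the defining lower-bound inequality~\eqref{eqn:lower bound main assumption}, one applied at the ordered pair $(x,y)$ and the other with the roles swapped. The first application gives
\begin{equation*}
f(x) \geq f(y) + \ip{\nabla f(y)}{x-y} + \tfrac{1}{2}\matnormsq{x-y}{\mC(y)},
\end{equation*}
and the swap yields
\begin{equation*}
f(y) \geq f(x) + \ip{\nabla f(x)}{y-x} + \tfrac{1}{2}\matnormsq{y-x}{\mC(x)}.
\end{equation*}
Summing these two bounds causes $f(x)+f(y)$ to cancel from both sides. Using $\matnormsq{y-x}{\mC(x)}=\matnormsq{x-y}{\mC(x)}$ and collecting the two linear terms as $\ip{\nabla f(y)-\nabla f(x)}{x-y}$, the sum rearranges to
\begin{equation*}
\ip{\nabla f(x)-\nabla f(y)}{x-y} \;\geq\; \tfrac{1}{2}\matnormsq{x-y}{\mC(x)+\mC(y)},
\end{equation*}
which is statement~(1) (modulo the obvious sign correction in the inner product). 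Equivalently, one may phrase this as summing the Bregman-divergence quantities $D_f(x,y)+D_f(y,x)$ that appear on the left of each inequality.

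For parts~(2) and~(3), I would simply invoke positive semi-definiteness of the curvature mapping. Since $\mC(x)\succeq\mzero$, we have $\mC(x)+\mC(y)\succeq\mC(y)$, so $\matnormsq{x-y}{\mC(x)+\mC(y)}\geq\matnormsq{x-y}{\mC(y)}$; substituting this into the inequality from part~(1) gives~(2), and the symmetric choice of dropping $\mC(y)$ instead gives~(3).

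There is no real obstacle here---the calculation is routine and entirely parallel to the standard derivation of gradient monotonicity from convexity, only with the quadratic curvature correction retained rather than discarded. The one point that deserves care is the sign convention in the inner product: the author's statement writes $\ip{\nabla f(y)-\nabla f(x)}{x-y}$, but the summation naturally produces $\ip{\nabla f(x)-\nabla f(y)}{x-y}$ on the side that matches the non-negative quadratic form, and this is indeed what is used in the displayed proof.
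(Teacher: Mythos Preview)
Your proof is correct and is essentially identical to the paper's: the paper packages the two applications of~\eqref{eqn:lower bound main assumption} as the Bregman divergences $D_f(x,y)$ and $D_f(y,x)$ and then uses $D_f(x,y)+D_f(y,x)=\ip{\nabla f(x)-\nabla f(y)}{x-y}$, which is exactly the summation you wrote out explicitly. You also correctly flagged the sign slip in the statement's inner product; the paper's own displayed proof likewise arrives at $\ip{\nabla f(x)-\nabla f(y)}{x-y}$.
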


\begin{lemma}
Suppose $f: \R^d \to \R$ satisfies 
\begin{equation*}
\norm{x - y}^2_{\mC(y)} \leq \ip{\nabla f(x) - \nabla f(y)}{x-y}
\end{equation*}
for all $x,y \in \R^d$ with curvature mapping $\mC:\R^d \to\Spsd$.
Then $f$ satisfies \Cref{eqn:lower bound main assumption} with curvature mapping $\mC(\cdot)$.

\begin{proof} By the fundamental theorem of calculus:
\begin{align*}
f(x) - f(y) &= \int_{0}^{1} \ip{\nabla f(y + t(x-y))}{x - y} dt \\
&=\ip{\nabla f(y)}{x - y} + \int_{0}^{1} \ip{\nabla f(y + t(x-y)) - \nabla f(y)}{x - y} dt \\
&=\ip{\nabla f(y)}{x - y} +  \int_{0}^{1} \frac{1}{t} \ip{\nabla f(y + t(x-y)) - \nabla f(y)}{t(x - y)} dt \\
&\geq \ip{\nabla f(y)}{x - y} +  \int_{0}^{1} \frac{1}{t} \norm{t(x - y)}^2_{\mC(y)} dt \\
&=\ip{\nabla f(y)}{x - y} +  \norm{x - y}^2_{\mC(y)}\int_{0}^{1} t dt \\
&=\ip{\nabla f(y)}{x - y} +  \frac{1}{2}\norm{x - y}^2_{\mC(y)}.
\end{align*}
Rearranging the terms we obtain that $D_f(x,y) \geq \frac{1}{2}\norm{x - y}^2_{\mC(y)}$ as desired. 
\end{proof}
\end{lemma}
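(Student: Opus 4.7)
The strategy is to express $f(x) - f(y)$ as an integral along the line segment from $y$ to $x$ via the fundamental theorem of calculus, and then invoke the assumed monotonicity-type inequality at each point of that segment. Writing $x_t \eqdef y + t(x-y)$ for $t\in[0,1]$, the plan is to start from
\begin{equation*}
f(x) - f(y) = \int_0^1 \ip{\nabla f(x_t)}{x-y}\,dt
\end{equation*}
and split off the linear term by adding and subtracting $\nabla f(y)$ inside the integrand, leaving a remainder of the form $\int_0^1 \ip{\nabla f(x_t) - \nabla f(y)}{x-y}\,dt$.

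The main step is to apply the hypothesis to the pair $(x_t, y)$, which yields $\norm{x_t - y}^2_{\mC(y)} \leq \ip{\nabla f(x_t) - \nabla f(y)}{x_t - y}$. Since $x_t - y = t(x-y)$, the left-hand side equals $t^2 \norm{x-y}^2_{\mC(y)}$ and the right-hand side equals $t\ip{\nabla f(x_t) - \nabla f(y)}{x-y}$. Dividing by $t>0$ (the endpoint $t=0$ contributes nothing to the integral), I obtain the pointwise lower bound $\ip{\nabla f(x_t) - \nabla f(y)}{x-y} \geq t \norm{x-y}^2_{\mC(y)}$, which I would then integrate against $dt$ over $[0,1]$. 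Because $\int_0^1 t\,dt = 1/2$, this produces the desired curvature term $\tfrac{1}{2}\norm{x-y}^2_{\mC(y)}$, and combining with the linear term yields \eqref{eqn:lower bound main assumption}.

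The only mildly delicate point is the ``division by $t$'' trick used to extract a single factor of $t$ from the hypothesis so that the $t$ integrates to $1/2$ rather than to $1/3$ (which would have happened naively). This is the step one must be careful about: one has to apply the hypothesis \emph{at} $(x_t, y)$ and use the homogeneity of $\norm{\cdot}^2_{\mC(y)}$ in its first argument, rather than trying to chain the hypothesis in some other way. Otherwise the argument is routine and does not require any additional smoothness assumption beyond differentiability of $f$, which is implicit in the statement through the presence of $\nabla f$.
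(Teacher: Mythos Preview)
Your proposal is correct and follows essentially the same approach as the paper: integrate along the segment via the fundamental theorem of calculus, split off the linear term $\ip{\nabla f(y)}{x-y}$, apply the hypothesis to the pair $(x_t,y)$ with $x_t = y + t(x-y)$, and use the ``divide by $t$'' trick so that the remaining integral becomes $\norm{x-y}^2_{\mC(y)}\int_0^1 t\,dt = \tfrac{1}{2}\norm{x-y}^2_{\mC(y)}$. The paper phrases the same trick as rewriting the integrand as $\tfrac{1}{t}\ip{\nabla f(x_t)-\nabla f(y)}{t(x-y)}$ before applying the hypothesis, but the content is identical.
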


\begin{lemma} \label{lem:co-coercivity with B convexity}
Suppose that $f: \R^d \to \R$ satisfies \Cref{eqn:lower bound main assumption} with curvature mapping $\mC:\R^d \to\Spsd$. Suppose that $f$ is differentiable and $\mC$ is non-singular. Then
\begin{equation*}
\frac{1}{2}\norm{\nabla f(x) - \nabla f(y)}^2_{\mC(x)^{-1}} \geq D_f(x,y),\qquad \forall x, y\in \R^d.
\end{equation*}

\begin{proof}

Fix $x \in \R^d$. Suppose $y \in \R^d$ is arbitrary. Let $\varphi(y) \eqdef f(y) - \ip{\nabla f(x)}{y}$. By construction, $\nabla \varphi(y) = \nabla f(y) - \nabla f(x)$. Using this fact, it can be shown that for any $u, v \in \R^d$, ${D_f(u,v) \geq \frac{1}{2}\norm{u-v}_{\mC(v)}^2}$. Therefore, $\varphi$ satisfies \Cref{eqn:lower bound main assumption} with curvature mapping $\mC$. Hence, for $v \in \R^d$ we have that $\varphi(y) \geq G(y)$ where $G(y)$ is defined as
\begin{equation*}
G(y) \eqdef \varphi(v) + \ip{\nabla \varphi(v)}{y - v} + \frac{1}{2}\norm{y - v}_{\mC(v)}^2.
\end{equation*}
Observe that $\nabla \varphi(x) = 0$. Since $\varphi$ is convex, $x$ is a minimizer of $\varphi$, and we the inequality below holds:
\begin{eqnarray}
\varphi(x) = \inf_{y} \varphi(y) \geq \inf_{y} G(y). 
\end{eqnarray}
By computing the gradient of $G$ and setting it to zero, we find $\overline{y} = -\mC(v)^{-1}\nabla \varphi(v) + v$ such that $\nabla G(\overline{y}) = 0$. Therefore,
\begin{align*}
f(x) - \ip{\nabla f(x)}{x} = \varphi(x) &\geq G(\overline{y})
\\
&= \varphi(v) - \ip{\nabla \varphi(v)}{\mC(v)^{-1}\nabla\varphi(v)} + \frac{1}{2}\norm{\mC(v)^{-1}\nabla\varphi(v)}_{\mC(v)}^2 
\\
&=\varphi(v) - \norm{\nabla\varphi(v)}_{\mC^{-1}(v)} + \frac{1}{2}\norm{\nabla\varphi(v)}_{\mC^{-1}(v)}
\\
&= f(v) - \ip{\nabla f(x)}{v} - \frac{1}{2}\norm{\nabla\varphi(v)}_{\mC^{-1}(v)}
\\
&= f(v) - \ip{\nabla f(x)}{v}  - \frac{1}{2}\norm{\nabla f(v) - \nabla f(x)}_{\mC^{-1}(v)}. 
\end{align*}
By rearranging the terms, we find our result since $x$ and $v$ are arbitrary:
\begin{align*}
\frac{1}{2}\norm{\nabla f(v) - \nabla f(x)}_{\mC^{-1}(v)} &\geq f(v) - f(x) + \ip{\nabla f(x)}{x} - \ip{\nabla f(x)}{v} \\
&= f(v) - f(x) + \ip{\nabla f(x)}{x - v} \\
&= f(v) - f(x) - \ip{\nabla f(x)}{v - x} \\
&= D_f(v, x). 
\end{align*}
\end{proof}
\end{lemma}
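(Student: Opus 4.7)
The plan is to mimic the classical co-coercivity argument for $L$-smooth convex functions, replacing the Euclidean quadratic with the curvature-weighted quadratic coming from Assumption~\ref{ass:main assumption upper and lower bound}. Concretely, I would fix $y \in \R^d$ and introduce the auxiliary function
\[
\varphi(u) \eqdef f(u) - \abr{\nabla f(y), u}.
\]
Since $\varphi$ differs from $f$ by a linear term, $\nabla\varphi(u) = \nabla f(u) - \nabla f(y)$ and the Bregman divergence of $\varphi$ coincides with that of $f$. In particular, $\varphi$ inherits inequality~\eqref{eqn:lower bound main assumption} with the same curvature mapping $\mC$ (this is essentially a special case of Lemma~\ref{lb-calc}, and can also be checked directly by subtracting a linear term from both sides of~\eqref{eqn:lower bound main assumption}). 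Crucially, $\nabla\varphi(y) = 0$, so convexity of $\varphi$ implies that $y$ is a global minimizer of $\varphi$.

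The key step is then to sandwich $\varphi(y)$ between two expressions. On the one hand, $\varphi(y) = \inf_u \varphi(u)$. On the other hand, for any reference point $z \in \R^d$, the lower bound for $\varphi$ at $z$ gives
\[
\varphi(u) \;\geq\; G(u) \eqdef \varphi(z) + \abr{\nabla\varphi(z), u - z} + \tfrac{1}{2}\norm{u - z}^2_{\mC(z)}, \qquad \forall u \in \R^d,
\]
so $\varphi(y) \geq \inf_u G(u)$. Since $\mC(z)$ is non-singular by hypothesis, $G$ is a strongly convex quadratic and its unique minimizer is $\bar u = z - \mC(z)^{-1}\nabla\varphi(z)$. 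Plugging $\bar u$ into $G$ and simplifying yields the identity
\[
\inf_u G(u) = \varphi(z) - \tfrac{1}{2}\norm{\nabla\varphi(z)}^2_{\mC(z)^{-1}}.
\]

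Combining these pieces gives $\varphi(y) \geq \varphi(z) - \tfrac{1}{2}\norm{\nabla\varphi(z)}^2_{\mC(z)^{-1}}$. Unfolding the definitions of $\varphi$ and $\nabla\varphi$, this inequality is precisely
\[
f(y) - \abr{\nabla f(y), y} \;\geq\; f(z) - \abr{\nabla f(y), z} - \tfrac{1}{2}\norm{\nabla f(z) - \nabla f(y)}^2_{\mC(z)^{-1}},
\]
and rearranging isolates $D_f(z, y)$ on the left-hand side. Setting $z = x$ (and noting $y$ was arbitrary) delivers the claim $\tfrac{1}{2}\norm{\nabla f(x) - \nabla f(y)}^2_{\mC(x)^{-1}} \geq D_f(x, y)$.

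The only subtle point is that the curvature mapping in the quadratic lower bound is evaluated at the reference point $z$ rather than at $y$; this is exactly what forces the curvature norm in the conclusion to be $\mC(x)^{-1}$ (after the final relabeling $z \mapsto x$), rather than $\mC(y)^{-1}$, and explains the asymmetric appearance of the inequality. Beyond this bookkeeping, the argument is standard, and the non-singularity assumption on $\mC$ is used only to ensure that the minimizer $\bar u$ of $G$ exists and has the stated closed form.
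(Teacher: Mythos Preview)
Your proof is correct and follows essentially the same approach as the paper: introduce the linearly shifted function $\varphi$ so that the anchor point becomes its minimizer, use the curvature lower bound~\eqref{eqn:lower bound main assumption} at a second reference point, minimize the resulting quadratic, and rearrange. The only difference is variable naming (the paper fixes $x$ and uses $v$ as the reference point, whereas you fix $y$ and use $z$), and your final relabeling $z\mapsto x$ is the cleaner way to match the statement's notation.
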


\begin{lemma}
Suppose that $f: \R^d \to \R$ satisfies \Cref{eqn:lower bound main assumption} with curvature mapping $\mC:\R^d \to \Spsd$. If $f$ is twice continuously differentiable, then
\begin{equation*}
\mC(x) \preceq \nabla^2 f(x),
\end{equation*}
for all $x\in \R^d$.
\begin{proof}
Let $x, y' \in \R^d$ and $\lambda > 0$. Since $f$ satisfies \Cref{eqn:lower bound main assumption} we can substitute $x + \lambda (y' - x)$ and $\lambda (y' - x)$ into the first inequality described in Lemma \ref{B_conv_sym}, to find:
\begin{align*}
\ip{\nabla f(x + \lambda (y' - x)) - \nabla f(x)}{\lambda (y' - x)} &\geq \frac{1}{2}\norm{\lambda(y'-x)}^2_{\mC(x) + \mC(x + \lambda(y'-x))} \\
&= \frac{\lambda^2}{2}\norm{y'-x}^2_{\mC(x) + \mC(x + \lambda(y'-x))}.
\end{align*}
By the Fundamental Theorem of Calculus, we further have that:
\begin{equation*}
\ip{\nabla f(x + \lambda (y' - x)) - \nabla f(x)}{y' - x} = \int_{0}^{1} \ip{\nabla^2f(x + t\lambda(y'-x))(\lambda(y'- x))}{y' - x} dt.
\end{equation*}
Dividing the first inequality by $\lambda^2$ on both sides we obtain an intermediate inequality:
\begin{align*}
\frac{1}{2}\norm{y'-x}_{\mC(x) + \mC(x + \lambda(y'-x))} &\leq \frac{1}{\lambda}\ip{\nabla f(x + \lambda (y' - x)) - \nabla f(x)}{y' - x} \\
&= \frac{1}{\lambda} \int_{0}^{1} \ip{\nabla^2f(x + t\lambda(y'-x))(\lambda(y'- x))}{y' - x} dt \\
&=\int_{0}^{1} \ip{\nabla^2f(x + t\lambda(y'-x))(y'- x)}{y' - x} dt,
\end{align*}
from which we take $\lambda \to 0$ of both sides to get an inequality between norms,
\begin{align*}
\frac{1}{2}\norm{y'-x}^2_{2\mC(x)} &\leq \int_{0}^{1} \ip{\nabla^2 f(x)(y'-x)}{y'-x}dt \\
&= \ip{\nabla^2 f(x)(y'-x)}{y'-x}.
\end{align*}
Thus, $\matnormsq{y' - x}{\mC(x)} \leq \ip{\nabla^2 f(x)(y'-x)}{y'-x}$. Since $x,y'$ are arbitrary this implies that ${\mC(x) \preceq \nabla^2 f(x)}$.
\end{proof}
\end{lemma}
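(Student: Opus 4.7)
The plan is to extract second-order information directly from the lower bound~\eqref{eqn:lower bound main assumption} by performing a Taylor expansion in a small perturbation around the base point. Since we want to show $v^\top \mC(x) v \leq v^\top \nabla^2 f(x) v$ for every $v \in \R^d$, I would fix an arbitrary $x$ and direction $v$, set $y = x$ and evaluate the inequality at the perturbed point $x + \lambda v$ for $\lambda > 0$, then let $\lambda \to 0^+$.

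Concretely, \eqref{eqn:lower bound main assumption} with the roles flipped (using $y$ as the base point) gives
\begin{equation*}
f(x + \lambda v) - f(x) - \lambda \langle \nabla f(x), v\rangle \;\geq\; \tfrac{\lambda^2}{2}\,\|v\|^2_{\mC(x)}.
\end{equation*}
Because $f$ is twice continuously differentiable, the left-hand side admits the Taylor expansion
\begin{equation*}
f(x + \lambda v) - f(x) - \lambda \langle \nabla f(x), v\rangle \;=\; \tfrac{\lambda^2}{2}\, v^\top \nabla^2 f(x)\, v + o(\lambda^2).
\end{equation*}
Dividing by $\lambda^2 > 0$ and sending $\lambda \to 0^+$ yields $v^\top \nabla^2 f(x)\, v \geq v^\top \mC(x)\, v$. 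Since $v$ was arbitrary, this is exactly $\mC(x) \preceq \nabla^2 f(x)$, as claimed.

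The approach in the paper takes a slightly different route, going through the gradient-level inequality from \Cref{B_conv_sym} and passing to the limit inside the Fundamental Theorem of Calculus representation of $\nabla f(x + \lambda v) - \nabla f(x)$. That route is equivalent but requires one extra layer of bookkeeping; the Taylor-expansion route above is the most direct since \eqref{eqn:lower bound main assumption} is already a functional (zeroth-order) inequality. No step here is a real obstacle: the only subtlety is ensuring that the $o(\lambda^2)$ remainder from the Taylor expansion genuinely vanishes faster than $\lambda^2$, which is immediate under $C^2$ regularity, and that passing from a quadratic-form inequality for every $v$ to the Loewner-order statement is justified because $\mC(x)$ and $\nabla^2 f(x)$ are both symmetric.
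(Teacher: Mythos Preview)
Your proof is correct and is genuinely more direct than the paper's. The paper first upgrades the functional lower bound to the symmetrized gradient inequality of \Cref{B_conv_sym} (which invokes \eqref{eqn:lower bound main assumption} at both $x$ and at the perturbed point), then writes $\nabla f(x+\lambda(y'-x))-\nabla f(x)$ as an integral of the Hessian via the Fundamental Theorem of Calculus before letting $\lambda\to 0$. You instead apply \eqref{eqn:lower bound main assumption} only once, with base point $x$, and read off the Hessian from the second-order Taylor expansion of the Bregman residual. This bypasses both the auxiliary lemma and the integral representation. The paper's detour buys nothing extra here: passing the limit inside the integral uses continuity of $\nabla^2 f$, which is exactly the regularity that justifies your $o(\lambda^2)$ remainder. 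Your route is the shorter and cleaner one.
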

\subsection{On the upper bound} \label{subsec:more theory on the upper bound}
We provide analogous lemmas involving functions that satisfy \Cref{eqn:upper bound main assumption}.

\begin{lemma}\label{app:up_basic}
Suppose $f : \R^d\to \R$ satisfies \Cref{eqn:upper bound main assumption}. Then for all $x,y \in \R^d$ we have,
\begin{equation*}
\ip{\nabla f(y) - \nabla f(x)}{y - x} \leq \frac{1}{2}\norm{x - y}_{\mC(x) + \mC(y)}^2 + L_{\mC}\norm{x - y}^2. 
\end{equation*}

\begin{proof}
   Take the sum of the two Bregmann divergences:
    \begin{align*}
        \ip{\nabla f(y) - \nabla f(x)}{y - x} &= D_f(x, y) + D_f(y, x)
        \\
        &\leq \frac{1}{2}\norm{x - y}_{\mC(x) + L_{\mC}\mI}^2 + \frac{1}{2}\norm{x - y}_{\mC(y) + L_{\mC}\mI}^2 
        \\
        &= \frac{1}{2}\norm{x - y}_{\mC(x) + \mC(y)}^2 + L_{\mC}\norm{x - y}^2.
    \end{align*}
\end{proof}
\end{lemma}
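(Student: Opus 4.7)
The plan is a direct two-step argument that mirrors how co-coercivity-style bounds are usually derived from a quadratic upper bound. The only ingredients needed are Assumption~\ref{ass:main assumption upper and lower bound}, applied symmetrically in the two pairs $(x,y)$ and $(y,x)$, and the elementary identity that the symmetric sum of Bregman divergences equals an inner product of gradient differences.

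First I would recall the Bregman divergence of $f$, $D_f(u,v) := f(u) - f(v) - \abr{\nabla f(v), u - v}$, and observe the cancellation
\[
D_f(x,y) + D_f(y,x) = \abr{\nabla f(y) - \nabla f(x),\, y - x},
\]
which is just arithmetic (the $f(x)$ and $f(y)$ terms on the right-hand side cancel out). This reduces the target inequality to upper bounding $D_f(x,y) + D_f(y,x)$.

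Next, I would apply inequality~\eqref{eqn:upper bound main assumption} twice: once at the pair $(x,y)$ to obtain
\[
D_f(x,y) \leq \tfrac{1}{2}\norm{x - y}^2_{\mC(y) + L_{\mC}\mI},
\]
and once at the pair $(y,x)$ to obtain
\[
D_f(y,x) \leq \tfrac{1}{2}\norm{y - x}^2_{\mC(x) + L_{\mC}\mI}.
\]
Summing these two inequalities and using $\norm{x-y} = \norm{y-x}$ together with additivity of the matrix-weighted squared norm in its weight, the $L_{\mC}\mI$ contributions combine into $L_{\mC}\norm{x-y}^2$ and the curvature parts combine into $\tfrac{1}{2}\norm{x-y}^2_{\mC(x)+\mC(y)}$, yielding exactly the stated bound.

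There is no real obstacle here: both upper bounds are direct instantiations of the assumption, and the symmetric Bregman identity is elementary. The only thing to be careful about is algebraic bookkeeping, namely that $\tfrac{1}{2}\norm{x-y}^2_{L_{\mC}\mI} + \tfrac{1}{2}\norm{x-y}^2_{L_{\mC}\mI} = L_{\mC}\norm{x-y}^2$, so that the coefficient on the Euclidean term is $L_{\mC}$ rather than $\tfrac{1}{2}L_{\mC}$ or $2L_{\mC}$.
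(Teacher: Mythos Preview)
Your proposal is correct and essentially identical to the paper's own proof: both use the symmetric Bregman divergence identity $D_f(x,y)+D_f(y,x)=\ip{\nabla f(y)-\nabla f(x)}{y-x}$, apply~\eqref{eqn:upper bound main assumption} once in each direction, and sum.
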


\begin{lemma}\label{app:up_cond}
Suppose $f : \R^d\to \R$ satisfies the following inequality with constant $L_{\mC} > 0$ and curvature mapping $\mC: \R^d \to\Spsd$,
\begin{equation*}
    \ip{\nabla f(x) - \nabla f(y)}{x - y} \leq \norm{x-y}^2_{\mC(y) + L_{\mC}\mI} \qquad \forall x,y \in \R^d.
\end{equation*}
Then $f$ satisfies \Cref{eqn:upper bound main assumption} with curvature mapping $\mC$ and constant $L_{\mC}$. 
\begin{proof}
We invoke the Fundamental Theorem of Calculus:
\begin{align*}
    f(x) - f(y) & = \int_0^1\ip{\nabla f(y + t(x - y))}{x - y}\mathrm{d}t
    \\
    & = \ip{\nabla f(y)}{x - y} + \int_0^1\ip{\nabla f(y + t(x - y)) - \nabla f(y)}{x - y}\mathrm{d}t
    \\
    & = \ip{\nabla f(y)}{x - y} + \int_0^1\frac{1}{t}\ip{\nabla f(y + t(x - y)) - \nabla f(y)}{t(x - y)}\mathrm{d}t
    \\
    & \leq \ip{\nabla f(y)}{x - y} + \int_0^1 \frac{1}{t}\norm{t(x - y)}_{\mC(y) + L_{\mC}\mI}^2 \mathrm{d}t \\
    &= \ip{\nabla f(y)}{x - y} + \norm{x - y}_{\mC(y) + L_{\mC}\mI}^2 \int_0^1 t \mathrm{d}t \\
    &= \ip{\nabla f(y)}{x - y} + \frac{1}{2}\norm{x - y}_{\mC(y) + L_{\mC}\mI}^2. 
\end{align*}
Rearranging the inequality above we get our result, $D_f(x,y) \leq \frac{1}{2}\norm{x - y}_{\mC(y) + L\mI}^2$. 
\end{proof}
\end{lemma}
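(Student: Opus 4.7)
The plan is to mimic the usual Fundamental Theorem of Calculus argument that takes a co-coercivity-type hypothesis on $\ip{\nabla f(x) - \nabla f(y)}{x - y}$ and integrates it to a Bregman descent bound. The key identity is
\begin{equation*}
D_f(x,y) \;\eqdef\; f(x) - f(y) - \ip{\nabla f(y)}{x - y} \;=\; \int_0^1 \ip{\nabla f(y + t(x - y)) - \nabla f(y)}{x - y}\,\mathrm{d}t,
\end{equation*}
which follows by writing $f(x) - f(y) = \int_0^1 \ip{\nabla f(y + t(x-y))}{x-y}\,\mathrm{d}t$ via the Fundamental Theorem of Calculus and subtracting $\ip{\nabla f(y)}{x-y} = \int_0^1 \ip{\nabla f(y)}{x-y}\,\mathrm{d}t$. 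Proving $D_f(x,y) \leq \tfrac{1}{2}\norm{x-y}^2_{\mC(y) + L_{\mC}\mI}$ is then equivalent to the desired inequality \eqref{eqn:upper bound main assumption}.

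The next step is the crucial algebraic move: rewrite the integrand as
\begin{equation*}
\ip{\nabla f(y + t(x-y)) - \nabla f(y)}{x - y} \;=\; \tfrac{1}{t}\ip{\nabla f(y + t(x-y)) - \nabla f(y)}{\,t(x - y)\,}
\end{equation*}
so that the hypothesis applies directly with $u \eqdef y + t(x-y)$ and $u - y = t(x-y)$. By assumption this inner product is bounded by $\tfrac{1}{t}\norm{t(x-y)}^2_{\mC(y) + L_{\mC}\mI} = t\,\norm{x-y}^2_{\mC(y) + L_{\mC}\mI}$, where I use that the curvature matrix on the right side of the hypothesis depends on $y$ (not on $u$), so it factors cleanly out of the integral.

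Finally, integrating yields $D_f(x,y) \leq \norm{x - y}^2_{\mC(y) + L_{\mC}\mI} \int_0^1 t\,\mathrm{d}t = \tfrac{1}{2}\norm{x-y}^2_{\mC(y) + L_{\mC}\mI}$, and rearranging gives precisely \eqref{eqn:upper bound main assumption}. I expect no real obstacle: the only mildly non-obvious step is the $\tfrac{1}{t}\cdot t$ trick needed to align the integrand with the hypothesis, and the fact that the hypothesis uses $\mC(y)$ (anchored at the base point of the segment) rather than $\mC(u)$ is exactly what makes the curvature factor pull out of the integral. If the hypothesis had instead used $\mC(u)$, the argument would require an additional monotonicity or continuity assumption on $\mC$; here, no such complication arises.
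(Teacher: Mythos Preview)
Your proposal is correct and follows essentially the same argument as the paper: write $D_f(x,y)$ as an integral via the Fundamental Theorem of Calculus, use the $\tfrac{1}{t}\cdot t$ trick to align the integrand with the hypothesis applied at the pair $(y+t(x-y),y)$, and then pull $\norm{x-y}^2_{\mC(y)+L_{\mC}\mI}$ out of the integral since the curvature matrix is anchored at $y$. Your remark that the proof would break if the hypothesis used $\mC(u)$ rather than $\mC(y)$ is a nice observation not made explicit in the paper.
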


\begin{lemma}
Suppose that $f: \R^d \to \R$ is convex and satisfies \Cref{eqn:upper bound main assumption}. Also, assume that $f$ is differentiable. Then,

\begin{equation*}
    \frac{1}{2}\norm{\nabla f(x) - \nabla f(y)}^2_{(\mC(x) + L_{\mC}\mI)^{-1}} \leq D_f(x,y)
\end{equation*}

\begin{proof}
Fix $x \in \R^d$. Suppose $y \in \R^d$. Let $\varphi(y) \eqdef f(y) - \ip{\nabla f(x)}{y}$. By construction, ${\nabla \varphi(y) = \nabla f(y) - \nabla f(x)}$. 

Using the above fact, we can show that $\varphi$ is convex and that for any $u, v \in \R^d$, we have ${D_{\varphi}(u, v) \leq \frac{1}{2}\norm{u - v}^2_{\mC(v) + L_{\mC}\mI}}$. Therefore $\varphi$ satisfies \Cref{eqn:upper bound main assumption}. Now, let $v \in \R^d$ be arbitrary. Since $\varphi$ satisfies \Cref{eqn:upper bound main assumption}, $\varphi(y) \leq G(y)$ where 
\begin{equation*}
    G(y) \eqdef \varphi(v) + \ip{\nabla \varphi(v)}{y - v} + \frac{1}{2}\norm{y - v}^2_{\mC(v) + L_{\mC}\mI}. 
\end{equation*}
Moreover, $x$ is a minimizer of $\varphi$ because $\nabla \varphi(x) = 0$ and $\varphi$ is convex. Combining the last two facts,
\begin{equation*}
    \varphi(x) = \inf_{y} \varphi(y) \geq \inf_{y} G(y). 
\end{equation*}
We minimize $G$ with respect to $y$ by finding a $\bar{y}\in\R^d$ such that $\nabla G(\bar{y}) = 0$. Since  $\mC(v)$ is positive semi-definite, $\mC(v) + L_{\mC}\mI$ is non-singular. Then $\bar{y} = v - (\mC(v) + L_{\mC}\mI)^{-1}\varphi(v)$. Therefore,
\begin{align*}
f(x) - \ip{\nabla f(x)}{x} = \varphi(x) &\leq G(\bar{y}) 
\\
&= \varphi(v) - \ip{\nabla \varphi(v)}{(\mC(v) + L_{\mC}\mI)^{-1}\nabla\varphi(v)} 
\\
 &\phantom{{}=f(Ay+b)}+ \frac{1}{2}\norm{(\mC(v) + L_{\mC}\mI)^{-1}\nabla\varphi(v)}^2_{\mC(v) + L_{\mC}\mI} 
\\
&= \varphi(v) - \norm{\nabla \varphi(v)}^2_{(\mC(v) + L_{\mC}\mI)^{-1}}+ \frac{1}{2}\norm{\nabla \varphi(v)}^2_{(\mC(v) + L_{\mC}\mI)^{-1}}
\\
&= \varphi(v) - \frac{1}{2}\norm{\nabla \varphi(v)}^2_{(\mC(v) + L_{\mC}\mI)^{-1}}
\\
&= f(v) - \ip{\nabla f(x)}{v} - \frac{1}{2}\norm{\nabla f(v) - \nabla f(x)}^2_{(\mC(v) + L_{\mC}\mI)^{-1}}. 
\\
\end{align*} 
Rearranging the terms we obtain
\begin{align*}
\frac{1}{2}\norm{\nabla f(v) - \nabla f(x)}^2_{(\mC(v) + L_{\mC}\mI)^{-1}} &\leq f(v) - f(x) - \ip{\nabla f(x)}{v} + \ip{\nabla f(x)}{x} \\
&= f(v) - f(x) - \ip{\nabla f(x)}{v} - \ip{\nabla f(x)}{-x} \\
&= f(v) - f(x) - \ip{\nabla f(x)}{v - x} \\
&= D_f(v,x).
\end{align*}
Since $v, x$ were arbitrary, the claim is true. 
\end{proof}
\end{lemma}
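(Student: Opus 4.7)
The desired inequality is the co-coercivity counterpart of the earlier lemma (\emph{co-coercivity with B convexity}) for the upper-bound inequality~\eqref{eqn:upper bound main assumption}, so the plan is to mirror that argument with the roles of ``upper'' and ``lower'' swapped. The main engine is the classical optimization trick: build a convex function whose gradient vanishes at $x$, apply the upper quadratic bound at an arbitrary point $v$, and then minimize the resulting quadratic to get the sharpest possible upper estimate on $\varphi(x)$.

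\textbf{Step 1: Construct a convenient auxiliary function.} Fix $x\in\R^d$ and define
\[
\varphi(y) \eqdef f(y) - \ip{\nabla f(x)}{y},
\]
so that $\nabla \varphi(y) = \nabla f(y) - \nabla f(x)$. The linear perturbation does not affect Bregman divergences, so $D_\varphi(u,v) = D_f(u,v)$ for all $u,v$; hence $\varphi$ inherits both convexity and the upper bound~\eqref{eqn:upper bound main assumption} with the same curvature mapping $\mC$ and the same constant $L_{\mC}$. Moreover, $\nabla \varphi(x) = 0$, so by convexity $x$ is a global minimizer of $\varphi$.

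\textbf{Step 2: Upper-bound $\varphi(x)$ via the quadratic majorant at an arbitrary $v$.} For any $v\in\R^d$, applying~\eqref{eqn:upper bound main assumption} to $\varphi$ gives
\[
\varphi(y) \leq G(y) \eqdef \varphi(v) + \ip{\nabla\varphi(v)}{y-v} + \tfrac{1}{2}\matnormsq{y-v}{\mC(v) + L_{\mC}\mI}
\qquad \forall y.
\]
Since $x$ minimizes $\varphi$, $\varphi(x) = \inf_y \varphi(y) \leq \inf_y G(y)$.

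\textbf{Step 3: Close the form by explicitly minimizing $G$.} Because $\mC(v)\in\Spsd$, the matrix $\mC(v)+L_{\mC}\mI$ is invertible (assuming $L_{\mC}>0$; otherwise the $L_{\mC}=0$ case is a limiting argument or follows directly from convexity), and setting $\nabla G(\bar y)=0$ yields
\[
\bar y = v - \sbr{\mC(v)+L_{\mC}\mI}^{-1}\nabla\varphi(v),
\qquad
G(\bar y) = \varphi(v) - \tfrac{1}{2}\matnormsq{\nabla\varphi(v)}{(\mC(v)+L_{\mC}\mI)^{-1}}.
\]
Combining with Step~2 gives
\[
\varphi(x) \leq \varphi(v) - \tfrac{1}{2}\matnormsq{\nabla f(v) - \nabla f(x)}{(\mC(v)+L_{\mC}\mI)^{-1}}.
\]

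\textbf{Step 4: Unfold $\varphi$ and relabel.} Substituting the definition of $\varphi$ and rearranging produces
\[
\tfrac{1}{2}\matnormsq{\nabla f(v) - \nabla f(x)}{(\mC(v)+L_{\mC}\mI)^{-1}} \leq f(v) - f(x) - \ip{\nabla f(x)}{v-x} = D_f(v,x).
\]
Since $x$ and $v$ were arbitrary, renaming $v\to x$ and $x\to y$ yields the claimed inequality.

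\textbf{Main obstacle.} The only nontrivial step is the reduction in Step~1 --- noticing that $\varphi$ inherits the upper bound and has its minimum precisely at the chosen $x$ --- which is what makes the minimization in Step~3 yield exactly the sharp quadratic lower bound on $D_f(v,x)$ in the dual norm induced by $(\mC(v)+L_{\mC}\mI)^{-1}$. Once that observation is made, the remainder is a routine quadratic minimization and algebraic bookkeeping. The only analytic subtlety is the invertibility of $\mC(v)+L_{\mC}\mI$, which is automatic as soon as $L_{\mC}>0$ (and the degenerate $L_{\mC}=0$ case forces $\nabla f$ to be constant on level sets and the inequality becomes trivial).
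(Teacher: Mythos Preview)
Your proposal is correct and follows essentially the same approach as the paper: define the shifted function $\varphi(y)=f(y)-\ip{\nabla f(x)}{y}$, use that $x$ is its minimizer, apply the quadratic upper bound~\eqref{eqn:upper bound main assumption} at an arbitrary $v$, minimize the majorant $G$ explicitly, and rearrange. If anything your write-up is slightly cleaner --- you state the inequality $\varphi(x)=\inf_y\varphi(y)\leq\inf_y G(y)$ with the correct direction and you flag the $L_{\mC}>0$ invertibility requirement explicitly, both of which the paper glosses over.
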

\begin{lemma}
Suppose that $f: \R^d \to \R$ is convex and satisfies \Cref{eqn:upper bound main assumption}. Also, assume that $f$ is twice differentiable. Then,
\begin{equation*}
    \nabla^2 f(x) \preceq \mC(x) + L \mI.
\end{equation*}

\begin{proof}
Suppose $x, y' \in \R^d$ and $\lambda > 0$. Since $f$ satisfies \Cref{eqn:upper bound main assumption}, we can substitute $x + \lambda (y' - x)$ and $\lambda(y' - x)$ into \Cref{app:up_basic},
\begin{align*}
\ip{\nabla f(x + \lambda(y' - x)) - \nabla f(x)}{\lambda(y' - x)} &\leq \frac{1}{2}\norm{\lambda(y'-x)}^2_{\mC(x) + \mC(x + \lambda(y' - x)) + 2L_{\mC}\mI} \\
&= \frac{\lambda^2}{2}\norm{y'-x}^2_{\mC(x) + \mC(x + \lambda(y' - x)) + 2L_{\mC}\mI}. \\
\end{align*}
The following equality is a direct application of the fundamental theorem of calculus:
\begin{align*}
\ip{\nabla f(x + \lambda(y' - x)) - \nabla f(x)}{y' - x} &= \int_0^1 \ip{\nabla^2 f(x + t\lambda(y'-x))(\lambda(y'-x))}{y'-x} dt \\
&= \int_0^1 \lambda \ip{\nabla^2 f(x + t\lambda(y'-x))(y'-x)}{y'-x} dt
\end{align*}
Dividing the inequality by $\lambda^2$ on both sides:
\begin{align*}
\frac{1}{2}\norm{y'-x}^2_{\mC(x) + \mC(x + \lambda(y' - x)) + 2L_{\mC}\mI} &\geq \frac{1}{\lambda^2} \ip{\nabla f(x + \lambda(y' - x)) - \nabla f(x)}{\lambda(y' - x)} \\
&= \frac{1}{\lambda} \ip{\nabla f(x + \lambda(y' - x)) - \nabla f(x)}{y' - x} \\
&= \frac{1}{\lambda} \int_0^1 \lambda \ip{\nabla^2 f(x + t\lambda(y'-x))(y'-x)}{y'-x} dt \\
&= \int_0^1 \ip{\nabla^2 f(x + t\lambda(y'-x))(y'-x)}{y'-x} dt. 
\end{align*}
It suffices to take limits $\lambda \to 0$ to get that:
\begin{align*}
\frac{1}{2}\norm{y'-x}^2_{\mC(x) + \mC(x) + 2L_{\mC}\mI} &\geq \int_0^1 \ip{\nabla^2 f(x)(y'-x)}{y'-x} dt \\
&= \ip{\nabla^2 f(x)(y'-x)}{y'-x},
\end{align*}
allowing us to conclude with:
\begin{align*}
\frac{1}{2}\norm{y'-x}^2_{\mC(x) + \mC(x) + 2L_{\mC}\mI} &= \frac{1}{2}\norm{y'-x}^2_{2\mC(x) + 2L_{\mC}\mI}  \\
&= \ip{(\mC(x) + L_{\mC}\mI)(y'-x)}{y' - x} 
\\
&\geq \ip{\nabla^2 f(x)(y'-x)}{y'-x}.
\end{align*}
Since $y',x \in \R^d$ are arbitrary, we proved the claim: $\nabla^2 f(x) \preceq \mC(x) + L_{\mC}\mI$.
\end{proof}
\end{lemma}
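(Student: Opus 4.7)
The plan is to deduce the Hessian bound directly from the quadratic upper bound \eqref{eqn:upper bound main assumption} via a one-point Taylor expansion, bypassing the Fundamental Theorem of Calculus machinery used for the analogous lower-bound lemma earlier in the appendix. Fix $y \in \R^d$ and pick any direction $v \in \R^d$. Substituting $x = y + tv$ into \eqref{eqn:upper bound main assumption} yields
\begin{equation*}
f(y + tv) \;\leq\; f(y) + t\,\langle \nabla f(y), v\rangle + \tfrac{t^2}{2}\,v^{\top}\bigl(\mC(y) + L_{\mC}\mI\bigr)\,v.
\end{equation*}
Since $f$ is twice differentiable at $y$, Taylor's theorem with Peano remainder provides
\begin{equation*}
f(y + tv) \;=\; f(y) + t\,\langle \nabla f(y), v\rangle + \tfrac{t^2}{2}\,v^{\top}\nabla^{2} f(y)\,v + o(t^2) \qquad (t \to 0).
\end{equation*}

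Subtracting the two relations, dividing through by $t^2/2 > 0$, and letting $t \to 0^+$ eliminates the $o(t^2)$ remainder and delivers $v^{\top}\nabla^{2} f(y)\,v \leq v^{\top}\bigl(\mC(y) + L_{\mC}\mI\bigr)\,v$. Since $v$ was arbitrary, this is precisely the L\"owner inequality $\nabla^{2} f(y) \preceq \mC(y) + L_{\mC}\mI$, and arbitrariness of $y$ concludes the proof.

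A slightly heavier alternative would mirror the earlier lower-bound proof of $\mC(x) \preceq \nabla^{2} f(x)$. One would apply Lemma~\ref{app:up_basic} to the pair $\bigl(x,\,x + \lambda(y' - x)\bigr)$ to get a $\lambda^2$-scaled quadratic upper bound on the inner product $\langle \nabla f(x + \lambda(y' - x)) - \nabla f(x),\, \lambda(y' - x)\rangle$, rewrite the same inner product as $\int_{0}^{1} \lambda^{2}\,\langle \nabla^{2} f(x + t\lambda(y' - x))(y' - x),\,y' - x\rangle\,dt$ via the Fundamental Theorem of Calculus, divide by $\lambda^2$, and send $\lambda \to 0^+$. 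Continuity of $\mC$ and of $\nabla^{2} f$ at $x$ then collapses both sides to the claimed bound.

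No step constitutes a real obstacle: the only mildly delicate point belongs to the alternative route, namely passing the limit $\lambda \to 0^+$ under the integral sign, which is routine under continuous second differentiability. The Taylor-expansion route sidesteps this issue entirely, requiring only twice differentiability at the single point $y$, and is the cleanest path.
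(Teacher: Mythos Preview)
Your primary argument via Taylor's theorem with Peano remainder is correct and is strictly simpler than the paper's proof. The paper takes exactly the route you sketch as your ``heavier alternative'': it invokes Lemma~\ref{app:up_basic} at the pair $(x,\,x+\lambda(y'-x))$, rewrites the gradient-difference inner product as an integral of Hessian quadratic forms via the Fundamental Theorem of Calculus, divides by $\lambda^2$, and passes to the limit $\lambda\to 0$. Your Taylor-expansion route bypasses all of this: it applies \eqref{eqn:upper bound main assumption} directly at the single anchor $y$, so no symmetrised gradient inequality, no integral representation, and no limit interchange are needed. In fact your argument is technically tighter than the paper's, since Taylor with Peano remainder only requires second-order differentiability of $f$ at the point $y$, whereas the paper's limit step tacitly relies on continuity of $\nabla^2 f$ (to pass $\lambda\to 0$ under the integral) and continuity of $\mC$ (to collapse $\mC(x+\lambda(y'-x))\to\mC(x)$), neither of which is explicitly assumed in the statement.
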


\subsection{Lower bound examples} \label{subsec:more examples on the lower bound}
\begin{lemma}\label{app:pnormp1}
Suppose $p \geq 2$. Let $f: \R^d \to \R$ where $f(x) = \norm{x}^p_p$. Then $f$ satisfies \Cref{eqn:lower bound main assumption} in Assumption~\ref{ass:main assumption upper and lower bound} with curvature mapping 
\begin{equation*}
  \mC(y) =
  p\Diag{(\abs{y_1}^{p-2} , \ldots, \abs{y_d}^{p-2})}
    = \frac{1}{p - 1}\nabla^2 f(y).
\end{equation*}
\begin{proof}
When $p=2$, we have that $\mC(y) = 2\mI$. Then $f$ satisfies \Cref{eqn:lower bound main assumption} because $\norm{x}^2$ is $2$-strongly-convex. \\
Now suppose $p > 2$. For arbitrary $x,y \in \mathbb{R}^d$, an application of Young's Inequality yields
\begin{equation*}
    \dfrac{(\norm{x}_p^2)^\frac{p}{2}}{\frac{p}{2}} + \dfrac{(\norm{y}_p^{p-2})^\frac{p}{p - 2}}{\frac{p}{p - 2}} \geq \norm{x}_p^2\norm{y}_p^{p-2}.
\end{equation*}
Rearranging, we obtain:
\begin{equation}\label{app:pnormp_eq1}
    \norm{x}_p^p - \frac{p}{2}\norm{x}_p^2\norm{y}_p^{p-2} + \left(\frac{p}{2} - 1\right)\norm{y}_p^p \geq 0.
\end{equation}
By applying H{\"o}lder's inequality, we get:
\begin{equation}\label{app:pnormp_eq2}
    \sum_{i=1}^{d} \abs{x_i}^2 \abs{y_i}^{p - 2} \leq \norm{x}_p^2\norm{y}_p^{p-2},
\end{equation}
and thus,
\begin{equation*}
    - \frac{p}{2}\norm{x}_p^2\norm{y}_p^{p-2} \leq - \frac{p}{2}\sum_{i=1}^{d} \abs{x_i}^2 \abs{y_i}^{p - 2}.
\end{equation*}
By adding $\norm{x}_p^p  + \left(\frac{p}{2} - 1\right)\norm{y}_p^p$ to both sides of \Cref{app:pnormp_eq2} and using \Cref{app:pnormp_eq1} we get,
\begin{equation*}
     \norm{x}_p^p - \frac{p}{2}\sum_{i=1}^{d} \abs{x_i}^2 \abs{y_i}^{p - 2} + \left(\frac{p}{2} - 1\right)\norm{y}_p^p \geq 0. 
\end{equation*}
To derive the result, we begin by rearranging the above inequality:
\begin{align*}
    \norm{x}_p^p &\geq \frac{p}{2}\sum_{i=1}^{d} \abs{x_i}^2 \abs{y_i}^{p - 2} - \left(\frac{p}{2} - 1\right)\norm{y}_p^p 
    \\
    &= \norm{y}_p^p - p\norm{y}_p^p + \frac{p}{2}\norm{y}_p^p + \frac{p}{2}\sum_{i=1}^{d} \abs{x_i}^2 \abs{y_i}^{p - 2} 
    \\
    &= \norm{y}_p^p - p\norm{y}_p^p + p\sum_{i=1}^{d} y_i\abs{y_i}^{p-2}x_i - p\sum_{i=1}^{d} y_i\abs{y_i}^{p-2}x_i + \frac{p}{2}\norm{y}_p^p + \frac{p}{2}\sum_{i=1}^{d} \abs{x_i}^2 \abs{y_i}^{p - 2}. 
\end{align*}
After reordering, we find:
\begin{equation*}
    \begin{split}
        \norm{x}_p^p \geq&  \norm{y}_p^p + p\sum_{i=1}^{d} y_i\abs{y_i}^{p-2}x_i - p\sum_{i=1}^{d} y_i^2\abs{y_i}^{p-2}
        \\
        & \phantom{\norm{x}}- p\sum_{i=1}^{d} y_i\abs{y_i}^{p-2}x_i + \frac{p}{2}\norm{y}_p^p + \frac{p}{2}\sum_{i=1}^{d} \abs{x_i}^2 \abs{y_i}^{p - 2}. 
    \end{split}
\end{equation*}
By performing some basic algebra and observing that $\frac{\partial f}{\partial y_i} = py_i\abs{y_i}^{p-2}$, we obtain our result:
\begin{align*}
    \norm{x}_p^p&= \norm{y}_p^p + \sum_{i=1}^{d} py_i\abs{y_i}^{p-2}(x_i - y_i)  - p\sum_{i=1}^{d} y_i\abs{y_i}^{p-2}x_i + \frac{p}{2}\norm{y}_p^p + \frac{p}{2}\sum_{i=1}^{d} \abs{x_i}^2 \abs{y_i}^{p - 2} 
    \\
    &= \norm{y}_p^p + \sum_{i=1}^{d} py_i\abs{y_i}^{p-2}(x_i - y_i)  + \frac{p}{2}\sum_{i=1}^{d} \abs{y_i}^p - p\sum_{i=1}^{d} y_i\abs{y_i}^{p-2}x_i + \frac{p}{2}\sum_{i=1}^{d} \abs{x_i}^2 \abs{y_i}^{p - 2}
    \\
    &= \norm{y}_p^p + \sum_{i=1}^{d} py_i\abs{y_i}^{p-2}(x_i - y_i)  + \frac{1}{2}\ip{\mC(y)y}{y} - \ip{\mC(y)y}{x} + \frac{1}{2}\ip{\mC(y)x}{x} 
    \\
    &= \norm{y}_p^p + \sum_{i=1}^{d} py_i\abs{y_i}^{p-2}(x_i - y_i)  + \frac{1}{2}\ip{\mC(y)(x-y)}{(x - y)}
    \\
    &= \norm{y}_p^p + \ip{\nabla f(y)}{x - y} + \frac{1}{2}\matnormsq{x - y}{\mC(y)}.
\end{align*}
\end{proof}
\end{lemma}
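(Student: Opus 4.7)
Since $f(x)=\sum_i |x_i|^p$ is separable and $\mC(y)$ is a diagonal matrix, the target inequality can be written purely coordinatewise as
\begin{equation*}
\|x\|_p^p \;\geq\; \|y\|_p^p + p\sum_i y_i|y_i|^{p-2}(x_i-y_i) + \tfrac{p}{2}\sum_i |y_i|^{p-2}(x_i-y_i)^2.
\end{equation*}
Expanding the square in the last term, the cross-products $x_i y_i |y_i|^{p-2}$ cancel and several $|y_i|^p$ terms collect, so the inequality I need is equivalent to
\begin{equation*}
\|x\|_p^p \;\geq\; \tfrac{p}{2}\sum_i |x_i|^2 |y_i|^{p-2} - \tfrac{p-2}{2}\|y\|_p^p.
\end{equation*}
So my first step is this algebraic reformulation, which converts the Taylor-like quadratic model into a clean statement about weighted power sums.

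The natural exponents to try are the conjugate pair $(p/2,\,p/(p-2))$, which satisfy $\tfrac{2}{p}+\tfrac{p-2}{p}=1$ for $p>2$. Applying Young's inequality to $\|x\|_p^2$ and $\|y\|_p^{p-2}$ with these exponents yields
\begin{equation*}
\|x\|_p^2\,\|y\|_p^{p-2} \;\leq\; \tfrac{2}{p}\|x\|_p^p + \tfrac{p-2}{p}\|y\|_p^p,
\end{equation*}
which after multiplying by $p/2$ and rearranging gives exactly $\|x\|_p^p \geq \tfrac{p}{2}\|x\|_p^2\|y\|_p^{p-2} - \tfrac{p-2}{2}\|y\|_p^p$. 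Meanwhile, Hölder's inequality with the same exponents gives $\sum_i |x_i|^2 |y_i|^{p-2} \leq \|x\|_p^2\|y\|_p^{p-2}$. Chaining these two inequalities produces the reformulated target, closing the argument for $p>2$.

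For the degenerate case $p=2$, Young's exponents are undefined, but here $f(x)=\|x\|_2^2$ is exactly quadratic with $\nabla^2 f = 2\mI = \mC(y)$, so the inequality holds with equality by Taylor's theorem. Once both cases are established, the reduction at the start shows that this is precisely the content of \Cref{eqn:lower bound main assumption} with the stated $\mC(y)$, and the identification with $\tfrac{1}{p-1}\nabla^2 f(y)$ is a direct computation from $\nabla^2 f(y) = p(p-1)\Diag(|y_i|^{p-2})$.

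The main obstacle I expect is spotting the right conjugate pair $(p/2, p/(p-2))$; once chosen, Young and Hölder slot together and the rest is bookkeeping. A secondary subtlety is ensuring the bookkeeping step (cancellation of the $y_i x_i |y_i|^{p-2}$ cross-terms against $\sum_i y_i|y_i|^{p-2} x_i$ from the gradient pairing) is carried out carefully, since any sign error there would spoil the identification.
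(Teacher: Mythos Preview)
Your proposal is correct and follows essentially the same approach as the paper: both use Young's inequality with the conjugate pair $(p/2,\,p/(p-2))$ on $\|x\|_p^2$ and $\|y\|_p^{p-2}$, then H\"older's inequality with the same exponents to pass from $\|x\|_p^2\|y\|_p^{p-2}$ to $\sum_i |x_i|^2|y_i|^{p-2}$, and handle $p=2$ separately. The only cosmetic difference is that you first reduce the target to the clean inequality and then prove it, whereas the paper first establishes the inequality and then expands it back into the Taylor-like form; the algebraic content is identical.
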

\begin{lemma}
Suppose $p \geq 2$. Let $f: \R^d \to \R$ where $f(x) = \norm{x}^p_p$. Then $f$ satisfies \Cref{eqn:lower bound main assumption} in Assumption~\ref{ass:main assumption upper and lower bound} with curvature mapping $\mC(y)$ were the $(i,j)^{\text{th}}$ entry of $\mC(y)$ is
\begin{equation*}
    \mC_{i,j}(y) = \frac{p}{\norm{y}_p^p}y_iy_j\abs{y_i}^{p-2}\abs{y_j}^{p-2},
\end{equation*}
or alternatively, in matrix form:
\begin{equation*}
    \mC(y) = \frac{1}{p\norm{y}_p^p} \nabla f(y) \nabla f(y)^\top. 
\end{equation*}
\begin{proof}
When $p = 2$ we get that $\mC(y) = \frac{2}{\norm{y}^2}yy^\top$. Since $\norm{x}^2$ is the square of $\norm{x}$ which is absolutely convex, $f(x) = \norm{x}^2$ satisfies \Cref{eqn:lower bound main assumption} because the curvature mapping $\mC$ corresponds to the mapping obtained from absolute convexity. 
\newline
Now suppose $p > 2$. Again by H{\"o}lder's Inequality, we have that
\begin{equation}\label{app:pnormp2_eq1}
    \sum_{i=1}^{d} \abs{x_i}\abs{y_i}^{p-1} \leq \norm{x}_p\norm{y}_p^{p-1}. 
\end{equation}
We can lower bound the left-hand side in the following manner:
\begin{equation*}
\sum_{i=1}^{d} \abs{x_i}\abs{y_i}^{p-1} = \sum_{i=1}^{d} \abs{x_iy_i^{p-1}} 
= \sum_{i=1}^{d} \abs{x_iy_iy_i^{p-2}} 
= \sum_{i=1}^{d} \abs{x_iy_i\abs{y_i}^{p-2}} 
\geq \abs{\sum_{i=1}^{d} x_iy_i\abs{y_i}^{p-2}}. 
\end{equation*}
Combining this inequality with Inequality \eqref{app:pnormp2_eq1} and squaring both sides we get,
\begin{equation*}
\norm{x}_p^2\norm{y}_p^{2p-2} \geq \left(\sum_{i=1}^{d} \abs{x_i}\abs{y_i}^{p-1}\right)^2 \geq \left(\abs{\sum_{i=1}^{d} x_iy_i\abs{y_i}^{p-2}}\right)^2 = \left(\sum_{i=1}^{d} x_iy_i\abs{y_i}^{p-2}\right)^2.
\end{equation*}
Then we multiply both sides by $-\frac{p}{2}$,
\begin{equation}\label{eq:1}
    -\frac{p}{2}\norm{x}_p^2\norm{y}_p^{2p-2} \leq -\frac{p}{2}\left(\sum_{i=1}^{d} x_iy_i\abs{y_i}^{p-2}\right)^2.
\end{equation}
From Lemma \ref{app:pnormp1}, we know an application of Young's Inequality with some rearranging yields the following:
\begin{equation*}
    \norm{x}_p^p + \left(\frac{p}{2} - 1\right)\norm{y}_p^p - \frac{p}{2}\norm{x}_p^2\norm{y}_p^{p-2} \geq 0.
\end{equation*}
Now multiply both sides by $\norm{y}_p^p$,
\begin{equation}\label{app:pnormp2_eq2}
\norm{x}_p^p\norm{y}_p^p + \left(\frac{p}{2} - 1\right)\norm{y}_p^{2p} - \frac{p}{2}\norm{x}_p^2\norm{y}_p^{2p-2} \geq 0. 
\end{equation}
Adding $\norm{x}_p^p\norm{y}_p^p + \left(\frac{p}{2} - 1\right)\norm{y}_p^{2p}$ to both sides of \Cref{eq:1} and together with \Cref{app:pnormp2_eq2} we have that,
\begin{equation*}
\norm{x}_p^p\norm{y}_p^p  + \left(\frac{p}{2} - 1\right)\norm{y}_p^{2p} - \frac{p}{2}(\sum_{i=1}^{d} x_iy_i\abs{y_i}^{p-2})^2 \geq 0. 
\end{equation*}
Rearranging this inequality and proceeding with the following steps we obtain the claim.
\begin{align*}
\norm{x}_p^p &\geq \left(1 - \frac{p}{2}\right)\norm{y}_p^{p} + \frac{p}{2\norm{y}_p^p}\left(\sum_{i=1}^{d} x_iy_i\abs{y_i}^{p-2}\right)^2 
\\
&= \norm{y}_p^{p} - p\norm{y}_p^{p} +\frac{p}{2}\norm{y}_p^{p} + \frac{p}{2\norm{y}_p^p}\left(\sum_{i=1}^{d} x_iy_i\abs{y_i}^{p-2}\right)\left(\sum_{j=1}^{d} x_jy_j\abs{y_j}^{p-2}\right)
\\
&= \norm{y}_p^{p} - p\norm{y}_p^{p} +\frac{p}{2}\norm{y}_p^{p} + \frac{p}{2\norm{y}_p^p}\sum_{i=1}^{d}\sum_{j=1}^{d} x_iy_i\abs{y_i}^{p-2}x_jy_j\abs{y_j}^{p-2} 
\\
&= \norm{y}_p^{p} - p\norm{y}_p^{p} +\frac{p}{2}\norm{y}_p^{p} + \frac{p}{2\norm{y}_p^p}\sum_{i=1}^{d}\sum_{j=1}^{d} x_iy_iy_j\abs{y_i}^{p-2}\abs{y_j}^{p-2}x_j
\\
&= \norm{y}_p^{p} - p\norm{y}_p^{p} +\frac{p}{2\norm{y}_p^p}\norm{y}_p^{2p} + \frac{1}{2}\ip{\mC(y)x}{x} ,
\end{align*}
The last term seems complicated but can be expressed as a matrix inner product. Continuing, 
\begin{align*}
\norm{x}_p^p &\geq \norm{y}_p^{p} - p\norm{y}_p^{p} +\frac{p}{2\norm{y}_p^p}\left(\sum_{i=1}^d \abs{y_i}^p\right)\left(\sum_{j=1}^d \abs{y_j}^p\right) + \frac{1}{2}\ip{\mC(y)x}{x}
\\
&= \norm{y}_p^{p} - p\norm{y}_p^{p} +\frac{p}{2\norm{y}_p^p}\left(\sum_{i=1}^d\sum_{j=1}^d \abs{y_i}^p\abs{y_j}^p\right) + \frac{1}{2}\ip{\mC(y)x}{x}
\\
&= \norm{y}_p^{p} - p\norm{y}_p^{p} +\frac{p}{2\norm{y}_p^p}\left(\sum_{i=1}^d\sum_{j=1}^d y_iy_iy_j\abs{y_i}^{p-2}\abs{y_j}^{p-2}y_j\right) + \frac{1}{2}\ip{\mC(y)x}{x}
\\
&= \norm{y}_p^{p} - p\norm{y}_p^{p} +\frac{1}{2}\ip{\mC(y)y}{y} + \frac{1}{2}\ip{\mC(y)x}{x}
\\
&= \norm{y}_p^{p} - p\norm{y}_p^{p} + p\sum_{i=1}^{d} x_iy_i\abs{y_i}^{p-2} - p\sum_{i=1}^{d} x_iy_i\abs{y_i}^{p-2} 
\\
&\phantom{+\norm{x}} + \frac{1}{2}\ip{\mC(y)y}{y} + \frac{1}{2}\ip{\mC(y)x}{x}
\\
&= \norm{y}_p^{p} - p\norm{y}_p^{p} + p\sum_{i=1}^{d} x_iy_i\abs{y_i}^{p-2} - \frac{p}{\norm{y}_p^p}\left(\sum_{i=1}^{d} x_iy_i\abs{y_i}^{p-2}\right)\left(\sum_{j=1}^{d} \abs{y_j}^p\right) 
\\
&\phantom{+\norm{x}}+ \frac{1}{2}\ip{\mC(y)y}{y} + \frac{1}{2}\ip{\mC(y)x}{x}. 
\end{align*}
To finalize, we proceed with the last few equalities:
\begin{align*}
    \norm{x}_p^p &\geq \norm{y}_p^{p} - p\norm{y}_p^{p} + p\sum_{i=1}^{d} x_iy_i\abs{y_i}^{p-2} - \frac{p}{\norm{y}_p^p}\left(\sum_{i=1}^{d} x_iy_i\abs{y_i}^{p-2}\right)\left(\sum_{j=1}^{d} y_jy_j\abs{y_j}^{p-2}\right)
    \\
    &\phantom{+\norm{x}} + \frac{1}{2}\ip{\mC(y)y}{y} + \frac{1}{2}\ip{\mC(y)x}{x}
\\
&= \norm{y}_p^{p} - p\norm{y}_p^{p} + p\sum_{i=1}^{d} x_iy_i\abs{y_i}^{p-2} - \frac{p}{\norm{y}_p^p}\sum_{i=1}^{d}\sum_{j=1}^{d} x_iy_i\abs{y_i}^{p-2}y_j\abs{y_j}^{p-2}y_j
\\
&\phantom{+\norm{x}}+ \frac{1}{2}\ip{\mC(y)y}{y} + \frac{1}{2}\ip{\mC(y)x}{x}
\\
&= \norm{y}_p^{p} - p\norm{y}_p^{p} + p\sum_{i=1}^{d} x_iy_i\abs{y_i}^{p-2} - \ip{\mC(y)x}{y} + \frac{1}{2}\ip{\mC(y)y}{y} + \frac{1}{2}\ip{\mC(y)x}{x}
\\
&= \norm{y}_p^{p} + p\sum_{i=1}^{d} x_iy_i\abs{y_i}^{p-2} - p\sum_{i=1}^{d} \abs{y_i}^p +\frac{1}{2}\ip{\mC(x-y)}{x-y}
\\
&= \norm{y}_p^{p} + \sum_{i=1}^{d} py_i\abs{y_i}^{p-2}(x_i - y_i) + \frac{1}{2}\matnormsq{x - y}{\mC(y)}
\\
&= \norm{y}_p^{p} + \ip{\nabla f(y)}{x - y} + \frac{1}{2}\matnormsq{x - y}{\mC(y)}. 
\end{align*}
\end{proof}
\end{lemma}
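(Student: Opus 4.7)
The plan is to reduce the target inequality to a purely scalar identity in the quantities $\|x\|_p$, $\|y\|_p$ and $\langle \nabla f(y),x\rangle$, and then attack that scalar inequality with the Young/H\"older inequality pair (much as in the proof of the previous lemma). The key structural observation is that $\mC(y)$ is \emph{rank-one}, so the matrix norm collapses to a scalar: with $g\eqdef\nabla f(y)$,
\begin{equation*}
    \tfrac{1}{2}\matnormsq{x-y}{\mC(y)} \;=\; \frac{\langle g,x-y\rangle^{2}}{2p\|y\|_p^{p}}.
\end{equation*}
Combined with the Euler-type identity $\langle g,y\rangle = p\|y\|_p^{p}$ (immediate from $g_i = py_i|y_i|^{p-2}$), writing $\beta\eqdef\langle g,x\rangle$ turns the claim $\|x\|_p^p \geq \|y\|_p^p + \langle g,x-y\rangle + \tfrac{1}{2}\matnormsq{x-y}{\mC(y)}$ into the equivalent scalar inequality
\begin{equation*}
    \|x\|_p^{p} \;\geq\; \bigl(1-\tfrac{p}{2}\bigr)\|y\|_p^{p} \;+\; \frac{\beta^{2}}{2p\|y\|_p^{p}}.
\end{equation*}

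Next, I would upper-bound $\beta^{2}$ so as to bound the right-hand side. H\"older's inequality, applied to the pairing $\sum_i x_i\cdot (y_i|y_i|^{p-2})$ with exponents $p$ and $p/(p-1)$, yields
\begin{equation*}
    \beta^{2} \;=\; \Bigl(p\sum_{i=1}^{d} x_i\,y_i|y_i|^{p-2}\Bigr)^{\!2} \;\leq\; p^{2}\|x\|_p^{2}\,\|y\|_p^{2p-2},
\end{equation*}
so that $\tfrac{\beta^{2}}{2p\|y\|_p^{p}} \leq \tfrac{p}{2}\|x\|_p^{2}\|y\|_p^{p-2}$. It therefore suffices to establish
\begin{equation*}
    \|x\|_p^{p} \;\geq\; \bigl(1-\tfrac{p}{2}\bigr)\|y\|_p^{p} + \tfrac{p}{2}\|x\|_p^{2}\|y\|_p^{p-2},
\end{equation*}
and this is exactly what Young's inequality with conjugate exponents $p/2$ and $p/(p-2)$ (valid for $p>2$) delivers after one line of rearrangement---precisely the step already used in Lemma~\ref{app:pnormp1}. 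The edge case $p=2$ is handled separately, as there $\mC(y)=\tfrac{2}{\|y\|^{2}}yy^{\top}$ and the inequality reduces to absolute convexity of $\|\cdot\|$ squared, which the excerpt already notes.

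I expect the main obstacle to be purely bookkeeping: the rank-one curvature matrix produces a \emph{squared linear functional} of $x$ on the right-hand side, whose sign is not obviously controllable. Converting that quadratic into something that can be absorbed by $\|x\|_p^{p}$ requires finding the correct pair of inequalities (H\"older to pass from the signed quantity $\beta$ to $\|x\|_p\|y\|_p^{p-1}$, then Young to trade $\|x\|_p^{2}\|y\|_p^{p-2}$ for $\|x\|_p^{p}$ at the correct conversion rate). The numerology must match---Young's exponents $(p/2,p/(p-2))$ produce the coefficient $p/2$ which exactly cancels the H\"older bound's coefficient, and the leftover $(1-p/2)\|y\|_p^{p}$ term is precisely what balances the constants on both sides of the reduced scalar inequality. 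Once this alignment is verified, the proof closes without further effort.
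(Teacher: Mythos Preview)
Your proposal is correct and uses the same two core inequalities as the paper's proof---H\"older to control $\langle \nabla f(y),x\rangle$ by $p\|x\|_p\|y\|_p^{p-1}$, and Young with exponents $(p/2,\,p/(p-2))$ to trade $\|x\|_p^{2}\|y\|_p^{p-2}$ for $\|x\|_p^{p}$. The paper arrives at the same place but in the opposite direction: it first combines H\"older and Young into the inequality $\|x\|_p^{p}\|y\|_p^{p}+(\tfrac{p}{2}-1)\|y\|_p^{2p}-\tfrac{p}{2}\bigl(\sum_i x_iy_i|y_i|^{p-2}\bigr)^{2}\geq 0$, and then spends two pages of coordinate-level algebra expanding sums and regrouping terms until the expression matches $f(y)+\langle\nabla f(y),x-y\rangle+\tfrac{1}{2}\|x-y\|_{\mC(y)}^{2}$. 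Your route is cleaner precisely because you exploit the rank-one structure of $\mC(y)$ and the Euler identity $\langle\nabla f(y),y\rangle=p\|y\|_p^{p}$ at the outset, collapsing the target to the scalar inequality $\|x\|_p^{p}\geq(1-\tfrac{p}{2})\|y\|_p^{p}+\tfrac{\beta^{2}}{2p\|y\|_p^{p}}$ before invoking any estimates; this replaces all of the paper's bookkeeping with a single completion-of-the-square calculation.
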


\begin{lemma} \label{lem: B matrix for the P norm}
Suppose $p \geq 2$. The function $f(x) = \norm{x}^p$ satisfies \Cref{eqn:lower bound main assumption} with either of the two curvature mappings:
\begin{equation*}
    (1) \ \mC(y) = p\norm{y}_2^{p-2} \mI. \qquad (2) \ \mC(y) = p\norm{y}_2^{p-4}yy^\top.
\end{equation*}
\begin{proof}
\par $(1)$. When $p = 2$, we have $\mC(y) = 2\mI$. Therefore, $f$ satisfies \Cref{eqn:lower bound main assumption} because $\norm{x}^2$ is $2$-strongly-convex. 
\newline
Now suppose $p > 2$. By applying Young's Inequality we get that:
\begin{equation*}
    \dfrac{(\norm{x}^2)^\frac{p}{2}}{\frac{p}{2}} + \dfrac{(\norm{y}^{p-2})^\frac{p}{p-2}}{\frac{p}{p-2}} \geq \norm{x}^2\norm{y}^{p-2},
\end{equation*}
and rearranging
\begin{equation*}
    \norm{x}^p + \left(\frac{p}{2} - 1\right)\norm{y}^p \geq \frac{p}{2}\norm{y}^{p-2}\norm{x}^2.
\end{equation*}
We get our result from the above inequality and by observing that $\nabla f(y) = p\norm{y}_2^{p-2}y$.
\begin{align*}
    \norm{x}_2^p &\geq \left(1 - \frac{p}{2}\right)\norm{y}_2^p + \frac{p}{2}\norm{y}_2^{p-2}\norm{x}_2^2 \\
    &= \norm{y}_2^p - p\norm{y}_2^p  + \frac{p}{2}\norm{y}_2^{p-2}\norm{x}_2^2 + \frac{p}{2}\norm{y}_2^p \\
    &= \norm{y}_2^p + p\norm{y}_2^{p-2}\ip{x}{y} - p\norm{y}_2^{p-2}\ip{x}{y} - p\norm{y}_2^p  + \frac{p}{2}\norm{y}_2^{p-2}\norm{x}_2^2 + \frac{p}{2}\norm{y}_2^p \\
    &= \norm{y}_2^p + p\norm{y}_2^{p-2}\ip{x}{y} - p\norm{y}_2^p  - p\norm{y}_2^{p-2}\ip{x}{y} + \frac{p}{2}\norm{y}_2^{p-2}\ip{x}{x} + \frac{p}{2}\norm{y}_2^{p-2}\ip{y}{y} \\
    &= \norm{y}_2^p + p\norm{y}_2^{p-2}\ip{x}{y} - p\norm{y}_2^{p - 2}\ip{y}{y}  - \ip{\mC(y)x}{y} + \frac{1}{2}\ip{\mC(y)x}{x} + \frac{1}{2}\ip{\mC(y)y}{y} \\
    &= \norm{y}_2^p + p\norm{y}_2^{p-2}\ip{x}{y} - p\norm{y}_2^{p - 2}\ip{y}{y}  + \frac{1}{2}\ip{\mC(y)(x-y)}{x-y} \\
    &= \norm{y}_2^p + \ip{p\norm{y}_2^{p-2}y}{x - y}  + \frac{1}{2}\matnormsq{x - y}{\mC(y)} \\
    &= \norm{y}_2^p + \ip{p\norm{y}_2^{p-2}y}{x - y}  + \frac{1}{2}\matnormsq{x - y}{\mC(y)} \\
    &= \norm{y}_2^p + \ip{\nabla f(y)}{x - y}  + \frac{1}{2}\matnormsq{x - y}{\mC(y)}.
\end{align*}
$(2)$ When $p=2$, we have $\mC(y) = \frac{p}{\norm{y}_2^2}yy^\top$. Since $\norm{x}^2$ is a square of an absolutely convex function, it satisfies \Cref{eqn:lower bound main assumption} with curvature mapping $\mC(y)$. For more details, refer to section~\ref{subsec:absolutely convex functions main text} and~\ref{sec: more absolute convexity}. 
\newline
Suppose that $p > 2$. As done previously, we can use Young's Inequality to obtain:
\begin{equation}\label{app:2normp_eq1}
    \norm{x}^p + \left(\frac{p}{2} - 1\right)\norm{y}^p \geq \frac{p}{2}\norm{y}^{p-2}\norm{x}^2.
\end{equation}
Moreover, by Cauchy-Schwarz:
\begin{equation*}
    \norm{x}_2\norm{y}_2 \geq \abs{\ip{x}{y}} \Longrightarrow  \norm{x}_2^2\norm{y}_2^2 \geq (\ip{x}{y})^2.
\end{equation*}
Multiplying both sides by $-\frac{p}{2}\norm{y}^{p-4}$ we get,
\begin{equation*}
    -\frac{p}{2} \norm{x}^2\norm{y}^{p-2} \leq -\frac{p}{2}\norm{y}^{p-4}(\ip{x}{y})^2,
\end{equation*}
Adding $\norm{x}_2^p + \left(\frac{p}{2} - 1\right)\norm{y}_2^p$ to both sides and by using \Cref{app:2normp_eq1},
\begin{equation*}
    \norm{x}^p -\frac{p}{2}\norm{y}^{p-4}(\ip{x}{y})^2 + \left(\frac{p}{2} - 1\right)\norm{y}^p \geq 0.
\end{equation*}
We can reorder the terms to obtain the result:
\begin{align*}
\norm{x}_2^p &\geq \left(1 - \frac{p}{2}\right)\norm{y}_2^p + \frac{p}{2}\norm{y}_2^{p-4}(\ip{x}{y})^2 
\\
&= \norm{y}_2^p - p\norm{y}_2^p + \frac{p}{2}\norm{y}_2^{p-4}(\ip{x}{y})^2 + \frac{p}{2}\norm{y}_2^p 
\\
&= \norm{y}_2^p - p\norm{y}_2^p + \frac{p}{2}\norm{y}_2^{p-4}x^\top y y^\top x + \frac{p}{2}\norm{y}_2^{p-4}y^\top yy^\top y 
\\
&= \norm{y}_2^p - p\norm{y}_2^p + \frac{1}{2}\ip{\mC(y)x}{x} + \frac{1}{2}\ip{\mC(y)y}{y} 
\\
&= \norm{y}_2^p - p\norm{y}_2^p + p\norm{y}_2^{p-2}\ip{x}{y} - p\norm{y}_2^{p-2}\ip{x}{y} + \frac{1}{2}\ip{\mC(y)x}{x} + \frac{1}{2}\ip{\mC(y)y}{y} 
\\
&= \norm{y}_2^p - p\norm{y}_2^p + p\norm{y}_2^{p-2}\ip{x}{y} - p\norm{y}_2^{p-4}x^\top y y^\top y + \frac{1}{2}\ip{\mC(y)x}{x} + \frac{1}{2}\ip{\mC(y)y}{y}
\\
&= \norm{y}_2^p - p\norm{y}_2^p + p\norm{y}_2^{p-2}\ip{x}{y} - \ip{\mC(y)x}{y} + \frac{1}{2}\ip{\mC(y)x}{x}+ \frac{1}{2}\ip{\mC(y)y}{y} \\
&= \norm{y}_2^p - p\norm{y}_2^p + p\norm{y}_2^{p-2}\ip{x}{y} + \ip{\mC(y)(x-y)}{x - y} 
\\
&= \norm{y}_2^p - p\norm{y}_2^{p-2}\ip{y}{y} + p\norm{y}_2^{p-2}\ip{x}{y} +  \frac{1}{2}\matnormsq{x - y}{\mC(y)}
\\
&= \norm{y}_2^p + \ip{p\norm{y}_2^{p-2}y}{x - y} + \frac{1}{2}\matnormsq{x - y}{\mC(y)}
\\
&= \norm{y}_2^p + \ip{\nabla f(y)}{x - y}  + \frac{1}{2}\matnormsq{x - y}{\mC(y)}.
\end{align*}
\end{proof}
\end{lemma}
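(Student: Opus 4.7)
The plan is to mimic the structure used in the preceding two lemmas (for $\|\cdot\|_p^p$ and $\|\cdot\|_p^2$): handle the base case $p=2$ separately and then, for $p>2$, apply Young's inequality as the one non-trivial analytic input, followed by careful algebraic rearrangement to expose the target quadratic form $\frac{1}{2}\|x-y\|^2_{\mC(y)}$ together with $\langle \nabla f(y), x-y\rangle$, where $\nabla f(y) = p\|y\|_2^{p-2}y$.

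For curvature mapping $(1)$, the case $p=2$ reduces to the strong convexity of $\|\cdot\|_2^2$ with constant $2$, giving $\mC(y) = 2\mI$. For $p>2$, I would use Young's inequality with conjugate exponents $p/2$ and $p/(p-2)$, applied to $a=\|x\|_2^2$ and $b=\|y\|_2^{p-2}$, to obtain
\begin{equation*}
\|x\|_2^p + \left(\tfrac{p}{2}-1\right)\|y\|_2^p \;\geq\; \tfrac{p}{2}\|y\|_2^{p-2}\|x\|_2^2.
\end{equation*}
From here the steps are purely algebraic: rewrite $\|x\|_2^2 = \|x-y\|_2^2 + 2\langle x,y\rangle - \|y\|_2^2$, add and subtract $p\|y\|_2^{p-2}\langle x,y\rangle$ and $p\|y\|_2^p$ to reconstruct $\langle \nabla f(y), x-y\rangle$ on the right-hand side, and recognize the remainder as $\frac{1}{2}\|x-y\|^2_{p\|y\|_2^{p-2}\mI}$. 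This mirrors the last chain of equalities in Lemma~\ref{app:pnormp1} almost exactly, only with a scalar rather than diagonal weighting.

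For curvature mapping $(2)$, the case $p=2$ follows from absolute convexity of $\|\cdot\|_2$ applied through the calculus developed in Section~\ref{subsec:absolutely convex functions main text}: with $\phi(x)=\|x\|_2$, one has $\nabla\phi(y)=y/\|y\|_2$, and squaring yields $\mC(y)=\frac{2}{\|y\|_2^2}yy^\top$. For $p>2$, I would chain Young's inequality with Cauchy--Schwarz: since $\langle x,y\rangle^2 \leq \|x\|_2^2\|y\|_2^2$, multiplying by $\|y\|_2^{p-4}$ gives $\|y\|_2^{p-4}\langle x,y\rangle^2 \leq \|x\|_2^2\|y\|_2^{p-2}$. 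Combining this with Young's inequality produces
\begin{equation*}
\|x\|_2^p + \left(\tfrac{p}{2}-1\right)\|y\|_2^p \;\geq\; \tfrac{p}{2}\|y\|_2^{p-4}\langle x,y\rangle^2,
\end{equation*}
and the same algebraic reorganization as in case $(1)$ — now writing $\langle x,y\rangle^2$ in terms of $\langle x-y,y\rangle$ and $\|y\|_2^2$, and recognizing $\ip{\mC(y)u}{v} = p\|y\|_2^{p-4}\langle u,y\rangle\langle y,v\rangle$ — yields the claim.

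The main obstacle, as in the two preceding lemmas, is not the inequality itself but the bookkeeping in the final algebraic rearrangement: one needs to simultaneously materialize the term $\langle \nabla f(y), x-y\rangle = p\|y\|_2^{p-2}\langle y, x-y\rangle$ and a clean quadratic form $\frac{1}{2}\|x-y\|^2_{\mC(y)}$ from the raw Young/Cauchy–Schwarz output, by strategically adding and subtracting terms of the form $p\|y\|_2^{p-2}\langle x,y\rangle$ and $p\|y\|_2^p$. Once the corresponding expansions $\ip{\mC(y)(x-y)}{x-y} = \ip{\mC(y)x}{x} - 2\ip{\mC(y)x}{y} + \ip{\mC(y)y}{y}$ are written out, the identification is routine, and $\mC(y)\succeq \mzero$ is immediate in both cases (a positive multiple of $\mI$, respectively a positive multiple of $yy^\top$).
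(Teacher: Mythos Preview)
Your proposal is correct and matches the paper's proof essentially step for step: both parts are handled by the same base-case arguments ($2$-strong convexity for (1), absolute convexity of $\|\cdot\|_2$ for (2)), and for $p>2$ both use Young's inequality with exponents $p/2$ and $p/(p-2)$ (augmented by Cauchy--Schwarz in part (2)) followed by the add-and-subtract algebraic regrouping you describe to expose $\langle \nabla f(y), x-y\rangle$ and $\tfrac{1}{2}\|x-y\|_{\mC(y)}^2$.
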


\begin{lemma}\label{app:p_norm_square_lb}
Suppose $p \geq 2$ and let $f: \R^d \to \R$ be defined as $f(x) = \absbr{x}_p^2$. Then $f$ satisfies \Cref{eqn:lower bound main assumption} with the following curvature mapping:
\begin{equation*}
     \mC(y) = \frac{2}{\absbr{y}^{p-2}_p}\Diag(\abs{y_1}^{p-2}, \ \dots, \ \abs{y_d}^{p-2}) 
\end{equation*}
\begin{proof}
Using Holder's inequality we can see that,
\begin{equation*}
    \left(\sum_{i=1}^{d} \abs{x_i}^2\abs{y_i}^{p-2}\right)^p \leq \left(\sum_{i=1}^{d} \abs{x_i}^p \right)^2\left(\sum_{i=1}^{d} \abs{y_i}^{p-2} \right)^{p-2} = \absbr{x}^{2p}_p\absbr{y}^{p(p-2)}_p
\end{equation*}    
We raise both sides to the power of $\frac{1}{p}$ and proceed by rearranging some terms:
\begin{align*}
    \absbr{x}^2_p &\geq \frac{1}{\absbr{y}^{p-2}_p}\sum_{i=1}^{d} \abs{x_i}^2\abs{y_i}^{p-2} \\
    &= \absbr{y}^2_p - \absbr{y}^2_p + \frac{1}{\absbr{y}^{p-2}_p}\sum_{i=1}^{d} \abs{x_i}^2\abs{y_i}^{p-2} \\
    &= \absbr{y}^2_p - \frac{1}{\absbr{y}^{p-2}_p}\sum_{i=1}^d \abs{y_i}^p + \frac{1}{\absbr{y}^{p-2}_p}\sum_{i=1}^{d} \abs{x_i}^2\abs{y_i}^{p-2} \\
    &= \absbr{y}^2_p - \frac{2}{\absbr{y}^{p-2}_p}\sum_{i=1}^d \abs{y_i}^p + \frac{1}{\absbr{y}^{p-2}_p}\sum_{i=1}^{d} \abs{x_i}^2\abs{y_i}^{p-2} + \frac{1}{\absbr{y}^{p-2}_p}\sum_{i=1}^d \abs{y_i}^p \\
    &= \absbr{y}^2_p + \frac{2}{\absbr{y}^{p-2}_p}\sum_{i=1}^d y_i\abs{y_i}^{p-2}x_i - \frac{2}{\absbr{y}^{p-2}_p}\sum_{i=1}^d \abs{y_i}^p + \frac{1}{\absbr{y}^{p-2}_p}\sum_{i=1}^{d} \abs{x_i}^2\abs{y_i}^{p-2} \\
    &\phantom{+\absbr{x}} - \frac{2}{\absbr{y}^{p-2}_p}\sum_{i=1}^d y_i\abs{y_i}^{p-2}x_i + \frac{1}{\absbr{y}^{p-2}_p}\sum_{i=1}^d \abs{y_i}^p \\
\end{align*}
We can arrive at our result by realizing that the last three terms are equal to $\matnormsq{x-y}{\mC(y)}$ and the middle two terms are equal to $\ip{\nabla f(y)}{x - y}$. Observe that $\frac{\partial f}{\partial y_i} = \frac{2}{\absbr{y}^{p-2}_p} y_i\abs{y_i}^{p-2}$. Therefore,
\begin{align*}
    \absbr{x}^2_p &\geq \absbr{y}^2_p + \sum_{i=1}^d \frac{2y_i\abs{y_i}^{p-2}}{\absbr{y}^{p-2}_p}(x_i - y_i) + \frac{1}{\absbr{y}^{p-2}_p}\sum_{i=1}^{d} \abs{x_i}^2\abs{y_i}^{p-2} \\
    &\phantom{\absbr{x}\qquad}  - \frac{2}{\absbr{y}^{p-2}_p}\sum_{i=1}^d y_i\abs{y_i}^{p-2}x_i + \frac{1}{\absbr{y}^{p-2}_p}\sum_{i=1}^d \abs{y_i}^p \\
    &= \absbr{y}^2_p + \ip{\nabla f(y)}{x - y} + \frac{1}{2}\ip{\mC(y)x}{x} - \ip{\mC(y)x}{y} + \frac{1}{2}\ip{\mC(y)y}{y} \\
    &= \absbr{y}^2_p + \ip{\nabla f(y)}{x - y} + \frac{1}{2}\ip{\mC(y)(x-y)}{x-y} \\
    &= \absbr{y}^2_p + \ip{\nabla f(y)}{x - y} + \frac{1}{2}\matnormsq{x-y}{\mC(y)} \\
\end{align*}
\end{proof}
\end{lemma}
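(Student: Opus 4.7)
My plan is to reduce the squared $p$-norm bound to a single application of H\"older's inequality, followed by a clean algebraic rearrangement using the identity $\mC(y) y = \nabla f(y)$.

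First, I would apply H\"older's inequality with exponents $r = p/2$ and $s = p/(p-2)$ to the product $|x_i|^2 \cdot |y_i|^{p-2}$:
\[
\sum_{i=1}^d |x_i|^2 |y_i|^{p-2} \leq \left( \sum_{i=1}^d |x_i|^p \right)^{2/p} \left( \sum_{i=1}^d |y_i|^{p} \right)^{(p-2)/p} = \|x\|_p^2 \cdot \|y\|_p^{p-2}.
\]
Dividing by $\|y\|_p^{p-2}$ and recognizing that $\sum_i |x_i|^2 |y_i|^{p-2}$ is nothing but $\tfrac{1}{2} \langle \mC(y) x, x\rangle$ for the proposed $\mC(y)$, this rearranges to the compact inequality
\[
\|x\|_p^2 \;\geq\; \tfrac{1}{2}\,\matnormsq{x}{\mC(y)}.
\]

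Next, I would perform the standard ``shift by $y$'' expansion:
\[
\tfrac{1}{2}\matnormsq{x}{\mC(y)} = \tfrac{1}{2}\matnormsq{x-y}{\mC(y)} + \anglebr{\mC(y) y,\, x - y} + \tfrac{1}{2}\matnormsq{y}{\mC(y)}.
\]
The key computations are then (i) $\mC(y) y = \frac{2}{\|y\|_p^{p-2}} (y_i |y_i|^{p-2})_i = \nabla f(y)$, which is a direct chain-rule calculation on $f(y) = \|y\|_p^2$, and (ii) $\matnormsq{y}{\mC(y)} = \frac{2}{\|y\|_p^{p-2}} \sum_i |y_i|^p = 2\|y\|_p^2 = 2 f(y)$. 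Plugging these in yields exactly
\[
\|x\|_p^2 \;\geq\; \|y\|_p^2 + \anglebr{\nabla f(y),\, x - y} + \tfrac{1}{2}\matnormsq{x-y}{\mC(y)},
\]
which is \eqref{eqn:lower bound main assumption} for $f = \|\cdot\|_p^2$.

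There is no real obstacle here: the proposed $\mC(y)$ is precisely tuned so that the ``curvature term'' coming from H\"older's inequality matches the Bregman correction, while Euler's identity for the $2$-homogeneous function $\|y\|_p^2$ (namely $\langle \nabla f(y), y\rangle = 2 f(y)$) guarantees that the cross term $\langle \mC(y) y, x - y\rangle$ collapses to $\langle \nabla f(y), x - y\rangle$. The only slightly subtle point is the $p = 2$ boundary case, where the H\"older step degenerates ($s = \infty$); here $\mC(y) = 2\mI$ and the inequality is just $2$-strong convexity of $\|\cdot\|_2^2$, so it can be treated as a trivial side remark before the main argument.
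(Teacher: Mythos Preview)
Your proof is correct and takes essentially the same approach as the paper: both start from the identical H\"older inequality $\sum_i |x_i|^2|y_i|^{p-2} \leq \|x\|_p^2\|y\|_p^{p-2}$, then algebraically rearrange the right-hand side into the form $f(y)+\langle\nabla f(y),x-y\rangle+\tfrac12\|x-y\|_{\mC(y)}^2$. The only difference is presentational---you carry out the rearrangement cleanly in matrix form via the identities $\mC(y)y=\nabla f(y)$ and $\tfrac12\|y\|_{\mC(y)}^2=f(y)$, whereas the paper does the equivalent add-and-subtract manipulation coordinate by coordinate.
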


\begin{lemma}
Suppose $p \geq 1$. Let $g: \R^d \to \R$ be $g(x) = \norm{x}_p$. Then $f \coloneq g^2$ satisfies \Cref{eqn:lower bound main assumption} with the following curvature mapping:
\begin{equation*}
    \mC(y) = 2 \nabla g(y) \nabla g(y)^\top. 
\end{equation*}
\begin{proof}
By \Cref{app:pnorm_abs_conv}, $g$ is absolutely convex for $p \geq 1$. Therefore, $g^2$ satisfies \Cref{eqn:lower bound main assumption} with curvature mapping $\mC$. 
\end{proof}
\end{lemma}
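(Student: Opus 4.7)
The plan is to reduce the claim to two ingredients already established in the paper: (i) the $L_p$ norm is absolutely convex for $p \geq 1$, as stated in Example~\ref{ex:ac} and proved in the appendix (referenced as Lemma~\ref{app:pnorm_abs_conv}); and (ii) the squaring construction from Section~\ref{subsec:absolutely convex functions main text}, which shows that for any absolutely convex $\phi$, the function $\phi^2$ satisfies \eqref{eqn:lower bound main assumption} with curvature mapping $2 \nabla \phi(y) \nabla \phi(y)^\top$.

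First I would invoke absolute convexity of $g(x) = \|x\|_p$, which gives
\begin{equation*}
g(x) \;\geq\; \bigl| g(y) + \langle \nabla g(y), x - y\rangle \bigr| \qquad \forall x, y \in \R^d,
\end{equation*}
where $\nabla g(y)$ is a subgradient if $g$ is not differentiable at $y$. Since both sides are non-negative, squaring preserves the inequality, yielding
\begin{equation*}
g(x)^2 \;\geq\; g(y)^2 + 2 g(y) \langle \nabla g(y), x-y\rangle + \bigl\langle \nabla g(y), x - y\bigr\rangle^{2}.
\end{equation*}

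Next I would rewrite the right-hand side in the target form. By the chain rule, $\nabla f(y) = 2 g(y) \nabla g(y)$, so the cross term is exactly $\langle \nabla f(y), x - y\rangle$. The quadratic term can be expressed as $\langle \nabla g(y) \nabla g(y)^\top (x-y), \, x-y\rangle = \tfrac12 \|x-y\|_{\mC(y)}^2$ for $\mC(y) = 2\nabla g(y)\nabla g(y)^\top$. Note that $\mC(y) \in \Spsd$ because it is a rank-one outer product. Assembling these pieces gives
\begin{equation*}
f(x) \;\geq\; f(y) + \langle \nabla f(y), x - y\rangle + \tfrac{1}{2} \matnormsq{x-y}{\mC(y)},
\end{equation*}
which is precisely \eqref{eqn:lower bound main assumption}.

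The main subtlety, and essentially the only obstacle, is non-differentiability: for $p = 1$ the norm $g$ is non-smooth on all coordinate hyperplanes, and for $p > 1$ the origin is a singular point. The argument goes through unchanged provided the chain rule identity $\nabla f(y) = 2 g(y) \nabla g(y)$ is interpreted via subgradients; this is valid because $t \mapsto t^2$ is non-decreasing and convex on $[0, \infty)$ and $g(y) \geq 0$, so subgradients of $g^2$ are exactly $2 g(y) \partial g(y)$. One simply picks the same subgradient consistently on both sides of the squaring step. No further computation is required since the squaring template in Section~\ref{subsec:absolutely convex functions main text} already covers this case verbatim.
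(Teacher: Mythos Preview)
Your proposal is correct and follows essentially the same approach as the paper: invoke absolute convexity of $\|\cdot\|_p$ (Lemma~\ref{app:pnorm_abs_conv}) and then apply the squaring construction from Section~\ref{subsec:absolutely convex functions main text}. Your version is in fact more explicit than the paper's two-sentence proof, spelling out the squaring step and the subgradient chain rule, but the underlying argument is identical.
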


\subsection{Lower and upper bound examples} \label{subsec:more examples on the lower and upper bounds}
\begin{lemma}
Let $\mG$ be a symmetric positive semi-definite matrix. Let $f: \R^d \to \R$ where ${f(x) = \matnormsq{x}{\mG}}$. Then $f$ satisfies \Cref{ass:main assumption upper and lower bound} with curvature mapping $\mC(y) \equiv 2\mG$ and constant $L_{\mC} = 0$.
\begin{proof}
We start by computing:
\begin{align*}
    0 &= \matnormsq{y}{\mG} -2\matnormsq{y}{\mG} + \matnormsq{y}{\mG} \\
    &= -\matnormsq{x}{\mG} + \matnormsq{x}{\mG} + \matnormsq{y}{\mG} -2\matnormsq{y}{\mG} + \matnormsq{y}{\mG}  \\
    &= -\matnormsq{x}{\mG} + \matnormsq{y}{\mG} -2\ip{\mG y}{y} + \ip{\mG y}{y} + \ip{\mG x}{x} \\
    &= -\matnormsq{x}{\mG} + \matnormsq{y}{\mG} -2\ip{\mG y}{y} + \ip{\mG y}{y} + \ip{\mG x}{x} \\
    &= -\matnormsq{x}{\mG} + \matnormsq{y}{\mG} + 2\ip{\mG y}{x} -2\ip{\mG y}{y} + \ip{\mG y}{y} -  2\ip{\mG y}{x} + \ip{\mG x}{x} \\
    &= -\matnormsq{x}{\mG} + \matnormsq{y}{\mG} + \ip{2\mG y}{x - y} + \ip{\mG y}{y} -  2\ip{\mG y}{x} + \ip{\mG x}{x} \\
    &= -\matnormsq{x}{\mG} + \matnormsq{y}{\mG} + \ip{2\mG y}{x - y} + \frac{1}{2}2\matnormsq{x - y}{\mG}.
\end{align*}

Rearranging the terms we get

\begin{align*}
\matnormsq{x}{\mG} &= \matnormsq{y}{\mG} + \ip{2\mG y}{x - y} + \frac{1}{2}2\matnormsq{x - y}{\mG}  
\\
&= \matnormsq{y}{\mG} + \ip{\nabla f(y)}{x - y} +\frac{1}{2}\matnormsq{x - y}{2\mC(y)}.
\end{align*}
\end{proof}
\end{lemma}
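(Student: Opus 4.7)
The plan is to observe that for a quadratic form $f(x) = \matnormsq{x}{\mG} = \ip{\mG x}{x}$, the second-order Taylor expansion around any point is exact, so both inequalities in \Cref{ass:main assumption upper and lower bound} will reduce to the same identity. Since $L_{\mC}=0$, the upper and lower bounds from the assumption coincide; I need to show both as equalities with $\mC(y)\equiv 2\mG$, i.e.,
\[
f(x) = f(y) + \ip{\nabla f(y)}{x-y} + \tfrac{1}{2}\matnormsq{x-y}{2\mG}.
\]

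First, I would compute the gradient. Since $\mG$ is symmetric, $\nabla f(y) = 2\mG y$. Next, I would form the Bregman divergence $D_f(x,y) \eqdef f(x) - f(y) - \ip{\nabla f(y)}{x-y}$ and expand it directly:
\[
D_f(x,y) = \ip{\mG x}{x} - \ip{\mG y}{y} - 2\ip{\mG y}{x-y}.
\]
Using symmetry of $\mG$ to rewrite $2\ip{\mG y}{x} = \ip{\mG y}{x} + \ip{\mG x}{y}$, this collapses to
\[
D_f(x,y) = \ip{\mG(x-y)}{x-y} = \matnormsq{x-y}{\mG} = \tfrac{1}{2}\matnormsq{x-y}{2\mG},
\]
which is precisely the right-hand side required.

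Because this identity is symmetric in supplying both a lower and upper bound, \eqref{eqn:lower bound main assumption} is met with $\mC(y)\equiv 2\mG$, and \eqref{eqn:upper bound main assumption} is met with the same $\mC(y)$ and $L_{\mC}=0$. The positive semi-definiteness of $2\mG$ is inherited from $\mG$, so $\mC$ is a valid curvature mapping. There is no real obstacle here beyond careful bookkeeping of the symmetry of $\mG$ when expanding the cross term; the quadratic structure forces everything to line up as an equality. As a sanity check, this is consistent with the remark after \Cref{thm:b87big87=-=0iu-f9dhufdfg} on convex quadratics: with $\mC(x)=\nabla^2 f(x) = 2\mG$ and $L_{\mC}=0$, both \algname{LCD1} and \algname{LCD2} collapse to Newton's method and terminate in a single iteration.
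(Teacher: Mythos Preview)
Your proof is correct and follows essentially the same approach as the paper: both establish the exact identity $f(x) = f(y) + \ip{\nabla f(y)}{x-y} + \tfrac{1}{2}\matnormsq{x-y}{2\mG}$ by direct algebraic expansion of the quadratic. Your version via the Bregman divergence is somewhat cleaner than the paper's chain of equalities starting from $0$, but the mathematical content is identical.
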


\begin{lemma}
Let $p \geq 2$. Suppose $g: \R^d \to \R$ with $g(x) = \norm{x}_p^2$. Let $f \coloneqq g^2$. Then $f$ satisfies  \Cref{ass:main assumption upper and lower bound} with constant $L_{\mC} = 2(p - 1)$ and either of the two curvature mappings:
\begin{equation*}
    \mB(y) = \frac{2}{\absbr{y}^{p-2}_p}\Diag(\abs{y_1}^{p-2}, \ \dots, \ \abs{y_d}^{p-2}) \qquad \mB(y) = 2\nabla g(y) \nabla g(y)^\top.
\end{equation*}
\begin{proof}
In Lemma \ref{app:p_norm_square_lb}, we proved that $f$ satisfies inequality \eqref{eqn:lower bound main assumption} with the abovementioned curvature mappings. Now we will show that $f$ is smooth so it satisfies inequality \eqref{eqn:upper bound main assumption}.  For $p = 2$. it is clear that $f$ is $2$-smooth. We focus on the case where $p > 2$. 

Since $p > 2$ we have that $1 < q < 2$ where $q = \frac{p}{p - 1}$. \citet{KakadeRegularization2012} proved that $h(x) = \frac{1}{2}\absbr{x}^2_q$ is strongly-convex with respect to the $L_q$ norm with $\mu = q-1$. We also know that $L_p$ norms are a decreasing function of $p$. Therefore, $h(x) = \frac{1}{2}\absbr{x}^2_q$ is also strongly-convex with respect to the $L_2$ norm because $\absbr{x-y}^2_2 \leq \absbr{x-y}^2_q$:
\begin{align*}
    \frac{1}{2}\absbr{x}^2_q &\geq \frac{1}{2}\absbr{y}^2_q + \ip{\nabla h(y)}{x - y} + \frac{1}{2}\mu\absbr{x-y}^2_q \\
    &\geq \frac{1}{2}\absbr{y}^2_q + \ip{\nabla h(y)}{x - y} + \frac{1}{2}\mu\absbr{x-y}^2_2
\end{align*}
The Frenchel conjugate of $\frac{1}{2}\absbr{\cdot}^2_q$ is $\frac{1}{2}\absbr{\cdot}^2_p$ because the the dual norm of $\absbr{\cdot}_q$ is $\absbr{\cdot}_p$. \citet{KakadeRegularization2012} showed that if $h$ is $\mu$-strongly-convex then the Frenchel conjugate of $h$ is $\frac{1}{\mu}$-smooth. Therefore, $\frac{1}{2}\absbr{x}^2_p$ is $L$-smooth with $L = \frac{1}{q - 1} = p - 1$. Thus, $f(x) = \absbr{x}^2_p$ is smooth with constant ${2(p-1)}$.
\end{proof}
\end{lemma}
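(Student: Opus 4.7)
The statement has two parts: the lower bound \eqref{eqn:lower bound main assumption} with either of the two curvature mappings, and the upper bound \eqref{eqn:upper bound main assumption} with constant $L_{\mC} = 2(p-1)$. My plan is to reduce the lower bound to machinery already assembled in the appendix, and to prove the upper bound via Fenchel-conjugate duality.

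First I would dispose of the lower bound. The diagonal mapping is exactly the content of Lemma~\ref{app:p_norm_square_lb}, so nothing further is required there. For the outer-product mapping $\mB(y) = 2\nabla g(y)\nabla g(y)^\top$ with $g(x) = \norm{x}_p$, I would instantiate the general recipe of \Cref{subsec:absolutely convex functions main text} with $n=1$ and $\phi_1 = g$: Example~\ref{ex:ac} states that $\norm{\cdot}_p$ is absolutely convex for $p \geq 1$, and the computation shown immediately before problem~\eqref{eq:g-problem} then produces $\mB(y) = 2 \nabla g(y) \nabla g(y)^\top$ as a valid curvature mapping for $g^2 = \norm{\cdot}_p^2$.

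Next I would prove the upper bound by showing that $f = \norm{\cdot}_p^2$ is Euclidean $L$-smooth with $L = 2(p-1)$; this suffices because $\mB(y) \succeq 0$ implies $\mB(y) + L_{\mC}\mI \succeq L_{\mC}\mI$, so pure Euclidean smoothness dominates the mixed quadratic on the right-hand side of \eqref{eqn:upper bound main assumption}. For $p = 2$ the function has constant Hessian $2\mI$, so smoothness with constant $2$ is immediate. For $p > 2$, let $q = p/(p-1) \in (1,2)$ be the Hölder conjugate exponent. By the classical result of Kakade, Shalev-Shwartz and Tewari, $h(x) = \tfrac12 \norm{x}_q^2$ is $(q-1)$-strongly convex with respect to $\norm{\cdot}_q$; its Fenchel conjugate is $h^*(x) = \tfrac12 \norm{x}_p^2$, and strong-convexity / smoothness duality then yields that $h^*$ is $\tfrac{1}{q-1} = (p-1)$-smooth with respect to $\norm{\cdot}_p$. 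Multiplying by $2$, the function $\norm{\cdot}_p^2$ is $2(p-1)$-smooth in $\norm{\cdot}_p$. Since for $p \geq 2$ we have $\norm{v}_p \leq \norm{v}_2$ for all $v \in \R^d$, the bound $\norm{x - y}_p^2 \leq \norm{x - y}_2^2$ converts this into $2(p-1)$-Euclidean smoothness, as required.

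The step I expect to be the most delicate is the norm conversion at the end: a smoothness bound is useful only when the descent-lemma norm upper-bounds $\norm{\cdot}_p$, which does hold here because $\norm{\cdot}_p \leq \norm{\cdot}_2$ for $p \geq 2$; the reverse inequality would render the argument vacuous. A smaller technical point to check is that $f$ is differentiable everywhere, including where coordinates vanish, but this is automatic for $p \geq 2$ since $t \mapsto |t|^p$ is then $C^1$, so the Fenchel-duality reasoning applies globally on $\R^d$.
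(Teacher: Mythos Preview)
Your proposal is correct and follows essentially the same route as the paper: the lower bound is reduced to Lemma~\ref{app:p_norm_square_lb} and the absolute-convexity recipe of \Cref{subsec:absolutely convex functions main text}, and the upper bound goes through the Kakade--Shalev-Shwartz--Tewari strong-convexity of $\tfrac12\|\cdot\|_q^2$, Fenchel duality, and the norm monotonicity $\|\cdot\|_p \le \|\cdot\|_2$ for $p\ge 2$. The only cosmetic difference is the order of operations---the paper converts the $\|\cdot\|_q$ strong-convexity to Euclidean strong-convexity \emph{before} conjugating, whereas you conjugate first (obtaining $\|\cdot\|_p$-smoothness) and then convert to Euclidean; both orderings use the same inequality and yield the same constant $2(p-1)$.
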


\begin{lemma}
Suppose $a,b \in \R$ and $a \neq 0$ and $b > 0$. The function $f: \R \to \R$ defined as 
\begin{equation*}
    f(x) = \sqrt{ax^4 + b}
\end{equation*}
satisfies the upper and lower bounds in \Cref{ass:main assumption upper and lower bound} with $\mC(y) =\frac{2ay^2}{f(y)}$ and $L_{\mC} = \sqrt{8a}$. 
\begin{proof}
Observe that $x^2 + y^2 \geq 2x^2y^2$. Multiply both sides by $ab$ we get $ab(x^2 + y^2) \geq 2abx^2y^2$. Then we add $a^2x^4y^4 + b^2$ to both sides,
\begin{equation*}
    a^2x^4y^4 + abx^2 + aby^2 + b^2 \geq  a^2x^4y^4 + 2abx^2y^2 + b^2. 
\end{equation*}
We can write this equivalently as,
\begin{equation*}
    (ax^4 + b)(ay^4 + b) \geq (b + ax^2y^2)^2. 
\end{equation*}
Then taking the square root of both sides,
\begin{align*}
\sqrt{(ax^4 + b)(ay^4 + b)} &\geq b + ax^2y^2 = ay^4 + b - ay^4 + ax^2y^2.
\end{align*}
Rearranging the terms,
\begin{align*}
    \sqrt{ax^4 + b}\sqrt{ay^4 + b} &\geq ay^4 + b - ay^4 + ax^2y^2 \\
    &= ay^4 + b - 2ay^4 + ax^2y^2 + ay^4 \\
    &= ay^4 + b + 2axy^3 - 2ay^4 + ax^2y^2 - 2axy^3 + ay^4 \\
    &= ay^4 + b + 2ay^3(x - y) + ay^2(x^2 - 2xy + y^2) \\
    &= ay^4 + b + 2ay^3(x - y) + \frac{1}{2}2ay^2(x-y)^2.
\end{align*}
Divide both sides by $\sqrt{ay^4 + b}$ to obtain our result,
\begin{equation*}
    \sqrt{ax^4 + b} \geq \sqrt{ay^4 + b} + \frac{2ay^3}{\sqrt{ay^4 + b}}(x-y) + \frac{1}{2}\frac{2ay^2}{\sqrt{ax^4 + b}}(x-y)^2.
\end{equation*}
To compute $L_{\mC}$, note that $f$ is $L$-smooth so we can find an upper bound on $f''$ which is given by $L_{\mC} = \sqrt{8a}$. 
\end{proof}
\end{lemma}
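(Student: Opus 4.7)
The plan is to handle the lower bound and the upper bound separately; both ultimately reduce to elementary calculus on scalar functions. First I note that, although the statement only assumes $a\neq 0$, both $\mC(y) = \tfrac{2ay^2}{f(y)}\geq 0$ and $L_{\mC} = \sqrt{8a}\in\R$ silently require $a>0$, which I will assume throughout. A direct differentiation gives $f'(y) = \tfrac{2ay^3}{f(y)}$, so that the choice $\mC(y) = \tfrac{2ay^2}{f(y)}$ is the natural candidate playing the role of the ``local second derivative.''

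For the lower bound~\eqref{eqn:lower bound main assumption}, I would clear the denominator by multiplying the target inequality through by $f(y)=\sqrt{ay^4+b}>0$ and then expand. The cancellations are dramatic: the right-hand side collapses to $b + ax^2y^2$, so the bound reduces to $\sqrt{(ax^4+b)(ay^4+b)} \geq b + ax^2y^2$. Both sides are nonnegative, so squaring is legitimate and the difference simplifies to $ab(x^2-y^2)^2 \geq 0$, which holds since $a,b>0$.

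For the upper bound~\eqref{eqn:upper bound main assumption}, I would exploit the fact that $\mC(y)\geq 0$ and prove the stronger statement that $f$ is globally $L_{\mC}$-smooth with $L_{\mC}=\sqrt{8a}$, i.e., $\sup_{x\in\R} f''(x) \leq \sqrt{8a}$. The Taylor remainder identity $f(x)-f(y)-f'(y)(x-y) = \int_y^x (x-t)\, f''(t)\, \mathrm{d}t$ then yields the quadratic upper bound with constant $\sqrt{8a}$, and appending the nonnegative term $\tfrac{1}{2}\mC(y)(x-y)^2$ only loosens it. A direct calculation gives $f''(x)=\tfrac{2ax^2(ax^4+3b)}{(ax^4+b)^{3/2}}$. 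Substituting $u=ax^4$ reduces the maximization to $\max_{u\geq 0}\sqrt{u}(u+3b)/(u+b)^{3/2}$; differentiating produces a single interior critical point at $u=b$, and evaluating $f''$ there yields exactly $\sqrt{8a}$.

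The main obstacle is the calculus step: computing $f''$ correctly, confirming that the stationary point $u=b$ is the global maximum (and not a local one or a boundary value), and checking that this maximum equals $\sqrt{8a}$ rather than some other constant. The lower bound, once one notices the trick of multiplying through by $f(y)$, reduces transparently to $(x^2-y^2)^2\geq 0$ and requires no real ingenuity.
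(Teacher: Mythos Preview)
Your proposal is correct and follows essentially the same route as the paper. For the lower bound, both arguments reduce to the elementary inequality $(ax^4+b)(ay^4+b)\geq (b+ax^2y^2)^2$, i.e.\ $ab(x^2-y^2)^2\geq 0$; the paper builds this up forward from (a typo'd version of) $x^4+y^4\geq 2x^2y^2$ and then unwinds, while you run the same computation backward by clearing the denominator first. For the upper bound, the paper simply asserts that $\sup f'' = \sqrt{8a}$ and moves on; your explicit calculation of $f''$ and the one-variable optimization in $u=ax^4$ supplies the details the paper omits. Your observation that the hypotheses silently force $a>0$ is also correct.
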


\begin{lemma}
Suppose $\delta > 0$. Suppose $h: \R \to \R$ is such that:
\begin{equation*}
    h(x) = 
    \begin{cases}
    \frac{1}{2}x^2 & \abs{x} \leq \delta \\
    \delta(\abs{x} - \frac{1}{2}\delta) & \abs{x} > \delta
    \end{cases}.
\end{equation*}
Let $f = h^2$. Then $f$ satisfies \eqref{ass:main assumption upper and lower bound} with constant $L_{\mC} = 2\delta^2$ and curvature mapping 
\begin{equation*}
    \mC(y) = 
    \begin{cases}
        y^2 & \abs{y} \leq \delta \\
        \delta^2 & \abs{y} > \delta
    \end{cases}
\end{equation*}
Notice that $L_{\mC}$ is less than the $L$-smoothness constant of $f$, which is $3\delta^2$ (the tightest bound on the second derivative of $f$). 
\end{lemma}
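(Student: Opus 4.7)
The plan is to verify both inequalities of Assumption~\ref{ass:main assumption upper and lower bound} directly, by exploiting the one-dimensional structure and the decomposition $f = h^2$. Setting $D_f(x,y) := f(x) - f(y) - f'(y)(x-y)$ and using $f' = 2hh'$, one gets the algebraic identity
\begin{equation*}
D_f(x,y) \;=\; \bigl(h(x)-h(y)\bigr)^2 \;+\; 2\,h(y)\,D_h(x,y),
\end{equation*}
where $D_h(x,y) := h(x) - h(y) - h'(y)(x-y) \geq 0$ is the Bregman divergence of the convex Huber loss $h$. Since $h \geq 0$, this representation makes $D_f \geq 0$ transparent and gives a clean handle on $D_f$ in each Huber regime.

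By the evenness of $h$, I reduce to $y \geq 0$ and split on the positions of $|x|$ and $y$ relative to $\delta$. When $|x|, y \leq \delta$, $h$ is quadratic and a direct expansion yields $D_f(x,y) = \tfrac{1}{4}(x-y)^2\bigl[(x+y)^2 + 2y^2\bigr]$, reducing the lower bound to $(x+y)^2 \geq 0$ and the upper bound to $(x+y)^2 \leq 4\delta^2$, both elementary. When $|x|, y > \delta$, $h$ is affine on the relevant branches and $\mC(y) = \delta^2$. If $x,y$ share a sign then $D_h(x,y) = 0$ and $D_f(x,y) = \delta^2(x-y)^2$, so both bounds are trivial; otherwise, the substitution $p = |x| - \delta$, $q = y - \delta$ produces
\begin{equation*}
D_f(x,y) \;=\; \delta^2\bigl[(p+q)^2 + 2\delta^2 + 2\delta p + 4\delta q\bigr],
\end{equation*}
and both target inequalities reduce to manifestly non-negative polynomials in $p, q, \delta$ (the lower bound becomes $\tfrac{1}{2}(p+q)^2 + 2\delta q \geq 0$ and the upper bound $4\delta^2 + 4\delta p + 2\delta q + \tfrac{1}{2}(p+q)^2 \geq 0$). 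In the mixed region $0 \leq y \leq \delta < |x|$, using $h(x) = \delta(|x| - \delta/2)$ together with the factorization $D_h(x,y) = (\delta - y)\bigl(x - (\delta + y)/2\bigr)$ for $x > 0$ (and an analogous expression for $x < 0$), the lower bound becomes a quadratic in $x$ on $[\delta, \infty)$ whose minimum, attained at $x = \delta$, equals $\tfrac{1}{4}(\delta^2 - y^2)^2 \geq 0$; the upper bound is disposed of by a parallel expansion.

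The main obstacle is simply bookkeeping in the mixed region, where the quadratic and affine branches of $h$ coexist and one must separately track the sign of $x$; nothing deeper than polynomial inequalities is needed. As a consistency check, at $|y| = \delta$ one has $\mC(y) + L_{\mC} = 3\delta^2 = f''(y^-)$, and more generally the pointwise bounds $\mC(y) \leq f''(y) \leq \mC(y) + L_{\mC}$ hold wherever $f''$ exists, confirming that $L_{\mC} = 2\delta^2$ is the correct constant for the stated curvature mapping.
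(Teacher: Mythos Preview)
Your proposal is correct and takes a genuinely different, cleaner route than the paper. The key novelty is the identity
\[
D_f(x,y) \;=\; \bigl(h(x)-h(y)\bigr)^2 + 2\,h(y)\,D_h(x,y),
\]
which the paper never writes down. The paper instead attacks each regime from scratch with ad hoc polynomial manipulations: in the quadratic--quadratic case it starts from $x^4+y^4\ge 2x^2y^2$ and unwinds; in the mixed case $|x|\ge\delta,\ |y|\le\delta$ it introduces an auxiliary polynomial $r(x)$ and argues via boundary value plus sign of $r'$; in the affine--affine case it proves a separate sign-by-sign inequality; and for part of the upper bound it appeals to the sufficient condition of Lemma~\ref{app:up_cond}. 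Your decomposition gives a uniform mechanism across all regimes, makes the nonnegativity of $D_f$ transparent, and reduces every case to a single-variable quadratic whose vertex position can be located explicitly (as you do when showing the minimum on $[\delta,\infty)$ sits at $x=\delta$). One small gap to flag: you do not explicitly list the fourth regime $|x|\le\delta<y$, and your ``parallel expansion'' for the upper bound in the mixed case is asserted rather than displayed. Both are routine under your framework (and the paper is equally terse there), but in a final write-up you should at least state that the vertex of the relevant quadratic again lies at the boundary, since that is what closes the argument.
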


\begin{proof}
First, we will prove that $f$ satisfies inequality \eqref{eqn:lower bound main assumption}. We will split the proof into four cases and prove the inequality holds in each case.  

Case $\abs{x}, \abs{y} \leq \delta$. We know $x^4 + y^4 \geq 2x^2y^2$. We divide both sides by $4$ and rearrange the terms,
\begin{align*}
    \frac{1}{4}x^4 \geq -\frac{1}{4}y^4 + \frac{1}{2}x^2y^2 &= \frac{1}{4}y^4 - y^4 + \frac{1}{2}y^4 + \frac{1}{2}x^2y^2 \\
    &= \frac{1}{4}y^4 - y^4 + \frac{1}{2}y^4 + \frac{1}{2}x^2y^2 + xy^3 - xy^3\\
    &= \frac{1}{4}y^4 - y^4 + xy^3 + \frac{1}{2}y^2(x^2 - 2xy + y^2) \\
    &= \frac{1}{4}y^4 - y^3(x-y) + \frac{1}{2}y^2(x-y)^2.
\end{align*}
We have our result because $f'(y) = y^3$ for $\abs{y} \leq \delta$. 

Case $\abs{x} \geq \delta$ and $\abs{y} \leq \delta$. For any $\abs{y} \leq \delta$ define $r: \R \to \R$ as the following,
\begin{equation*}
    r(x) = \frac{y^4}{4} + \frac{\delta^4}{4} + \delta^2 x^2 - \delta^3\abs{x} - \frac{x^2y^2}{2}
\end{equation*}
First, we need to show that $r(x) \geq 0$. 
Suppose $x \geq \delta$. When $x = \delta$,
\begin{equation*}
    r(x) = \frac{y^4}{4} - \frac{y^2\delta^2}{2} + \frac{\delta^4}{4} = \frac{1}{4}(y^2 - \delta^2) \geq 0. 
\end{equation*}
Therefore, for $x > \delta$, if we show that $r'(x) \geq 0$ then $r(x) \geq 0$. By a simple computation, we get that
\begin{equation*}
    r'(x) = 2\delta^2 x - \delta^3 - \frac{xy^2}{2}.
\end{equation*}
Since $x \geq \delta$ then obviously $x \geq \frac{2}{3}\delta$ so $\frac{3}{2}x - \delta \geq 0$. Rearranging the terms and multiplying the entire inequality by $\delta$ we get that
\begin{equation*}
    2\delta^2 x - \delta^3 - \frac{x\delta^2}{2} \geq 0. 
\end{equation*}
It is easy to show that $-y^2 \geq -\delta^2$ because $\abs{y} \leq \delta$. Therefore,
\begin{equation*}
   r'(x) = 2\delta^2 x - \delta^3 - \frac{xy}{2} \geq  2\delta^2 x - \delta^3 - \frac{x\delta^2}{2} \geq 0.
\end{equation*}
Now suppose $x \leq -\delta$. Observe that $r(-\delta) = r(\delta) \geq 0$. Thus, if we show that for $x \leq -\delta$, $r'(x) \leq 0$ then $r(x) \geq 0$. For $x \leq -\delta$, we have that
\begin{equation*}
    r'(x) = 2\delta^2 x + \delta^3 - \frac{xy^2}{2}.
\end{equation*}
Notice that $y^2 \leq \delta^2$ because $\abs{y} \leq \delta$. Then we have that $2\delta^2 - \frac{y^2}{2} \geq \frac{3\delta^2}{2}$. Multiplying both sides of this inequality by $x$ we obtain,
\begin{equation*}
    x\left(2\delta^2 - \frac{y^2}{2} \right) \leq \frac{3\delta^2x}{2} \leq \frac{3\delta^3}{2} \leq -\delta^3.
\end{equation*}
The inequality was reversed because $x \leq -\delta < 0$ and the second inequality also follows from $x \leq -\delta$. Rearranging the terms in this inequality we see that
\begin{equation*}
    r'(x) = 2\delta^2 x - \delta^3 - \frac{xy^2}{2} \leq 0. 
\end{equation*}
Therefore, we have shown that $r(x) \geq 0$ for arbitrary $abs{y} \leq \delta$. Then by rearranging the terms in $r$, we have that
\begin{equation*}
\delta^2x^2 - \delta^3\abs{x} + \frac{\delta^4}{4} \geq -\frac{y^4}{4} + \frac{x^2y^2}{2}.
\end{equation*}
The left-hand side is equal to $\delta^2(\abs{x} - \frac{1}{2}\delta) = f(x)$. In the previous case, we showed that the right-hand side is equal to $\frac{1}{4}y^4 - y^3(x-y) + \frac{1}{2}y^2(x-y)^2$. Therefore, we have our result. 

Case $\abs{x}, \abs{y} \geq \delta$. First, we show that the following inequality holds:
\begin{equation}\label{eqn:appendix_huber_loss_c}
    x^2 - 2xy + y^2 - 2\delta\left(\abs{x} - x\frac{y}{\abs{y}}\right) \geq 0.
\end{equation}
For $x, y \geq \delta$ and $x, y \leq -\delta$ it is easy to show because $(x-y)^2 \geq 0$. Now suppose $x \geq \delta$ and $y \leq -\delta$. Then we must show that 
\begin{equation*}
    x^2 - 2xy + y^2 - 4\delta x \geq 0.
\end{equation*}
Since $y \leq -\delta$ we have that $(x-y)^2 \geq (x + \delta)^2$. Therefore,
\begin{equation*}
    (x-y)^2 - 4\delta x \geq (x + \delta)^2 - 4\delta x = (x - \delta)^2 \geq 0. 
\end{equation*}
Now suppose $x \leq -\delta$ and $y \geq \delta$. Similar to before, we need to show
\begin{equation*}
    x^2 - 2xy + y^2 + 4\delta x \geq 0.
\end{equation*}
Since $y \geq \delta$ we obtain $4xy \leq 4\delta x$ by multiplying both sides by $4x$ and reversing the inequality because $x \leq -\delta < 0$. Therefore,
\begin{equation*}
    (x - y)^2 + 4\delta x \geq (x - y)^2 + 4xy = (x + y)^2 \geq 0. 
\end{equation*}
As a result, we have shown inequality \eqref{eqn:appendix_huber_loss_c} holds. We can rewrite the inequality as 
\begin{equation*}
    \frac{x^2}{2} - xy + \frac{y^2}{2} - \delta\abs{x} + \delta x \frac{y}{\abs{y}} \geq 0. 
\end{equation*}
Moving some terms to the right-hand side we get,
\begin{align*}
x^2 - \delta \abs{x} + \frac{\delta^2}{4} &\geq -\frac{-y^2}{2} + \frac{\delta^2}{4} + xy + \frac{x^2}{2} - \delta \frac{xy}{\abs{y}} \\
&= \left(\abs{y} - \frac{1}{2}\delta \right)^2 + 2xy - \delta \frac{xy}{\abs{y}} - 2y^2 + \delta\abs{y} + \frac{x^2}{2} - xy + \frac{y^2}{2}.
\end{align*}
Recall, that $f'(y) = 2\delta^2\left(\abs{y} - \frac{1}{2}\delta \right)\frac{y}{\abs{y}}$. Observe that we can factor the left-hand side of the inequality and after multiplying both sides by $\delta^2$ we get our result:
\begin{align*}
    \delta^2\left(\abs{x} - \frac{1}{2}\delta \right)^2 &\geq \delta^2\left(\abs{y} - \frac{1}{2}\delta \right)^2 + 2\delta^2xy - \delta^2 \frac{xy}{\abs{y}} - 2\delta^2y^2 + \delta^3\abs{y} + \frac{\delta^2}{2}x^2 - \delta^2xy + \frac{\delta^2}{2}y^2 \\
    &= \delta^2\left(\abs{y} - \frac{1}{2}\delta \right)^2 + 2\delta^2\left(\abs{y} - \frac{1}{2}\delta \right)\frac{y}{\abs{y}}(x - y) + \frac{1}{2}\delta^2(x-y)^2
\end{align*}

The case where $\abs{x} \leq \delta, \abs{y} \geq \delta$ is similar to the previous cases. Using some elementary calculus, one can show that
\begin{equation*}
    \frac{x^4}{4} + \frac{\delta^2 y^2}{2} - \frac{\delta^4}{4} - \delta^2 xy - \frac{\delta^2 x^2}{2} + \delta^3x \geq 0. 
\end{equation*}
Rearranging the terms above directly leads to the result. 

Now we show that $f$ satisfies inequality \eqref{eqn:upper bound main assumption} with $L_{\mC} = 2\delta^2$. In the case where $\abs{y} \geq \delta$, $L_{\mC} + \mC(y) = 3\delta^2$ is the $L$-smoothness constant of $f$ so the inequality holds. We consider the case where $\abs{y} \leq \delta$. 

Case $\abs{x}, \abs{y} \leq \delta$. Then $xy \leq \abs{xy} \leq \delta^2$ so $x^2 + xy \leq 2\delta^2$. Adding $y^2$ to both sides and multiplying by $(x - y)^2$ we obtain,
\begin{align*}
    (x - y)^2(y^2 + 2\delta^2) &\geq (x-y)^2(x^2 + xy + y^2) \\
    &= (x^3 - y^3)(x-y).
\end{align*}
By Lemma \ref{app:up_cond} we have our result. 

Case $\abs{x} \geq \delta, \abs{y} \leq \delta$. We must show that the following inequality holds:
\begin{equation*}
    \frac{y^4}{4} - \frac{x^2y^2}{2} + 2\delta^2 xy - \delta^2y^2 - \delta^3\abs{x} + \frac{\delta^4}{4} \leq 0. 
\end{equation*}
We leave out the details of the calculations. The proof is similar to the same case for showing the lower bound. We can define a polynomial in $x$ for arbitrary $\abs{y} \leq \delta$. Then we show that this polynomial is less than $0$ for $x \geq \delta$ by computing the value at $\delta$ and show that the derivative is negative for $ x \geq \delta$\ We proceed similarly for $x \leq -\delta$. By rearranging and manipulating the terms in the above inequality we can arrive at our result. These calculations are similar to the previous cases so we exclude them for brevity. 
\end{proof} 

\newpage

\newpage
\section{Absolutely Convex Functions} \label{sec: more absolute convexity}
\par Discussing the theory constructions derived from Assumption~\ref{ass:main assumption upper and lower bound}, we introduced a stand-alone class of functions, satisfying an absolute convexity condition. In this section, we derive more properties and examples. Let us remind that a function $\phi : \R^d \to \R$ is absolutely convex if and only if:
\begin{equation} \label{eqn:absolute convexity equation appendix}
    \phi(y)\geq |\phi(x) + \ip{\nabla \phi(x)}{y - x}|,\qquad \forall x, y\in \R^d. 
\end{equation}
Our first statement is a Lemma that establishes calculus in the spirit of Lemma~\ref{theory_lb_basic_calc} in the main text. 
\begin{lemma} \label{lem:abs-convex-basic-properties} Let  $\phi, \phi_1,\phi_2:\R^d\to \R$ be absolutely convex, and let $\mA\in \R^{d\times m}$, $b\in \R^d$ and $\alpha \geq 0$. Then
\begin{itemize}
\item [(i)] $\phi + \alpha $ is absolutely convex.
\item [(ii)] $\alpha \phi $ is absolutely convex.
\item [(iii)] $\phi_1 + \phi_2 $ is absolutely convex.
\item [(iv)] $\phi(\mA x+b)$ is absolutely convex.
\end{itemize}
\begin{proof}
We prove each statement:
\begin{itemize}
\item [(i)] 
$\psi(x) \eqdef  \phi(x)  + \alpha \geq   \left|  \phi(y)  + \abr{\nabla \phi(y),x-y} \right| +   \alpha  \geq   \left|  \phi(y)  + \abr{\nabla \phi(y),x-y} + \alpha \right| =\left| \psi(y) + \abr{\nabla \psi(y),x-y} \right|.$
\item [(ii)] 
$\psi(x) \eqdef \alpha \phi(x)   \geq \alpha \left(  \left|  \phi(y)  + \abr{\nabla \phi(y),x-y} \right|  \right) \geq  \alpha \left|  \phi(y)  + \abr{\nabla \phi(y),x-y}  \right| = \left| \alpha \phi(y) + \abr{\alpha  \nabla \phi(y),x-y} \right|=\left| \psi(y) + \abr{\nabla \psi(y),x-y} \right|.$
\item [(iii)]   $\psi(x) \eqdef \phi_1(x) + \phi_2(x) \geq \left|  \phi_1(y)  + \abr{\nabla \phi_1(y),x-y} \right| + \left|  \phi_2(y)  + \abr{\nabla \phi_2(y),x-y} \right|$ \\ $\geq \left|  \phi_1(y)  + \abr{\nabla \phi_1(y),x-y} + \phi_2(y)  + \abr{\nabla \phi_2(y),x-y} \right|  = \left|  \psi(y)  + \abr{\nabla \psi(y),x-y}  \right|$. 
\item [(iv)] 
$ \psi(x) = \phi(\mA x+b) \geq | \phi(\mA y+b) + \abr{\nabla \phi (\mA y+b), \mA x + b - (\mA y+b)} | =  | \phi(\mA y+b) + \abr{\mA^\top \nabla \phi (\mA y+b), x-y} | =  | \phi(\mA y+b) + \abr{A^\top \nabla \phi  (\mA y+b), x-y} | = | \psi(y) + \abr{ \nabla \psi (y), x-y} | .$
\end{itemize}
\end{proof}
\end{lemma}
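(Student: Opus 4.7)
The plan is to unpack the defining inequality~\eqref{eqn:absolute convexity equation appendix} for each claimed function and reduce to the hypothesis that $\phi$ (or $\phi_1, \phi_2$) is absolutely convex. The key algebraic tools are: (a) the elementary fact that $|u| + c \geq |u+c|$ whenever $c \geq 0$, (b) positive homogeneity of the absolute value, $|\alpha u| = \alpha |u|$ for $\alpha \geq 0$, (c) the triangle inequality $|u| + |v| \geq |u+v|$, and (d) the chain rule $\nabla[\phi(\mA\,\cdot\,+b)](y) = \mA^\top \nabla \phi(\mA y + b)$. In each case the strategy is to pick an arbitrary pair $x, y \in \R^d$, lower-bound the function value at $x$ by invoking absolute convexity of the building block, and then manipulate the right-hand side until it matches the linearization of the combined function evaluated at $y$ and tested at $x$.

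For (i), starting from $\phi(x) + \alpha \geq |\phi(y) + \langle \nabla \phi(y), x - y\rangle| + \alpha$, I would absorb the nonnegative constant $\alpha$ inside the absolute value using (a); crucially, the hypothesis $\alpha \geq 0$ cannot be dropped (otherwise inequality (a) can fail). For (ii), after multiplying the defining inequality for $\phi$ by $\alpha \geq 0$, I would pull $\alpha$ inside the absolute value using (b); the gradient of $\alpha \phi$ being $\alpha \nabla \phi$ gives the correct linear term. For (iii), summing the two defining inequalities and then collapsing the sum of absolute values via the triangle inequality (c) yields the required bound, with $\nabla(\phi_1 + \phi_2)(y) = \nabla \phi_1(y) + \nabla \phi_2(y)$.

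Case (iv) is the one most likely to trip one up, so I would do it carefully. Set $\psi(x) \eqdef \phi(\mA x + b)$ and apply the defining inequality of $\phi$ at the points $\mA x + b$ and $\mA y + b$: $\phi(\mA x + b) \geq |\phi(\mA y + b) + \langle \nabla \phi(\mA y + b),\; \mA(x - y)\rangle|$. Using the adjoint identity $\langle \nabla \phi(\mA y + b), \mA(x-y)\rangle = \langle \mA^\top \nabla \phi(\mA y + b), x - y\rangle$ and the chain rule (d) to identify $\nabla \psi(y) = \mA^\top \nabla \phi(\mA y + b)$, the right-hand side becomes $|\psi(y) + \langle \nabla \psi(y), x - y\rangle|$, as required. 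No hypothesis on $\mA$ or $b$ is needed, since only inner-product manipulation is used.

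The main (very mild) obstacle is keeping track of where nonnegativity is used: in (i) it is needed to move $\alpha$ inside $|\cdot|$, in (ii) it is needed for positive homogeneity of $|\cdot|$, while (iii) and (iv) use no sign hypothesis at all. Beyond that, each item is a one- or two-line manipulation, and no subgradient subtleties arise because the defining inequality~\eqref{eqn:absolute convexity equation appendix} is stated in terms of \emph{a} subgradient at $y$, and the natural choices (sum of subgradients, scalar multiple of a subgradient, pullback by $\mA^\top$) furnish valid subgradients of the combined function by standard convex calculus.
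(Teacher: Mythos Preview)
Your proposal is correct and follows essentially the same approach as the paper's proof: each item is handled by invoking the defining inequality for the building block(s) and then applying, respectively, the inequality $|u|+c\geq|u+c|$ for $c\geq 0$, positive homogeneity of $|\cdot|$, the triangle inequality, and the chain rule with the adjoint identity. Your write-up is slightly more explicit about where nonnegativity of $\alpha$ is used and about the subgradient calculus, but the underlying argument is identical.
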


\subsection{Examples}
\begin{lemma}\label{app:pnorm_abs_conv}
Let $p \geq 1$. Then $\phi : \R^d \to \R$ where $\phi(x) = \norm{x}_p$ is absolutely convex. 

\begin{proof}
We already know that $\phi$ is convex so we show that 
\begin{equation*}
    -\phi(y) - \ip{\nabla \phi(y)}{x-y} \leq \phi(x),
\end{equation*}
where $\frac{\partial\phi(y)}{\partial y_i} = \frac{y_i|y_i|^{p-2}}{\norm{y}_p^{p-1}}$ for $x,y \in \R^d$.

Observe that 
\begin{equation*}
\ip{\phi(y)}{x-y} = \sum_{i=1}^{d} \frac{y_i|y_i|^{p-2}}{\norm{y}_p^{p-1}}(x_i - y_i).
\end{equation*}

We make use of Holder's inequality which is stated below for $r, s \geq 0$,
\begin{equation*}
    \left( \sum_{i=1}^{d} |x_i|^r|y_i|^s \right)^{r+s} \leq \left( \sum_{i=1}^{d} |x_i|^{r+s}\right)^r \left( \sum_{i=1}^{d} |y_i|^{r+s}\right)^s.
\end{equation*}
For any $x,y \in \R^d$ and with $r = 1$ and $s=p-1$ we have that ,
\begin{equation*}
    \left( \sum_{i=1}^{d} |x_i||y_i|^{p-1} \right)^{p} \leq \left( \sum_{i=1}^{d} |x_i|^{p}\right) \left( \sum_{i=1}^{d} |y_i|^{p}\right)^{p-1} .
\end{equation*}
Simplifying this expression we obtain,
\begin{align*}
    \sum_{i=1}^{d} |x_i||y_i|^{p-1} &\leq \left( \sum_{i=1}^{d} |x_i|^{p}\right)^{\frac{1}{p}} \left( \sum_{i=1}^{d} |y_i|^{p}\right)^\frac{p-1}{p} \\
    &=\norm{x}_p \left(\left( \sum_{i=1}^{d} |y_i|^{p}\right)^\frac{1}{p}\right)^{p-1} \\
    &= \norm{x}_p\norm{y}_p^{p-1}.
\end{align*}

We can obtain a lower bound on the term,
\begin{align*}
\sum_{i=1}^{d} |x_i||y_i|^{p-1} &= \sum_{i=1}^{d} |x_i||y_i||y_i|^{p-2} = \sum_{i=1}^{d} |x_iy_i||y_i|^{p-2} \\
&\geq \sum_{i=1}^{d} -(x_iy_i)|y_i|^{p-2} \\
&= -\sum_{i=1}^{d} x_iy_i|y_i|^{p-2}.
\end{align*}

Therefore,
\begin{equation*}
\norm{x}_p\norm{y}_p^{p-1} \geq -\sum_{i=1}^{d} x_iy_i|y_i|^{p-2}. 
\end{equation*}

Now add $\norm{y}_p^p = \sum_{i=1}^{d} |y_i|^{p}$ to both sides of the inequality we get 
\begin{align*}
\norm{x}_p\norm{y}_p^{p-1} + \norm{y}_p^p &\geq \sum_{i=1}^{d} |y_i|^{p}  -\sum_{i=1}^{d} x_iy_i|y_i|^{p-2} = \sum_{i=1}^{d} y_i^2|y_i|^{p-2}  -\sum_{i=1}^{d} x_iy_i|y_i|^{p-2} \\
&= \sum_{i=1}^{d} \left(y_i^2|y_i|^{p-2} - x_iy_i|y_i|^{p-2}\right) = \sum_{i=1}^{d} y_i|y_i|^{p-2}(y_i - x_i) \\
&= -\sum_{i=1}^{d} y_i|y_i|^{p-2}(x_i - y_i). 
\end{align*}
Now divide both sides of this inequality by $\norm{y}_p^{p-1}$,
\begin{equation*}
\norm{x}_p + \norm{y}_p \geq -\sum_{i=1}^{d} \frac{y_i|y_i|^{p-2}}{\norm{y}_p^{p-1}}(x_i - y_i). 
\end{equation*}
By rearranging the terms we get our desired inequality
\begin{equation*}
    -\norm{y}_p - \sum_{i=1}^{d} \frac{y_i|y_i|^{p-2}}{\norm{y}_p^{p-1}}(x_i - y_i) \leq \norm{x}_p. 
\end{equation*}
\end{proof}
\end{lemma}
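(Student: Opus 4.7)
\textbf{Proof plan for Lemma~\ref{app:pnorm_abs_conv}.}

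The plan is to exploit the fact that $\phi(x)=\norm{x}_p$ is positively $1$-homogeneous. By Euler's theorem for homogeneous functions, $\ip{\nabla \phi(y)}{y}=\phi(y)$, so the linearization of $\phi$ at $y$ collapses to
\begin{equation*}
\phi(y)+\ip{\nabla \phi(y)}{x-y} \;=\; \ip{\nabla \phi(y)}{x}.
\end{equation*}
Therefore, the defining inequality \eqref{eqn:absolute convexity equation appendix} reduces to the single bound
\begin{equation*}
\bigl|\ip{\nabla \phi(y)}{x}\bigr| \leq \norm{x}_p, \qquad \forall x,y\in\R^d.
\end{equation*}

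To establish this, I would invoke H\"older's inequality in the form $|\ip{a}{x}| \leq \norm{a}_q\norm{x}_p$ with conjugate exponent $q=p/(p-1)$, and then compute the dual norm of the gradient. Using the explicit formula $\nabla\phi(y)_i = y_i|y_i|^{p-2}/\norm{y}_p^{p-1}$, I would observe that
\begin{equation*}
\norm{\nabla \phi(y)}_q^q \;=\; \frac{1}{\norm{y}_p^{(p-1)q}}\sum_{i=1}^d |y_i|^{(p-1)q} \;=\; \frac{\sum_{i=1}^d |y_i|^p}{\norm{y}_p^p} \;=\; 1,
\end{equation*}
where the key identity $(p-1)q=p$ comes straight from $1/p+1/q=1$. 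Hence $\norm{\nabla\phi(y)}_q=1$ and H\"older yields the target inequality.

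The only subtleties are (i) the edge case $p=1$, where $q=\infty$ and $\nabla\phi$ must be interpreted as a subgradient with entries $\text{sgn}(y_i)\in[-1,1]$, so that $\norm{\nabla \phi(y)}_\infty\leq 1$ and the argument still goes through, and (ii) the case $y=0$ where $\phi$ is not differentiable and one must choose a subgradient of dual norm at most $1$, which exists by the general fact $\partial\norm{\cdot}_p(0)=\{a:\norm{a}_q\leq 1\}$. I don't anticipate a real obstacle: everything follows from homogeneity plus H\"older, once the dual-norm computation is carried out. This approach is arguably cleaner than manipulating the termwise inequality $\sum_i |x_i||y_i|^{p-1}\leq \norm{x}_p\norm{y}_p^{p-1}$ directly, since it makes the geometric content (the gradient lies on the unit sphere of the dual norm) transparent.
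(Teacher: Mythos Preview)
Your proposal is correct and takes a genuinely different route from the paper. The paper works componentwise: it applies H\"older's inequality in the termwise form $\sum_i |x_i||y_i|^{p-1}\leq\norm{x}_p\norm{y}_p^{p-1}$, uses $|x_iy_i|\geq -x_iy_i$ to turn this into a bound on $-\sum_i x_iy_i|y_i|^{p-2}$, then adds $\norm{y}_p^p$ to both sides and divides by $\norm{y}_p^{p-1}$ to recover only the lower side of the absolute-value inequality (convexity being taken for granted). Your approach instead exploits $1$-homogeneity up front via Euler's identity $\ip{\nabla\phi(y)}{y}=\phi(y)$, so the entire linearization collapses to $\ip{\nabla\phi(y)}{x}$, and then a single dual-norm computation $\norm{\nabla\phi(y)}_q=1$ together with H\"older gives both sides of the inequality \eqref{eqn:absolute convexity equation appendix} at once. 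The two arguments are equivalent at the level of ingredients (both rest on H\"older), but yours is shorter, makes the geometric content explicit, and handles the upper and lower bounds symmetrically; you also flag the edge cases $p=1$ and $y=0$, which the paper's proof leaves implicit.
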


\begin{lemma}
There exists an absolutely convex function $\phi: \R \to \R$ such that the derivative of $f \coloneq \phi^2$ is not Lipschitz continuous. Namely,
\begin{equation*}
    \phi(x) = 
    \begin{cases} 
      \frac{3}{2}\abs{x} + \frac{1}{2} & \abs{x} \geq 1 \\
      x^\frac{3}{2} + 1 & 0\leq x < 1 \\
      (-x)^\frac{3}{2} + 1 & -1 < x < 0 
   \end{cases}.
\end{equation*}
\begin{proof}
Firstly, by a simple computation we can show that $\phi'' \geq 0$ so $\phi$ is convex. Observe that $\abs{f'} \leq \frac{3}{2}$ and $x_{\star} = 0$. Also notice that $\phi$ is bounded below by $\frac{3}{2}\abs{x}$. Therefore, by Lemma~\ref{abs_conv_cond}, $\phi$ is absolutely convex.

From a brief computation, we can obtain,
\begin{equation*}
f'(x) = 
    \begin{cases} 
      \frac{3}{2}(3\abs{x} + 1)\frac{x}{\abs{x}} & \abs{x} \geq 1 \\
      3(x^\frac{3}{2} + 1)x^\frac{1}{2} & 0\leq x < 1 \\
      3((-x)^\frac{3}{2} + 1)(-x)^\frac{1}{2} & -1 < x < 0 
   \end{cases}.
\end{equation*}

Now suppose by contradiction that $h'$ is Lipschitz continuous. Then there exists an $L > 0$ such that for all $x, y \in \R^d$,
\begin{equation*}
    \frac{\abs{f'(x) - f'(y)}}{\abs{x - y}} \leq L.
\end{equation*}
Specifically, this holds for $0 < x < 1$ and $y = 0$ so we have
\begin{align*}
    \frac{\abs{f'(x) - f'(y)}}{\abs{x - y}} &= \frac{\abs{f'(x)}}{\abs{x}} = \frac{\abs{3x^2 + 3x^\frac{1}{2}}}{\abs{x}} \\
    &= \frac{3x^2 + 3x^\frac{1}{2}}{x} = 3x + \frac{3}{\sqrt{x}} \leq L. 
\end{align*}
We can find an $x$ small enough such that $\frac{3}{\sqrt{x}} > L$. Therefore, the inequality cannot hold. As a consequence, $f'$ is not Lipschitz continuous.  
\end{proof}
\end{lemma}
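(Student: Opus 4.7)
The plan has two independent tasks: verify that the specified $\phi$ is absolutely convex, and then exhibit a failure of Lipschitz continuity for $f'= (\phi^2)'$. I will dispatch the second (which is the easier one) by a direct calculation and the first by reducing to a structural criterion via convexity plus a lower-bound comparison.

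For absolute convexity, I would first verify that $\phi$ is convex: on each of the three pieces the function is smooth and $\phi'' \geq 0$ (on the outer regions $\phi'' = 0$, on $(0,1)$ we have $\phi'' = \tfrac{3}{4}x^{-1/2}$, and symmetrically on $(-1,0)$). Next I would check that the one-sided derivatives match at the knots $x = \pm 1$ and $x = 0$ (at $x=0$ both one-sided derivatives are $0$; at $x=\pm 1$ the slopes are $\pm 3/2$ from both sides), so $\phi$ is in fact $C^1$ and genuinely convex. This already yields the upper branch $\phi(x) + \langle \nabla \phi(x), y - x\rangle \leq \phi(y)$ of the absolute value in~\eqref{eqn:absolute convexity equation appendix}. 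For the lower branch, my plan is to invoke a criterion of the earlier appendix (\textsf{abs\_conv\_cond}): since $|\phi'|$ is bounded by $3/2$ and the minimum value $\phi(0)=1$ is attained at the origin, a convex function whose graph stays above $\tfrac{3}{2}|x|$ (which is immediate on $|x|\geq 1$ by the formula, and on $|x|<1$ follows because $\phi(x) \geq 1 \geq \tfrac{3}{2}|x|$ whenever $|x| \leq 2/3$, while for $2/3 \leq |x| \leq 1$ one can verify $(\pm x)^{3/2}+1 \geq \tfrac{3}{2}|x|$ directly) automatically satisfies the absolute convexity condition.

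For the Lipschitz failure, the key computation is to compare $f'(x) = 2\phi(x)\phi'(x)$ to $f'(0)=0$ and examine the differential quotient as $x\to 0^+$. On the piece $0 < x < 1$ one has
\begin{equation*}
f'(x) = 2\bigl(x^{3/2}+1\bigr)\cdot \tfrac{3}{2}x^{1/2} = 3x^2 + 3x^{1/2},
\end{equation*}
so
\begin{equation*}
\frac{|f'(x)-f'(0)|}{|x - 0|} = 3x + \frac{3}{\sqrt{x}} \xrightarrow[x\to 0^+]{} +\infty.
\end{equation*}
This unboundedness immediately contradicts the existence of any finite Lipschitz constant for $f'$ on a neighborhood of $0$.

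The main obstacle, and the only piece that requires actual thinking rather than a routine calculation, is the absolute convexity verification: the definition~\eqref{eqn:absolute convexity equation appendix} must be checked for \emph{all} pairs $(x,y)$ across the piecewise boundaries, and bounding $\phi$ below by $\tfrac{3}{2}|x|$ sidesteps this by reducing the ``lower'' direction to a single universal inequality rather than a case analysis. The Lipschitz-failure part, by contrast, just needs the square-root singularity of $\phi'$ at the origin, which survives multiplication by $\phi(0)=1$ in the product rule and cannot be cancelled.
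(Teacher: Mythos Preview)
Your proposal is correct and follows essentially the same approach as the paper: both invoke Lemma~\ref{abs_conv_cond} (convexity, $|\phi'|\leq 3/2$, and the lower bound $\phi(x)\geq \tfrac{3}{2}|x|$) to establish absolute convexity, and both exhibit the Lipschitz failure via the difference quotient $\tfrac{f'(x)-f'(0)}{x}=3x+\tfrac{3}{\sqrt{x}}$ on $(0,1)$. Your write-up is in fact more careful, since you explicitly check the $C^1$ matching at the knots and justify the lower bound $\phi(x)\geq \tfrac{3}{2}|x|$ on $[2/3,1]$, whereas the paper asserts these steps without detail.
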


\begin{lemma}
Let $\delta > 0$. Then $f,\phi: \R \to \R$ defined below are absolutely convex.
\begin{equation*}
f(x) = \delta \sqrt{1 + \frac{x^2}{\delta^2}},
\qquad
\phi(x) = \begin{cases} 
        \frac{1}{2}x^2 + \frac{\delta^2}{2} & \abs{x} \leq \delta \\
        \delta(\abs{x} - \frac{\delta}{2}) + \frac{\delta^2}{2}& x > \delta 
        \end{cases},
\end{equation*}
Note that $f$ is the pseudo-Huber loss function and $\phi$ is the Huber loss function.

\begin{proof}
Both the Huber loss and pseudo-Huber loss functions are well-known examples of convex loss functions. The minimizer of $f$ and $\phi$ is $x_{\star} = 0$. 

From a simple computation we obtain that $\abs{f'(x)} \leq 1$ for all $x \in \R$,
\begin{equation*}
    f'(x) = \frac{x}{\delta \sqrt{1 + \frac{x^2}{\delta^2}}} \leq \dfrac{x}{\delta \frac{x}{\delta}} = 1.
\end{equation*}
Also, we know that for all $x \in \R$,
\begin{equation*}
    f(x) = \delta \sqrt{1 + \frac{x^2}{\delta^2}} \geq \delta \sqrt{\frac{x^2}{\delta^2}} = \sqrt{x^2} = \abs{x}. 
\end{equation*}

Therefore, by Lemma~\ref{abs_conv_cond}, $f$ is absolutely convex. 

By computing the derivative of $\phi$, we can show that $\abs{\phi'(x)} \leq \delta$ for all $x \in \R$. Now we show that $\phi(x) \geq \delta\abs{x}$. 

When $x > \delta$, we can simply observe that $\phi(x) = \delta\abs{x}$. Now consider the case where $x \leq \delta$. We have that,
\begin{equation*}
    (\abs{x} - \delta)^2 \geq 0.
\end{equation*}
Expanding the square we get,
\begin{equation*}
    x^2 - 2\delta\abs{x} + \delta^2 \geq 0.
\end{equation*}
Rearranging the terms and dividing by $2$ we see that,
\begin{equation*}
    \phi(x) = \frac{1}{2}x^2 + \frac{\delta^2}{2} \geq \delta\abs{x}.
\end{equation*}
Hence, by Lemma~\ref{abs_conv_cond}, $\phi$ is absolutely convex. 
\end{proof}
\end{lemma}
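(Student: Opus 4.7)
The plan is to reduce the problem to an earlier sufficient condition (the referenced \texttt{abs\_conv\_cond} lemma), which one can read off from the preceding absolute convexity examples as: a convex function $\psi:\R\to\R$ whose minimum is attained at $x_\star$ is absolutely convex whenever there exists $M \geq 0$ with $\abs{\psi'(x)} \leq M$ and $\psi(x) \geq M\abs{x - x_\star}$ for all $x$. Both functions are clearly convex (routine second-derivative check for $f$, and for $\phi$ by verifying continuity and non-negativity of left/right derivatives at the junction $|x|=\delta$) and both attain their minimum at $x_\star = 0$. So the entire task collapses to producing the bounds on the derivative and the lower envelope of the form $M|x|$ with a matching constant $M$.

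For the pseudo-Huber loss $f(x) = \delta\sqrt{1 + x^2/\delta^2}$, I would take $M = 1$. The derivative bound follows from
\[
\abs{f'(x)} \;=\; \frac{\abs{x}}{\delta\sqrt{1 + x^2/\delta^2}} \;\leq\; \frac{\abs{x}}{\delta\cdot \abs{x}/\delta} \;=\; 1,
\]
using $\sqrt{1 + t^2}\geq \abs{t}$. The pointwise lower bound $f(x)\geq \abs{x}$ follows by squaring: $f(x)^2 = \delta^2 + x^2 \geq x^2$. Invoking the sufficient condition gives absolute convexity of $f$.

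For the Huber loss $\phi$, I would take $M = \delta$. The derivative is $\phi'(x) = x$ on $|x|\leq\delta$ and $\phi'(x) = \delta\cdot\text{sign}(x)$ for $|x| > \delta$, so $\abs{\phi'(x)} \leq \delta$ everywhere. For the pointwise bound $\phi(x) \geq \delta \abs{x}$, the regime $|x| > \delta$ is immediate from the formula $\phi(x) = \delta\abs{x} - \delta^2/2 + \delta^2/2 = \delta\abs{x}$; for $|x|\leq \delta$ one rearranges $(\abs{x}-\delta)^2\geq 0$ to obtain $\tfrac{1}{2}x^2 + \tfrac{1}{2}\delta^2 \geq \delta\abs{x}$, which is exactly $\phi(x) \geq \delta\abs{x}$ in that region. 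A second application of the sufficient condition then yields absolute convexity of $\phi$.

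There is no real obstacle here beyond correctly identifying the constant $M$ matching the gradient bound to the linear lower envelope: the pseudo-Huber requires $M=1$ while the Huber requires $M=\delta$, and mixing these up would make the lower bound inequality fail. The rest is elementary calculus and algebraic manipulation, so the whole argument reduces to two applications of the same black-box sufficient condition.
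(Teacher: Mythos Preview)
Your proposal is correct and follows essentially the same approach as the paper: verify convexity and the minimizer at $0$, then for each function exhibit a matching gradient bound $M$ and lower envelope $M|x|$ (with $M=1$ for $f$ and $M=\delta$ for $\phi$) and invoke Lemma~\ref{abs_conv_cond}. The computations you outline, including the $(\abs{x}-\delta)^2\geq 0$ trick for the Huber case, coincide with the paper's.
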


\begin{lemma}
Let $a > 0$ and $b \geq 0$. Then $\phi: \R \to \R$ where $\phi(x) = \sqrt{ax^2 + b}$ is absolutely convex. 

\begin{proof}
Notice that $x_{\star} = 0$. Also by a simple computation we know can show that $f$ is convex,
\begin{equation*}
    \phi''(x) = \frac{ab}{(ax^2 + b)^\frac{3}{2}} \geq 0.
\end{equation*}
We can compute an upper bound on the derivative of $f$,
\begin{equation*}
    \phi'(x) = \frac{ax}{\sqrt{ax^2 + b}} \leq \frac{ax}{\sqrt{ax^2}} = \sqrt{a}.
\end{equation*}
Then,
\begin{equation*}
    \phi(x) = \sqrt{ax^2 + b} \geq \sqrt{ax^2} = \sqrt{a}\sqrt{x^2} = \sqrt{a}\abs{x}.
\end{equation*}
Since $\phi(x) \geq \sqrt{a}\abs{x}$, by Lemma~\ref{abs_conv_cond}, $f$ is absolutely convex.
\end{proof}
\end{lemma}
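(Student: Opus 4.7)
The plan is to verify the defining inequality of absolute convexity, namely~\eqref{eqn:absolute convexity equation appendix}, by splitting the outer absolute value into its two one-sided inequalities. For all $x,y\in\R$ I need both
\begin{equation*}
\phi(x) \;\geq\; \phi(y) + \phi'(y)(x-y) \qquad\text{and}\qquad \phi(x) \;\geq\; -\phi(y) - \phi'(y)(x-y).
\end{equation*}
The first is ordinary convexity; the second is the nontrivial ``reverse-sign'' direction, equivalent to $\phi(x) + \phi(y) + \phi'(y)(x-y) \geq 0$.

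For convexity I would simply differentiate twice and observe $\phi''(x) = \tfrac{ab}{(ax^2+b)^{3/2}} \geq 0$, which handles the first inequality for all $a>0$ and $b\geq 0$. For the reverse-sign inequality I would reuse the recipe that worked on the Huber and pseudo-Huber examples earlier in this appendix: (i) identify $x_\star=0$ as the unique minimizer of $\phi$ (clear because $\phi'(0)=0$ and $\phi$ is convex); (ii) derive a uniform bound on the slope, $\abs{\phi'(x)} = \tfrac{a\abs{x}}{\sqrt{ax^2+b}} \leq \tfrac{a\abs{x}}{\sqrt{ax^2}} = \sqrt{a}$; and (iii) verify the linear minorant $\phi(x) = \sqrt{ax^2+b} \geq \sqrt{ax^2} = \sqrt{a}\,\abs{x}$. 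With these three facts in hand I would invoke the helper lemma used throughout this appendix (the ``abs\_conv\_cond'' lemma, which packages ``$\phi$ convex, $\abs{\phi'}\leq M$, and $\phi\geq M\,\abs{\,\cdot-x_\star\,}$'' into absolute convexity). Crucially, the constant $M=\sqrt{a}$ matches in both (ii) and (iii), which is precisely the compatibility required by that lemma.

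The only subtlety I anticipate is the degenerate case $b=0$, where $\phi(x)=\sqrt{a}\,\abs{x}$ fails to be differentiable at the origin: one must either interpret $\phi'(0)$ as a subgradient when reading~\eqref{eqn:absolute convexity equation appendix}, or recover the $b=0$ case by continuity from $b>0$, exploiting that both sides of the absolute-convexity inequality depend continuously on $b$. Apart from this boundary bookkeeping, the argument is essentially a three-line computation once the auxiliary lemma is granted, and no inequality beyond elementary algebra (and the trivial pointwise bound $\sqrt{ax^2+b}\geq \sqrt{ax^2}$) is needed.
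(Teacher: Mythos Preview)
Your proposal is correct and follows essentially the same approach as the paper: verify convexity via $\phi''\geq 0$, bound $\abs{\phi'}\leq\sqrt{a}$, establish the linear minorant $\phi(x)\geq\sqrt{a}\,\abs{x}$, and then invoke Lemma~\ref{abs_conv_cond} with $M=\sqrt{a}$. Your treatment is slightly more careful than the paper's in writing the derivative bound with absolute values and in flagging the $b=0$ subgradient issue, but the argument is otherwise identical.
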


\begin{lemma}
The function $\phi: \R \to \R$, defined as follows
\begin{equation*}
        \phi(x) = \begin{cases} 
            x + 1 + \frac{1}{x + 1} & x \geq 0 \\
            1 - x + \frac{1}{1 - x} & x < 0 
            \end{cases}
\end{equation*}
is absolutely convex.

\begin{proof}
Observe that $x_{\star} = 0$. By a simple computation, we can show that $\phi''(x) \geq 0$ for all $x \in \R$,
\begin{equation*}
    \phi''(x) = \begin{cases} 
            \frac{2}{(x + 1)^3} & x \geq 0 \\
            \frac{2}{(1 - x)^3} & x < 0 
            \end{cases}. 
\end{equation*}
Similarly to the previous examples, we compute an upper bound on $\phi'$,
\begin{equation*}
\phi'(x) = 
\begin{cases} 
    1 - \frac{1}{(x + 1)^2} & x \geq 0 \\
    \frac{1}{(1 - x)^2} - 1 & x < 0 
\end{cases}. 
\end{equation*}
It is clear that $\abs{\phi'(x)} \leq 1$ for all $x \in \R$. It is easy to shoiw $\phi(x) \geq \abs{x}$. For $x \geq 0$, we have
\begin{equation*}
    \phi(x) = x + 1 + \frac{1}{x + 1} \geq x.
\end{equation*}
For $x < 0$, we have
\begin{equation*}
    \phi(x) = 1 - x + \frac{1}{1 - x} \geq - x.
\end{equation*}
Therefore, by Lemma~\ref{abs_conv_cond}, $\phi$ is absolutely convex. 
\end{proof}
\end{lemma}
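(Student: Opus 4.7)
The plan is to follow the same template used earlier in the appendix for the Huber loss, pseudo-Huber loss, and $\sqrt{ax^2+b}$ examples: verify the three ingredients demanded by Lemma~\ref{abs_conv_cond} (convexity, bounded derivative, and the pointwise bound $\phi(x)\geq |x|$ with the constant that matches the derivative bound) and then invoke it. Observe first that $\phi$ is even, i.e.\ $\phi(-x)=\phi(x)$, so $x_\star=0$ is the unique minimizer with $\phi(0)=2$; continuity and $C^1$ gluing at $0$ are immediate since both one-sided limits of $\phi$ and of $\phi'$ agree (both equal $2$ and $0$, respectively).

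Next I would verify convexity by a direct second-derivative computation in each piece, yielding
\begin{equation*}
\phi''(x)=\begin{cases}\dfrac{2}{(x+1)^3} & x\geq 0,\\[2pt]\dfrac{2}{(1-x)^3} & x<0,\end{cases}
\end{equation*}
which is strictly positive on each side (the denominators are positive for all admissible $x$), so $\phi$ is convex on $\R$. From the first-derivative formula
\begin{equation*}
\phi'(x)=\begin{cases}1-\dfrac{1}{(x+1)^2} & x\geq 0,\\[2pt]\dfrac{1}{(1-x)^2}-1 & x<0,\end{cases}
\end{equation*}
both expressions lie in $[-1,1]$ because $(x+1)^2\geq 1$ for $x\geq 0$ and $(1-x)^2\geq 1$ for $x<0$. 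Hence $|\phi'(x)|\leq 1$ for all $x\in\R$.

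Finally, I would establish the lower bound $\phi(x)\geq |x|$. For $x\geq 0$ the extra $1+\tfrac{1}{x+1}\geq 0$ immediately gives $\phi(x)=x+1+\tfrac{1}{x+1}\geq x=|x|$, and the $x<0$ case is identical by the even symmetry (or by the analogous trivial bound $1-x+\tfrac{1}{1-x}\geq -x$). With convexity, $|\phi'(x)|\leq 1$, and $\phi(x)\geq |x|$ in hand, Lemma~\ref{abs_conv_cond} applies verbatim and yields absolute convexity of $\phi$. No step here is really obstructive — the only thing to double check is the $C^1$ matching at the origin so that $\phi'$ is well-defined globally and the bounds can be applied uniformly; everything else is a short algebraic verification in the spirit of the preceding examples.
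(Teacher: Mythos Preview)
Your proposal is correct and follows essentially the same route as the paper: verify convexity via $\phi''\geq 0$, bound $|\phi'|\leq 1$, check $\phi(x)\geq |x|$, and invoke Lemma~\ref{abs_conv_cond}. Your added remarks on even symmetry and the $C^1$ gluing at the origin are minor extras not present in the paper but do not change the argument.
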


\subsection{Functions with zero minimum}
\par Absolutely convex functions have some interesting properties when their minimum is $0$.

\begin{lemma}\label{lem:absolutely convex functions with zero minimum}
If $\phi$ is absolutely convex, then the following statements are equivalent:
\begin{enumerate}
    \item $\phi(0)=0$,
    \item $\phi(x) = \abr{\nabla \phi(x),x}$, 
    \item $\phi$ is homogeneous of degree 1.
\end{enumerate}
\end{lemma}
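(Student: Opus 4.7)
The plan is to establish the cycle $(1) \Rightarrow (2) \Rightarrow (3) \Rightarrow (1)$, since each implication follows from a short and self-contained argument given absolute convexity.

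For $(1) \Rightarrow (2)$, I would instantiate the defining inequality~\eqref{eqn:absolute convexity equation appendix} of absolute convexity by swapping the roles of $x$ and $y$ and plugging in the point $0$; that is, take the inequality $\phi(0) \geq |\phi(x) + \langle \nabla \phi(x), 0 - x\rangle|$. Using $\phi(0) = 0$ on the left forces $|\phi(x) - \langle \nabla \phi(x), x\rangle| \leq 0$, and hence equality $\phi(x) = \langle \nabla \phi(x), x\rangle$ holds for every $x \in \R^d$. This is the cleanest step.

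For $(2) \Rightarrow (3)$, I would fix $x \in \R^d$ and study the scalar map $g:(0,\infty) \to \R$ defined by $g(t) \eqdef \phi(tx)$. By the chain rule, $g'(t) = \langle \nabla \phi(tx), x\rangle$, while (2) applied at the point $tx$ gives $\phi(tx) = \langle \nabla \phi(tx), tx\rangle = t\, g'(t)$. This yields the ODE $g(t) = t\, g'(t)$ on $(0,\infty)$, whose solutions are precisely the linear maps $g(t) = C t$; matching at $t=1$ gives $C = \phi(x)$, so $\phi(tx) = t\phi(x)$ for all $t>0$, i.e., positive homogeneity of degree $1$. The main obstacle here is the mild technical point that $\phi$ needs to be differentiable along the ray through $x$ in order to use the chain rule and ODE argument; this is already implicit in the way absolute convexity is stated via $\nabla \phi$ throughout the paper, so I would rely on that standing assumption rather than re-proving regularity.

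For $(3) \Rightarrow (1)$, I would exploit homogeneity directly at the origin: for any $\lambda > 0$, $\phi(0) = \phi(\lambda \cdot 0) = \lambda \phi(0)$, so choosing $\lambda \neq 1$ forces $\phi(0) = 0$. Alternatively (and equivalently), taking $\lambda \to 0^+$ in $\phi(\lambda x) = \lambda \phi(x)$ for any fixed $x$, together with continuity, yields the same conclusion. This closes the cycle and completes the proof.
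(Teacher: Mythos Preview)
Your cycle $(1)\Rightarrow(2)\Rightarrow(3)\Rightarrow(1)$ is the same as the paper's, and your arguments for $(1)\Rightarrow(2)$ and $(3)\Rightarrow(1)$ match the paper's essentially verbatim.

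The one genuine difference is $(2)\Rightarrow(3)$. You argue via the scalar ODE $g(t)=t\,g'(t)$ along each ray. The paper instead substitutes $\phi(y)=\abr{\nabla\phi(y),y}$ into the convexity inequality $\phi(x)\geq\phi(y)+\abr{\nabla\phi(y),x-y}$ to obtain $\phi(x)\geq\abr{\nabla\phi(y),x}$ for every $y$, and hence the support-function representation $\phi(x)=\max_{g\in Q}\abr{g,x}$ with $Q=\{\nabla\phi(y):y\in\R^d\}$; positive homogeneity is then immediate from linearity inside the max. This route never invokes the chain rule and therefore works transparently with subgradients.

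That matters here: your dismissal of the regularity issue is not quite right. The paper explicitly states (just after Definition~7.1) that $\nabla\phi(y)$ denotes a \emph{subgradient}, so differentiability is not a standing assumption you can lean on. Your ODE argument can be salvaged --- $g$ is convex hence locally absolutely continuous, $g'(t)$ exists a.e.\ and equals $\abr{s,x}$ for every $s\in\partial\phi(tx)$ at points of differentiability, so $(g(t)/t)'=0$ a.e.\ and $g(t)/t$ is constant --- but you would need to say this rather than appeal to a nonexistent assumption. The paper's support-function argument buys you exactly the avoidance of this detour.
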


\begin{proof} 
We establish three implications:

$(i) \Rightarrow (ii)$ Pick any $y$. If $\phi(0)=0$, then using \Cref{eqn:absolute convexity equation appendix} with $x=0$ leads to $ \left| \phi(y) + \abr{\nabla \phi(y),-y} \right| \leq  0$, which implies $\phi(y)=\abr{\nabla \phi(y),y}$. 

\bigskip
$(ii)\Rightarrow (iii)$ We start by substituting $\phi(y) = \ip{\nabla \phi(y)}{y}$ and $\phi(x)=\ip{\nabla \phi(x)}{x}$ into ${\phi(x) \geq \phi(y) + \ip{\nabla \phi(y)}{x-y}}$ to get, 
\begin{equation*}
    \phi(x)=\abr{\nabla \phi(x),x} \geq \abr{\nabla \phi(y),y} + \abr{\nabla \phi(y),x-y} = \abr{\nabla \phi(y),x} .
\end{equation*}
This means that,
\begin{equation*}
    \phi(x) = \max_{g \in Q} \abr{g,x},
\end{equation*}
where $Q \eqdef \{\nabla \phi(y) \;:\; y\in \R^d\}$. As a consequence, for any $t\geq 0$ and $x\in \R^d$ we get
\[ \phi(tx) =  \max_{g \in Q} \abr{g,tx} = t\max_{g \in Q} \abr{g,x} = t \phi(x). \] 

\bigskip
$(iii)\Rightarrow (i)$ Choose any $x\in \R^d$ and $t=0$. Then $\phi(0)=\phi(tx) = t\phi(x) = 0 \phi(x) = 0$.
\end{proof}

\begin{lemma} \label{lem:adapatation of act II to nonzero at zero functions. }
Let $\phi: \R^d \rightarrow \R$ be absolutely convex. Suppose there exists an $x_{\star} \in \R^d$ such that $\phi(x_{\star}) = 0$. Then

\begin{equation*}
\phi(x) = \ip{\nabla \phi(x)}{x - x_{\star}}.
\end{equation*}

\begin{proof}
From absolute convexity, we have that

\begin{equation*}
0 \leq |\phi(x) + \ip{\nabla \phi(x)}{x_{\star} - x}| \leq \phi(x_{\star}) = 0.
\end{equation*}

So
\begin{equation*}
\phi(x) + \ip{\nabla \phi(x)}{x_{\star} - x} = 0.
\end{equation*}

Simply rearranging we get our result,
\begin{equation*}
    \phi(x) = -\ip{\nabla \phi(x)}{x_{\star} - x} = \ip{\nabla \phi(x)}{x - x_{\star}}.
\end{equation*}
\end{proof}
\end{lemma}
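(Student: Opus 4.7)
The target identity $\phi(x) = \langle \nabla \phi(x), x - x_\star\rangle$ involves the gradient at $x$ (not at $x_\star$), so my plan is to apply the defining inequality of absolute convexity \eqref{eqn:absolute convexity equation appendix} with the roles of the variables chosen so that $\nabla \phi$ is evaluated at $x$. Concretely, I would swap the names of the variables in \eqref{eqn:absolute convexity equation appendix} to obtain
\[
\phi(y) \geq \bigl| \phi(x) + \langle \nabla \phi(x), y - x\rangle \bigr| \qquad \forall x, y \in \R^d,
\]
and then specialize $y = x_\star$.

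With that specialization, the left-hand side becomes $\phi(x_\star) = 0$ by hypothesis, so I get
\[
0 \;\geq\; \bigl| \phi(x) + \langle \nabla \phi(x), x_\star - x\rangle \bigr| \;\geq\; 0.
\]
Since both ends equal zero, the quantity inside the absolute value must vanish. Rearranging yields
\[
\phi(x) \;=\; -\langle \nabla \phi(x), x_\star - x\rangle \;=\; \langle \nabla \phi(x), x - x_\star\rangle,
\]
which is the desired identity.

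There is essentially no obstacle here: the argument is a one-line application of the definition of absolute convexity combined with the hypothesis $\phi(x_\star)=0$, mirroring the implication $(i)\Rightarrow(ii)$ in \Cref{lem:absolutely convex functions with zero minimum} (the case $x_\star = 0$). The only thing to be careful about is the direction in which the inequality is instantiated—one must evaluate the gradient at $x$, not at $x_\star$, because the conclusion is a pointwise statement about $\nabla \phi(x)$; this is why I plug in $y = x_\star$ rather than $x = x_\star$. No differentiability or smoothness beyond what is already implicit in the definition of absolute convexity is needed.
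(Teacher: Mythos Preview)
Your proposal is correct and follows essentially the same approach as the paper's own proof: instantiate the absolute convexity inequality with the gradient evaluated at $x$ and the other point set to $x_\star$, use $\phi(x_\star)=0$ to squeeze the absolute value to zero, and rearrange.
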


\begin{lemma}
Let $\phi: \R \rightarrow \R$ be absolutely convex. Suppose there exists an $x_{\star} \in \R$ such that $\phi(x_{\star}) = 0$. Also, suppose that $\phi$ is differentiable everywhere but at $x_{\star}$. Then it must be that

\begin{equation}
    \phi(x) = m\abs{x- x_{\star}} =
    \begin{cases} 
        -m(x - x_{\star}) & x < x_{\star} \\
         m(x - x_{\star}) & x > x_{\star} \\ 
    \end{cases}
\end{equation}
for some $m \in \R_{\geq 0}$.  
\begin{proof}

We have that $f(x) = \phi(x + x_{\star})$ is absolutely convex. It is also homogeneous of degree one since $f(0) = 0$. Define $U_1 = \{x \in \R \ | \ x > 0\}$ and $U_2 = \{x \in \R \ | \ x < 0\}$. 

By homogeneity, for any $t > 0$ and $x \in U_1$ we have that $f(tx) = tf(x)$. Differentiating both sides with respect to $x$ we get $f'(tx) = f'(x)$. This means that for any $x \in U_1$ we have that $f'(x) = m_1$ for some $m_1 \in \R$. Since $U_1$ is a connected open set, we have that $f(x) = m_1x$ for $x \in U_1$. 

By similar reasoning, for $x \in U_2$, $f'(x) = m_2$ for some $m_2 \in \R$. 
Also then, $f(x) = m_2x$ for $x \in U_2$. 

Now consider $x \in U_1$ and $y \in U_2$. By absolute convexity we know that
\begin{align*}
m_1x = f(x) &\geq |f(y) + f'(y)(x-y)|  = |m_2y + m_2(x-y)|  = |m_2x| \geq |m_2||x|.  \\
\end{align*}
Then $m_1 \geq |m_2|\frac{x}{|x|} = |m_2|$ which comes from the fact that $x \in U_1$ so $x > 0$ and thus $\frac{x}{|x|} = 1$. Similarly,
\begin{align*}
    m_2y = f(y) &\geq |f(x) + f'(x)(y-x)| = |m_1x + m_1(y-x)| = |m_1y|  \geq |m_1||y|. \\
\end{align*}
Then $|m_1| \leq m_2\frac{y}{|y|} = -m_2$ because $y \in U_2$ so $y < 0$ and thus $\frac{y}{|y|} = -1$. 

Since $0 \leq |m_1| \leq -m_2$ we get $m_2 \leq 0$. Also because $|m_2| \leq m_1$ it must be that $m_2 = -m_1$ where $m_1 \geq 0$. 

For brevity, we set $m = m_1$ and so we have that 
\begin{equation*}
    f(x) = 
    \begin{cases} 
        -mx & x < 0 \\
         mx & x > 0 \\ 
    \end{cases}.
\end{equation*}

By definition we have that $\phi(x) = f(x - x_{\star})$. Therefore, we obtain our desired result
\begin{equation*}
    \phi(x) = 
    \begin{cases} 
        -m(x - x_{\star}) & x - x_{\star} < 0 \\
         m(x - x_{\star}) & x - x_{\star} > 0 \\ 
    \end{cases}.
\end{equation*}
\end{proof}
\end{lemma}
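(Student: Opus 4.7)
The plan is to translate the problem so the vanishing point sits at the origin, exploit the fact that an absolutely convex function with $f(0)=0$ is positively homogeneous of degree one (already established in Lemma~\ref{lem:absolutely convex functions with zero minimum}), and then combine homogeneity with differentiability to force $\phi$ to be linear on each side of $x_\star$. A final application of the defining absolute-convexity inequality, taken across the two sides, ties the two slopes together and yields the desired form.

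First, I would set $f(x) \eqdef \phi(x + x_\star)$. By Lemma~\ref{lem:abs-convex-basic-properties}(iv) (affine pre-composition), $f$ is absolutely convex, with $f(0)=0$ and differentiable on $\R\setminus\{0\}$. Applying Lemma~\ref{lem:absolutely convex functions with zero minimum}, we get the positive homogeneity identity
\begin{equation*}
f(tx) = t\,f(x), \qquad \forall\, t \geq 0,\ x \in \R.
\end{equation*}

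Second, I would restrict attention to the two open rays $U_+ \eqdef (0,\infty)$ and $U_- \eqdef (-\infty,0)$, where $f$ is differentiable. Differentiating the homogeneity identity in $x$ gives $t f'(tx) = t f'(x)$, hence $f'(tx) = f'(x)$ for every $t > 0$ and $x$ in a single component. Because any two points in $U_+$ (resp.\ $U_-$) are related by a positive scalar, this scale-invariance forces $f' \equiv m_+$ on $U_+$ and $f' \equiv m_- $ on $U_-$ for constants $m_\pm \in \R$. Integrating and using $f(0)=0$, I get $f(x) = m_+ x$ on $U_+$ and $f(x) = m_- x$ on $U_-$.

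Third, I would link $m_+$ and $m_-$ by applying the absolute-convexity inequality \eqref{theory:abs_conv_defn_abs_conv} across the two sides. For any $x > 0$ and $y < 0$,
\begin{equation*}
m_+ x \;=\; f(x) \;\geq\; \bigl|\,f(y) + f'(y)(x-y)\,\bigr| \;=\; \bigl|\,m_- y + m_-(x-y)\,\bigr| \;=\; |m_-|\,x,
\end{equation*}
so $m_+ \geq |m_-|$; in particular $m_+ \geq 0$. Reversing the roles of $x$ and $y$ yields $-m_- = |m_-| \geq |m_+| = m_+$. Together these force $m_- = -m_+$, and setting $m \eqdef m_+ \geq 0$ gives $f(x) = m|x|$, i.e., $\phi(x) = m|x-x_\star|$ as claimed.

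The only subtlety is Step~2: one must be confident that scale-invariance of the derivative on each ray upgrades to being a single constant, and that the constant of integration when recovering $f$ from $f'$ vanishes. The first point is automatic because each of $U_\pm$ is a single ray through the origin, so scale-invariance covers the whole component; the second relies on $f(0)=0$ together with continuity of $\phi$ (which follows from convexity) to kill the integration constants at $0^+$ and $0^-$. Everything else is routine bookkeeping; the absolute-convexity inequality across the two rays is the ingredient that prevents asymmetric slopes and pins down the V-shape.
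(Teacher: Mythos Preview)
Your proposal is correct and follows essentially the same approach as the paper's proof: translate so the zero sits at the origin, invoke Lemma~\ref{lem:absolutely convex functions with zero minimum} to get degree-one homogeneity, differentiate to force $f'$ constant on each open ray, and then apply the absolute-convexity inequality across the two rays to pin down $m_- = -m_+$. Your write-up is in fact slightly more careful than the paper's in two places (you explicitly justify the vanishing integration constant via continuity at $0$, and you note that $m_+\ge 0$ before using $|m_+|=m_+$), but the underlying argument is identical.
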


\begin{lemma}
Suppose that $\phi: \R^d \to \R$ is absolutely convex and $\phi(0) = 0$. Then $\phi$ is sub-additive, 
\begin{equation*}
\phi(x + y) \leq \phi(x) + \phi(y).
\end{equation*}

\begin{proof}
By Lemma~\ref{lem:absolutely convex functions with zero minimum}, we know $\phi$ is positively homogeneous of degree one. Also because $\phi$ is convex we have that for any $0 \leq \alpha \leq 1$,
\begin{equation*}
    \phi(\alpha x + (1 - \alpha)y) \leq \alpha  \phi(x) + (1 - \alpha)  \phi(y).
\end{equation*}
Selecting $\alpha = \frac{1}{2}$,
\begin{equation}\label{Appendix_AbsConv_Sublinear_Eq1}
     \phi\left(\frac{1}{2}x + \frac{1}{2}y\right) \leq \frac{1}{2} \phi(x) + \frac{1}{2} \phi(y).
\end{equation}
By homogeneity of $\phi$, 
\begin{equation*}
    \frac{1}{2}\phi(x+y) \leq \phi\left(\frac{1}{2}x + \frac{1}{2}y\right).
\end{equation*}
By combining the previous inequality with inequality \eqref{Appendix_AbsConv_Sublinear_Eq1} and multiplying by $2$ we get our result. 
\end{proof}
\end{lemma}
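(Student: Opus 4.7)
The plan is to exploit the two structural facts that are already in hand for an absolutely convex $\phi$ with $\phi(0)=0$: convexity (which every absolutely convex function satisfies by definition) and positive $1$-homogeneity (granted by Lemma~\ref{lem:absolutely convex functions with zero minimum}, since $\phi(0)=0$). The whole proof should be a two-line interplay between these properties.

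First I would apply convexity at the midpoint: for any $x,y\in\R^d$,
\begin{equation*}
\phi\!\left(\tfrac{1}{2}x+\tfrac{1}{2}y\right) \;\leq\; \tfrac{1}{2}\phi(x)+\tfrac{1}{2}\phi(y).
\end{equation*}
Next I would invoke positive homogeneity with $t=\tfrac{1}{2}$ applied to the vector $x+y$, which yields
\begin{equation*}
\phi\!\left(\tfrac{1}{2}x+\tfrac{1}{2}y\right) \;=\; \phi\!\left(\tfrac{1}{2}(x+y)\right) \;=\; \tfrac{1}{2}\phi(x+y).
\end{equation*}
Combining the two displays and multiplying through by $2$ gives exactly the subadditivity inequality $\phi(x+y)\leq \phi(x)+\phi(y)$.

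There is essentially no obstacle here; the only subtlety is to make sure the homogeneity statement of Lemma~\ref{lem:absolutely convex functions with zero minimum} is applied for $t=\tfrac{1}{2}\geq 0$, which is in the allowed range. In a polished write-up I would simply display the convex combination bound, rewrite its left-hand side via the homogeneity identity, and conclude in one line, without any additional assumptions on differentiability or boundedness of $\phi$.
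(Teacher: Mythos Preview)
Your proposal is correct and follows essentially the same argument as the paper: convexity at the midpoint combined with the positive $1$-homogeneity from Lemma~\ref{lem:absolutely convex functions with zero minimum}, then multiply by $2$. The only cosmetic difference is that you (correctly) record the homogeneity step as an equality, whereas the paper writes it as an inequality.
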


\begin{lemma}
Suppose that $\phi : \R^d \to \R$ is absolutely convex and $\phi(0) = 0$. Then the epigraph of $\phi$ is a convex cone. 

\begin{proof}
Now we show that $\epi{\phi}$ is a convex cone. Suppose $(x, \mu_1) \in \epi{\phi}$ and $(y, \mu_2) \in \epi{\phi}$. Suppose $\alpha \geq 0$ and $\beta \geq 0$. Then
\begin{equation*}
\phi(\alpha x + \beta y) \leq \phi(\alpha x) + \phi(\beta y) = \alpha\phi(x) + \beta\phi(y) \leq \alpha \mu_1 + \beta \mu_2.
\end{equation*}
The first inequality is from sub-additivity and the first equality is from homogeneity, Therefore, $(\alpha x + \beta y, \alpha \mu_1 + \beta \mu_2) \in \epi{\phi}$ so $\epi{\phi}$ is a convex cone. 
\end{proof}
\end{lemma}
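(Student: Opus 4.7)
The plan is to verify the two defining closure properties of a convex cone for $\epi(\phi) = \{(x, \mu) \in \R^d \times \R : \phi(x) \leq \mu\}$, namely closure under addition and closure under nonnegative scaling. Equivalently, it suffices to show that for any $(x,\mu_1), (y,\mu_2) \in \epi(\phi)$ and any $\alpha, \beta \geq 0$, the combination $(\alpha x + \beta y,\, \alpha \mu_1 + \beta \mu_2)$ again lies in $\epi(\phi)$.

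To carry this out, I would rely on the two structural facts about absolutely convex functions with $\phi(0)=0$ that were already established in the preceding lemmas: positive homogeneity of degree one, i.e.\ $\phi(tx) = t\phi(x)$ for all $t \geq 0$ (via \Cref{lem:absolutely convex functions with zero minimum}, implication $(i)\Rightarrow(iii)$), and subadditivity $\phi(u+v)\leq \phi(u)+\phi(v)$ (the immediately prior lemma). Given $(x,\mu_1),(y,\mu_2)\in \epi(\phi)$ and $\alpha,\beta\geq 0$, I would apply subadditivity to $u=\alpha x$ and $v=\beta y$, then invoke homogeneity to rewrite $\phi(\alpha x)=\alpha \phi(x)$ and $\phi(\beta y)=\beta \phi(y)$, and finally use the epigraph memberships $\phi(x)\leq \mu_1$ and $\phi(y)\leq \mu_2$ to bound the sum. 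The chain of inequalities
\begin{equation*}
\phi(\alpha x + \beta y) \;\leq\; \phi(\alpha x) + \phi(\beta y) \;=\; \alpha\phi(x) + \beta\phi(y) \;\leq\; \alpha \mu_1 + \beta \mu_2
\end{equation*}
delivers exactly $(\alpha x + \beta y,\, \alpha \mu_1 + \beta \mu_2) \in \epi(\phi)$, which is the definition of a convex cone (closed under nonnegative linear combinations; note that $\alpha=\beta=0$ gives $(0,0)\in\epi(\phi)$ since $\phi(0)=0$, so the origin is included).

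There is essentially no hard step here; the whole content is bookkeeping on top of the two lemmas cited above. The only subtle point worth flagging is that homogeneity only holds for nonnegative scalars, so the argument genuinely produces a \emph{convex cone} rather than a full linear subspace, which is consistent with the statement. If one wanted to be even more economical, one could separately verify (a) that $\epi(\phi)$ is convex (which follows from convexity of $\phi$, itself a consequence of absolute convexity) and (b) that $\epi(\phi)$ is a cone (which reduces to the single-scalar check $\phi(tx)\leq t\mu$ via homogeneity); the combined form above just handles both in one line.
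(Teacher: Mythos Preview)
Your proposal is correct and follows exactly the same approach as the paper: take $(x,\mu_1),(y,\mu_2)\in\epi(\phi)$ and $\alpha,\beta\geq 0$, then chain subadditivity and positive homogeneity (both established in the preceding lemmas) to obtain $\phi(\alpha x+\beta y)\leq \alpha\mu_1+\beta\mu_2$. The displayed inequality chain you wrote is identical to the paper's.
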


\begin{lemma}
Suppose that $\phi: \R^d \to \R$ is absolutely convex and $\phi(0) = 0$. Then $\phi$ is even, i.e.
\begin{equation}
    \phi(x) = \phi(-x),\qquad \forall x\in \R^d. 
\end{equation}

\begin{proof}
    By absolute convexity we have that for any $x,y \in \R^d$
\begin{align}\label{Appendix_AbsConv_Even_Eq1}
    \ip{\nabla \phi(x)}{x} = \phi(x) &\geq \abs{\phi(y) + \ip{\nabla \phi(y)}{x - y}} \nonumber \\
            &= \abs{\phi(y) + \ip{\nabla \phi(y)}{x} + \ip{\nabla \phi(y)}{-y} } \nonumber \\
            &= \abs{\phi(y) + \ip{\nabla \phi(y)}{x} - \ip{\nabla \phi(y)}{y} } \nonumber \\
            &= \abs{\phi(y) + \ip{\nabla \phi(y)}{x} - \phi(y)}  \nonumber \\
            &= \abs{\ip{\nabla \phi(y)}{x}}. 
\end{align}
Similarly, 
\begin{equation}\label{Appendix_AbsConv_Even_Eq2}
    \phi(y) = \ip{\nabla \phi(y)}{y} \geq \abs{\ip{\nabla \phi(x)}{y}}.
\end{equation}
Now subtitute $y = -x$ into \eqref{Appendix_AbsConv_Even_Eq1},
\begin{equation*}
    \phi(x) \geq \abs{\ip{\nabla \phi(-x)}{x}} = \abs{\ip{\nabla \phi(-x)}{-x}} = \abs{\phi(-x)} = \phi(-x), 
\end{equation*}
where the last equality is because absolutely convex functions are non-negative. Substituting ${y=-x}$ into \eqref{Appendix_AbsConv_Even_Eq2},
\begin{equation*}
    \phi(-x) \geq \abs{\ip{\nabla \phi(x)}{-x}} = \abs{\ip{\nabla \phi(x)}{x}} = \abs{\phi(x)} = \phi(x).
\end{equation*}
Therefore, $\phi(x) = \phi(-x)$. 
\end{proof}
\end{lemma}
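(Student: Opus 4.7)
The plan is to exploit the identity $\phi(z) = \ip{\nabla \phi(z)}{z}$ that holds for any absolutely convex $\phi$ with $\phi(0) = 0$ (established in Lemma~\ref{lem:absolutely convex functions with zero minimum}), combined with the defining inequality of absolute convexity evaluated at an antipodal pair of points.

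First, I would apply the defining inequality $\phi(x) \geq |\phi(y) + \ip{\nabla \phi(y)}{x - y}|$ with the specific choice $y = -x$, obtaining
\[\phi(x) \geq \bigl|\phi(-x) + \ip{\nabla \phi(-x)}{2x}\bigr|.\]
Next, I would substitute the identity $\phi(-x) = \ip{\nabla \phi(-x)}{-x} = -\ip{\nabla \phi(-x)}{x}$ from Lemma~\ref{lem:absolutely convex functions with zero minimum} into the right-hand side. The two summands collapse to $|\phi(-x) - 2\phi(-x)| = |{-}\phi(-x)| = \phi(-x)$, where the last equality uses the fact that absolutely convex functions are non-negative. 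This yields $\phi(x) \geq \phi(-x)$.

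Then, by swapping the roles of $x$ and $-x$ and repeating the argument verbatim, I would obtain the reverse inequality $\phi(-x) \geq \phi(x)$. Combining the two bounds gives $\phi(x) = \phi(-x)$ for every $x \in \R^d$, which is the claim.

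The key insight is that the antipodal choice $y = -x$ in the absolute-convexity inequality is exactly what causes the linear term $\ip{\nabla \phi(-x)}{2x}$ to cancel $\phi(-x)$ via the identity $\phi(z) = \ip{\nabla \phi(z)}{z}$, leaving only $|{-}\phi(-x)|$. I do not expect any serious obstacle beyond this clean algebraic observation; the only minor subtlety is careful sign bookkeeping inside the absolute value, which is routine.
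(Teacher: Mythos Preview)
Your proposal is correct and follows essentially the same approach as the paper: both use the identity $\phi(z)=\ip{\nabla\phi(z)}{z}$ from Lemma~\ref{lem:absolutely convex functions with zero minimum} together with the absolute-convexity inequality at the antipodal pair $(x,-x)$, then invoke non-negativity to strip the absolute value. The only cosmetic difference is that the paper first records the general bound $\phi(x)\geq\abs{\ip{\nabla\phi(y)}{x}}$ for arbitrary $x,y$ and then specializes to $y=-x$, whereas you substitute $y=-x$ at the outset; the resulting algebra is the same.
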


\subsection{Real valued functions}
\par Simple absolutely convex functions from the real numbers to the real numbers are an instructive playground to understand how to finalize the generalized proofs. Below, we report some properties of such sub-class. In particular, we prove bounded subgradients in Lemma~\ref{bounded_grad real domain} and a useful result for validating examples in Lemma~\ref{abs_conv_cond}. 
\begin{lemma}
\label{cone_sym}
Let $f: \R \to \R$ defined as follows, $f(x) = \abs{a + b(x-x_0)}$ for some $a, b, x_0 \in \R$ with $b \neq 0$. Then for any $c > 0$. There exists $x_1, x_2 \in \R$ with $x_2 \geq x_1$, $f(x_1) = f(x_2) = c$, $\abs{x_2 - x_1} = \frac{2c}{\abs{b}}$ and $f'(x_2) \geq 0$ and $f'(x_1) \leq 0$.

\begin{proof}
By simply solving the equation $f(x) = c$, we get that
\begin{align*}
    x_2 &= \frac{c-a}{b} + x_0; \\
    x_1 &= \frac{-c - a}{b} + x_0.
\end{align*}
Thus,
\begin{align*}
    \abs{x_2 - x_1} &= \abs{\frac{c-a}{b} + x_0 - \frac{-c - a}{b} - x_0} 
    = \abs{\frac{c - a + c + a}{b}} 
    = \abs{\frac{2c}{b}}. 
\end{align*}

Suppose without loss of generality that $b > 0$. Then $f'(x_2) = cb > 0$ and $f'(x_1) = -cb < 0$.  
\end{proof}
\end{lemma}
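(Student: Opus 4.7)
The plan is to solve the equation $f(x) = c$ explicitly and then read off all four claimed properties from the two roots. Since $f(x) = |a + b(x-x_0)|$ is piecewise affine with vertex at $x_0 - a/b$, the equation $|a + b(x-x_0)| = c$ has exactly two solutions, obtained by removing the absolute value with the two possible signs:
\[
x = \frac{c - a}{b} + x_0 \quad \text{or} \quad x = \frac{-c - a}{b} + x_0.
\]
I would then label these as $x_2$ and $x_1$, choosing the ordering so that $x_2 \geq x_1$ (which depends on the sign of $b$); without loss of generality, assume $b > 0$, so $x_2 = (c-a)/b + x_0$ and $x_1 = (-c-a)/b + x_0$. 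The case $b < 0$ is handled symmetrically by swapping labels, which is precisely the role of the $|b|$ in the distance formula.

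For the distance, a direct subtraction gives $x_2 - x_1 = 2c/b$, so $|x_2 - x_1| = 2c/|b|$ in either sign regime for $b$. For the derivatives, note that away from the vertex, $f'(x) = b\cdot\operatorname{sign}(a + b(x-x_0))$. At $x_2$ the inner quantity equals $+c > 0$, so $f'(x_2) = b > 0$; at $x_1$ the inner quantity equals $-c < 0$, so $f'(x_1) = -b < 0$. When $b < 0$ the same computation, with labels swapped, gives $f'(x_2) = -b > 0$ and $f'(x_1) = b < 0$. In all cases the inequalities $f'(x_2) \geq 0$ and $f'(x_1) \leq 0$ hold (and are in fact strict since $b \neq 0$).

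The only mild subtlety is bookkeeping of the sign of $b$ to ensure the ordering $x_2 \geq x_1$ is consistent with the derivative conditions; this is resolved once and for all by the symmetry observation above. There is no real analytic obstacle here, as the function is piecewise affine and everything reduces to solving a linear equation and inspecting slopes. A short ``without loss of generality'' argument on the sign of $b$ keeps the presentation clean.
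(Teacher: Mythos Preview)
Your proposal is correct and follows essentially the same route as the paper: solve $|a+b(x-x_0)|=c$ explicitly for the two roots, subtract to get the distance $2c/|b|$, and handle the sign of $b$ by a ``without loss of generality'' argument to read off the derivative signs. Your treatment of the derivative is in fact cleaner than the paper's (which writes $f'(x_2)=cb$ where it should be $b$, though the sign conclusion is unaffected).
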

\begin{lemma}
\label{cone}
Suppose $\phi: \R \to \R$ is a convex function that is not constant with minimizer $x_{\star}$. Suppose $\phi$ lower bounded by $f(x) = \abs{a + b(x-x_0)}$ for $a,b, x_0 \in \R$ and $b \neq 0$. For any $c > \phi(x_{\star})$ there exists an $x'$ such that $\phi(x') = c$ and $\abs{x' - x_{\star}} \leq \frac{2c}{\abs{b}}$. 

\begin{proof}
By Lemma $\ref{cone_sym}$, there exists $x_1, x_2 \in \R$ such that $\abs{x_2 - x_1} = \frac{2c}{|b|}$ and $f(x_1) = f(x_2) = c$. 

We also know that $x_2 > x_1$ and that $f'(x_2) \geq 0$ and $f'(x_1) \leq 0$. We will show that $x_1 < x_{\star} < x_2$. Suppose by contradiction that $x_{\star} \geq x_2$. Since $f$ is a linear function with slope $f'(x_2)$ on the interval $[x_2, \infty)$ we get
\begin{equation*}
f(x_{\star}) = f(x_2) + f'(x_2)(x_{\star}-x_2) \geq f(x_2) = c > \phi(x_{\star}), 
\end{equation*}
which is a contradiction because $f$ is supposed to be a lower bound on $\phi$. The first inequality follows from the fact that $f'(x_2)(x_{\star}-x_2) \geq 0$. A similar argument follows if the assumption $x_{\star} \leq x_1$ is made. 

So $c = f(x_1) \leq \phi(x_1)$ and $\phi(x_{\star}) < c$. By the Intermediate Value Theorem, there exists $x' \in (x_1, x_{\star})$ such that $\phi(x') = c$.

Thus $x_1 < x' < x_{\star} < x_2$. Therefore, $\abs{x' - x_{\star}} \leq \abs{x_2 - x_1} = \frac{2c}{|b|}$ with $\phi(x') = c$. 
\end{proof}
\end{lemma}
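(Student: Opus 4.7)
My plan is to combine the two-point construction from Lemma~\ref{cone_sym} with the Intermediate Value Theorem applied to $\phi$. First I would apply Lemma~\ref{cone_sym} to the linear-absolute value lower bound $f(x) = |a + b(x - x_0)|$ at the given level $c > \phi(x_\star)$, producing two points $x_1 < x_2$ with $f(x_1) = f(x_2) = c$, $|x_2 - x_1| = 2c/|b|$, $f'(x_1) \leq 0$, and $f'(x_2) \geq 0$. Intuitively, outside the interval $[x_1, x_2]$ the V-shape of $f$ sits above the level $c$, and inside it sits below; the lemma then follows by showing that $x_\star$ is trapped inside this interval and that $\phi$ must cross the level $c$ on either side before escaping.

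Next, I would verify $x_1 < x_\star < x_2$ by contradiction. Suppose instead $x_\star \geq x_2$. Since $f$ is linear with nonnegative slope $f'(x_2)$ on $[x_2, \infty)$, we would obtain
\begin{equation*}
\phi(x_\star) \geq f(x_\star) = f(x_2) + f'(x_2)(x_\star - x_2) \geq f(x_2) = c,
\end{equation*}
contradicting $c > \phi(x_\star)$. The symmetric case $x_\star \leq x_1$ is handled identically using the nonpositive slope on $(-\infty, x_1]$. This forces $x_1 < x_\star < x_2$.

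Now I would apply the Intermediate Value Theorem to $\phi$ on $[x_1, x_\star]$: we have $\phi(x_1) \geq f(x_1) = c$ and $\phi(x_\star) < c$, and $\phi$ is continuous since it is convex on $\R$. Hence there exists $x' \in [x_1, x_\star]$ with $\phi(x') = c$. Because $x_1 < x' \leq x_\star < x_2$, we deduce
\begin{equation*}
|x' - x_\star| \leq |x_2 - x_1| = \frac{2c}{|b|},
\end{equation*}
which is the required bound.

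The only subtle point, and the place where the argument could slip, is step two: one must really use both pieces of information from Lemma~\ref{cone_sym} (the sign of the one-sided slope of $f$ outside $[x_1, x_2]$, together with $f \leq \phi$) to rule out $x_\star$ lying outside the interval. Once $x_\star$ is pinned inside, the remainder is a routine continuity argument, and convexity of $\phi$ is only invoked implicitly through continuity. I would also note that the hypothesis ``$\phi$ not constant'' and $b \neq 0$ ensure nondegeneracy of the construction (so that $x_1 < x_2$ strictly) and that $\phi(x_\star)$ is actually attained.
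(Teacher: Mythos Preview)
Your proposal is correct and follows essentially the same approach as the paper: invoke Lemma~\ref{cone_sym} to obtain $x_1<x_2$ with $f(x_1)=f(x_2)=c$ and $|x_2-x_1|=2c/|b|$, use the slope signs together with $f\leq\phi$ to trap $x_\star\in(x_1,x_2)$ by contradiction, and then apply the Intermediate Value Theorem on $[x_1,x_\star]$ to locate $x'$. The structure, the key inequalities, and the final distance bound all match the paper's proof.
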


\begin{lemma}
\label{bounded_grad real domain}
Suppose $\phi: \R \to \R$ is absolutely convex and has a minimizer $x_{\star}$. Then there exists an $M \in \R$ such that $\abs{\phi'(x)} \leq M$ for all $x \in \R$ i.e. $\abs{\phi'}$ is bounded. 

\begin{proof}
If $\phi$ is constant then its derivative is bounded so we consider the case where $\phi$ is not constant. 

We will do a proof by contradiction. Let $c \in \R$ such that $c > \phi(x_{\star})$. Let $\epsilon = \frac{c - \phi(x_{\star})}{2}$. Note that $\abs{\phi(x_{\star}) - c} > \epsilon$. 

By continuity of $\phi$ at $x_{\star}$ there must be a $\delta > 0$ such that if $\abs{x- x_{\star}} \leq \delta$ then $\abs{\phi(x) - \phi(x_{\star})} \leq \epsilon$.  

Suppose that $\abs{\phi'}$ is unbounded. Therefore, there exists a sequence of numbers $y_n \in \R$ such that
\begin{equation*}
    \lim_{n \to \infty} \abs{\phi'(y_n)} = \infty
\end{equation*}

For any $n$, let $f_n(x) = \abs{\phi(y_n) + \phi'(y_n)(x-y_n)}$. Since $\phi$ is absolutely convex, $f_n$ must be a lower bound on $\phi$. By Lemma \ref{cone} there exists an $x_n$ such that $\phi(x_n) = c$ and $\abs{x_n - x_{\star}} \leq \frac{2c}{\abs{\phi'(y_n)}}$ for any $y_n$. 

Since $|\phi'|$ is unbounded we can choose a $N$ such that $\frac{2c}{\abs{\phi'(y_N)}} \leq \delta$. Thus $\abs{x_N - x_{\star}} \leq \delta$. Therefore, $\abs{\phi(x_{\star}) - \phi(x_N)} = \abs{\phi(x_{\star}) - c} \leq \epsilon$. 

But this contradicts the fact that $\abs{\phi(x_{\star}) - c} > \epsilon$. 
\end{proof}
\end{lemma}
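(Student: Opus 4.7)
The natural route is a proof by contradiction powered by the quantitative cone trap packaged in Lemma~\ref{cone}. If $\phi$ is constant the claim is vacuous, so assume $\phi$ is nonconstant. Suppose toward contradiction that $\abs{\phi'}$ is unbounded on $\R$; then we can pick a sequence $(y_n)\subset\R$ with $\abs{\phi'(y_n)}\to\infty$.

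The key observation is that, by absolute convexity, the tent
\[
f_n(x)\eqdef \abs{\phi(y_n)+\phi'(y_n)(x-y_n)}
\]
is a global lower bound on $\phi$. As $\abs{\phi'(y_n)}$ grows, these tents become ever steeper, so for any fixed height $c>0$ their super-level sets $\{f_n\ge c\}$ shrink in diameter. Lemma~\ref{cone} turns this into a usable quantitative statement: for each $c>\phi(x_\star)$ there is a point $x_n\in\R$ with $\phi(x_n)=c$ and $\abs{x_n-x_\star}\le 2c/\abs{\phi'(y_n)}$.

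To close the argument, I fix some $c>\phi(x_\star)$ and set $\varepsilon\eqdef(c-\phi(x_\star))/2$. Continuity of $\phi$ at $x_\star$ yields a radius $\delta>0$ on which $\phi$ stays within $\varepsilon$ of $\phi(x_\star)$, and in particular strictly below $c$. Choosing $n$ so large that $2c/\abs{\phi'(y_n)}<\delta$, the point $x_n$ supplied by Lemma~\ref{cone} lies inside the $\delta$-neighbourhood of $x_\star$, forcing $\phi(x_n)<c$. This contradicts $\phi(x_n)=c$ and completes the proof.

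The only mildly subtle point is verifying that Lemma~\ref{cone} actually applies to each $f_n$: one needs $\phi'(y_n)\neq 0$, which holds for all sufficiently large $n$ because $\abs{\phi'(y_n)}\to\infty$, and $\phi$ nonconstant with minimizer $x_\star$, which were arranged at the outset. The substantive geometric work has therefore been done inside Lemmas~\ref{cone_sym} and~\ref{cone}, and the present lemma is essentially a continuity-plus-cone cleanup; I do not foresee any further technical obstacle.
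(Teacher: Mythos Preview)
Your proposal is correct and follows essentially the same approach as the paper: contradiction via the tent lower bounds $f_n$, an application of Lemma~\ref{cone} to produce level-$c$ points $x_n$ with $\abs{x_n-x_\star}\le 2c/\abs{\phi'(y_n)}$, and a collision with continuity of $\phi$ at $x_\star$. The only cosmetic difference is how the contradiction is phrased (you conclude $\phi(x_n)<c$ directly, while the paper packages it as $\abs{\phi(x_\star)-c}\le\varepsilon$), and you are slightly more careful in noting that $\phi'(y_n)\neq 0$ for large $n$ so that Lemma~\ref{cone} applies.
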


\begin{lemma} \label{abs_conv_cond}
Suppose $\phi: \R \to \R$ is convex and has a minimizer at $x_{\star}=0$. Suppose $\phi'$ is bounded. If $\phi$ can be lower bounded by $h(x) = m\abs{x}$ where $\abs{\phi'(y)} \leq m$ for all $y \in \R$ then $\phi$ is absolutely convex. 

\begin{proof}
Suppose without loss of generality $y < x_{\star} = 0$. Since $\phi$ is convex we have that $\phi'(y) \leq 0$ by monotoncity of $\abs{\phi'}$. Since $m$ is a bound on $\phi'$ we get 
\begin{equation} \label{Appendix:abs_conv_if_cond1}
    m \geq -\phi'(y) \geq 0.
\end{equation}
Since $h$ is a lower bound on $\phi$ we also know that $ $.
\begin{equation}\label{Appendix:abs_conv_if_cond2}
    \phi(y) \geq -my \geq 0.
\end{equation}

Now let $f(x) = \abs{\phi(y) + \phi'(y)(x-y)}$. Denote $x^v$ as the point where $f(x^v) = 0$. On the interval $(-\infty, x^v]$, $f$ is equal to the line tangent to $\phi$ at point $y$. So by convexity, $f$ is a lower bound on $\phi$ on the interval $(-\infty, x^v]$. It remains to show that $f$ lower bounds $\phi$ on the interval $[x^v, \infty)$. 

First, we show that $x^v = -\frac{\phi(y)}{\phi'(y)} + y \geq x_{\star} = 0$. We can take the reciprocal of inequality \eqref{Appendix:abs_conv_if_cond1} to obtain, $\frac{1}{m} \leq -\frac{1}{\phi'(y)}$. Multiply this inequality by \eqref{Appendix:abs_conv_if_cond2} to get, $-y \leq -\frac{\phi(y)}{\phi'(y)}$. We can do this because the terms on both sides of the inequalities are positive. Rearranging we can see that $x^v \geq 0$. 

Suppose $x \in [x^v, \infty)$. On this interval, $f$ is a line with slope $-\phi'(y)$ passing through the point $(x^v, 0)$. Thus we can rewrite $f$ as $f(x) = -\phi'(y)(x-x^v)$. Since $x^v \geq 0$ we know that $x-x^v \leq x$. Multiply this inequality by $m \geq -\phi'(y) \geq 0$ to get that $-\phi'(y)(x-x^v) \leq mx$. Since $h(x) = mx$ is a lower bound on $\phi$ we have that $-\phi'(y)(x-x^v) \leq \phi(x)$. Therefore, $f$ is a lower bound on $\phi$ for arbitrary $y < 0$ so we are done.

A similar argument can be made for $y \geq 0$, the signs will be flipped at each step.
\end{proof}
\end{lemma}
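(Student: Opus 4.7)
Absolute convexity at the pair $(x,y)$ is the pair of inequalities
\[
\phi(y)+\phi'(y)(x-y)\le\phi(x) \quad\text{and}\quad -\bigl(\phi(y)+\phi'(y)(x-y)\bigr)\le \phi(x).
\]
The first is nothing but the standard supporting-line inequality, which holds by convexity of $\phi$. So my entire effort will be to prove the second for every $x,y\in\R$. Equivalently, writing $\ell_y(x)\eqdef \phi(y)+\phi'(y)(x-y)$, I must show $\phi(x)\ge -\ell_y(x)$ for all $x,y$, i.e.\ $\phi(x)\ge |\ell_y(x)|$.

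The plan is to fix $y$ and split on the sign of $y$; by symmetry of the hypotheses (minimizer at $0$, $|\phi'|\le m$, $\phi\ge m|\cdot|$) I may assume $y<0$. Two facts about the signs enter. First, since $\phi$ is convex with minimum at $0$, the derivative $\phi'$ is nondecreasing with $\phi'(0)=0$, so $\phi'(y)\le 0$ for $y<0$; combined with the derivative bound this gives $-m\le \phi'(y)\le 0$. Second, the lower bound $\phi\ge m|\cdot|$ gives $\phi(y)\ge -my\ge 0$. These are exactly inequalities \eqref{Appendix:abs_conv_if_cond1} and \eqref{Appendix:abs_conv_if_cond2} in the statement.

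Next I identify the unique zero $x^v = y-\phi(y)/\phi'(y)$ of the affine $\ell_y$ (handling the degenerate $\phi'(y)=0$ case separately: then monotonicity of $\phi'$ forces $\phi$ to be constant on $[y,0]$, so $\phi(y)=\phi(0)$ is the minimum value and $|\ell_y(x)|=\phi(y)\le\phi(x)$ trivially). Combining $1/m\le -1/\phi'(y)$ with $\phi(y)\ge -my$ as in the excerpt yields $-\phi(y)/\phi'(y)\ge -y$, whence $x^v\ge 0$. Since $\phi'(y)\le 0$, the function $\ell_y$ is nonincreasing, so $\ell_y\ge 0$ on $(-\infty,x^v]$ and $\ell_y\le 0$ on $[x^v,\infty)$. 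On the first interval convexity directly gives $|\ell_y(x)|=\ell_y(x)\le\phi(x)$. On the second interval, $|\ell_y(x)|=-\phi'(y)(x-x^v)$; because $x^v\ge 0$ we have $0\le x-x^v\le x$, so multiplying by the nonnegative quantity $-\phi'(y)\le m$ gives $|\ell_y(x)|\le m x \le\phi(x)$ by the assumed lower bound. The case $y\ge 0$ is treated identically with all signs reversed and yields $x^v\le 0$ together with $|\ell_y(x)|\le m|x|\le\phi(x)$.

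The main obstacle is bookkeeping with signs rather than any deep estimate: one must verify that $x^v$ lands on the correct side of $x_\star=0$ so that the two-step argument (convexity on one side, the linear majorization by $m|x|$ on the other) closes up. The degenerate sub-case $\phi'(y)=0$ with $y\ne 0$ requires the separate observation that the minimum is then attained on a whole interval, which is where the hypothesis that $0$ is a minimizer (rather than the strict minimizer) is used.
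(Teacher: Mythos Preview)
Your proof is correct and follows essentially the same route as the paper's: fix $y<0$, locate the vertex $x^v$ of $|\ell_y|$, show $x^v\ge 0$ from the two displayed inequalities, handle $(-\infty,x^v]$ by convexity and $[x^v,\infty)$ by the bound $-\phi'(y)(x-x^v)\le mx\le\phi(x)$. Your explicit treatment of the degenerate case $\phi'(y)=0$ is a small improvement over the paper, which tacitly divides by $\phi'(y)$ without comment.
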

\par A direction of potential interest for future developments is how to ``absolutely-convexify'' a given function. Below, we prove the propotypical case of functions from $\R$ to $\R$. In words, any convex function lifted high enough is absolutely convex. 
\begin{lemma}
\label{lem:non-negative lift on interval}
Suppose $f: \R \rightarrow \R$ is a convex function. Then $\phi(x) = f(x) + \beta$ is non-negative for $x \in [a,b]$ with $\alpha = max\{|f'(a)|, |f'(b)|\}$ and $\beta = \frac{\alpha(b-a)}{2} - \frac{f(a) + f(b)}{2}$.

\begin{proof}
     It is sufficient to show that $\phi(x) \geq 0$ for $x \in [a,b]$:
    \begin{equation*}
        \begin{aligned}
            \phi(x) &= f(x) + \frac{\alpha(b-a)}{2} - \frac{f(a)+f(b)}{2} \\
            &= \frac{f(x) - f(a) + \alpha(x - a)}{2} + \frac{f(x) - f(b) - \alpha(x - b)}{2} \\
            &\geq \frac{D_f(x, a)}{2} + \frac{D_f(x, b)}{2} \\
            &\geq 0. 
        \end{aligned}
    \end{equation*}
\end{proof}
\end{lemma}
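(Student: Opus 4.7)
The plan is to verify the algebraic identity that appears in the appendix and then bound each of its two pieces below by a Bregman divergence of $f$. Since Bregman divergences of convex functions are non-negative, this will give the claim.

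First I would check, by direct computation, that $\phi$ admits the decomposition
\begin{equation*}
\phi(x) \;=\; \frac{f(x)-f(a) + \alpha(x-a)}{2} \;+\; \frac{f(x)-f(b) - \alpha(x-b)}{2}.
\end{equation*}
Indeed, adding the two fractions produces $f(x) - \tfrac{f(a)+f(b)}{2} + \tfrac{\alpha(b-a)}{2} = f(x) + \beta$, which matches the definition of $\beta$ in the statement.

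Next I would show that each fraction dominates the corresponding Bregman divergence. For the first term, I need
\begin{equation*}
f(x) - f(a) + \alpha(x-a) \;\geq\; f(x) - f(a) - f'(a)(x-a) \;=\; D_f(x,a),
\end{equation*}
which is equivalent to $(x-a)(\alpha + f'(a)) \geq 0$. Since $x \in [a,b]$ gives $x - a \geq 0$, and $\alpha \geq |f'(a)| \geq -f'(a)$ gives $\alpha + f'(a) \geq 0$, the inequality holds. For the second term, the required inequality
\begin{equation*}
f(x) - f(b) - \alpha(x-b) \;\geq\; f(x) - f(b) - f'(b)(x-b) \;=\; D_f(x,b)
\end{equation*}
reduces to $(x-b)(f'(b) - \alpha) \geq 0$. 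Here $x - b \leq 0$ and $f'(b) - \alpha \leq |f'(b)| - \alpha \leq 0$, so the product is non-negative.

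Combining the two lower bounds yields $\phi(x) \geq \tfrac{1}{2}D_f(x,a) + \tfrac{1}{2}D_f(x,b) \geq 0$, since Bregman divergences of convex functions are non-negative. There is really no main obstacle here: the only subtlety is the sign bookkeeping on $[a,b]$, which is handled uniformly by the bound $\alpha \geq \max\{|f'(a)|,|f'(b)|\}$. If $f$ is only convex (not differentiable everywhere), the same argument goes through with $f'(a), f'(b)$ interpreted as any choice of subgradients at $a$ and $b$.
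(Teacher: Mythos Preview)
Your proof is correct and follows exactly the same approach as the paper's: the same algebraic decomposition of $\phi(x)$ into two halves, each bounded below by a Bregman divergence, with the non-negativity of $D_f$ finishing the argument. You have simply spelled out the sign verification that the paper leaves implicit.
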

\begin{lemma}
Suppose $f: \R \rightarrow \R$ is a convex function. Then for any interval $[a, b]$ there exists $\beta$, such that $\phi(x)=f(x) + \beta$ is absolutely convex on $[a,b]$.
    \begin{proof}
        As $\phi$ is convex by convexity of $f$, it is sufficient to show that for every $x,y \in [a,b]$:
       \begin{equation*}
            -\phi(x) \leq \phi(y) + \phi'(y)(x-y). 
        \end{equation*}
        From Lemma \ref{lem:non-negative lift on interval} we have $\phi(x) \geq 0$, so it is sufficient to consider the case when $\phi'(y)(x-y) \leq 0$.

        Having $\alpha, \beta$ from Lemma \ref{lem:non-negative lift on interval}.
        
        \textbf{Case 1:} $\phi'(y) \leq 0$ and $x \geq y$.
        
        From the convexity of $\phi$ it follows that:   
        \begin{equation*}
            \begin{aligned}
             \phi(y) + \phi'(y)(x-y) - (\phi(a) + \phi'(a)(x-a))
             &= D_\phi(y,a) + (\phi'(y) - \phi'(a))(x-y) \\
             &\geq D_\phi(y,a) \\
             &\geq 0. 
            \end{aligned}
        \end{equation*}
        Hence, $-f'(a) \leq \alpha$ by construction, implying:
       \begin{equation*}
            \phi(a) - \alpha(x-a) \leq \phi(y) + \phi'(y)(x-y). 
        \end{equation*}
        It remains to show that:
        \begin{equation*}
            \begin{aligned}
                \phi(x) + \phi(a) - \alpha(x-a) &= f(x) + f(a) + 2\beta - \alpha(x-a) \\
                &= f(x) - f(b) + \alpha(b-a) - \alpha(x-a) \\
                &= f(x) - f(b) - \alpha(x-b) \\
                &\geq D_f(x,b) \\
                &\geq 0. 
            \end{aligned}
        \end{equation*}

        \textbf{Case 2:} $\phi'(y) \geq 0$ and $x \leq y$.
        
       In analogy with the previous case:
      \begin{equation*}
            \phi(b) + \alpha(x-b) \leq \phi(y) + \phi'(y)(x-y). 
      \end{equation*}  
        Therefore, it is sufficient to show that:
        \begin{equation*}
            \begin{aligned}
                \phi(x) + \phi(b) + \alpha(x-b) &= f(x) + f(b) + 2\beta + \alpha(x-b) \\
                &= f(x) - f(a) + \alpha(b-a) + \alpha(x-b) \\
                &= f(x) - f(a) + \alpha(x-a) \\
                &\geq D_f(x,a) \\
                &\geq 0. 
            \end{aligned}
        \end{equation*}
    \end{proof}
\end{lemma}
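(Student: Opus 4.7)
The plan is to lift $f$ by the constant $\beta$ produced by the preceding Lemma~\ref{lem:non-negative lift on interval}, so that $\phi \eqdef f + \beta$ becomes non-negative on $[a,b]$. Explicitly, I will use $\alpha = \max\{|f'(a)|, |f'(b)|\}$ and $\beta = \tfrac{\alpha(b-a)}{2} - \tfrac{f(a)+f(b)}{2}$. Absolute convexity on $[a,b]$ amounts to proving the two-sided bound
\[
-\phi(x) \leq \phi(y) + \phi'(y)(x-y) \leq \phi(x), \qquad \forall x,y \in [a,b].
\]
The right-hand inequality is just convexity of $\phi$, inherited from $f$; the real work lies in the left-hand inequality. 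Since $\phi \geq 0$ on $[a,b]$, this is only nontrivial when the tangent line from $y$ dips below zero at $x$.

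To handle this uniformly, for fixed $x \in [a,b]$ I would consider the one-variable function $g(y) \eqdef \phi(y) + \phi'(y)(x-y)$. Differentiating formally gives $g'(y) = \phi''(y)(x-y)$, which is $\geq 0$ for $y \leq x$ and $\leq 0$ for $y \geq x$ (using $\phi'' \geq 0$). Thus $g$ attains its minimum over $[a,b]$ at one of the two endpoints, so it suffices to verify
\[
\phi(x) + \phi(a) + \phi'(a)(x-a) \geq 0 \quad\text{and}\quad \phi(x) + \phi(b) + \phi'(b)(x-b) \geq 0.
\]

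For the first of these, I would lower bound $\phi(x)$ by its tangent at $b$, namely $\phi(x) \geq \phi(b) + \phi'(b)(x-b)$. Substituting $\phi = f + \beta$ and plugging in the explicit value of $\beta$, the $f(a)$ and $f(b)$ contributions cancel and the inequality collapses to
\[
f'(a)(x-a) + f'(b)(x-b) + \alpha(b-a) \geq 0.
\]
Since $x \in [a,b]$ we have $x-a \geq 0$ and $x-b \leq 0$, and the choice of $\alpha$ gives $|f'(a)|,|f'(b)| \leq \alpha$, so $f'(a)(x-a) \geq -\alpha(x-a)$ and $f'(b)(x-b) \geq -\alpha(b-x)$. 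Adding these yields $-\alpha(b-a)$, which exactly cancels the $+\alpha(b-a)$ term. The second endpoint inequality is symmetric, now using the tangent at $a$ to lower-bound $\phi(x)$.

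The main obstacle I anticipate is the minor technicality that $f$ is only assumed convex, not twice differentiable, so the appeal to $\phi''$ above must be replaced by the monotonicity of the one-sided derivatives of a convex function---but this monotonicity is all that the endpoint reduction really needs. Beyond that, the elegance of the argument is that the lift $\beta$, originally tuned just to force $\phi \geq 0$, simultaneously balances the endpoint tangent estimates against $\alpha(b-a)$; this is not a priori obvious but falls out cleanly once both sides are expanded.
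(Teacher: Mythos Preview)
Your proposal is correct and follows essentially the same route as the paper. Both arguments choose the same lift $\beta$ from Lemma~\ref{lem:non-negative lift on interval}, reduce the absolute-convexity inequality to verifying $\phi(x) + \phi(a) + \phi'(a)(x-a) \geq 0$ and $\phi(x) + \phi(b) + \phi'(b)(x-b) \geq 0$, and then dispatch each endpoint inequality by bounding $\phi(x)$ from below via the tangent line at the \emph{opposite} endpoint, which collapses the expression to $f'(a)(x-a) + f'(b)(x-b) + \alpha(b-a) \geq 0$ exactly as you found. The only cosmetic difference is in the reduction-to-endpoints step: the paper splits into the two sign cases $\phi'(y)(x-y) \leq 0$ and compares the tangent at $y$ directly to the tangent at $a$ (resp.\ $b$) via a Bregman-divergence identity, whereas you observe in one stroke that $g(y) = \phi(y) + \phi'(y)(x-y)$ is unimodal in $y$ with maximum at $y=x$, hence minimized at an endpoint. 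Your version is a bit cleaner; otherwise the two proofs coincide.
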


\par It is useful to remind the fllowing standard result for sub-gradients. The proof is in the referenced book. 
\begin{lemma}[Lebourg Mean Value Theorem {\citep{clarkeOptimizationNonsmoothAnalysis1990}}]
Suppose $\phi: \R \to \R$ is Lipschitz on any open set containing the line segment $[x,y]$. Then there exists an $a \in (x,y)$ such that
\begin{equation*}
    \phi(x) - \phi(y) \in \ip{\partial \phi(a)}{x - y}. 
\end{equation*}
\end{lemma}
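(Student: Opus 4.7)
The plan is to use the classical Rolle-type trick for Lipschitz functions: reduce the statement to a scalar endpoint-matching function on $[0,1]$, apply the nonsmooth Fermat rule at an interior extremum, and propagate the information back via the Clarke chain rule.

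First I would introduce the auxiliary function $g: [0,1]\to \R$ by
\begin{equation*}
g(t) \eqdef \phi(y + t(x-y)) - t\sbr{\phi(x) - \phi(y)}.
\end{equation*}
Because $\phi$ is Lipschitz on some open set containing the segment $[y,x]$ and $t \mapsto y + t(x-y)$ is affine, the composition is Lipschitz on $[0,1]$; subtracting a linear term preserves Lipschitz continuity. A direct substitution yields $g(0) = \phi(y)$ and $g(1) = \phi(x) - \sbr{\phi(x) - \phi(y)} = \phi(y)$, so $g$ takes equal values at the endpoints.

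Second, since $g$ is continuous on the compact interval $[0,1]$, it attains its maximum and minimum. As $g(0) = g(1)$, at least one extremum is achieved at some interior point $t^{\star} \in (0,1)$ (if $g$ is constant, any $t^{\star} \in (0,1)$ will do). The generalized Fermat rule for Clarke subdifferentials then yields $0 \in \partial g(t^{\star})$. Next, apply the Clarke chain rule to $g$. Writing $\gamma(t) \eqdef y + t(x-y)$, the map $\gamma$ is smooth (hence strictly differentiable), so the chain rule gives equality rather than merely an inclusion:
\begin{equation*}
\partial g(t^{\star}) = (x-y)\, \partial \phi(a) - \sbr{\phi(x) - \phi(y)}, \qquad a \eqdef \gamma(t^{\star}) \in (x,y).
\end{equation*}
Combining with $0 \in \partial g(t^{\star})$ and rearranging produces $\phi(x) - \phi(y) \in (x-y)\, \partial \phi(a) = \ip{\partial \phi(a)}{x-y}$, which is exactly the claimed inclusion.

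The main obstacle is justifying the exact chain-rule identity above, since Clarke's chain rule in full generality gives only an inclusion. The saving feature here is that the ``outer'' variable $\gamma$ is affine and therefore strictly differentiable, a setting in which the Clarke chain rule is known to be an equality (Theorem~2.3.10 in Clarke's book). One minor case to check is when $g$ is constant: then $\phi(x) = \phi(y)$, any interior $t^{\star}$ works, and the required inclusion reduces to $0 \in (x-y)\,\partial \phi(a)$, which holds because an interior extremum of a constant Lipschitz function trivially contains $0$ in its Clarke subdifferential. All other intermediate claims — Lipschitz continuity of $g$, the Fermat rule, and compactness — are standard ingredients in Clarke's nonsmooth calculus and can be invoked directly without further work.
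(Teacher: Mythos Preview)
Your proof is correct and is essentially the standard argument for Lebourg's theorem. Note that the paper itself does not supply a proof for this lemma; it simply cites Clarke's book, and the proof you have written is precisely the Rolle-type argument given there (Theorem~2.3.7 in Clarke). The only point worth tightening is the chain-rule equality: in Clarke's Theorem~2.3.9, equality in $\partial(\phi\circ\gamma)(t^\star)=(x-y)\,\partial\phi(\gamma(t^\star))$ is guaranteed because the derivative of the affine map $\gamma$ is the nonzero scalar $x-y$, which is surjective as a linear map $\R\to\R$; you should say this explicitly rather than invoking strict differentiability of $\gamma$ alone, and separately dispose of the trivial case $x=y$.
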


\begin{lemma}
\label{lin_up_bound}
Suppose $\phi: \R \to \R$ is absolutely convex. Let $M \in \R$ be the bound on the subgradient of $\phi$, i.e. $\abs{\phi'(x)} \leq M$. Fix a $y \in \R$. Then for any $x \geq y$ we have that 

\begin{equation*}
    \phi(y) + M(x-y) \geq \phi(x). 
\end{equation*}

Similarly, for any $x \leq y$,

\begin{equation*}
    \phi(y) - M(x-y) \geq \phi(x) .
\end{equation*}

\begin{proof}
We prove the first claim. Suppose $x \geq y$.

Since $\phi$ is convex, it is locally Lipschitz i.e. Lipschitz on $[y, x]$. By Lebourg's MVT we know there exists a $c \in (y, x)$ such that $\phi(x) - \phi(y) = g(x-y)$ where $g \in \partial \phi(c)$. Note that $g \leq M$. So, $g(x - y) \leq M(x - y)$ because $x - y \geq 0$.  Therefore, 

\begin{equation*}
    \phi(x) - \phi(y) \leq M(x-y). 
\end{equation*}

Rearranging this expression we obtain $\phi(x) \leq \phi(y) + M(x_0 - y)$. 

The proof of the second claim follows the same format and uses the fact that $g \geq -M$ instead. 
\end{proof}
\end{lemma}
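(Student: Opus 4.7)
The plan is to apply the Lebourg Mean Value Theorem (stated immediately above) together with the hypothesis that every element of $\partial \phi$ is bounded in absolute value by $M$. Absolute convexity implies convexity, so $\phi$ is locally Lipschitz on $\R$; in particular it is Lipschitz on every compact interval, which is exactly the regularity Lebourg's theorem requires. Both halves of the lemma will then follow from the same one-line calculation, differing only in signs.

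For the first claim, fix $x \geq y$. Applying Lebourg's MVT on $[y,x]$ produces a point $c \in (y,x)$ and an element $g \in \partial \phi(c)$ with $\phi(x) - \phi(y) = g(x-y)$. Using $g \leq M$ and $x - y \geq 0$, we obtain $\phi(x) - \phi(y) \leq M(x-y)$, which rearranges to the stated inequality $\phi(y) + M(x-y) \geq \phi(x)$.

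The case $x \leq y$ is symmetric. Apply Lebourg's MVT on $[x,y]$ to obtain $\phi(y) - \phi(x) = g'(y-x)$ for some $g' \in \partial \phi(c')$ with $c' \in (x,y)$. Using $g' \geq -M$ together with $y - x \geq 0$ gives $\phi(y) - \phi(x) \geq -M(y-x) = M(x-y)$, i.e.\ $\phi(x) \leq \phi(y) - M(x-y)$, which is the second claim.

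There is no real obstacle: the boundedness of subgradients on which the whole argument rests was established in Lemma \ref{bounded_grad real domain}, and Lebourg's MVT packages the rest. A minor subtlety worth noting is that $\partial \phi(c)$ is in general a set (a compact interval in $\R$), so the hypothesis $|\phi'(x)| \leq M$ must be interpreted as a bound on every element of the subdifferential — but this is automatic from the bound on the one-sided derivatives. If one wished to avoid invoking Lebourg's MVT, a self-contained alternative is to use absolute continuity of convex functions on $\R$ and write $\phi(x) - \phi(y) = \int_y^x \phi'(t)\,dt$ with $|\phi'(t)| \leq M$ a.e., which yields the same bound.
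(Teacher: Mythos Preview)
Your proof is correct and essentially identical to the paper's: both invoke Lebourg's Mean Value Theorem on the relevant interval, use the subgradient bound $|g|\leq M$ together with the sign of $x-y$, and rearrange. Your write-up is in fact slightly cleaner in spelling out the second case explicitly, and the alternative via absolute continuity is a nice remark, but the core argument is the same.
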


\begin{lemma}
Suppose $\phi: \R \to \R$ is absolutely convex. Let $M \in \R$ be such that $\abs{\phi'} \leq M$. Then the following limits exist and are equal

\begin{equation*}
    \lim_{x \to \infty} \abs{\phi'(x)} = \lim_{x \to -\infty} \abs{\phi'(x)}. 
\end{equation*}

\begin{proof}
First, we show that $\lim_{x \to \infty} \abs{\phi'(x)}$ exists. 

Let $\{x_n \}$ be an arbitrary sequence such that $x_n \to \infty$. Since $\phi$ is convex the sequence $\{ \abs{\phi'(x_n)}\}$ is monotonically increasing. Also, it is bounded above. Therefore, by the Monotone Convergence Theorem there exists an $L_1 \in \R$ 

\begin{equation*}
    \lim_{n \to \infty} \abs{\phi'(x_n)} = L_1 \ \ \text{and} \ \ L_1 \geq \abs{\phi'}. 
\end{equation*}

Since $x_n$ is arbitrary it must be that $\lim_{x \to \infty} \abs{\phi'(x)} = L_1$. A similar argument can demonstrate that there exists an $L_2 \in \R$ with $\abs{L_2} \geq \abs{\phi'}$ and $\lim_{x \to -\infty} \abs{\phi'(x)} = \abs{L_2}$.

Now we show that $L_1 = \abs{L_2}$. We will do this by contradiction. Suppose without loss of generality that $L_1 > \abs{L_2}$ and that $L_2 < 0$. 

There exists an $\epsilon > 0$ such that $L_1 - \epsilon > \abs{L_2}$. Now there also exists an $N$ such that for any $y > N$ we have that $\abs{\phi'(y) - L_1} < \epsilon$. So $L_1 - \epsilon < \phi'(y)$. 

By absolutely convexity of $\phi$, function $f(x) = \abs{\phi(y) + \phi'(y)(x-y)}$ is a lower bound on $\phi$. Let $x_v$ be the value where $f(x_v) = 0$. 

Define $l(x) = -\phi'(y)(x-x_v)$, which is the line passing through the point $(x_v, 0)$ with slope $-\phi'(y)$. Observe that for $x \in (-\infty, x_v]$, $l(x) = f(x)$ so $l$ is a lower bound on $\phi$ in that interval. Define $h_1(x) = -(L_1 - \epsilon)(x-x_v)$ to be the line passing through $(x_v, 0)$ with slope $-(L_1 - \epsilon)$. For $x \leq x_v$, $h_1(x) < l(x)$ because $L_1 - \epsilon < \phi'(y)$. Therefore, $h_1(x) < \phi(x)$ for $x \in (-\infty, x_v]$. 

Define $h_2(x) = \phi(x_v) + L_2(x-x_v)$. Since $L_2 < 0$, by Lemma \ref{lin_up_bound}, the function $h_2$ is an upper bound on $\phi$ for any $x < x_v$. 

By calculation we can determine an $x_i$ such that $h_1(x_i) = h_2(x_i)$ where 
\begin{equation*}
    x_i = \frac{x_v(L_2 + L_1 - \epsilon) - \phi(x_v)}{L_2 + L_1 - \epsilon} . 
\end{equation*}

Now we show that $x_i \leq x_v$. Note that $\phi$ is absolutely convex so $-\phi(x_0) \leq 0$. By adding $(L_2 + L_1 - \epsilon)x_v$ to both sides of this inequality we obtain
\begin{align*}
    x_v(L_2 + L_1 - \epsilon) - \phi(x_0) \leq x_v(L_2 + L_1 - \epsilon). 
\end{align*}
Dividing by $L_2 + L_1 - \epsilon$ we get
\begin{align*}
    x_i = \frac{x_v(L_2 + L_1 - \epsilon) - \phi(x_v)}{L_2 + L_1 - \epsilon} \leq x_v .
\end{align*}

Let $x < x_i$. So $h_1(x) = h_1(x_i) - (L_1 - \epsilon)(x - x_i)$ and $h_2(x) = h_1(x_i) + L_2(x-x_i)$. Observe that $-(L_1 - \epsilon) < L_2$ because $L_2 < 0$ and $L_1 - \epsilon > \abs{L_2}$. Multiplying both sides of this inequality by $x - x_i$ which is less than $0$ and then adding $h(x_i)$ to both sides again we see that $h_1(x) > h_2(x)$.  

Therefore, for any $x < x_i < x_v$, we have that $h_1(x) > h_2(x)$. This is a contradiction because on the interval $(-\infty, x_v)$,  $h_1$ is a lower bound so $\phi(x) \geq h_1(x)$ and $h_2$ is an upper bound so $h_2(x) \geq \phi(x)$. 
\end{proof} 
\end{lemma}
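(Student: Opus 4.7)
My plan is to (i) set up the one-sided limits of $\phi'$ and pin down their signs from $\phi\geq 0$, (ii) extract two linear lower bounds on $\phi$ by letting $y\to\pm\infty$ in the absolute convexity inequality, and (iii) compare asymptotic slopes to conclude $|L_1|=|L_2|$.

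For (i): absolute convexity applied with $y=x$ gives $\phi(x)\geq |\phi(x)|$, hence $\phi\geq 0$. Since $\phi$ is convex, $\phi'$ is non-decreasing, and the hypothesis $|\phi'|\leq M$ ensures that $L_1\eqdef \lim_{x\to\infty}\phi'(x)=\sup\phi'$ and $L_2\eqdef \lim_{x\to-\infty}\phi'(x)=\inf\phi'$ exist and are finite. Non-negativity of $\phi$ forces $L_1\geq 0$ and $L_2\leq 0$: if $L_1<0$ then $\phi'\leq L_1<0$ everywhere and $\phi(x)\leq \phi(0)+L_1 x\to -\infty$ as $x\to\infty$, a contradiction; the other sign is symmetric. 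So the claim reduces to $L_1=-L_2$.

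For (ii): plugging $x=0$ into absolute convexity yields
\[|\phi(y)-y\phi'(y)|\leq \phi(0)\qquad \forall y\in\R,\]
so the tangent-intercept $c(y)\eqdef \phi(y)-y\phi'(y)$ is uniformly bounded. A short convexity calculation shows that $c$ is non-increasing on $(0,\infty)$ and non-decreasing on $(-\infty,0)$, so the limits $A\eqdef \lim_{y\to\infty}c(y)$ and $B\eqdef \lim_{y\to-\infty}c(y)$ exist in $[-\phi(0),\phi(0)]$. Rewriting absolute convexity as $\phi(x)\geq |c(y)+x\phi'(y)|$ and sending $y\to\pm\infty$ with $x$ fixed produces the two key linear lower bounds
\[\phi(x)\geq |A+L_1 x|,\qquad \phi(x)\geq |B+L_2 x|,\qquad \forall x\in\R.\]

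For (iii): writing $\phi(x)=\phi(0)+\int_0^x\phi'(t)\,dt$ and using $\phi'(t)\to L_1$ (a Ces\`aro/L'H\^opital argument) gives $\phi(x)/x\to L_1$ as $x\to +\infty$, and symmetrically $\phi(x)/|x|\to -L_2$ as $x\to -\infty$. Dividing $\phi(x)\geq |B+L_2 x|$ by $x$ and letting $x\to +\infty$ delivers $L_1\geq |L_2|$; dividing $\phi(x)\geq |A+L_1 x|$ by $|x|$ and letting $x\to -\infty$ delivers $-L_2\geq L_1$. Combined with $L_1\geq 0$ and $L_2\leq 0$, this forces $|L_1|=L_1=-L_2=|L_2|$, which is exactly the claim.

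\textbf{Main obstacle.} The delicate step is (ii): I need to simultaneously pass to the limit on the two ingredients $c(y)$ and $\phi'(y)$ inside the absolute value. The monotonicity of $c$ on each half-line (which is what turns a priori boundedness into convergence) is the crux; without it one is left only with subsequential limits $A,B$, and one has to argue that the eventual slope comparison in (iii) depends only on $L_1,L_2$ and is insensitive to which subsequence was chosen. Once this technical point is handled, everything else is a routine use of convexity and Hopital-type estimates.
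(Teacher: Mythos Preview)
Your argument is correct and takes a genuinely different route from the paper. The paper argues by contradiction: assuming $L_1>|L_2|$, it fixes a point $y$ with $\phi'(y)$ close to $L_1$, builds from absolute convexity a linear lower bound $h_1$ of slope $-(L_1-\epsilon)$ to the left of the tangent-cone vertex, and invokes a separate lemma (a Lipschitz-type linear \emph{upper} bound $h_2$ with slope $L_2$) to force the two lines to cross, contradicting $h_1\leq\phi\leq h_2$. Your approach is direct: you show that the tangent-intercept $c(y)=\phi(y)-y\phi'(y)$ is bounded (from absolute convexity at $x=0$) and monotone on each half-line, pass to the limit inside $\phi(x)\geq|c(y)+x\phi'(y)|$ to obtain the clean global bounds $\phi(x)\geq|A+L_1x|$ and $\phi(x)\geq|B+L_2x|$, and then compare asymptotic slopes via $\phi(x)/|x|\to L_1$ and $\to -L_2$ at the two ends. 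This avoids the auxiliary upper-bound lemma entirely and yields both inequalities $L_1\geq|L_2|$ and $-L_2\geq L_1$ symmetrically.

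Two minor remarks. First, the monotonicity of $c$ that you flag as the ``main obstacle'' is indeed easy: for $0<y_1<y_2$, convexity gives $\phi(y_2)-\phi(y_1)\leq\phi'(y_2)(y_2-y_1)$, whence $c(y_2)-c(y_1)\leq y_1(\phi'(y_1)-\phi'(y_2))\leq 0$; the other half-line is symmetric. Second, you do not actually need the limits $A,B$ to exist: keeping $y$ fixed, dividing $\phi(x)\geq|c(y)+x\phi'(y)|$ by $|x|$ and sending $x\to\pm\infty$ already gives $L_1\geq|\phi'(y)|$ and $-L_2\geq|\phi'(y)|$ for every $y$, and then letting $y\to\mp\infty$ finishes. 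So step~(ii) can be shortened, though as written it is correct and arguably more transparent.
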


\begin{lemma}
Suppose $\phi: \R \to \R$ is absolutely convex. Suppose it has a minimum point $x_{\star}$. Suppose there exists a $y_1, y_2 \in \R$ such that $y_1 < x_{\star} < y_2$ and for every $y \leq y_1$, $\phi'(y) = m_1$ and for every $y' \geq y_2$, $\phi'(y') = m_2$. Then $m_1 = m_2$.  

\begin{proof}
Note that since $\phi$ is convex it has monotonicly increasing derivative. Therefore, $m_1 < 0$ since $y < x_{\star}$ and $m_2 > 0$ since $y' > x_{\star}$. Let $f(x) = \abs{\phi(y_1) + m_1(x-y_1)}$ be the tangent cone to $y_1$. Then define line $h(x) = \abs{m_1}(x+ \frac{\phi(y_1)}{m_1} - y_1)$ which is a line that has the same slope as $f$ and intersects the vertex of $f$. Note that $h$ is a lower bound on $\phi$ by absolute convexity.  
\end{proof}
\end{lemma}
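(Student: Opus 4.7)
The plan is to invoke the defining inequality of absolute convexity at the two points $y_1$ and $y_2$ to produce V-shaped lower bounds on $\phi$, and then compare their outward-pointing branches against the exact affine behavior of $\phi$ on the far left and far right tails. First I would record that, because $\phi$ is convex and $y_1<x_\star<y_2$, monotonicity of $\phi'$ forces $m_1\le 0$ and $m_2\ge 0$. Hence the claim amounts to showing that the two slopes have equal magnitudes, i.e.\ $|m_1|=m_2$; this is the content of the stated equality (the sign being a book-keeping convention).

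Next, I would apply \eqref{theory:abs_conv_defn_abs_conv} at the base point $y_1$: the affine map
\[
\ell_1(x)\eqdef \phi(y_1)+m_1(x-y_1)
\]
satisfies $|\ell_1(x)|\le \phi(x)$ for every $x\in\R$. Because $\phi$ is absolutely convex we also have $\phi(y_1)\ge 0$, and because $m_1\le 0$ the absolute value $|\ell_1|$ is a symmetric cone whose vertex sits at $x_v^{(1)}\eqdef y_1-\phi(y_1)/m_1\ge y_1$. On $[x_v^{(1)},\infty)$ its right branch has slope $-m_1=|m_1|$, so for all $x\ge \max\{y_2,x_v^{(1)}\}$,
\[
|m_1|\,(x-x_v^{(1)})\;\le\;\phi(x)\;=\;\phi(y_2)+m_2(x-y_2),
\]
using the hypothesis that $\phi'\equiv m_2$ on $[y_2,\infty)$. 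Dividing by $x$ and letting $x\to\infty$ yields $|m_1|\le m_2$.

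Then I would run the mirror argument at $y_2$: the cone $|\ell_2(x)|=|\phi(y_2)+m_2(x-y_2)|$ is a lower bound on $\phi$, and its left branch past the vertex $x_v^{(2)}\eqdef y_2-\phi(y_2)/m_2\le y_2$ has slope $-m_2$. Comparing against the affine identity $\phi(x)=\phi(y_1)+m_1(x-y_1)$ on $(-\infty,y_1]$ and letting $x\to-\infty$ gives $m_2\le |m_1|$. The two estimates combine to $|m_1|=m_2$.

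The main technical points to be careful with — none of them genuine obstacles — are: (i) checking that the vertices $x_v^{(1)}$ and $x_v^{(2)}$ lie on the correct sides so that the $\phi$-is-affine regions $\{x\ge y_2\}$ and $\{x\le y_1\}$ actually overlap with the outward cone branches (handled by $\phi\ge 0$ and the monotonicity of $\phi'$); and (ii) the degenerate case $m_1=0$ or $m_2=0$, where one of the cone branches is flat and the chain of inequalities immediately forces the other slope to vanish as well, recovering $|m_1|=m_2=0$.
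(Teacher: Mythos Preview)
Your argument is correct and follows exactly the line the paper begins: construct the absolute-value tangent cone at $y_1$, identify its right branch (the paper's line $h$) of slope $|m_1|$ as a global lower bound on $\phi$, and compare it against the affine right tail of slope $m_2$; then repeat symmetrically from $y_2$. The paper's own proof actually stops after defining $h$ and never carries out the comparison step, the mirror argument, or the degenerate case---so you have completed what the paper leaves as a fragment, and your reading of the conclusion as $|m_1|=m_2$ (rather than a literal $m_1=m_2$) is the intended one, since the paper itself records $m_1<0<m_2$.
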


\begin{lemma}
Suppose $\phi: \R \to \R$ be absolutely convex and assume $\phi^* = \inf_{x\ \in \R} \phi(x) < \infty$. Then there exists an $x_{\star} \in \R$ such that $\phi(x_{\star}) \leq \phi(x)$ for any $x \in \R$.

\begin{proof}
Suppose $x \in \R$ is arbitrary. Choose a $y \in \R$ and by absolute convexity we have that $\abs{\phi'(y) + \phi'(y)(x-y)} \leq \phi(x)$. We can rearrange the terms on the left side and take the limit to see that
\begin{equation*}
    \lim_{\abs{x} \to \infty} \abs{\phi(y) - \phi'(y)y + \phi'(y)x} = \infty. 
\end{equation*}
since only the last term which is linear depends on $x$. Therefore, we have that $\lim_{\abs{x} \to \infty} \phi(x) = \infty$. 

This demonstrates that $\phi$ is coercive. Now let $x_n$ be a sequence such that $\phi(x_n) \to f^*$. Suppose that $\lim_{n \to \infty} \abs{x_n} = \infty$. Then by coercivity, we get that $\lim_{\abs{x_n} \to \infty} \phi(x_n) = infty$ which is a contradiction with the fact that $\phi^* \leq \infty$. Thus it must be that $\lim_{n \to \infty} \abs{x_n} = r$ for some $r \in \R$. Let $B_r = \{ x \in \R \ | \ \abs{x} \leq r\}$ which is compact because it is closed and bounded. Since $x_n \in B_r$ and every sequence in a compact set has a convergent subsequence, so there exists an $x_{\star} \in B_r$ s.t. $x_{n_k} \to x_{\star}$. By continuity of $\phi$ (because it is a convex function) we obtain $f^* = \lim_{k \to \infty} f(x_{n_k}) = f(x_{\star})$. 
\end{proof}
\end{lemma}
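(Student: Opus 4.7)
The plan is to combine two classical ingredients: coercivity (to confine a minimizing sequence to a compact set) and continuity of convex functions (to pass a limit inside $\phi$).

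First, I would establish that $\phi$ is coercive, i.e.\ $\phi(x) \to \infty$ as $|x| \to \infty$. Fix any $y \in \R$ and any subgradient $\phi'(y)$. Absolute convexity gives
\[
\phi(x) \geq \bigl|\phi(y) + \phi'(y)(x-y)\bigr| \qquad \forall x \in \R.
\]
If there exists some $y$ for which $\phi'(y) \neq 0$, the right-hand side is a linear function of $x$ in absolute value, which tends to $+\infty$ as $|x| \to \infty$, and coercivity follows immediately. Otherwise, every subgradient of $\phi$ vanishes, and convexity forces $\phi$ to be constant, in which case the conclusion is trivial (every point is a minimizer). So without loss of generality we may assume coercivity.

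Second, I would exploit coercivity to localize a minimizing sequence. Since $\phi^* < \infty$, pick a sequence $\{x_n\} \subset \R$ with $\phi(x_n) \to \phi^*$. If $\{x_n\}$ were unbounded, we could extract a subsequence with $|x_{n_k}| \to \infty$, and then coercivity would force $\phi(x_{n_k}) \to \infty$, contradicting convergence to the finite value $\phi^*$. Hence $\{x_n\}$ lies in some bounded interval $[-R,R]$, which is compact.

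Third, I would close the argument with continuity. By Bolzano--Weierstrass, there is a subsequence $x_{n_k} \to x_\star$ for some $x_\star \in [-R,R]$. Since $\phi$ is convex on all of $\R$, it is continuous on $\R$, so
\[
\phi(x_\star) = \lim_{k\to\infty} \phi(x_{n_k}) = \phi^\star,
\]
which shows that $x_\star$ is a minimizer. The only potentially subtle step is the coercivity argument: one must handle the degenerate case where all subgradients are zero, but this collapses to the constant case and is harmless. Everything else is standard real analysis once absolute convexity is leveraged to produce the linear lower envelope.
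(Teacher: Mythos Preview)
Your proposal is correct and follows essentially the same route as the paper: establish coercivity from the absolute-convexity lower bound $\phi(x)\geq|\phi(y)+\phi'(y)(x-y)|$, then run the standard minimizing-sequence/compactness/continuity argument. Your version is in fact slightly more careful than the paper's, since you explicitly dispose of the degenerate constant case $\phi'(y)\equiv 0$ (which the paper glosses over) and you correctly argue boundedness of $\{x_n\}$ rather than asserting that $|x_n|$ converges.
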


\subsection{Multivariable functions}
\par Having analyzed the easy case, we move to general instances of absolutely convex functions. In particular, we prove that gradients of absolutely convex functions are bounded. The first statement is a rewriting of Lemma~\ref{lem:bounded subgradients main text} in the main text. 
\begin{lemma} \label{lem:absolutely convex functions have bounded gradients multivariable}
Suppose $\phi: \R^d \to \R$ is absolutely convex and has a minimizer $x_{\star}$. Then there exists a $M \in \R$ such that $\norm{\nabla \phi(x)}_2 \leq M$ for all $x \in \R^d$. 

\begin{proof}
If $\phi$ is the constant function then its gradient is bounded so we consider the case where $\phi$ is not constant.

We will do a proof by contradiction. Let $c > \phi(x_{\star})$. Let $\epsilon=\frac{c - \phi(x_{\star})}{2}$.  Note that $\abs{\phi(x_{\star}) - c} > \epsilon$. Suppose $\delta > 0$. Observe that since $\phi$ is convex on $\R^d$ it is continuous on $\R^d$ and in particular it is continuous at $x_{\star}$. Therefore, there exists a $\delta > 0$ such that if $\abs{x_{\star} - x} \leq \delta$ then $\abs{\phi(x_{\star}) - \phi(x)} \leq \epsilon$. 

Suppose that $\abs{\nabla \phi}$ is unbounded. So, there exists a sequence of points $y_n \in \R^d$ such that 
\begin{align*}
    \lim_{n \to \infty} \absbr{\nabla \phi(x)}_2 = \infty. 
\end{align*}
Let $f_n(x) = \abs{\phi(y_n) + \ip{\nabla \phi(y_n)}{x - y_n}}$. Since $\phi$ is absolutely convex, $f_n$ must be a lower bound on $\phi$. We can proceed similarly to the proof of bounded gradients in $\R$ (Lemma ~\ref{bounded_grad real domain}) by considering the restriction of $f$ and $\phi$ to specific lines. This allows us to find a sequence of points $x_n$ that lie on those lines and $x_n \to x_{\star}$. 

Define $L_n$ to be the line that passes through $x_{\star}$ in the direction of $\nabla \phi(y_n)$. Let $\phi\big|_{L_n}: \R \to \R$ be the restriction of $\phi$ to the line $L_n$ and similarly, $f_n\big|_{L_n}: \R \to \R$ be the restriction of $f_n$ to $L_n$. Note that the function $f_n\big|_{L_n}$ is of the form $\abs{a + b(x - x_0)}$ where $b = \absbr{\nabla \phi(y_n)}^2$ for some $a, x_0 \in \R$. Let $\bar{x}^* \in \R$ be the minimizer of $\phi\big|_{L_n}$. Then by Lemma, there exists a point, $\bar{x}_n \in \R$ such that $\phi\big|_{L_n}(\bar{x}_n) = c$ and $\abs{\bar{x}^* - \bar{x}_n} \leq \frac{2c}{\absbr{\nabla \phi(y_n)}^2}$. Observe that $\bar{x}^*$ corresponds to $x_{\star}$. Also, $\bar{x}_n$ can be mapped to a point $x_n \in \R^d$ which lies on the line $L_n$ and $\phi(x_n) = c$ and $\abs{x_{\star} - x_n} \leq \frac{2c}{\absbr{\nabla \phi(y_n)}^2}$. This holds for each $y_n$. 

Since $\absbr{\nabla \phi(y_n)}_2$ is unbounded we can find an $N$ such that $\abs{x_{\star} - x_N} < \delta$. Therefore, $\abs{\phi(x_{\star}) - \phi(x_n)} = \abs{\phi(x_{\star}) - c}\leq \epsilon$. However, this contradicts the fact that $\abs{\phi(x_{\star}) - c} > \epsilon$.
\end{proof}
\end{lemma}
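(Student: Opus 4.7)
The plan is to mimic the one-dimensional argument from Lemma~\ref{bounded_grad real domain}, reducing the multivariable claim to a restriction-to-a-line argument. I would proceed by contradiction: assume there is a sequence $\{y_n\} \subset \R^d$ with $\norm{\nabla \phi(y_n)}_2 \to \infty$, and derive a violation of continuity of $\phi$ at $x_\star$. Continuity is available because $\phi$ is convex and finite-valued on $\R^d$.

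By absolute convexity, each function $f_n(x) \eqdef \abs{\phi(y_n) + \ip{\nabla \phi(y_n)}{x - y_n}}$ is a pointwise lower bound for $\phi$ on $\R^d$. I would consider the line $L_n$ passing through $x_\star$ in the direction of the (without loss of generality nonzero) gradient $\nabla \phi(y_n)$, parameterized as $x_n(t) = x_\star + t\, \nabla\phi(y_n)/\norm{\nabla\phi(y_n)}_2$. Along this line, the restriction reads $f_n(x_n(t)) = \abs{a_n + t\, \norm{\nabla\phi(y_n)}_2}$ with $a_n \eqdef \phi(y_n) + \ip{\nabla\phi(y_n)}{x_\star - y_n}$, which is exactly the one-dimensional absolute-value affine form required by Lemma~\ref{cone}.

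Next, because $x_\star$ globally minimizes $\phi$, it also minimizes the convex restriction $\phi|_{L_n}$, which is bounded below on $L_n$ by $f_n|_{L_n}$. Fixing any constant $c > \phi(x_\star)$, Lemma~\ref{cone} applied to $\phi|_{L_n}$ then produces a scalar $t_n$ with $\phi(x_n(t_n)) = c$ and $\abs{t_n} \leq 2c / \norm{\nabla\phi(y_n)}_2$. Consequently, $\norm{x_n(t_n) - x_\star}_2 = \abs{t_n} \to 0$ as $n \to \infty$, yet $\phi(x_n(t_n)) = c > \phi(x_\star)$ for every $n$, which contradicts continuity of $\phi$ at $x_\star$. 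Hence no such sequence exists and $\norm{\nabla\phi(\cdot)}_2$ is uniformly bounded.

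The main obstacle is checking that the one-dimensional cone lemma transfers cleanly to the restriction: one must verify that $\phi|_{L_n}$ is convex (immediate), that $x_\star$ minimizes it (immediate from global minimality), and, crucially, that the slope of the bounding cone $f_n|_{L_n}$ equals $\norm{\nabla\phi(y_n)}_2$ rather than a smaller oblique component. This last point is why I parameterize $L_n$ in the direction $\nabla\phi(y_n)/\norm{\nabla\phi(y_n)}_2$ specifically; any other direction would yield a slope $\abs{b} < \norm{\nabla\phi(y_n)}_2$, weakening the distance estimate $\abs{t_n} \leq 2c/\abs{b}$ and breaking the contradiction. The remaining bookkeeping (e.g., the case $\nabla\phi(y_n) = 0$, which can simply be excluded from the sequence) is routine.
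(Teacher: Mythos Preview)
Your proposal is correct and follows essentially the same route as the paper: contradiction via continuity at $x_\star$, the absolute-value affine lower bound $f_n$, restriction to the line through $x_\star$ in the gradient direction, and an appeal to Lemma~\ref{cone} to locate a level-$c$ point within distance $O(1/\norm{\nabla\phi(y_n)}_2)$ of $x_\star$. Your choice to parameterize $L_n$ by the \emph{unit} vector $\nabla\phi(y_n)/\norm{\nabla\phi(y_n)}_2$ is in fact slightly cleaner than the paper's unnormalized parameterization, since it makes the parameter $t$ coincide with Euclidean distance and gives slope exactly $\norm{\nabla\phi(y_n)}_2$ rather than its square.
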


\begin{lemma} \label{lem:max is absolutely convex}
    
    The maximum of a constant and an absolutely convex function is absolutely convex. 
    \begin{proof}
        Let $\alpha\in \R$ and $f$ be absolutely convex and $g\eqdef \max\{f, \alpha\}$. We split the argument in some sub-steps.
        
        \textbf{(Trivial case)} Since absolutely convex functions are always positive, it follows that if $\alpha\leq 0$ then $g(x) = \max\{f(x), \alpha\} \equiv f(x)$ and $f= g$ is absolutely convex. 
    
        \textbf{(Second case)} Let $\alpha > 0$. Since $f$ is absolutely convex, it is convex and $g$ is by construction. Therefore, the positive side of the inequality in absolute convexity needs not to be verified. It reamains to show that:
        \begin{equation}
            \mathrm{wts}\quad g(y)\geq - g(x) - \ip{\nabla g(y)}{x - y}\quad \forall x, y\iff g(y)+g(x) + \ip{\nabla g(y)}{x - y}\geq 0\quad \forall x, y. 
        \end{equation}
        For convenience, we will show the last version is positive for different choices of $x, y$. Recall that $f$ is always positive so for arbitrary $(x, y)$there are four regions identified by the strips $[0,\alpha);[\alpha, \infty)$ iover which the values $f(x), f(y)$ can fall. 
         
        Additionally, recognize that for $z\in \R^d$ we have $\nabla g(z) = \nabla \max\{f(z), a\}=  \mathbbm{1}_{f(z) > a}\nabla f(z)$. 
        Let us treat all the cases in separate ways.\footnote{In principle, at $z = a$ there is a singularity, we avoid doing this computation since the non-differentiable definition of absolute convexity is satisfied for the max function.} 
        
        If $f(x)\leq \alpha$ and $f(y) < \alpha$ we have the expression $2\alpha\geq 0$ by construction. 
    
        If $f(x) > \alpha$ and $f(y) < \alpha$ we have the expression:
        \begin{equation}
            f(x) + \alpha > 0;
        \end{equation}
        again, by construction.
        
        \par If $f(x)< \alpha$ and $f(y)>  \alpha$ we have $a + f(y) + \ip{\nabla f(y)}{x - y}> f(x) + f(y) + \ip{\nabla f(y)}{x - y}\geq 0$ since we assumed $f$ is absolutely convex.
        
        \par If $f(x)\geq \alpha$ and $f(y) > \alpha$ one has:
        \begin{equation}
            f(x) + f(y) + \ip{\nabla f(y)}{x - y}\geq 0,
        \end{equation}
        which follows by the assumed strong convexity of $f$. 
    \end{proof}
\end{lemma}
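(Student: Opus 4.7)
The plan is to reduce the claim to a short case analysis driven by which piece of the max is active. Set $g \eqdef \max\{f,\alpha\}$. First, $g$ is convex as the pointwise maximum of convex functions (absolute convexity implies convexity), so the ``upper'' half of the absolute convexity inequality, $g(x) \geq g(y) + \ip{\nabla g(y)}{x-y}$, comes for free. What remains is the ``lower'' direction
\[
g(x) + g(y) + \ip{\nabla g(y)}{x-y} \geq 0 \qquad \forall x,y\in\R^d.
\]

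A preliminary observation trivializes one regime: absolutely convex functions are nonnegative (put $x=y$ in \eqref{theory:abs_conv_defn_abs_conv}), so if $\alpha \leq 0$ then $g \equiv f$ and there is nothing to show. I will therefore focus on $\alpha > 0$.

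The main step is then a two-level case split controlled by the value of $f(y)$ relative to $\alpha$, which also dictates $\nabla g(y)$. If $f(y)<\alpha$ then $g(y)=\alpha$ and $\nabla g(y)=0$, so the desired inequality collapses to $g(x)+\alpha\geq 0$, which holds since $g\geq \alpha>0$. If $f(y)\geq \alpha$, take $\nabla g(y)=\nabla f(y)$ and split on $f(x)$: when $f(x)\geq \alpha$ so that $g(x)=f(x)$, the inequality is precisely absolute convexity of $f$; when $f(x)<\alpha$ so that $g(x)=\alpha>f(x)$, I combine the bound $g(x)\geq f(x)$ with absolute convexity of $f$ to obtain
\[
\alpha + f(y) + \ip{\nabla f(y)}{x-y} \;\geq\; f(x)+f(y)+\ip{\nabla f(y)}{x-y} \;\geq\; 0.
\]

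The only subtlety — not really an obstacle — is non-differentiability of $g$ on the interface $\{f=\alpha\}$. I will handle this by working with the subdifferential formulation of \eqref{theory:abs_conv_defn_abs_conv}, noting that on this set $\partial g(y)$ is the segment $[0,\nabla f(y)]$; since the inequality is verified at both endpoints by the cases above, convex combinations satisfy it as well. This case analysis exhausts all possibilities and yields the claim.
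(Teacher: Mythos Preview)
Your proof is correct and follows essentially the same approach as the paper: reduce to $\alpha>0$, use convexity of $g$ to dispense with the upper inequality, and verify the lower inequality by a case split on which branch of the max is active at $x$ and at $y$, invoking absolute convexity of $f$ in the cases where $\nabla g(y)=\nabla f(y)$. Your nested two-level split (first on $f(y)$, then on $f(x)$) is a slightly tidier organization of the paper's flat four-way split, and your explicit treatment of the interface $\{f=\alpha\}$ via the subdifferential segment $[0,\nabla f(y)]$ is a bit more careful than the paper's footnote, but these are presentational refinements rather than a different argument.
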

\par{remark}
    Let $x_{\star} = \argmin f$, for an absolutely convex function $f$. Observe that one can always use, for any $y\neq x_{\star}, x\neq x_{\star}$:
    \begin{equation}
                f(x) \geq  |f(y) + \ip{\nabla f(y)}{x_{\star} - y}|. 
    \end{equation}

\begin{lemma}
Suppose $v \in \R^d$. A function $\phi: \R^d \to \R$ is absolutely convex if and only if the function $f: \R \to \R$ defined as $f(t) = \phi(x + tv)$ is absolutely convex for all $x \in \R^d$.

\begin{proof}\leavevmode

($\Rightarrow$) Suppose $f(t)$ is an absolutely convex function for any $x, v \in \R^d$. We already know that $\phi$ will be convex so we only need to show that for all $y, z \in \R^d$,
\begin{equation*}
    -\phi(x) \leq \phi(y) + \ip{\nabla \phi(y)}{z - y}.
\end{equation*}
Note that $f'(t) = \ip{\nabla \phi(x + tv)}{v}$. Select $x = y$ and $v = z - y$. Since $f$ is absolutely convex, the following inequality will hold,
\begin{align*}
    -f(1) &\leq f(0) + f'(0)(1 - 0) = f(0) + f'(0) \\
    &= \phi(y) + \ip{\nabla \phi(y)}{z - y}. 
\end{align*}

We have our result because $f(1) = \phi(y + v) = \phi(z)$

($\Leftarrow$) Suppose $\phi$ is absolutely convex. Let $x, v \in \R^d$ be arbitrary. We know already that $f$ is convex. So we just need to show that for any $s, t \in \R$ we have that
\begin{equation*}
    -f(t) \leq f(s) + f'(s)(t - s).
\end{equation*}
By absolute convexity of $\phi$ we know that,
\begin{align*}
    -\phi(x + tv) &\leq \phi(x + sv) + \ip{\nabla \phi(x + sv)}{x + tv - (x + sv)} \\
    &= \phi(x + sv) + \ip{\nabla \phi(x + sv)}{tv - sv)} \\
    &= \phi(x + sv) + \ip{\nabla \phi(x + sv)}{v}(t - s) \\
    &= f(s) + f'(s)(t - s). 
\end{align*}
Since $f(t) = \phi(x + tv)$ we have our result. 
\end{proof}
\end{lemma}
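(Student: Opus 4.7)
The plan is to prove this biconditional by exploiting the fact that absolute convexity, like ordinary convexity, is a pointwise condition on pairs of points $(y,z)$, and any such pair is naturally captured by the univariate restriction of $\phi$ to the line through $y$ in direction $z-y$. The chain rule $f'(s) = \ip{\nabla \phi(x + sv)}{v}$ will be the single identity that ferries the condition back and forth between the ambient dimension and the real line.

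For the easier $(\Leftarrow)$ direction, assume each 1D restriction is absolutely convex and fix arbitrary $y, z \in \R^d$. I would specialize by choosing $x = y$ and $v = z-y$, so that $f(0)=\phi(y)$, $f(1)=\phi(z)$, and $f'(0)=\ip{\nabla\phi(y)}{z-y}$. Applying absolute convexity of $f$ at $s=0$, $t=1$ yields
\begin{equation*}
\abs{\phi(y) + \ip{\nabla \phi(y)}{z-y}} = \abs{f(0)+f'(0)} \leq f(1) = \phi(z),
\end{equation*}
which is exactly \eqref{eqn:absolute convexity equation appendix} for $\phi$.

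For $(\Rightarrow)$, fix arbitrary $x,v\in\R^d$ and consider $s,t\in\R$. The chain rule gives $f'(s)=\ip{\nabla\phi(x+sv)}{v}$, hence
\begin{equation*}
f(s) + f'(s)(t-s) = \phi(x+sv) + \ip{\nabla\phi(x+sv)}{(x+tv)-(x+sv)}.
\end{equation*}
Applying the ambient absolute convexity of $\phi$ to the pair $(x+sv, x+tv)$ bounds the absolute value of the right-hand side by $\phi(x+tv)=f(t)$, which is precisely what is needed.

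The main obstacle is really only bookkeeping: writing the chain rule correctly and verifying that the choice $x=y$, $v=z-y$ in the reverse direction lands at exactly the two parameter values $0$ and $1$ needed to recover the defining inequality. A subtlety worth flagging is that $\phi$ is only assumed differentiable in the sense of absolute convexity, so $\nabla\phi$ should be interpreted as a subgradient; since any subgradient of $\phi$ at $x+sv$ pushes forward via $g\mapsto \ip{g}{v}$ to a valid subgradient of $f$ at $s$, the argument carries through unchanged in the nonsmooth case.
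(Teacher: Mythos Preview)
Your proposal is correct and follows essentially the same approach as the paper: both directions use the chain rule $f'(s)=\ip{\nabla\phi(x+sv)}{v}$, with the choice $x=y$, $v=z-y$ at parameters $0,1$ to pass from the line restrictions to $\phi$, and the direct substitution of $(x+sv,x+tv)$ into the ambient inequality to go the other way. Note that the paper's proof has the $(\Rightarrow)$ and $(\Leftarrow)$ labels swapped relative to the stated biconditional, whereas your labeling is the conventional one; your added remark on subgradients is also a welcome clarification that the paper omits.
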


\clearpage
\section{Extra Experiments} \label{sec:more experiments}

\subsection{Regression with squared Huber loss}
In this experiment we optimize the function
\begin{equation*}
    f(x) = \frac{1}{n}\sum_{i=1}^n h_{\delta}^2(a_ix - b_i),
\end{equation*}
where $a_i \in \R^d$, $b_i \in \R$ are the data samples associated with a regression problem, and $h_{\delta}$ is the Huber loss function. We run the experiments with $\mC(x)$ for absolutely convex functions \ref{examples:abs_convex_B}.

\begin{figure}[!ht]
    \centering
    \caption{Regression on {\tt housing} dataset.}
    
    \begin{subfigure}{0.32\textwidth}
        \caption{$\delta=10^0$}
        \vspace{-5pt}
        \includegraphics[width = \textwidth]{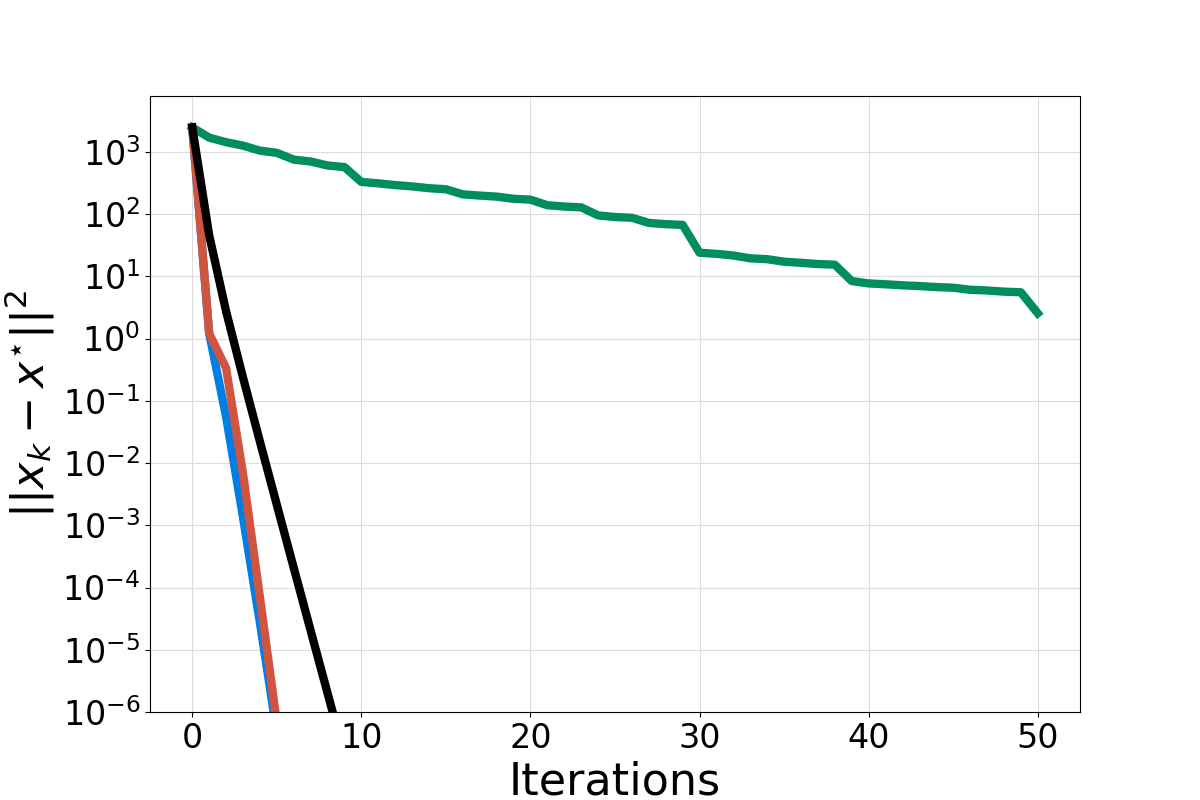}
    \end{subfigure}
    ~
    \begin{subfigure}{0.32\textwidth}
        \caption{$\delta=10^1$}
        \vspace{-5pt}
        \includegraphics[width = \textwidth]{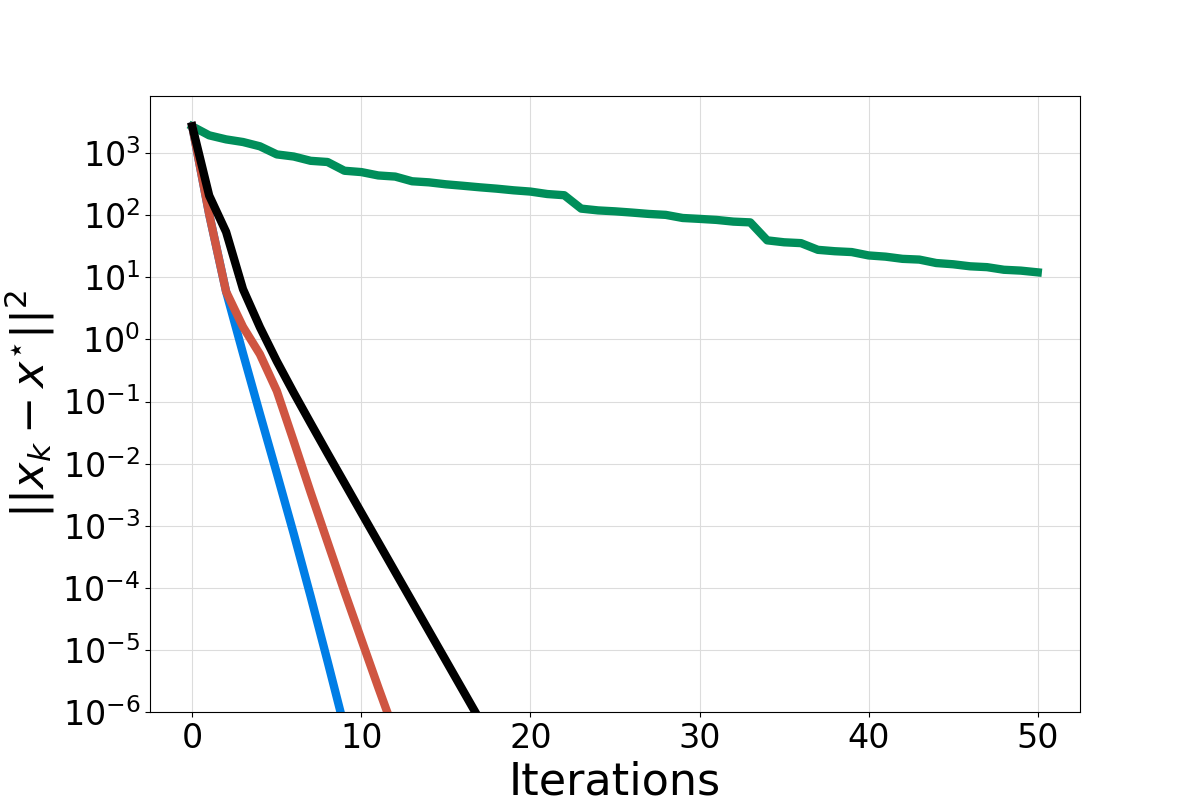}
    \end{subfigure}
    ~
    \begin{subfigure}{0.32\textwidth}
        \caption{$\delta=10^2$}
        \vspace{-5pt}
        \includegraphics[width = \textwidth]{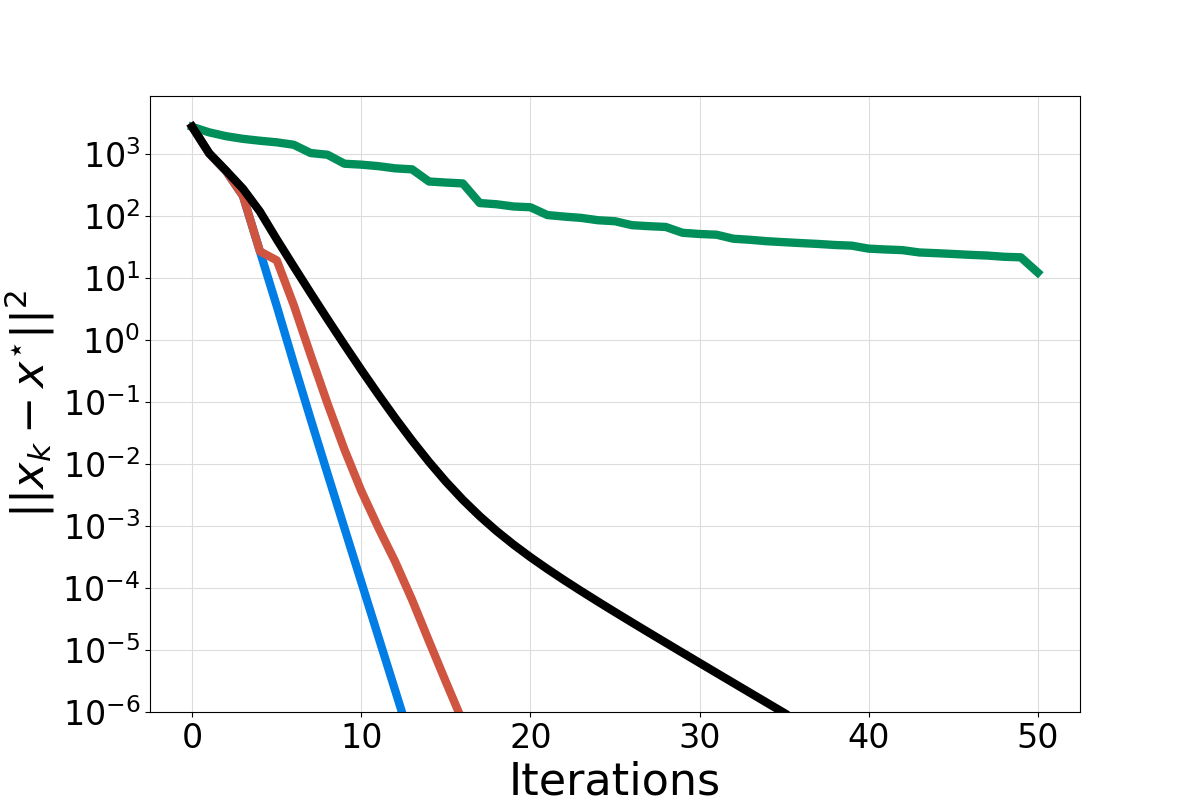}
    \end{subfigure}
    \label{extra_exp:huber_housing}
    
    \vspace{10pt}
    
    \centering
    \caption{Regression on {\tt mpg} dataset.}
    \begin{subfigure}{0.32\textwidth}
        \caption{$\delta=10^0 \cdot$}
        \vspace{-5pt}
        \includegraphics[width = \textwidth]{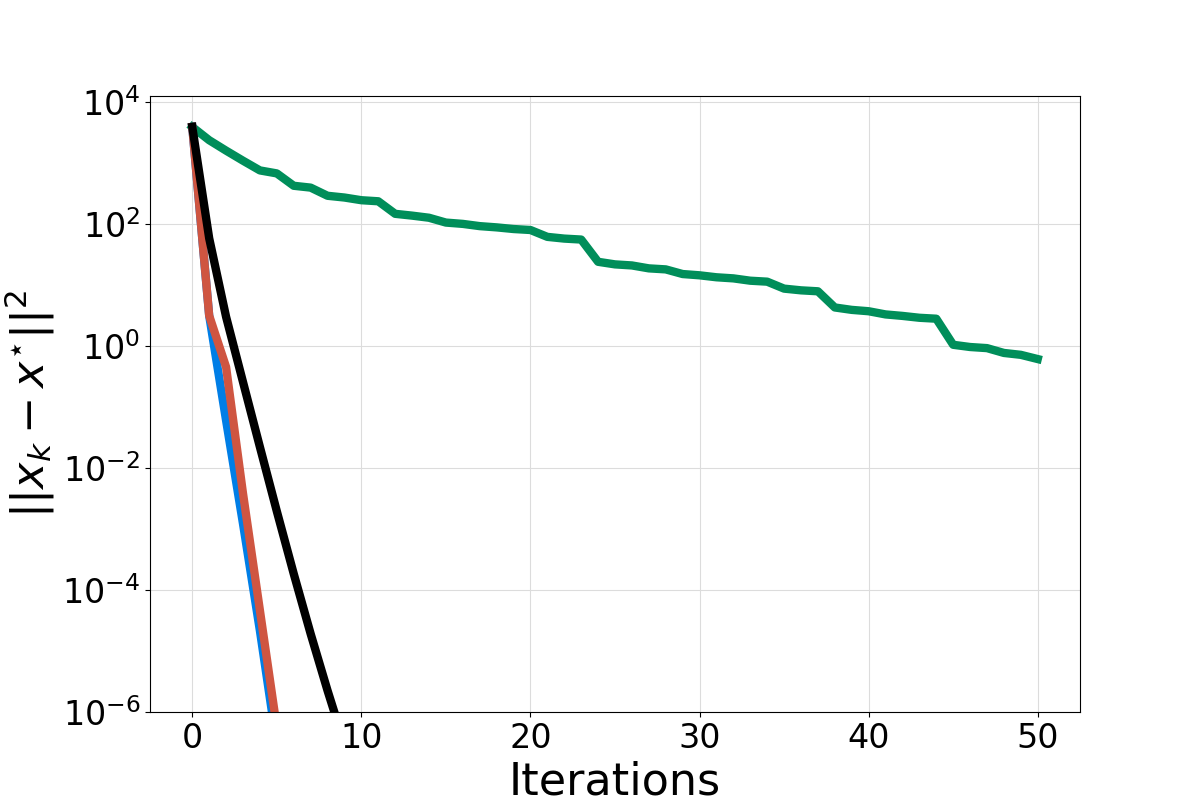}
    \end{subfigure}
    ~
    \begin{subfigure}{0.32\textwidth}
        \caption{$\delta=10^1$}
        \vspace{-5pt}
        \includegraphics[width = \textwidth]{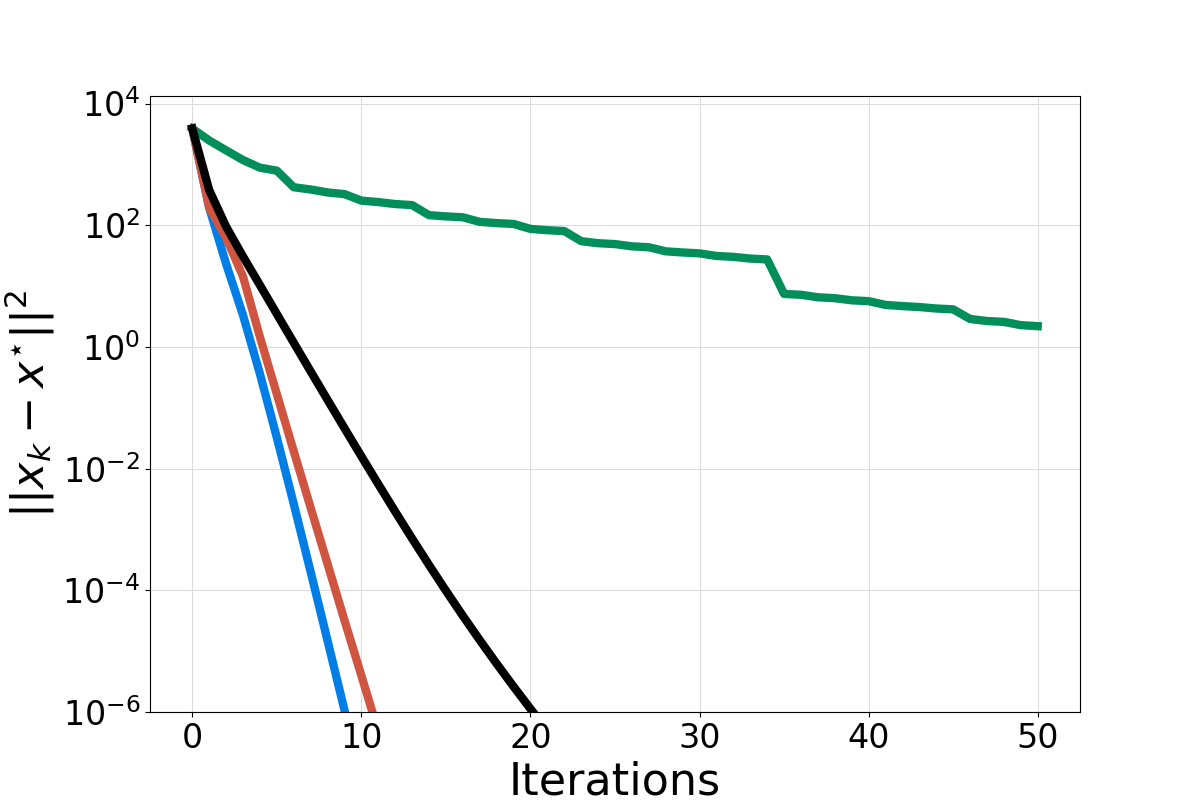}
    \end{subfigure}
    ~
    \begin{subfigure}{0.32\textwidth}
        \caption{$\delta=10^2$}
        \vspace{-5pt}
        \includegraphics[width = \textwidth]{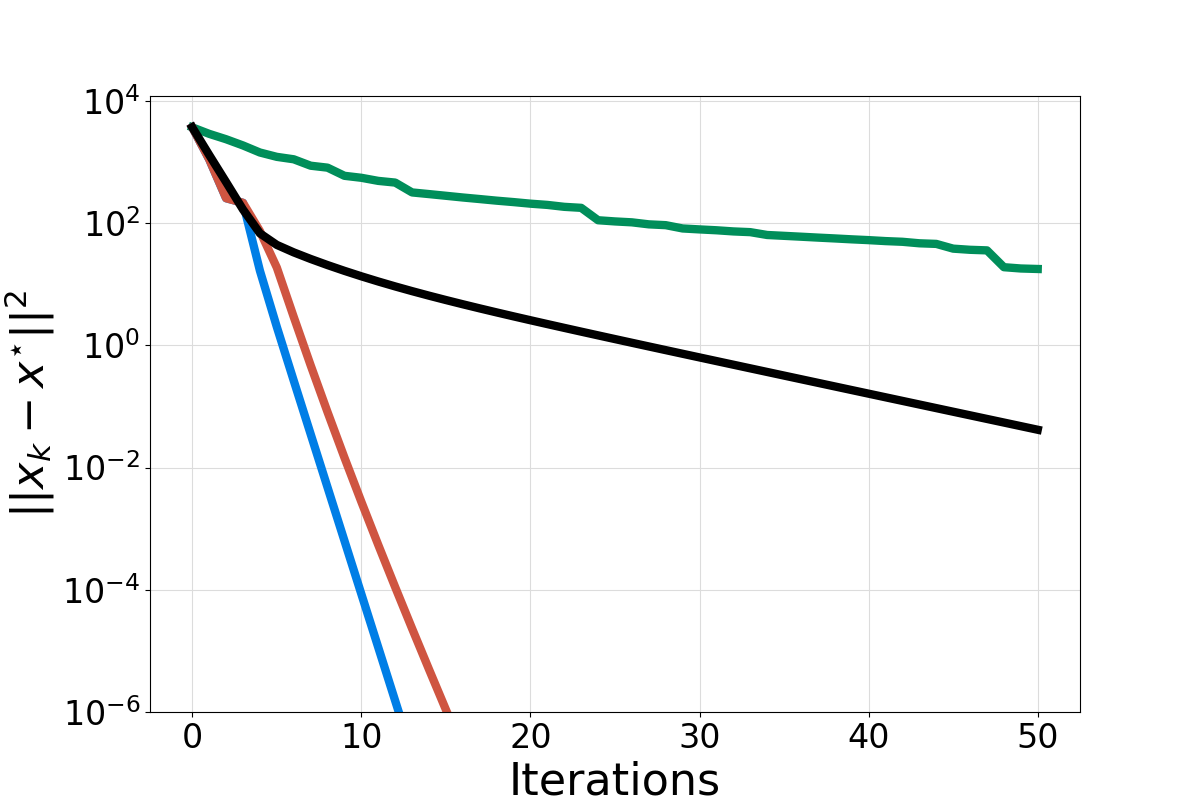}
    \end{subfigure}
    \label{extra_exp:huber_mpg}
     
    \vspace{10pt}
    
\includegraphics[width=0.55\textwidth]{imgs/log_reg_l2/legend_horizontal.png}
\end{figure}

Figures~\ref{extra_exp:huber_housing}--\ref{extra_exp:huber_mpg} show that the algorithms using the $\mC(x)$ matrix perform much better than the Polyak method. We observe very fast convergence of both \algname{LCD3} and \algname{LCD2}, regardless of $\delta$. Contrary, as $\delta$ increases \algname{LCD1} loses in comparison with the other two matrix methods. The most likely reason is increasing part of the objective, which is quartic, as it requires extra adaptiveness on the smoothness constant.  

\subsection{Ridge regression}
\par We consider the following objective function:
\begin{equation*}
    f(x) = \frac{1}{n}\sum_{i=1}^n (a_ix - b_i)^2 + \lambda\norm{x}^2
\end{equation*}
where $a_i \in \R^d$, $b_i \in \R$ are the data samples associated with a regression problem. By $L$ we understand the smoothness constant of a linear regression instance, excluding the regularizer.

\begin{figure}[!ht]
    \centering
    \caption{Ridge regression on {\tt housing} dataset; $\mC(x)= 2\lambda$.}    
    \begin{subfigure}{0.32\textwidth}
        \caption{$\lambda=L \cdot 10^{-3}$}
        \vspace{-5pt}
        \includegraphics[width = \textwidth]{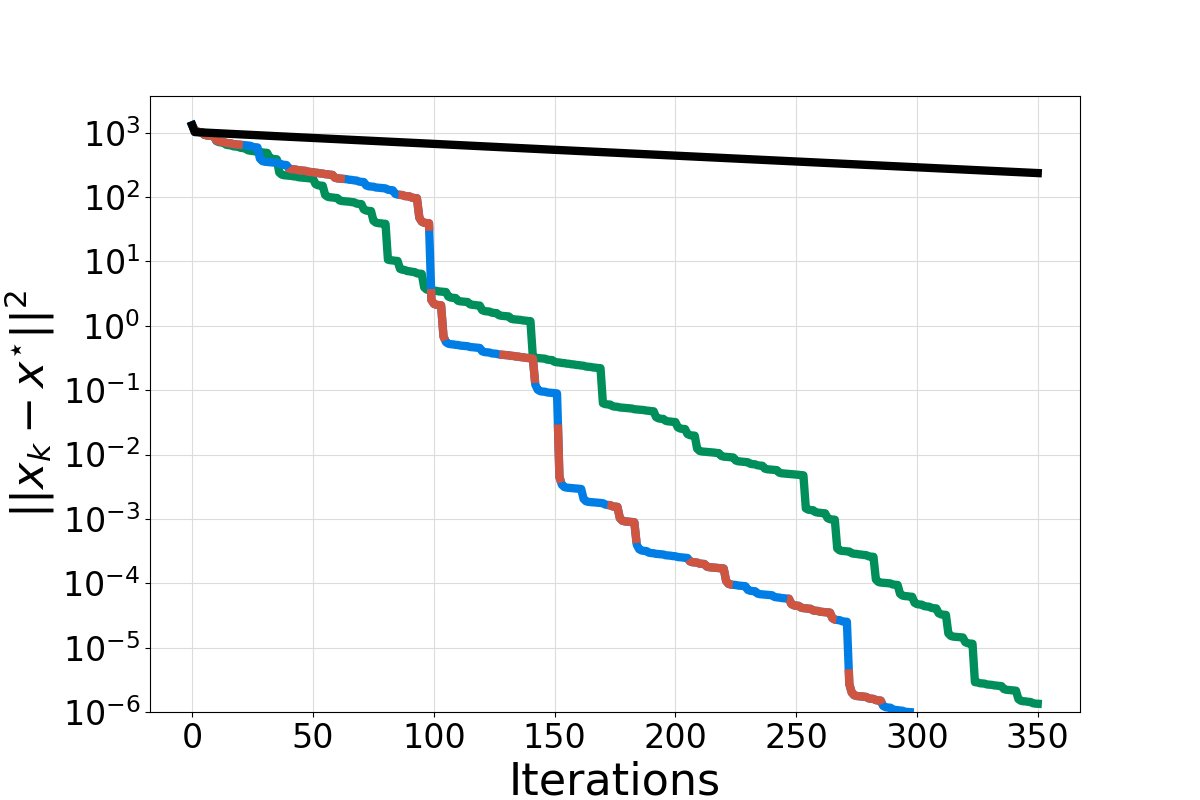}
    \end{subfigure}
    ~
    \begin{subfigure}{0.32\textwidth}
        \caption{$\lambda=\frac{L}{3} \cdot 10^{-2}$}
        \vspace{-5pt}
        \includegraphics[width = \textwidth]{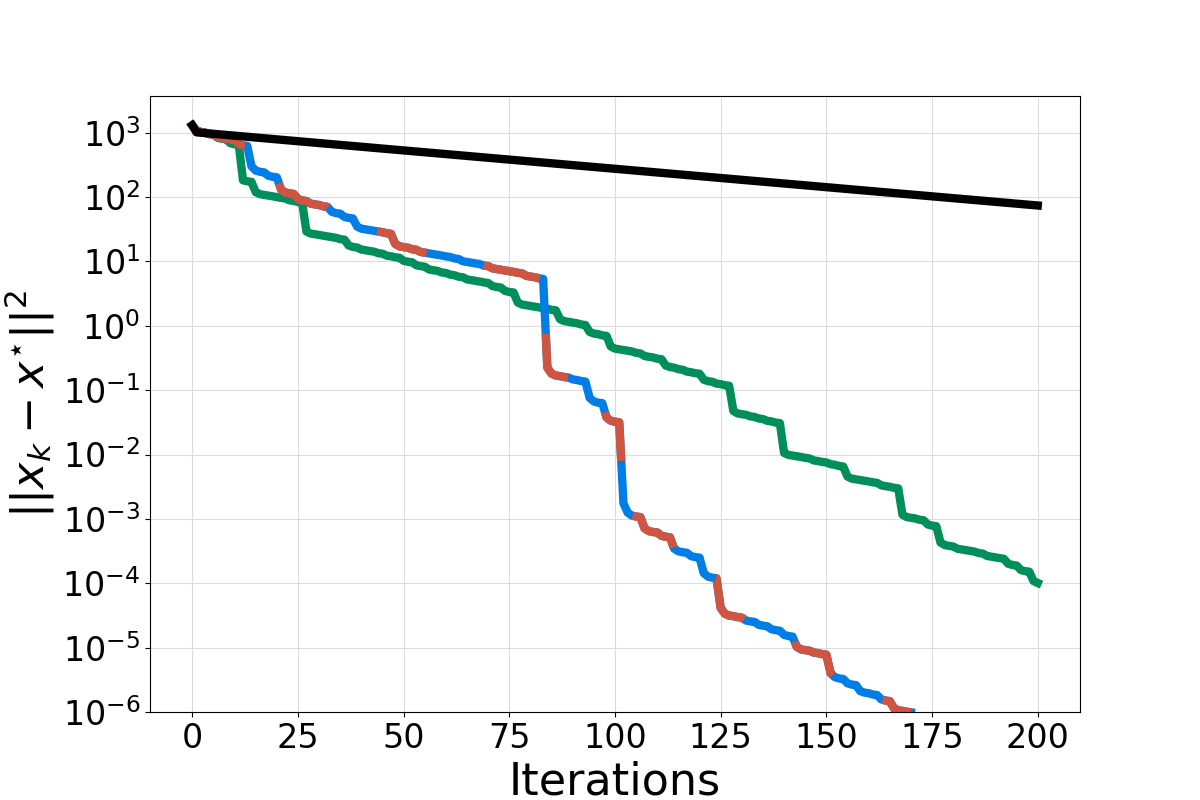}
    \end{subfigure}
    ~
    \begin{subfigure}{0.32\textwidth}
        \caption{$\lambda=L \cdot 10^{-2}$}
        \vspace{-5pt}
        \includegraphics[width = \textwidth]{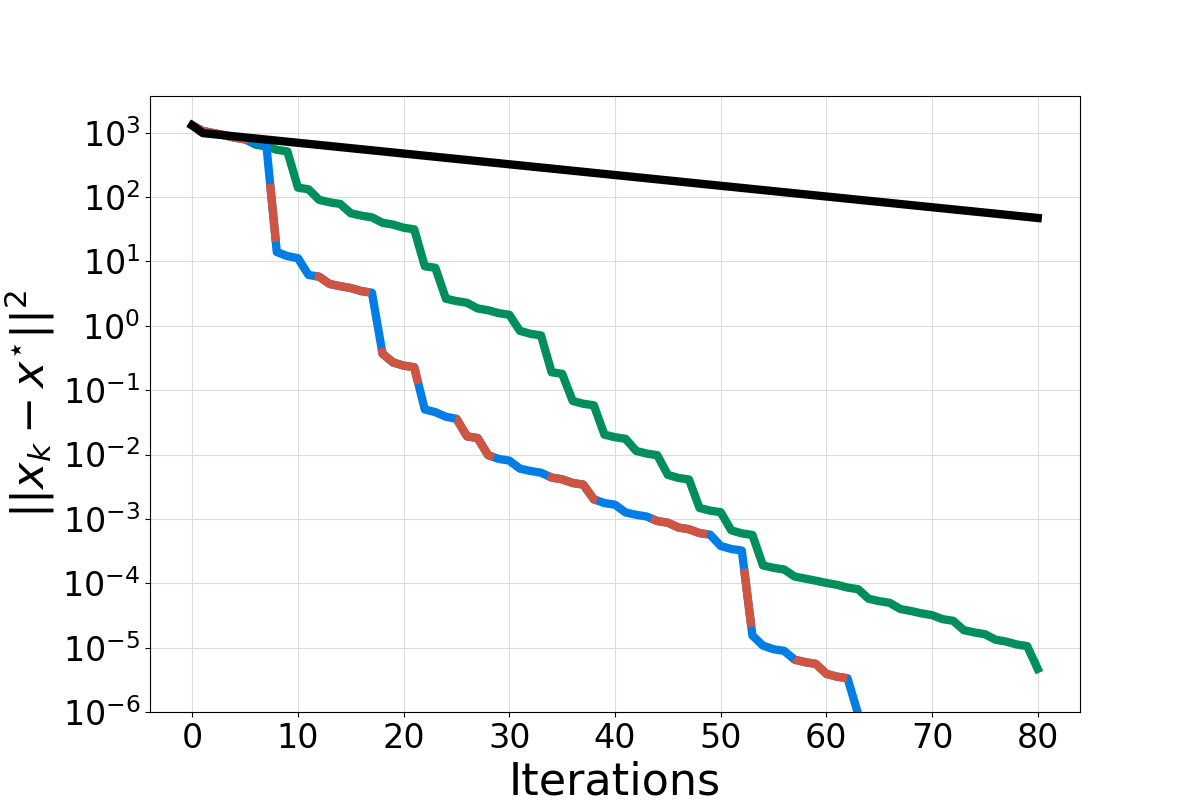}
    \end{subfigure}
    \label{extra_exp:housing_reg}

    \vspace{10pt}
    
    \caption{Ridge regression on {\tt mpg} dataset; $\mC(x)= 2\lambda$.}
    \begin{subfigure}{0.32\textwidth}
        \caption{$\lambda=L \cdot 10^{-3}$}
        \vspace{-5pt}
        \includegraphics[width = \textwidth]{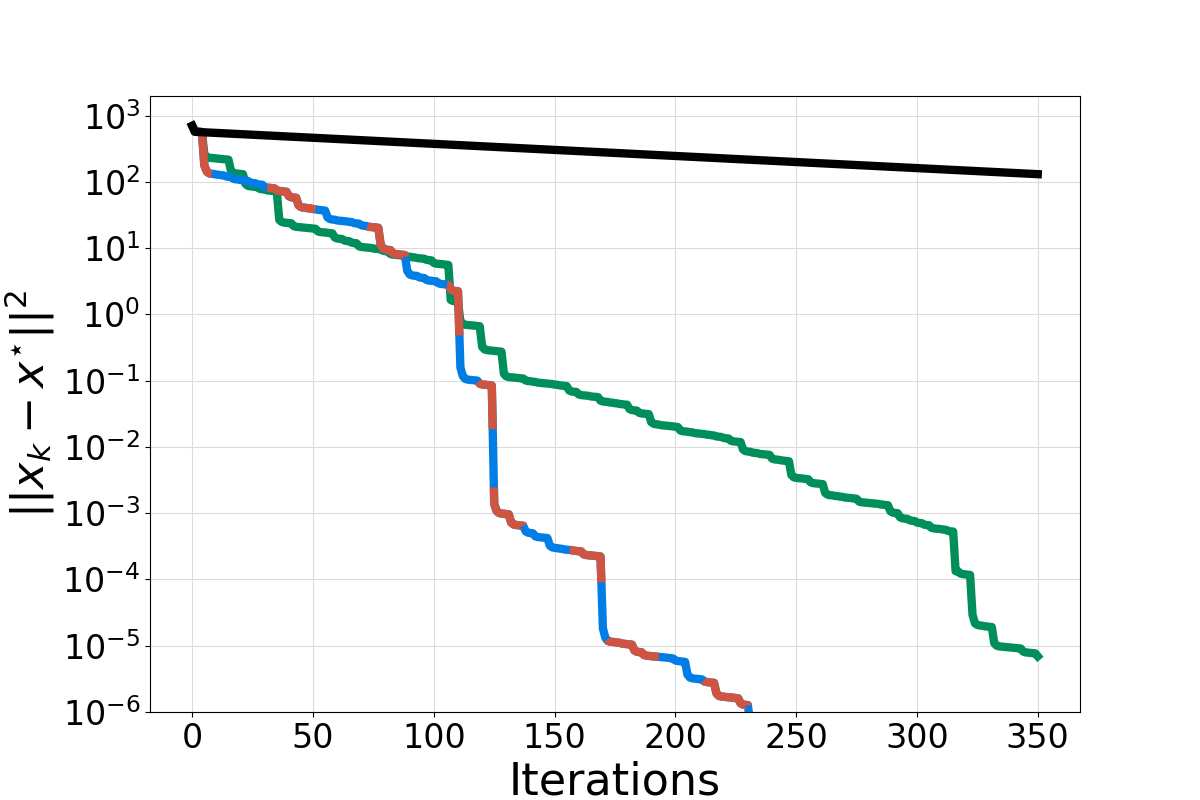}
    \end{subfigure}
    ~
    \begin{subfigure}{0.32\textwidth}
        \caption{$\lambda=\frac{L}{3} \cdot 10^{-2}$}
        \vspace{-5pt}
        \includegraphics[width = \textwidth]{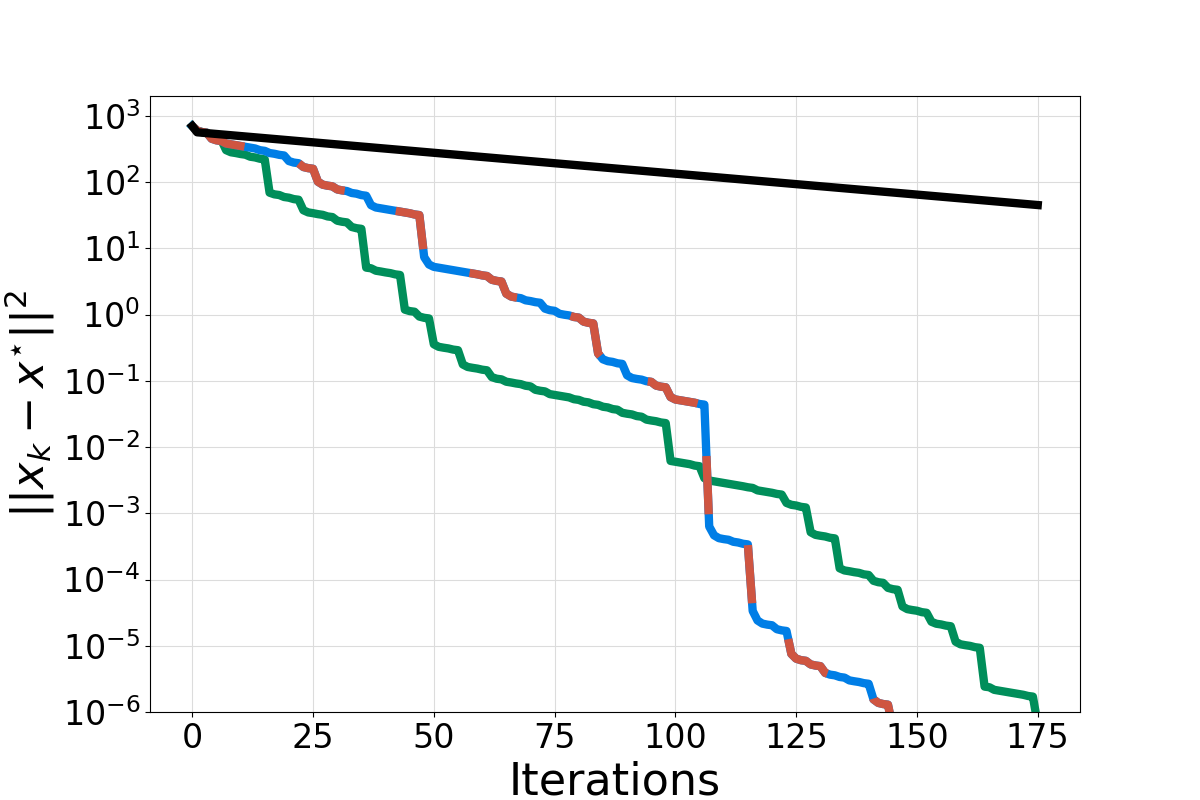}
    \end{subfigure}
    ~
    \begin{subfigure}{0.32\textwidth}
        \caption{$\lambda=L \cdot 10^{-2}$}
        \vspace{-5pt}
        \includegraphics[width = \textwidth]{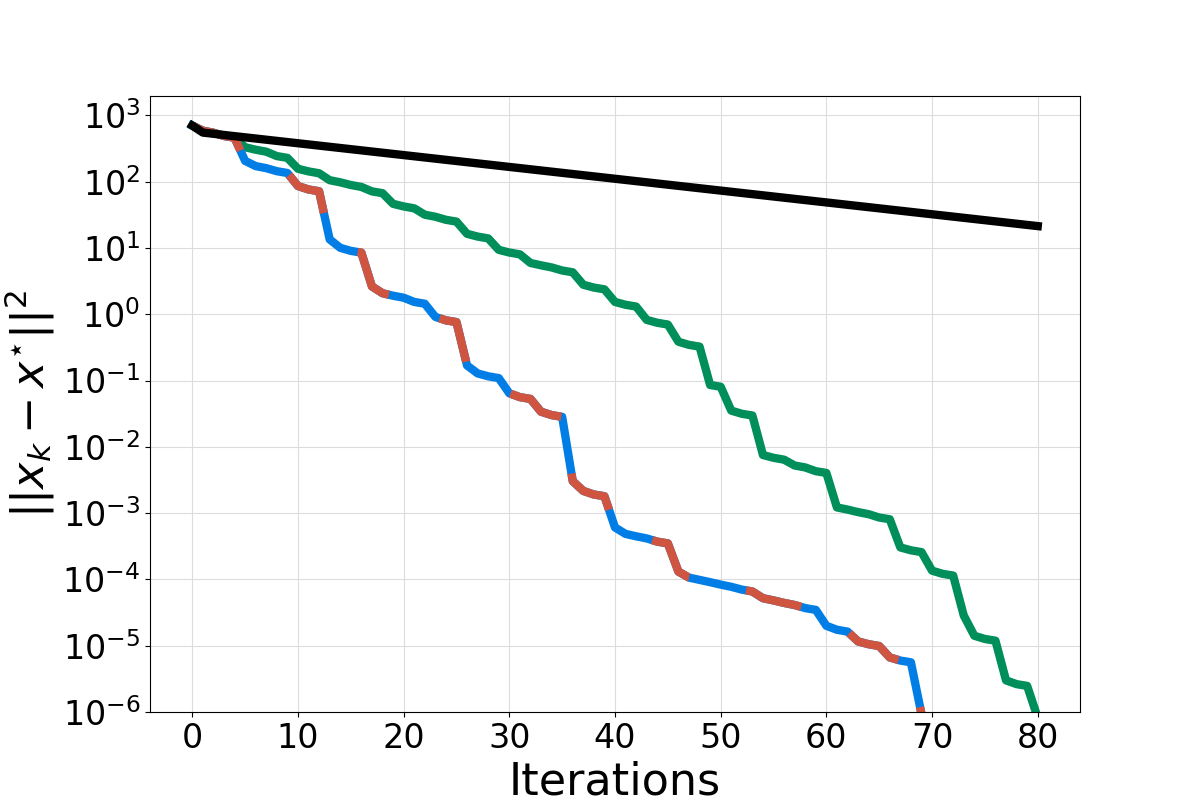}
    \end{subfigure}
    \label{extra_exp:mpg_reg}

\vspace{10pt}

\includegraphics[width=0.55\textwidth]{imgs/log_reg_l2/legend_horizontal.png}
\end{figure}
\begin{figure}[!ht]
    \centering
    \caption{Ridge regression on {\tt housing} dataset; $\mC(x)= \frac{2}{n}\mA^\top \mA$.}    
    \begin{subfigure}{0.32\textwidth}
        \caption{$\lambda=L \cdot 10^{-3}$}
        \vspace{-5pt}
        \includegraphics[width = \textwidth]{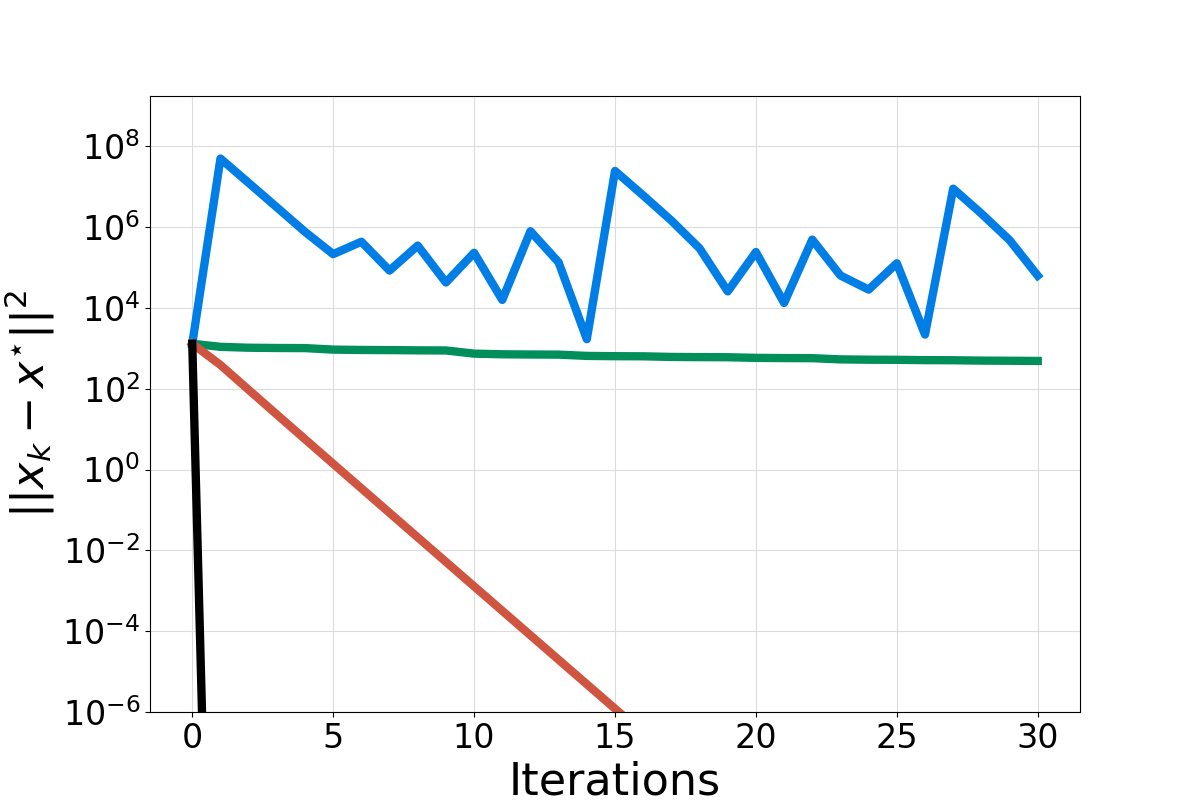}
    \end{subfigure}
    ~
    \begin{subfigure}{0.32\textwidth}
        \caption{$\lambda=\frac{L}{3} \cdot 10^{-2}$}
        \vspace{-5pt}
        \includegraphics[width = \textwidth]{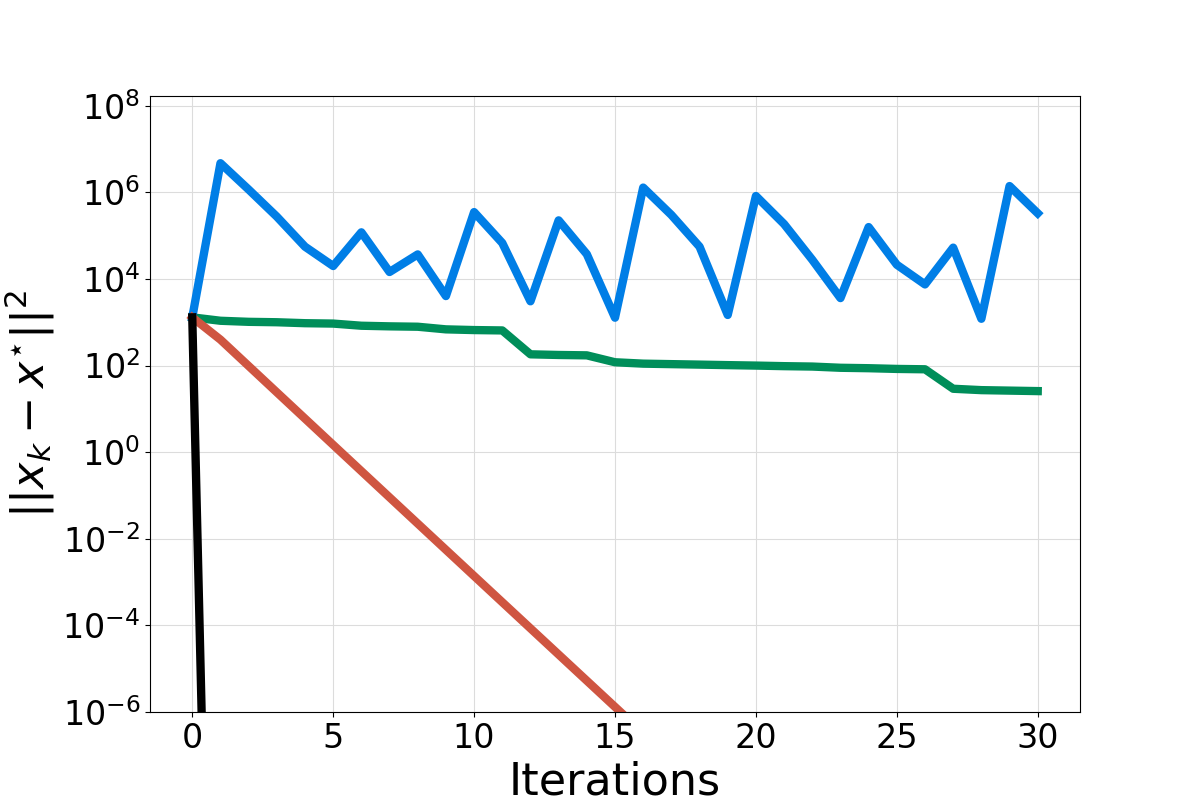}
    \end{subfigure}
    ~
    \begin{subfigure}{0.32\textwidth}
        \caption{$\lambda=L \cdot 10^{-2}$}
        \vspace{-5pt}
        \includegraphics[width = \textwidth]{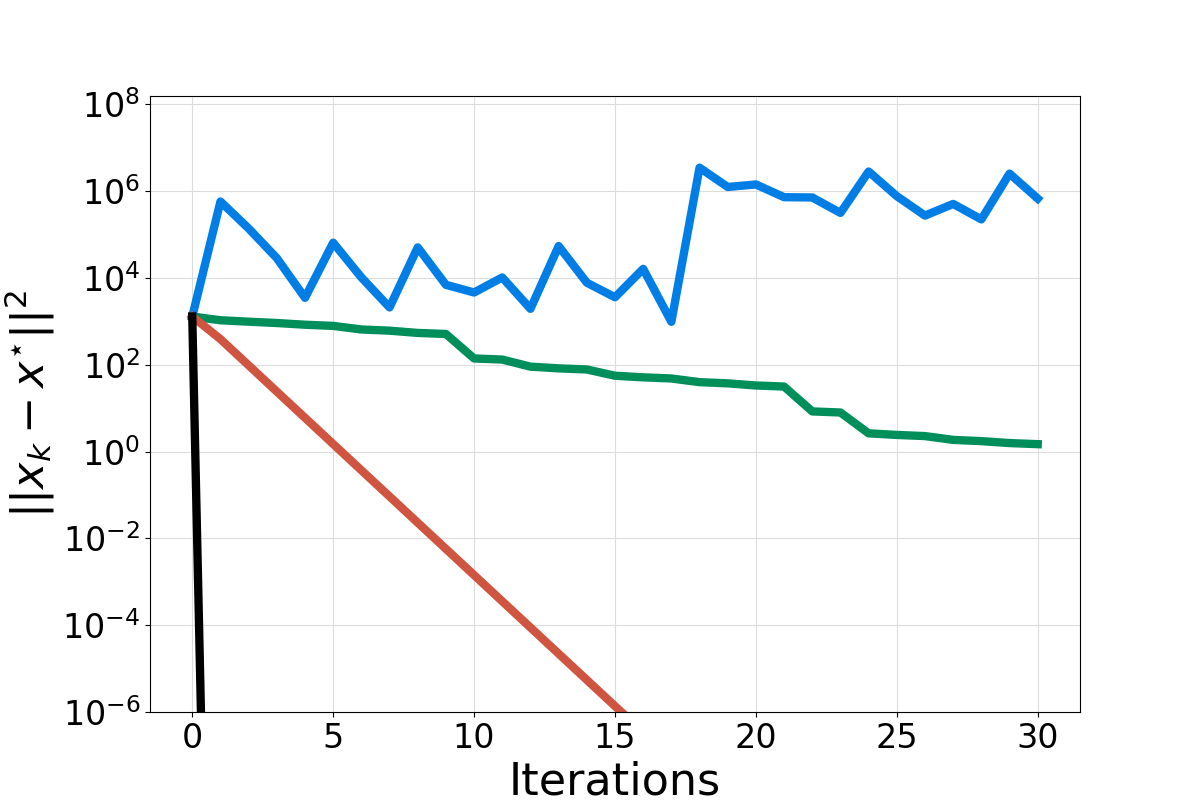}
    \end{subfigure}        \label{extra_exp:housing_quad}
    
    \vspace{10pt}
    
    \caption{Ridge regression on {\tt mpg} dataset; $\mC(x)= \frac{2}{n}\mA^\top \mA$.}
    \begin{subfigure}{0.32\textwidth}
        \caption{$\lambda=L \cdot 10^{-3}$}
        \vspace{-5pt}
        \includegraphics[width = \textwidth]{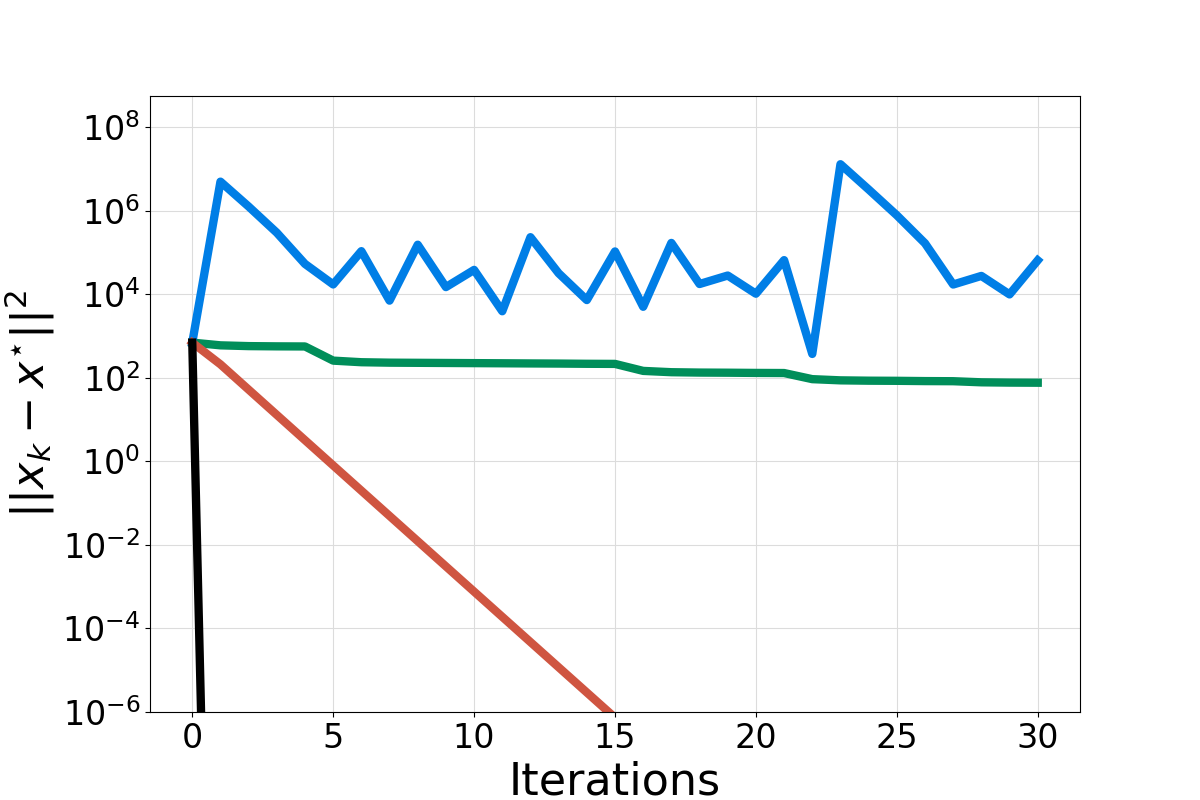}
    \end{subfigure}
    ~
    \begin{subfigure}{0.32\textwidth}
        \caption{$\lambda=\frac{L}{3} \cdot 10^{-2}$}
        \vspace{-5pt}
        \includegraphics[width = \textwidth]{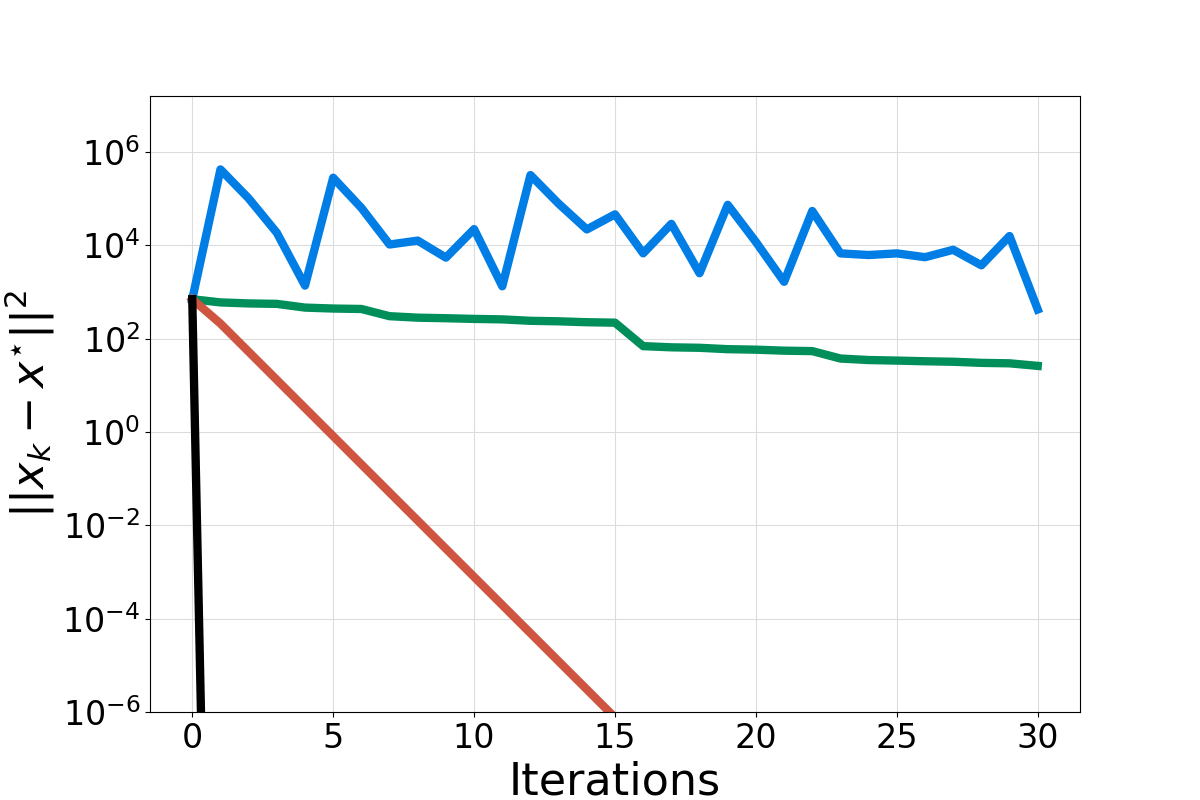}
    \end{subfigure}
    ~
    \begin{subfigure}{0.32\textwidth}
        \caption{$\lambda=L \cdot 10^{-2}$}
        \vspace{-5pt}
        \includegraphics[width = \textwidth]{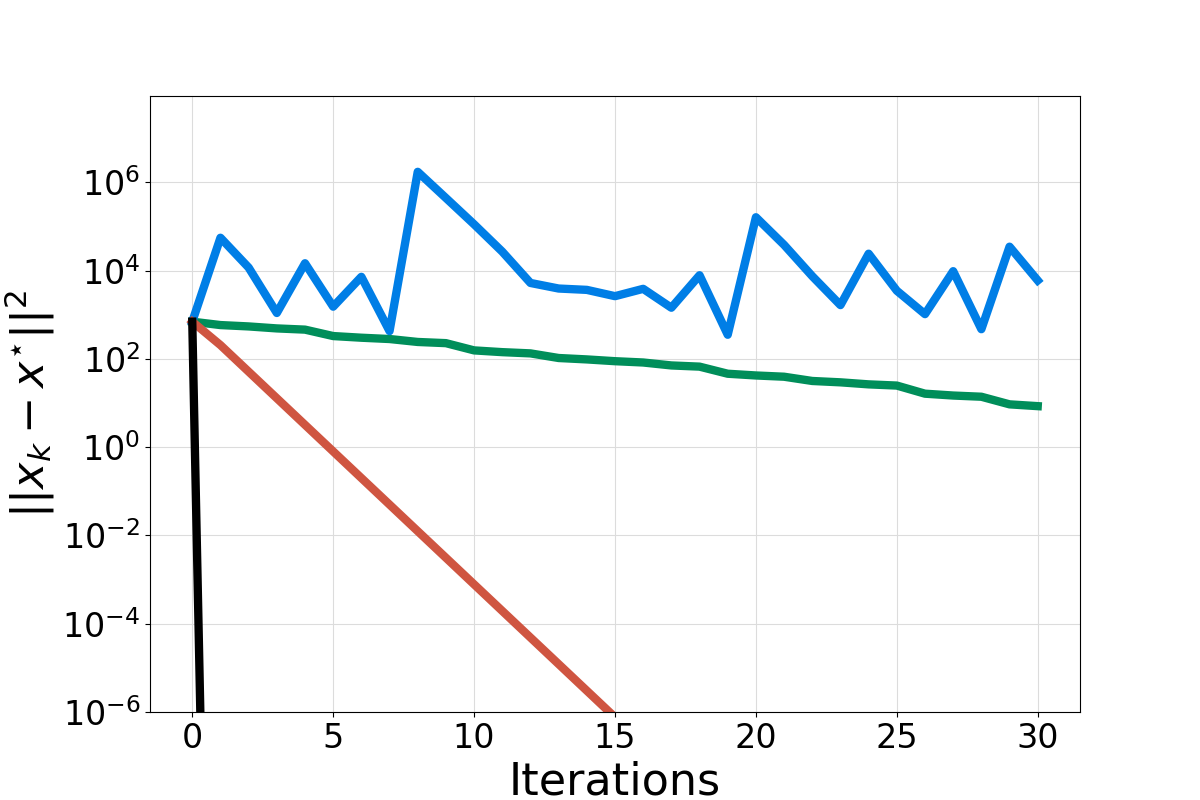}
    \end{subfigure}
    \label{extra_exp:mpg_quad}

\includegraphics[width=0.55\textwidth]{imgs/log_reg_l2/legend_horizontal.png}
\end{figure}

In Figures~\ref{extra_exp:housing_reg}--\ref{extra_exp:mpg_reg}, $\mC(x)$ is associated with the regularizer, and it becomes a multiple of $\mI$. As discussed in the main text, in this case \algname{LCD2} has closed-form solution, which coincides with \algname{LCD3}. The \algname{LCD1} algorithm becomes \algname{GD}. We can see, similar behavior to logistic regression with $L_2$ regularizer that is consistent improvement of \algname{LCD2} over the Polyak's method. 

Figures~\ref{extra_exp:housing_quad}--\ref{extra_exp:mpg_quad} show the results with $\mC(x)=\frac{2}{n}\mA^\top\mA$, which is a lower bound on the main part of the objective. In this circumstance, \algname{LCD1} becomes the Newton's method, and converges in one step. As anticipated in the main text, \algname{LCD3} can diverge. Finally, \algname{LCD2} performs in a very consistent way, and converges in exactly 15 steps across all the setups.

\end{document}